\let\C\relax
\newcommand{\ol}{\overline}
\newcommand{\ul}{\underline}
\newcommand{\C}{\mathbb{C}}
\newcommand{\Z}{\mathbb{Z}}
\newcommand{\Q}{\mathbb{Q}}
\newcommand{\Pbb}{\mathbb{P}}
\newcommand{\Ocal}{\mathcal{O}}
\newcommand{\Bcal}{\mathcal{B}}
\newcommand{\Dcal}{\mathcal{D}}
\newcommand{\Hcal}{\mathcal{H}}
\newcommand{\Hrm}{\mathrm{H}}
\newcommand{\Lrm}{\mathrm{L}}
\newcommand{\Hom}{\mathrm{Hom}}
\newcommand{\im}{\mathrm{Im}}
\newcommand{\gr}{\mathrm{gr}}
\newcommand{\rank}{\mathrm{rank}}
\newcommand{\lieb}{\mathfrak{b}}
\newcommand{\lien}{\mathfrak{n}}
\newcommand{\lieh}{\mathfrak{h}}
\newcommand{\lieg}{\mathfrak{g}}
\newcommand{\liem}{\mathfrak{m}}
\newcommand{\liea}{\mathfrak{a}}
\newcommand{\Rcal}{\mathcal{R}}
\newcommand{\Nscr}{\mathscr{N}}
\newcommand{\ebf}{\mathbf{e}}
\newcommand{\hbf}{\mathbf{h}}
\newcommand{\fbf}{\mathbf{f}}
\newcommand{\Scal}{\mathcal{S}}
\newcommand{\Pcal}{\mathcal{P}}
\newcommand{\SL}{\mathrm{SL}}
\newcommand{\wt}{\widetilde}
\newcommand{\e}{\mathrm{e}}
\newcommand{\wh}{\widehat}
\newcommand{\alg}{\mathrm{alg}}
\newcommand{\ind}{\mathrm{ind}}
\newcommand{\Ker}{\mathrm{Ker}}
\newcommand{\ord}{\mathrm{ord}}
\newcommand{\Hilb}{\mathrm{Hilb}}
\newcommand{\sign}{\mathrm{sign}}
\newcommand{\ch}{\mathrm{ch}}
\newcommand{\st}{\mathrm{st}}
\newcommand{\triv}{\mathrm{triv}}
\theoremstyle{plain}
\newtheorem{thm}{Theorem}[section]
\newtheorem{defn}[thm]{Definition}
\newtheorem{prop}[thm]{Proposition}
\newtheorem{coro}[thm]{Corollary}
\newtheorem{lem}[thm]{Lemma}
\newtheorem{conj}[thm]{Conjecture}
\newtheorem{rem}[thm]{Remark}
\newtheorem{ex}[thm]{Example}
\definecolor{color1}{HTML}{318CE7}
\definecolor{color2}{HTML}{F88379}
\newcommand{\Addresses}{{
		\footnotesize
		\textsc{University of Chicago, 5734 S. University Avenue, Chicago, IL 60637}\par\nopagebreak
		\textit{\noindent E-mail address}: \texttt{xcma@uchicago.edu}
		
}}
\title{Rational Cherednik Algebras and Torus Knot Invariants}
\author{Xinchun Ma}
\date{ }
\begin{document}
\maketitle
\begin{abstract}
 In \cite{gors}, the HOMFLY polynomial of the $(m,n)$ torus knot $T_{m,n}$ is extracted from the doubly graded character of the finite-dimensional representation $\Lrm_{\frac{m}{n}}$ of the type $A_{n-1}$ rational Cherednik algebra. It is furthermore conjectured in \textit{loc. cit.} that one can obtain the triply-graded Khovanov-Rozansky homology of $T_{m,n}$ by considering a certain filtration on $\Lrm_{\frac{m}{n}}$. In this paper, we show that two of the proposed candidates, the algebraic filtration and the inductive filtration, are equal. 
\end{abstract}
\tableofcontents
\section{Introduction}
\subsection{The rational Cherednik algebra and link homology}
The HOMFLY polynomial is a link invariant defined by the skein relation in the form of a two-variable polynomial, generalizing the one-variable Alexander polynomial and the Jones polynomial. In the early 2000s, these link polynomials were realized as Euler characteristics of various link homology theories. This process was inspired by the ideas of ``categorification", for a survey, see \cite{khovanovsurvey}. Remarkably, the HOMFLY polynomial was categorified by the Khovanov–Rozansky (KhR) link homology using matrix factorization \cite{khr}. Soon after, the construction of the KhR homology was modified using Hochschild homology of Soergel bimodules (hence the notation $\Hrm\Hrm\Hrm$), which  revealed the connection between link invariants and Kazhdan-Lusztig theory \cite{khovanovsoergel}. The past twenty years have seen a large variety of incarnations of KhR homology, to name a few: the (co)homology of braid/positroid/Richardson varieties, affine/global Springer theory, coherent sheaves on the Hilbert scheme of points and representation theory of double affine Hecke algebras (for a survey, see \cite{gorskysummerschool}).

This paper will focus on the representation theoretic model of the KhR homology. Let $m,n$ be a coprime positive integer pair and let $\lieh$ denote the $(n-1)$-dimensional standard representation of the symmetric group $S_n$. The rational Cherednik algebra  $\Hrm_{\frac{m}{n}}$ (also called rational DAHA) is a deformation of $D(\lieh)\ltimes S_n$ at the parameter ${\frac{m}{n}}$ where $D(\lieh)$ is the ring of differential operators on $\lieh$. The algebra $\Hrm_{\frac{m}{n}}$ has a unique finite-dimensional irreducible representation which we denote by $\Lrm_{\frac{m}{n}}$. Under the action of the Euler field, $\Lrm_{\frac{m}{n}}$ decomposes into a direct sum of eigenspaces, allowing us to define the associated $q$-graded Euler characterstic $\ch_q(\Lrm_{\frac{m}{n}})=\sum_i \dim(\Lrm_{\frac{m}{n}}(i))q^i$. In \cite{gors}, the authors recover the HOMFLY polynomials of the $(m,n)$ torus knot $T_{m,n}$  from the graded character of (the hook $S_n$-isotypic components of) $\Lrm_{\frac{m}{n}}$ by comparing explicit computational results of both sides:
\begin{thm}\label{gorsthm}
    \cite[Theorem 1.1]{gors}
    \begin{align}\label{homfly}
   \mathrm{HOMFLY}
_{a,q}(T_{m,n})=a^{(n-1)(m-1)}\sum_{i=0}^{n-1}a^{2i}\ch_q\big(\Hom_{S_n}(\wedge^i(\lieh),\Lrm_c)\big).
 \end{align}
\end{thm}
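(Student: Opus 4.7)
The plan is to prove the identity by computing both sides in closed form and matching them, following the approach outlined by GORS. The left-hand side is a classical quantity with an explicit presentation, while the right-hand side can be unpacked using the known character theory of $\Lrm_{\frac{m}{n}}$; the heart of the argument is reducing the comparison to a known combinatorial identity.

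First I would use the Jones--Rosso formula for the HOMFLY polynomial of the torus knot $T_{m,n}$, which expresses $\mathrm{HOMFLY}_{a,q}(T_{m,n})$ as a sum over partitions $\lambda \vdash n$ of terms involving the quantum dimension $s_\lambda$ (a specialized Schur function) and an eigenvalue of a central element of the Hecke algebra. A key input is that, after extracting the correct overall framing factor $a^{(n-1)(m-1)}$ and performing the substitution appropriate for the unreduced HOMFLY normalization, only the hook partitions $\lambda = (n-i, 1^i)$ contribute nontrivially to the relevant graded piece, so the sum over $\lambda$ collapses to a sum over $0 \le i \le n-1$ indexed by hook length, matching the shape of the right-hand side of \eqref{homfly}.

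Next I would compute the right-hand side. The character of $\Lrm_{\frac{m}{n}}$ is governed by the BGG-type resolution of Berest--Etingof--Ginzburg, which in the coprime case $\gcd(m,n)=1$ gives a closed formula (in terms of plethysms of Schur functions, or equivalently in terms of the Heckman--Opdam shift principle) for the bigraded $S_n$-character of $\Lrm_{\frac{m}{n}}$. Applying Frobenius reciprocity, $\ch_q\bigl(\Hom_{S_n}(\wedge^i(\lieh),\Lrm_c)\bigr)$ picks out the multiplicity of the $i$-th exterior power of the reflection representation, i.e., the hook $S_n$-type $(n-i,1^i)$. Substituting the BEG character formula, each such hook multiplicity becomes an explicit $q$-series which can be rewritten in terms of $q$-analogues of hook lengths.

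The final step is to match the two expressions hook-by-hook, showing that the graded multiplicity of the $(n-i,1^i)$ isotype in $\Lrm_{\frac{m}{n}}$ equals the coefficient of $a^{2i}$ in the Jones--Rosso expansion (up to the prefactor $a^{(n-1)(m-1)}$). I expect the main obstacle to be precisely this combinatorial identification: both sides can be written as specializations of Macdonald-type symmetric functions at $t=q$, but showing they are literally equal requires invoking a Schur-positive identity (in the spirit of the Haiman/Gorsky $q,t$-Catalan combinatorics, or equivalently Stembridge's formula for principal specializations of Schur functions at hook shapes). Once that symmetric-function identity is in place, the theorem follows by term-wise comparison and a bookkeeping check of the framing exponent $(n-1)(m-1)$, which is the writhe of the standard positive braid presentation of $T_{m,n}$.
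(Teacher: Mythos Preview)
The paper does not actually prove this statement; it is quoted from \cite{gors} as background, with the remark that the authors there establish it ``by comparing explicit computational results of both sides.'' So there is no proof in the present paper to compare your proposal against.

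That said, your outline is essentially the strategy carried out in \cite{gors}: compute the HOMFLY side via the Jones--Rosso/Aiston--Morton formula (which indeed reduces to a sum over hooks after the appropriate specialization), compute the $S_n$-character of $\Lrm_{m/n}$ via the BEG resolution, extract the hook isotypic multiplicities, and match the two closed formulas. Your description of the ``main obstacle'' as a symmetric-function identity is accurate in spirit, though in practice the matching in \cite{gors} is more direct than you suggest --- both sides end up as explicit products of $q$-numbers indexed by the same hook data, so the verification is a bookkeeping exercise rather than an appeal to deep Macdonald positivity. Your invocation of $q,t$-Catalan or Macdonald machinery is overkill for the $t=1$ (HOMFLY, not superpolynomial) level of the statement.
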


As mentioned above, the HOMFLY polynomial is the doubly graded Euler characteristic of the KhR homology. 
  In fact, the KhR homology is a triply graded knot invariant whose full characteristic is the so-called ``link superpolynomial". As seen in Theorem \ref{gorsthm}, the internal $q$-grading of $\Hrm\Hrm\Hrm$ corresponds to the weight space decomposition of $\Lrm_c$; the Hochschild homological $a$-grading corresponds to the (hook-)isotypic components of $\Lrm_{\frac{m}{n}}$.
The main conjecture of \cite{gors} is that the third usual homological $t$-grading on the KhR homology should be given by a filtration on $\Lrm_{\frac{m}{n}}$:
\begin{conj}\label{triply}
    \cite[Conjecture 1.2]{gors} There exists a filtration on $\Lrm_c$ whose associated $t$-grading in (\ref{homfly}) yields the refined identity
    \[\ch_{a,q,t} (\Hrm\Hrm\Hrm(T_{m,n}))=a^{(n-1)(m-1)}\sum_{i=0}^{n-1}a^{2i}\ch_{q,t}\big(\Hom_{S_n}(\wedge^i(\lieh),\Lrm_c)\big).\]
\end{conj}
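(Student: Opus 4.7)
The goal is to exhibit a filtration $F_\bullet$ on $\Lrm_c$ whose associated graded recovers the $t$-grading on $\Hrm\Hrm\Hrm(T_{m,n})$ when substituted into \eqref{homfly}. Since the right-hand side of \eqref{homfly} is already built from the Euler decomposition (the $q$-grading) and the hook-isotypic decomposition (the $a$-grading), any candidate filtration must be compatible with both the $S_n$-action and the Euler action, and must descend well to each $\Hom_{S_n}(\wedge^i(\lieh),\Lrm_c)$.

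My plan is to proceed in three stages. First, I would pin down a candidate filtration. The most natural ones are: an algebraic filtration, where $F_k\Lrm_c$ is defined via the order of action of the symmetric subalgebra $\C[\lieh]^{S_n}\subset\Hrm_c$ on a chosen cyclic vector; an inductive filtration, built recursively from the restriction/induction functors between Cherednik algebras of lower rank; and a geometric filtration, pulled back from a perverse or weight filtration on a compactified Jacobian of the plane curve $y^n=x^m$, or on a parabolic $\Hilb^n(\C^2)$, via the known geometric realizations of $\Lrm_c$. Since the present paper identifies the first two of these, I would take either as a working definition and then aim to identify it with the geometric candidate, which gives access to more tools.

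Second, I would compute $\ch_{q,t}(\gr_F\Lrm_c)$ in closed form. On the algebraic side this should be tractable using Macdonald-theoretic methods and the rational shuffle conjecture, which expresses related quantities as symmetric functions. On the geometric side one can try equivariant localization on $\Hilb^n(\C^2)$ or on the affine Springer fiber associated to a nil-elliptic regular semisimple element, together with purity of the relevant perverse sheaves. Third, I would match the resulting triply-graded series with the KhR homology of $T_{m,n}$, for which there are partial combinatorial formulas due to Hogancamp--Mellit and Gorsky--Negut--Rasmussen.

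The main obstacle is precisely this last matching step: even granting a clean formula for $\gr_F\Lrm_c$, the existing descriptions of $\Hrm\Hrm\Hrm(T_{m,n})$ are not in an obviously compatible form, and bridging them will likely require new symmetric-function identities, possibly mediated by an equivalence between a suitable category of $\Hrm_c$-modules and coherent or constructible sheaves on an affine Springer fiber in which $F_\bullet$ corresponds to a perverse filtration. A realistic intermediate target is to verify the identity in small cases (e.g.\ $n=2,3$, or $m=n+1$) and then attempt an induction, e.g.\ using the shift $(m,n)\mapsto(m+n,n)$ coming from the full twist on torus knots, to propagate the statement to the general case.
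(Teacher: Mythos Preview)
The statement you are trying to prove is labeled a \emph{Conjecture} in the paper, and the paper does not claim to prove it. What the paper actually proves is Theorem~\ref{main} (Conjecture~\ref{GORSconj}): that two of the three candidate filtrations proposed in \cite{gors}, namely $F^{\alg}$ and $F^{\ind}$, coincide on $\Lrm_{m/n}$ for $m>n$. This is progress toward Conjecture~\ref{triply} but is not a proof of it; the full conjecture remains open.

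Your proposal is not a proof but a research plan, and you are candid about this yourself when you write that ``the main obstacle is precisely this last matching step'' and that ``bridging them will likely require new symmetric-function identities.'' Everything in your outline is reasonable as a strategy, and in fact your first stage (identifying candidate filtrations and showing some of them agree) is exactly what the paper carries out for the algebraic and inductive filtrations. But stages two and three---computing $\ch_{q,t}(\gr_F\Lrm_c)$ in closed form and matching it to $\Hrm\Hrm\Hrm(T_{m,n})$---are not executed, and you correctly identify that the tools to do so in general are not yet available. So there is no genuine error in your reasoning, but there is also no proof here: the gap you name is the entire content of the conjecture.
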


In \textit{loc. cit.}, three potential candidates for this $t$-grading are proposed and conjectured to coincide: \begin{itemize}
    \item The inductive filtration $F^\ind$ is defined inductively using the shift functor from $\Hrm_{\frac{m}{n}}$-modules to $\Hrm_{\frac{m}{n}+1}$-modules and the ``flipping" isomorphism $\Lrm_{\frac{m}{n}}^{S_n}\cong \Lrm_{\frac{n}{m}}^{S_m}$. 
    \item The algebraic filtration $F^\alg$  is defined by reshuffling weight spaces under the Euler field and orthogonal complements of powers of the ideal $\liea$ generated by non-constant symmetric polynomials under the Dunkl form.
    \item The geometric filtration $F^{\text{geom}}$ is defined as the perverse filtration on the cohomology of a Hitchin fiber isomorphic to the compactified Jacobian of the planar singular curve $y^m=x^n$. 
    \end{itemize}
\subsection{The inductive filtration and the Hilbert scheme of points in $\C^2$}
The inductive filtration $F^\ind$ originated from the concept of a ``good filtration" in \cite{gs1}, where the authors establish $S_n$-equivariant bigraded isomorphisms  \begin{align}
(\Lrm_{k+\frac{1}{n}})^{S_n}&\cong \Gamma(\Hilb_0^n(\C^2),\Ocal(k))	\label{gs}\\
\Lrm_{k+\frac{1}{n}}&\cong \sign\otimes \Gamma(\Hilb_0^n(\C^2),\Pcal\otimes \Ocal(k-1))\label{gs2}
\end{align}
confirming conjectures in \cite{begg}. Here $\Hilb_0^n(\C^2)$ is the punctual Hilbert scheme of $n$ points on $\C^2$ and  $\Ocal(k)$, resp. $\Pcal$ denotes the degree-$k$ tautological line bundle, resp. Procesi bundle of rank $n!$ on $\Hilb_0^n(\C^2)$. The gradings on the right hand side of (\ref{gs}) and (\ref{gs2}) are induced by the action of $(\C^*)^2$ on $\C^2$ by scaling. 
 In this setting, only the shift functor is needed to define the inductive filtration on $\Lrm_{k+\frac{1}{n}}$.
As proved in \cite{haimandiscrete}, the $q,t$-character of the right hand side of (\ref{gs}) equals to the $q,t$-Catalan number:
\begin{align}
	\label{hilbcatalan}
\ch_{q,t}((\Lrm_{k+\frac{1}{n}})^{S_n})=\ch_{q,t}(\Gamma(\Hilb_0^n(\C^2),\Ocal(k)))=C_{nk+1,n}(q,t).
\end{align}
On the other hand, as a consequence of the rational Shuffle conjecture proved in \cite{mellitshuffle}, one also has \begin{align*}
\ch_{a=0,q,t}(\Hrm\Hrm\Hrm(T_{nk+1,n}))=C_{nk+1,n}(q,t).\end{align*}
These two equalities provide evidence of Conjecture \ref{triply} using the inductive filtration.

\subsection{Algebraic and geometric filtrations and the compactified Jacobian}
Let $C_{\frac{m}{n}}$ denote the closure of the plane curve $y^m=x^n$ in $\Pbb^2$ and $J_{\frac{m}{n}}$ be the compactified Jacobian of $C_{\frac{m}{n}}$, which parametrizes rank one torsion-free coherent sheaves on $C_{\frac{m}{n}}$ of a fixed degree.  On the $\C^*$-equivariant cohomology of $J_{\frac{m}{n}}$ one can define the perverse filtration $P$ (see \cite[8.5.1]{oy}). Let $\Hrm_{\epsilon=1}^*(J_{\frac{m}{n}})$ denote the specialization of $\Hrm_{\C^*}^*(J_{\frac{m}{n}})$ at $1$. It is proved in \textit{loc. cit.} using the global Springer theory and proved in \cite{garnerkivinen} using the BFN Springer theory that there are natural actions of the spherical Cherednik algebra $\e\Hrm_{\frac{m}{n}}\e$ on the associated graded $\gr^P \Hrm_{\epsilon=1}^*(J_{\frac{m}{n}})$ so that $\gr^P H_{\epsilon=1}^*(J_{\frac{m}{n}})\cong (\Lrm_{\frac{m}{n}})^{S_n}$ as $\e\Hrm_{\frac{m}{n}}\e$-representations.

On the other hand, consider the arc space $\mathcal{M}_{\frac{m}{n}}$, defined as the moduli space of maps $\displaystyle \phi: \Pbb^1\to C_{\frac{m}{n}}$ of degree $1$ such that $\phi(\infty)=\infty_C$ and $\deg(\phi^*(f)-f)\leq k-2$ for all $f\in \C[t^m,t^n]$ of degree $k$. Here $\phi^*$ is the induced homomorphism $\C[C_{\frac{m}{n}}-\infty_C]\to \C[\Pbb^1-\infty]$. Then according to \cite[section G]{fgs}, $\C[\mathcal{M}_{\frac{m}{n}}]$ is isomorphic to $(\Lrm_{{\frac{m}{n}}})^{S_n}$
as rings and is filtered by powers of its maximal ideal $\liem=\liea^{S_n}$. 
\begin{conj}\cite[Conjecture 1.1.7]{oyjacobian}\label{OYconj}
	For all $i,j\in \Z$, $\gr_j^P\Hrm^{2i}(J_{\frac{m}{n}})=\gr_{\liem}^{j-i}\C[\mathcal{M}_{\frac{m}{n}}](j)$.
\end{conj}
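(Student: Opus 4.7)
The plan is to reduce the conjecture to a comparison between the algebraic and geometric filtrations introduced in \cite{gors}, restricted to the spherical (that is, $S_n$-invariant) part of $\Lrm_{\frac{m}{n}}$. By \cite{oy, garnerkivinen}, the associated graded of the perverse filtration on $\Hrm_{\epsilon=1}^*(J_{\frac{m}{n}})$ is identified with $(\Lrm_{\frac{m}{n}})^{S_n}$ as a spherical Cherednik module, and by \cite{fgs} the latter is isomorphic as a ring to $\C[\mathcal{M}_{\frac{m}{n}}]$. The conjecture asks that this identification be promoted to a match of bigraded structures, so the argument splits into three steps.

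First, I would identify the $\liem$-adic filtration on $\C[\mathcal{M}_{\frac{m}{n}}]$ with (a suitably shifted version of) the restriction of the algebraic filtration $F^{\alg}$ to $(\Lrm_{\frac{m}{n}})^{S_n}$. Recall that $F^{\alg}$ is built from the Dunkl form and powers of the ideal $\liea$; on the spherical subspace $\liea$ specializes to $\liem$, and the non-degeneracy of the Dunkl pairing together with the Frobenius structure of the finite-dimensional representation $\Lrm_{\frac{m}{n}}$ should force the orthogonal-complement filtration $(\liem^k)^\perp$ to coincide with the $\liem$-adic filtration itself, the Euler-weight shift $j-(j-i)=i$ appearing in the conjecture keeping track of the Gorenstein duality.

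Second, by the definition given in \cite{gors}, the geometric filtration $F^{\text{geom}}$ on $(\Lrm_{\frac{m}{n}})^{S_n}$ is precisely the perverse filtration $P$ transported across the isomorphism $\gr^P\Hrm_{\epsilon=1}^*(J_{\frac{m}{n}}) \cong (\Lrm_{\frac{m}{n}})^{S_n}$, with cohomological degree $2i$ on the geometric side corresponding under the isomorphism to twice the gap $j-(j-i)=i$ between the Euler grading and the $\liem$-adic grading. Combined with Step~1, the full bigraded conjecture then reduces to the bigraded equality $F^{\alg}=F^{\text{geom}}$ on the spherical part.

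The hard step is this last equality, which is one of the filtration identifications conjectured in \cite{gors}. The present paper gives $F^{\alg}=F^{\ind}$, so it suffices to prove $F^{\ind}=F^{\text{geom}}$; but the inductive filtration is built from the shift functor and the flipping isomorphism $\Lrm_{\frac{m}{n}}^{S_n}\cong \Lrm_{\frac{n}{m}}^{S_m}$, while the perverse filtration arises from the decomposition theorem applied to the family of compactified Jacobians. To bridge them I would attempt to produce a geometric model of the shift functor — for instance a correspondence between $J_{\frac{m}{n}}$ and $J_{\frac{m+n}{n}}$ induced by twisting by $\Ocal(1)$ on the spectral curve — verify that this correspondence is strictly compatible with the perverse filtration, and match the flipping involution with a geometric symmetry of the Hitchin base exchanging $m$ and $n$. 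The principal obstacle is that no such geometric incarnations of either the shift functor or the flipping isomorphism are currently known, and supplying them is likely to require support-theoretic techniques for the family of compactified Jacobians (in the spirit of Maulik--Yun and Migliorini--Shende) that go well beyond the algebraic methods used in this paper.
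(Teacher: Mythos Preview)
Your proposal is a strategy rather than a proof: you yourself flag that Step~3 --- constructing a geometric incarnation of the shift functor and the flipping isomorphism to prove $F^{\ind}=F^{\text{geom}}$ directly --- is currently out of reach. That is the genuine gap, and without it the argument establishes the conjecture for no pair $(m,n)$.

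More to the point, you are aiming at the full conjecture, whereas the paper only claims the special case $m=nk+1$ (stated as a theorem immediately after the conjecture). For that case the paper's route is quite different from yours and bypasses Step~3 entirely. The argument is a bigraded character match, not a direct comparison of filtrations: by \cite{gm} the bigraded Poincar\'e series of $(\Hrm^*(J_{k+\frac{1}{n}}),P)$ equals $C_{nk+1,n}(q,qt^2)$, while by \cite{haimandiscrete} together with the Gordon--Stafford isomorphism the bigraded character of $((\Lrm_{k+\frac{1}{n}})^{S_n},F^{\ind})$ is the same rational $q,t$-Catalan number. The main theorem of the paper, $F^{\alg}=F^{\ind}$, identifies the inductive filtration on the spherical part with the $\liem$-adic filtration (this is essentially your Step~1, made precise in \cite[Proposition~6.2.3]{oyjacobian}); the numerical equality then upgrades to the asserted identification because both filtrations live on the same graded vector space and one already knows a containment.

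So the ingredient you are missing in the case $m=nk+1$ is exactly this character-count shortcut: for that family the perverse bigraded character has been computed independently, and no structural comparison of $F^{\ind}$ with $F^{\text{geom}}$ is needed. For general coprime $(m,n)$ no analogous formula for the perverse side is available, your assessment of the difficulty in Step~3 is accurate, and the paper does not claim the conjecture in that generality.
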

In the case of $m=nk+1$ for $k\geq 0$, the main result of \cite{gm} implies that
\begin{align}\label{jacobiancatalan}
\sum_{i,j}\dim(\gr_j^P\Hrm^{2i}(J_{k+\frac{1}{n}}))q^jt^{2i}=C_{nk+1,n}(q,qt^2).
\end{align}
Comparison between (\ref{jacobiancatalan}) and (\ref{hilbcatalan})  provides a numerical correlation between 
$(\Hrm^*(J_{k+\frac{1}{n}}),P)$ and $((\Lrm_{{k+\frac{1}{n}}})^{S_n},F^\ind)$.

In this paper, we relate $F^\ind$ and $F^\alg$. 
\begin{conj}\cite[Conjecture 4.12]{gors}\label{GORSconj}
On $\Lrm_\frac{m}{n}$ when $m>n$ for coprime $m,n$ the algebraic filtration coincides with the inductive filtration.
\end{conj}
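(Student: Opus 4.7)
The plan is to proceed by Euclidean induction on the pair $(m,n)$, mirroring the recursive definition of $F^{\ind}$. Writing $m = kn + r$ with $0 \le r < n$, the filtration $F^{\ind}$ on $\Lrm_{m/n}$ is obtained from $F^{\ind}$ on $\Lrm_{r/n}$ by $k$ applications of the shift functor $\mathrm{Sh}:\Hrm_{m/n}\Mod \to \Hrm_{(m+n)/n}\Mod$, while $F^{\ind}$ on the spherical part $\Lrm_{r/n}^{S_n}$ is transported from $F^{\ind}$ on $\Lrm_{n/r}^{S_r}$ via the flip isomorphism. Iterating pushes us down to the base case $n=1$, where $\Lrm_{m/1} \cong \C$ and both filtrations are trivial. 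It therefore suffices to prove two compatibilities for $F^{\alg}$: first, that $\mathrm{Sh}$ intertwines $F^{\alg}$ on source and target with a prescribed shift in index, and second, that the flip intertwines the restrictions of $F^{\alg}$ to the two spherical parts.

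For the shift compatibility, I would use a concrete realization of $\mathrm{Sh}$, such as the $\e$-bimodule presentation or a Heckman--Opdam-style shift operator. The ideal $\liea \subset \C[\lieh]$ is generated by $S_n$-invariants which sit inside the spherical subalgebra; their action commutes with $\mathrm{Sh}$ up to the known Euler grading shift, so $\mathrm{Sh}$ sends $\liea^j\Lrm_{m/n}$ into $\liea^j\Lrm_{(m+n)/n}$. Comparing the contravariant Dunkl forms on $\Lrm_{m/n}$ and $\Lrm_{(m+n)/n}$, which should differ by a simple scalar on each weight space, the orthogonal complements defining $F^{\alg}$ match under $\mathrm{Sh}$, and the weight-space reshuffling is dictated by the same Euler shift.

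For the flip compatibility, I would try to identify both $(\Lrm_{m/n}^{S_n}, F^{\alg})$ and $(\Lrm_{n/m}^{S_m}, F^{\alg})$ with a single model in which the $m \leftrightarrow n$ symmetry is manifest. Via \cite[\S G]{fgs}, both sides are isomorphic as graded rings to $\C[\Mcal_{m/n}]$, the coordinate ring of the arc space of $y^m = x^n$, and the flip corresponds to the tautological involution coming from the $x \leftrightarrow y$ symmetry of the curve itself. Under this identification the ideals $\liea^{S_n}$ and $(\liea')^{S_m}$ both correspond to the maximal ideal $\liem \subset \C[\Mcal_{m/n}]$, while the two spherical Dunkl pairings should agree, up to a scalar, with a single intrinsic pairing on $\C[\Mcal_{m/n}]$, for instance one realized by residues on $C_{m/n}$. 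Granting this, $F^{\alg}$ on both sides coincides with the $\liem$-adic orthogonal-complement filtration on $\C[\Mcal_{m/n}]$, and the flip compatibility follows.

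The hard part will be the flip compatibility, because the flip is not described by an explicit formula in Dunkl-operator terms; matching the two spherical Dunkl forms therefore requires an intrinsic characterization of each, e.g., via residue pairings on $C_{m/n}$ or cohomological pairings on $J_{m/n}$. A secondary difficulty is that $F^{\ind}$ is a filtration on the full module $\Lrm_{m/n}$, while the flip only directly controls the spherical part; one must propagate the spherical equality back to the full module, using that $\Lrm_{m/n}$ is generated from spherical weight vectors by the $\Hrm_{m/n}$-action together with the shift functor, and verify that this propagation respects $F^{\alg}$. Once both compatibilities are proven, the Euclidean induction closes and yields $F^{\alg} = F^{\ind}$ on $\Lrm_{m/n}$ for all coprime pairs with $m > n$.
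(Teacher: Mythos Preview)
Your inductive scaffolding is correct and matches the paper's: one reduces to showing that $F^{\alg}$ is compatible with the flip and with the shift/tensor-product step. But you have misidentified where the difficulty lies, and the step you call ``the hard part'' is in fact the easy one.

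The flip compatibility is straightforward: the isomorphism $\e\Lrm_{m/n}\cong\e\Lrm_{n/m}$ of \cite{cee} is given explicitly by $p_i(x_1,\dots,x_n)\mapsto p_i(x_1,\dots,x_m)$, which visibly respects the $\liea$-adic filtration; no residue model or intrinsic pairing is needed. Likewise, the compatibility of $F^{\alg}$ with the identification $\e_-\Lrm_c\cong\delta\,\e\Lrm_{c-1}$ follows from Dunkl's formula $(\delta\psi,\delta\phi)_c=(\delta,\delta)_c(\psi,\phi)_{c-1}$.

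The genuine gap is in your ``shift compatibility.'' The inductive filtration on $\Lrm_c$ is the \emph{tensor product} filtration on $\Hrm_c\e_-\otimes_{\e\Hrm_{c-1}\e}\e\Lrm_{c-1}$, where $\Hrm_c$ carries the order filtration. Showing that $\mathrm{Sh}$ carries $\liea^j$ into $\liea^j$ and that the Dunkl forms differ by a scalar gives only the easy containment $F^{\ind}\subset F^{\alg}$. The reverse containment asks: given $\phi\in(\liea^{\ell+1})^{\perp_c}$, can one write $\ol\Phi_c(\phi)=\sum\xi_k\otimes\eta_k$ with $\xi_k\in F^{\ord}_\alpha\Hrm_c\e_-$, $\eta_k\in F^{\ind}_\beta\e\Lrm_{c-1}$, $\alpha+\beta\le\ell$? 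Nothing in your sketch addresses this, and it is precisely here that the paper does all its work. The paper shows that the obstruction lies in the subspace $\Scal_\ell$ of lifts $\ul{h\psi}$ with $\psi\in I_{c-1}^W$, and reduces the question (Proposition~3.21) to proving that $(y_k-y_{k+1})\phi\in(\liea^\ell)^{\perp_c}$ for such $\phi$. This in turn requires an explicit analysis of the generators of $I_c$ via the residue description of \cite{chmutovaetingof,gorskyarc}, leading to a linear-algebra problem in copies of the coinvariant algebra: one must show a certain sequence $\Rcal^n\xrightarrow{A}\Rcal^{2n-m}\xrightarrow{B}\Rcal^n$ is exact in the middle, which is established by a dimension count involving a distributive-lattice argument. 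None of this machinery is hinted at in your proposal.
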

\begin{thm}\label{main}(Theorem \ref{thm})
Conjecture \ref{GORSconj} holds.
\end{thm}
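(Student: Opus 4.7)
My strategy would be to induct on the coprime pair $(m,n)$ with $m>n$ along the Euclidean algorithm. By construction, $F^{\ind}$ on $\Lrm_{\frac{m}{n}}$ is built up by reducing $\frac{m}{n}$ to a base value through alternating applications of the shift $\frac{m}{n}\mapsto \frac{m-n}{n}$ (the inverse shift functor) and the flip $\frac{m}{n}\mapsto \frac{n}{m}$, pulling back a fixed reference filtration along this chain. The natural plan, then, is to show that the algebraic filtration $F^{\alg}$ is invariant under these same two operations and to verify agreement in a base case.

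The first task is to give an explicit description of $F^{\alg}$ on each Euler weight space $\Lrm_{\frac{m}{n}}(i)$ as the Dunkl-orthogonal complement of a specific power $\liea^{k(i,j)}\cdot\Lrm_{\frac{m}{n}}$, with the exponent determined by the reshuffling recipe of \cite{gors}, and to establish its $S_n$-equivariance and multiplicative compatibility with $\liea$. Compatibility of $F^{\alg}$ with the shift functor should follow from the geometric realizations (\ref{gs})--(\ref{gs2}) of \cite{gs1}: the shift is tensoring with the tautological line bundle $\Ocal(1)$ on $\Hilb_0^n(\C^2)$, the ideal $\liea$ corresponds to the vanishing ideal of the punctual fiber, and the Dunkl pairing corresponds to a residue pairing, so multiplication by a section of $\Ocal(1)$ preserves the orthogonal-complement structure that defines the filtration.

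The main obstacle will be the compatibility of $F^{\alg}$ with the flipping isomorphism $\Lrm_{\frac{m}{n}}^{S_n}\cong \Lrm_{\frac{n}{m}}^{S_m}$. Although this isomorphism is intrinsic at the level of the $\e\Hrm_{\frac{m}{n}}\e$-action, the algebraic filtration is a priori tied to the polynomial representation of $\Hrm_{\frac{m}{n}}$ on $\C[\lieh_n]$ and to the symmetric ideal $\liea_n\subset \C[\lieh_n]^{S_n}$, which has no manifest match to $\liea_m\subset \C[\lieh_m]^{S_m}$. To bridge this, I would pass to the arc-space model $\C[\Mcal_{\frac{m}{n}}]\cong(\Lrm_{\frac{m}{n}})^{S_n}$ from \cite{fgs}: the curve symmetry $y^m=x^n$ intertwines $\Mcal_{\frac{m}{n}}$ with $\Mcal_{\frac{n}{m}}$, and under this identification both $\liea_n$ and $\liea_m$ should collapse to the \emph{same} maximal ideal $\liem\subset \C[\Mcal_{\frac{m}{n}}]$, whose powers then define $F^{\alg}$ uniformly on the invariants. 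I would extend this match from the $S_n$-invariants to the full representation via the Procesi-bundle description (\ref{gs2}), using that the $\liea$-adic filtration on $\Lrm_{\frac{m}{n}}$ is determined by its restriction to $(\Lrm_{\frac{m}{n}})^{S_n}$ together with the $S_n$-module structure.

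Finally, I would verify agreement of the two filtrations at the base of the Euclidean descent (the smallest $\frac{m}{n}$ reached by the algorithm), where $\Lrm_{\frac{m}{n}}$ is small enough that both filtrations can be written out by hand. With the shift and flip compatibilities established and the base case verified, the inductive step propagates the equality $F^{\alg}=F^{\ind}$ to all coprime $m>n$. Step three — transporting the Dunkl pairing across the flipping isomorphism via the arc space — is where I expect the bulk of the work to lie.
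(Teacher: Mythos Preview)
Your plan has a structural gap rooted in a misreading of how $F^{\ind}$ is defined. The inductive filtration is not obtained by pulling back a reference filtration through isomorphisms: under (\ref{3}) one has $\Lrm_c \cong \Hrm_c\e_-\otimes_{\e\Hrm_{c-1}\e}\e\Lrm_{c-1}$, and $F^{\ind}_i\Lrm_c$ is the image of $\sum_{\alpha+\beta\le i}F^{\ord}_\alpha(\Hrm_c\e_-)\otimes F^{\ind}_\beta(\e\Lrm_{c-1})$, where $F^{\ord}$ is the order filtration with $\deg y=1$, $\deg x=\deg w=0$. Compatibility of $F^{\alg}$ (equivalently $F^\liea$) with the flip (\ref{1}) and with $\e_-\Lrm_c\cong\delta\,\e\Lrm_{c-1}$ is indeed easy---the paper does it in a few lines around Lemma~\ref{delta1} using the Dunkl identity $(\delta\psi,\delta\phi)_c=(\delta,\delta)_c(\psi,\phi)_{c-1}$; no Hilbert-scheme input is needed, and in any case (\ref{gs})--(\ref{gs2}) only cover $c=k+\tfrac{1}{n}$. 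But these compatibilities only control how $F^{\alg}$ restricts to $\e_-\Lrm_c$; they say nothing about what happens when one acts by elements of $\Hrm_c\e_-$ of positive order. At best your argument recovers the containment $F^{\ind}\subset F^{\liea}$ (Lemma~\ref{containment}), which was already known from \cite{gors}.

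The genuine obstruction to the reverse containment is this: to express $\ol\Phi_c(\phi)\in F^\liea_j$ as an element of $F^{\ind}_j$ via the Euler-field identity (\ref{euler}), one needs $(y_k-y_{k+1})\phi\in(\liea^j)^{\perp_c}$ for every $k$, and this can fail for $\phi$ in the subspace $\mathcal{S}_j\subset(\liea^{j+1})^{\perp_c}_j$ built from $I_{c-1}^W$ (see (\ref{S})). The paper's Proposition~\ref{yphi} shows that whenever this fails one must have $\phi\in I_c$, and verifying that implication is the technical core of the argument: it reduces (Lemma~\ref{ABequivalent}) to the exactness of a sequence $\Rcal^n\xrightarrow{A}\Rcal^{2n-m}\xrightarrow{B}\Rcal^n$ of free modules over the coinvariant algebra, established by a dimension count involving distributive-lattice combinatorics (Corollary~\ref{equal}) and then extended to higher filtration levels in Proposition~\ref{k<l}. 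Your proposed ingredients---the arc-space model, the Procesi bundle, the curve symmetry---do not touch this computation; the flip compatibility they would establish is a hypothesis of the induction, not its content.
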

Because of its definition (see Section \ref{inductive})  one can only define $F^\ind$ on $\Lrm_\frac{m}{n}$  when $m>n$. But $F^\ind$ is defined on $(\Lrm_\frac{m}{n})^{S_n}$ for all positive integer $m$ coprime to $n$, in which cases it follows from Theorem \ref{main} that $F^\ind$ coincides with $F^\alg$.

As suggested by the discussion above (and explained in \cite[Proposition 6.2.3]{oyjacobian}), we obtain the following as a corollary.
\begin{thm}
	When $m=nk+1$ for $k\geq 0$, Conjecture \ref{OYconj} is true.
\end{thm}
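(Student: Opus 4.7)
The plan is to combine Theorem \ref{main} with the reduction already articulated in \cite[Proposition 6.2.3]{oyjacobian}, which translates Conjecture \ref{OYconj} into an equality of two filtrations on the common representation-theoretic model $(\Lrm_{\frac{m}{n}})^{S_n}$. Thus no new geometric input is really needed; the work is entirely on the algebraic side, and has already been done in Theorem \ref{main}.

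More concretely, I would first recall the two filtered identifications with $(\Lrm_{\frac{m}{n}})^{S_n}$. On the cohomology side, the results of \cite{oy, garnerkivinen} give $\gr^P \Hrm_{\epsilon=1}^*(J_{\frac{m}{n}}) \cong (\Lrm_{\frac{m}{n}})^{S_n}$ as $\e\Hrm_{\frac{m}{n}}\e$-modules, and in the special case $m = nk+1$ the shift-functor description (\ref{gs}) together with the main result of \cite{gm} matches the perverse filtration on $\Hrm^*(J_{\frac{m}{n}})$ with the inductive filtration $F^\ind$ on $(\Lrm_{k+\frac{1}{n}})^{S_n}$ coming from the Hilbert scheme of points. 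On the arc-space side, the ring isomorphism $\C[\Mcal_{\frac{m}{n}}] \cong (\Lrm_{\frac{m}{n}})^{S_n}$ of \cite[Section G]{fgs} carries the maximal ideal $\liem = \liea^{S_n}$, and by the very definition of $F^\alg$ via orthogonal complements of powers of $\liea$ under the Dunkl form, the $\liem$-adic filtration on $\C[\Mcal_{\frac{m}{n}}]$ corresponds to $F^\alg$ on $(\Lrm_{\frac{m}{n}})^{S_n}$.

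Once these two identifications are in place, the case $m = nk+1$ with $k \geq 1$ of Theorem \ref{main} gives $F^\ind = F^\alg$ on $\Lrm_{\frac{m}{n}}$, hence on $(\Lrm_{\frac{m}{n}})^{S_n}$. Comparing the resulting bigradings on both sides of Conjecture \ref{OYconj} yields the claimed identity $\gr_j^P\Hrm^{2i}(J_{\frac{m}{n}}) = \gr_{\liem}^{j-i}\C[\Mcal_{\frac{m}{n}}](j)$; the shift by $i$ encodes the discrepancy between the cohomological grading and the Euler grading, already visible numerically in the substitution $q \mapsto qt^2$ relating (\ref{hilbcatalan}) and (\ref{jacobiancatalan}). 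The case $k = 0$, corresponding to the unknot, is trivial since $\Lrm_{\frac{1}{n}}$ is one-dimensional.

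The main obstacle, were one to redo the argument from scratch, would be the careful bookkeeping of three different gradings across three different models: the Euler weight grading on $\Lrm_{\frac{m}{n}}$, the cohomological grading on $\Hrm^*(J_{\frac{m}{n}})$, and the $\liem$-adic grading on $\C[\Mcal_{\frac{m}{n}}]$. However, this degree-matching is precisely the content of \cite[Proposition 6.2.3]{oyjacobian}, so our contribution is to supply the missing algebraic input, namely the coincidence of $F^\ind$ and $F^\alg$ furnished by Theorem \ref{main}.
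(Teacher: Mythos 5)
Your proposal is correct and follows essentially the same route as the paper, which likewise obtains the theorem as an immediate corollary of Theorem \ref{main} combined with the reduction in \cite[Proposition 6.2.3]{oyjacobian} and the filtered identifications of $(\Lrm_{\frac{m}{n}})^{S_n}$ with $\gr^P\Hrm^*_{\epsilon=1}(J_{\frac{m}{n}})$ and $\C[\Mcal_{\frac{m}{n}}]$. The paper gives no additional argument beyond this citation, so your bookkeeping of the gradings is, if anything, slightly more explicit than the text itself.
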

The method we use to prove Theorem \ref{main} is induction on the powers of $\liea$. It is proved in \cite{dunklopdam} that $\Lrm_{\frac{m}{n}}$ can be defined as the maximal quotient of $\C[\lieh]$ such that the Dunkl form is nondegenerate. The main theme of this paper is to show that the obstruction of doing induction exactly lies in the kernel of the Dunkl form $I_{\frac{m}{n}}$. We prove this statement by inspecting $I_{\frac{m}{n}}$ using the residue description provided in \cite{dunklintertwining, chmutovaetingof, gorskyarc}. By decomposing elements in $\Lrm_c$ into sums of products of symmetric polynomials and ``$\frac{m}{n}$-harmonic polynomials" (Section \ref{harmonic}), we transform the question into a linear algebra problem on dimension counts of subspaces in the direct sums of copies of coinvariant algebras (Section \ref{kernel}, \ref{transform}). The solution to this linear algebra problem is derived from the appearance of certain distributive lattices (Section \ref{linearalgebra}). In the case of $m=2n-1$, the cohomology of the Springer fiber at a minimal nilpotent element comes into the picture (Section \ref{minimalspringer}).  The proof is highly combinatorial and it would be interesting to geometrize it.\\
\vspace{0pt}\\
\textbf{Acknowledgement}: The author would like to thank Victor Ginzburg, Eugene Gorsky, Thomas Hameister, Yixuan Li, Linus Setiabrata and Minh-Tam Trinh for interesting discussions and helpful feedback on the draft.
\section{Representations of the rational Cherednik algebra}
\subsection{Definitions}
 \begin{defn}
     
 We define the rational Cherednik algebra $\ol{\Hrm}_c$ associated to the Cartan $\ol{\mathfrak{h}}\subset\mathfrak{gl}_n$, Weyl group $W=S_n$ and parameter $c\in \C$ to be the $\C$-algebra generated by $ \ol{\lieh}$, $\ol{\lieh}^*$ and $W$ with relations
	\begin{align*}
		&[x,x']=[y,y']=0, \quad 
		wxw^{-1}=w(x), \quad wyw^{-1}=w(y)\\
		&[y,x]=x(y)-\sum_{s\in S}c\langle \alpha_s,x\rangle\langle \alpha_s^\vee, y\rangle s
	\end{align*}
	where $x,x'\in \ol{\lieh}$, $y,y'\in \ol{\lieh}^*$, $w\in W$, $S\subset W$ is the set of simple reflections and $\alpha_s$, resp. $\alpha_s^\vee$, is the root, resp. coroot, associated to $s$.
 \end{defn}
 Let $\lieh\subset \ol{\lieh}$ be a Cartan of $ \mathfrak{sl}_n$. The rational Cherednik algebra $\mathrm{H}_c$ associated to $\lieh$ with parameter $c$ can be defined similarly.
 
Let $\ol{\lieh}_{\text{reg}}=\ol{\lieh}\setminus \cup_{s\in S}\{\alpha_s=0\}$ be the regular part of $\ol{\lieh}$ and $\Dcal(\ol{\lieh}_{\text{reg}})$ be the ring of differential operators on $\ol{\lieh}_{reg}$. We take $x_1,\cdots,x_n$ to be the standard basis of $\ol{\lieh}^*$. Define the Dunkl operators \[y^{(c)}_i:=\frac{\partial}{\partial {x_i}}-c\sum_{s\in S}\frac{\langle \alpha_s,x_i\rangle}{\alpha_s}(1-s).\]
(Below we may drop the superscript $^{(c)}$ when there is no ambiguity.)  $\ol{\Hrm}_c$ can be realized as a subalgebra of $\Dcal(\ol{\lieh_{\text{reg}}})\ltimes W$ generated by $x_i$, $y_i$ for $1\leq i\leq n$ and $W$.

Similarly, $\Hrm_c$ is generated by $x_1,\cdots, x_n$ mod $(x_1+\cdots+ x_n)$,  $y_1-y_2,\cdots,y_{n-1}-y_n$ and $W$.

Take the polynomial representation of $\Hrm_c$ given by $\C[\lieh]\cong \Hrm_c \otimes_{S(\lieh)\ltimes W}\C$. When $c=\frac{m}{n}>0$, with $m,n$ coprime, this representation has a finite-dimensional irreducible quotient, usually denoted by $\Lrm_c$.
\begin{thm}\label{beg}
(\cite[Theorem 1.2]{begg})	When $c=\frac{m}{n}$ for positive integer $m$ coprime to $n$, the only irreducible finite-dimensional representation of $\Hrm_c$ is $ \Lrm_c$. Moreover, only when $c=\frac{m}{n}$ for integer $m$ coprime to $n$ does $\Hrm_c$ have finite-dimensional representations.
	\end{thm}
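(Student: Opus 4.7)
The plan is to combine the analysis of the polynomial representation $M = \C[\lieh]$ under the Dunkl contravariant form $B_c(p,q) = \big(p(y^{(c)})q\big)(0)$ with the classification of irreducibles in category $\mathcal{O}$ for $\Hrm_c$. The kernel $J_c$ of $B_c$ is automatically an $\Hrm_c$-submodule of $M$ (by contravariance of $B_c$), so $\Lrm_c := M/J_c$ is a candidate irreducible quotient. The theorem then splits into two tasks: (i) when $c = m/n$ with $\gcd(m,n)=1$ and $m > 0$, show that $\Lrm_c$ is finite-dimensional; and (ii) show that every irreducible finite-dimensional $\Hrm_c$-module is isomorphic to $\Lrm_c$, and that such modules exist only at $c = m/n$ for an integer $m$ coprime to $n$.

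For task (i), I would exhibit explicit symmetric elements in $J_c$. Using Heckman--Opdam shift theory, or a direct Dunkl-operator calculation on Jack polynomials, one shows that for $c = m/n$ in lowest terms with $m>0$ there is a regular sequence of symmetric polynomials $f_2,\ldots, f_n \in \C[\lieh]^{S_n}$ (the Jack polynomials tuned to parameter $c$) which lie in $J_c$. A Molien-type Hilbert series computation then shows that $\C[\lieh]/(f_2,\ldots,f_n)$ already has dimension $m^{n-1}$; since this quotient surjects onto $\Lrm_c$ and $\Lrm_c$ carries a nondegenerate form of equal dimension, the two coincide. This produces $\Lrm_c$ as a finite-dimensional irreducible with the expected graded character.

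For task (ii), any finite-dimensional $\Hrm_c$-module $N$ lies in category $\mathcal{O}$: the Euler element $\mathbf{h} = \tfrac{1}{2}\sum_i(x_iy_i + y_ix_i)$ has finite spectrum on $N$, so $N$ is locally nilpotent under the $y$-action. Every irreducible in $\mathcal{O}$ is $L_c(\lambda)$ for some partition $\lambda\vdash n$, with $\Lrm_c = L_c((n))$. I would then invoke two inputs: the Knizhnik--Zamolodchikov functor to the finite Hecke algebra $H_q(S_n)$ at $q = e^{2\pi i c}$, together with the support theory of Bezrukavnikov--Etingof for category $\mathcal{O}$. These jointly characterise finite-dimensional $L_c(\lambda)$ as exactly those for which $\lambda$ is $c$-aspherical; at $c = m/n$ in lowest terms the only such $\lambda$ is $(n)$, giving $\Lrm_c$, and for $c$ not of the stated form no aspherical $\lambda$ exists at all.

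The main obstacle is this last aspherical classification: at a rational $c = p/q$ with $q < n$, or at a nontrivial partition $\lambda\neq (n)$, one must rule out finite-dimensional $L_c(\lambda)$. This needs the precise combinatorial description of aspherical representations and an induction on parabolic subalgebras $\Hrm_c(W')$ for proper parabolics $W'\subsetneq S_n$, using the fact that $L_c(\lambda)$ acquires positive-dimensional support as soon as $\lambda$ restricts nontrivially to a parabolic Weyl group sitting over a singular stratum of $\lieh/W$. This is precisely where a direct argument gives way to the deeper analysis carried out in \cite{begg}.
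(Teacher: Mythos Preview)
The paper does not prove this statement at all; it is quoted verbatim from \cite{begg} with no argument supplied. So there is no ``paper's own proof'' to compare against, only the original one in \cite{begg}.

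Relative to that source, your part (i) is in the right spirit: one exhibits explicit singular elements in the radical $J_c$ and checks the quotient is finite-dimensional. The generators actually used are closer to the $f_i$ and $v_j^{(c)}$ of Theorem~\ref{arc} than to Jack polynomials, and your sentence ``$\Lrm_c$ carries a nondegenerate form of equal dimension, the two coincide'' is circular (nondegeneracy of the form on the quotient does not by itself pin down the dimension), but the upper bound $\dim\Lrm_c\le m^{n-1}$ is all you need for finite-dimensionality.

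Your part (ii) has a genuine slip and is also far heavier than what \cite{begg} does. First, ``aspherical'' means $\e L_c(\lambda)=0$, which is \emph{not} the same as $L_c(\lambda)$ being finite-dimensional; and $KZ(L_c(\lambda))=0$ only says the support lies in the non-regular locus, not that it is the origin. So the KZ step alone does not finish, which is presumably why you reach for Bezrukavnikov--Etingof support theory --- but that work postdates \cite{begg} by several years and is not needed. The actual argument in \cite{begg} is direct: any finite-dimensional irreducible lies in category $\mathcal{O}$, hence is some $L_c(\lambda)$; the $\mathfrak{sl}_2$-triple $(\ebf,\fbf,\hbf)$ forces the $\hbf$-character to be palindromic, while the lowest $\hbf$-weight on $L_c(\lambda)$ is an explicit affine function of $c$ determined by $\lambda$ (cf.\ Lemma~\ref{h} for $\lambda=\mathrm{triv}$). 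Combining this with the BGG resolution and the resulting character formula yields integrality and symmetry constraints that force $c\in\tfrac{1}{n}\Z$ with $\gcd(nc,n)=1$ and $\lambda=\mathrm{triv}$.
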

 
On $\ol{\Hrm}_c$ we have the Fourier transform defined by
\begin{align}
    \label{fourier}
\Phi_c(x_i)=y_i, \quad \Phi_c(y_i)=-x_i, \quad \Phi_c(w)=w\end{align}
which defines the Dunkl bilinear form 
\[(-,-)_c: \C[\lieh]\times\C[\lieh]\to\C,\quad (f,g)_c=[\Phi_c(f)g]|_{x_i=0}.\]
From the definition, we see that for any $\phi,\psi\in \C[\lieh]$,
\begin{align}\label{xy}
    ((x_i-x_j)\phi,\psi)_c=(\phi,(y_i-y_j)\psi)_c.
    \end{align}
    On the other hand, since $w x_i w^{-1}=x_{w(i)}$ and $w y_i w^{-1}=y_{w(i)}$for any $w\in S_n$, we also have
    \begin{align}
        \label{invariant}
        (w(\phi),w(\psi))_c=(\phi,\psi)_c.
    \end{align}
     For generic $c$ including $0$, $(-,-)_c$ is non-degenerate. However, when $c=\frac{m}{n}>0$, with $m,n$ coprime, $(-,-)_c$ has a nonzero kernel $I_c$ and the resulting quotient $\C[\lieh]/I_c$ is exactly isomorphic to $\Lrm_c$ (\cite[Proposition 2.34]{dunklopdam}). 
    \begin{ex}
    	When $c=\frac{k}{2}$, let $s=(12)$. Then  $y_1-y_2=\frac{\partial}{\partial_{x_i}}-2c\frac{1-s}{x_1-x_2}$ and  $(y_1-y_2)\big((x_1-x_2)^\ell\big)=2\ell c (x_1-x_2)^{\ell-1}$. As a result $I_{\frac{k}{2}}=((x_1-x_2)^k)$ and $\dim\Lrm_{\frac{k}{2}}=k$.
    \end{ex}
    Under the $W$-action, $\Lrm_c$ decomposes into isotypic components
\[\displaystyle \Lrm_c=\bigoplus_{\sigma\in \text{Irrep}{W}} \e_\sigma \Lrm_c,\]
where $\e_\sigma \Lrm_c=\sigma\otimes \Hom_W(\sigma,\Lrm_c)$. In particular, for $\e=\frac{1}{n!}\sum_{w\in W}w$ and $\e_-=\frac{1}{n!}\sum_{w\in W}(-1)^{\sign(w)}w$, $\e\Lrm_c=\e_{\text{triv}}\Lrm_c$ while $\e_-\Lrm_c=\e_{\sign}\Lrm_c$. For any two subspaces $U$, $V$ of $\C[\lieh]$ or $\Lrm_c$, we write $U\perp_c V$ when $U$ and $V$ are orthogonal with respect to $(-,-)_c$. 
\begin{lem}
\label{isotypic}$\e_\sigma\Lrm_c\perp_c \e_{\sigma'}\Lrm_c$ if $\sigma\neq \sigma'$.
\end{lem}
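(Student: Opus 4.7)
The plan is to reduce the lemma to a purely representation-theoretic fact about $W$-invariant bilinear forms and the orthogonality of the primitive central idempotents $\e_\sigma \in \C[W]$. The only input from the Cherednik side that will be needed is the $W$-invariance of the Dunkl form, namely equation (\ref{invariant}), $(w\phi,w\psi)_c=(\phi,\psi)_c$ for all $w\in W$.

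First, I would translate $W$-invariance into a statement about adjoints: rewriting $(w\phi,w\psi)_c=(\phi,\psi)_c$ with $\psi$ replaced by $w^{-1}\psi$ gives $(w\phi,\psi)_c=(\phi,w^{-1}\psi)_c$, so that the adjoint of $w$ with respect to $(-,-)_c$ is $w^{-1}$. Extending this to the group algebra $\C[W]$, an element $a=\sum_{w}c_w w$ has adjoint $a^*=\sum_{w}c_w w^{-1}$. Applying this to the standard formula for the primitive central idempotent
\[
\e_\sigma=\frac{\dim\sigma}{|W|}\sum_{w\in W}\chi_\sigma(w^{-1})\,w,
\]
one obtains $\e_\sigma^*=\frac{\dim\sigma}{|W|}\sum_{w}\chi_\sigma(w)\,w$. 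Since $W=S_n$ has real characters (indeed $\chi_\sigma(w)=\chi_\sigma(w^{-1})$ for all $w\in S_n$), this simplifies back to $\e_\sigma^*=\e_\sigma$; each $\e_\sigma$ is self-adjoint with respect to $(-,-)_c$.

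With this in hand, for any $\phi,\psi\in\Lrm_c$ and any two distinct irreducibles $\sigma,\sigma'$, I would compute
\[
(\e_\sigma\phi,\e_{\sigma'}\psi)_c=(\e_{\sigma'}^*\e_\sigma\phi,\psi)_c=(\e_{\sigma'}\e_\sigma\phi,\psi)_c=0,
\]
using the orthogonality $\e_{\sigma'}\e_\sigma=\delta_{\sigma,\sigma'}\e_\sigma$ of primitive central idempotents in $\C[W]$. This yields $\e_\sigma\Lrm_c\perp_c \e_{\sigma'}\Lrm_c$ as claimed.

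There is no real obstacle here; the only subtle point is making sure one moves the idempotent from the correct side of the form using the adjoint rule above, and invoking reality of $S_n$-characters (or, equivalently, self-duality of every irreducible $S_n$-representation) so that $\e_\sigma^*=\e_\sigma$ rather than $\e_{\sigma^*}$. Alternatively one can avoid the explicit idempotent formula altogether and argue conceptually: the restriction of $(-,-)_c$ to $\e_\sigma\Lrm_c\times\e_{\sigma'}\Lrm_c$ is a $W$-invariant bilinear pairing between $\sigma$- and $\sigma'$-isotypic components, so by Schur's lemma (together with $\sigma\cong\sigma^*$ for $S_n$) it must vanish when $\sigma\neq\sigma'$. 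Either route gives the result in a few lines.
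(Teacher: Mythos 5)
Your proof is correct and is essentially the paper's argument: the paper proves this lemma in one line by citing the $W$-invariance property (\ref{invariant}), and your write-up simply fills in the standard details (self-adjointness of the idempotents $\e_\sigma$ via reality/self-duality of $S_n$-characters, plus orthogonality of primitive central idempotents, or equivalently the Schur's lemma argument). No gap; both routes you sketch are fine.
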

\begin{proof}
It is a consequence of the property (\ref{invariant}) which says that $(-,-)_c$ is $W$-invariant.
\end{proof}
\subsection{c-Harmonic polynomials}\label{harmonic}
From now on we will assume $c$ satisfies the assumption in Theorem \ref{beg}. Inside $\C[\lieh]$, take the ideal $\liea:=(\C[\lieh]^W_+)$. Let $\delta:=\prod_{i<j}(x_i-x_j)$ denote the Vandermonde determinant.
Define the space of $c$-Harmonic polynomials to be $\Hcal_c:=  \C[y_1-y_2,\dots, y_{n-1}-y_n]\delta$.
\begin{lem}\label{higherpower}
    For non-negative $b_2,\cdots, b_n$, if $\sum_{i=2}^n b_i=\frac{n(n-1)}{2}$ and $\{b_2,\cdots, b_n\}\neq  \{1,2,\cdots, n-1\}$, then $y_2^{b_2}y_3^{b_3}\cdots y_n^{b_n}\delta=0$ 
\end{lem}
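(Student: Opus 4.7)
The plan is to convert $y_2^{b_2}\cdots y_n^{b_n}\delta$ into a value of the Dunkl bilinear form and then exploit the sign--isotypy of $\delta$ to reduce the problem to an antisymmetrization that visibly vanishes by an elementary minimality count on exponents.

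First, I would observe that each Dunkl operator lowers polynomial degree by one while $\deg\delta=\binom{n}{2}=\sum_{i=2}^{n}b_{i}$, so $y_2^{b_2}\cdots y_n^{b_n}\delta\in\C[\lieh]$ is a constant polynomial, hence equal to its value at $x=0$. The Fourier transform (\ref{fourier}) is an algebra automorphism of $\ol{\Hrm}_c$ sending each $x_i$ to $y_i$, so $\Phi_c(x_2^{b_2}\cdots x_n^{b_n})=y_2^{b_2}\cdots y_n^{b_n}$, and by the very definition of $(-,-)_c$ we get
$$y_2^{b_2}\cdots y_n^{b_n}\delta \;=\; \bigl(x_2^{b_2}\cdots x_n^{b_n},\,\delta\bigr)_c.$$

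Next, because $\delta\in\e_-\C[\lieh]$ is sign--isotypic under $S_n$, I would apply the $W$--invariance of the Dunkl form (property (\ref{invariant}), which is all Lemma \ref{isotypic} needs, and whose proof goes through verbatim on $\C[\lieh]$) to replace the first argument by its antisymmetrization:
$$\bigl(x_2^{b_2}\cdots x_n^{b_n},\,\delta\bigr)_c \;=\; \bigl(\e_-(x_2^{b_2}\cdots x_n^{b_n}),\,\delta\bigr)_c.$$
But $\e_-(x_1^{0}x_2^{b_2}\cdots x_n^{b_n})$ is the antisymmetrization of a monomial with exponent tuple $(0,b_2,\ldots,b_n)$, and this vanishes as soon as two of those exponents coincide (any transposition swapping the equal entries fixes the monomial but is sign--reversed by $\e_-$).

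Finally, I would finish with a combinatorial check: any $n$ distinct non--negative integers sum to at least $0+1+\cdots+(n-1)=\binom{n}{2}$, with equality iff they form the set $\{0,1,\ldots,n-1\}$. Consequently, if $(0,b_2,\ldots,b_n)$ were pairwise distinct then it would have to equal $\{0,1,\ldots,n-1\}$, i.e.\ $\{b_2,\ldots,b_n\}=\{1,\ldots,n-1\}$, contradicting the hypothesis. Therefore some two entries of $(0,b_2,\ldots,b_n)$ repeat, the antisymmetrization vanishes, and the claim follows. There is no serious obstacle: the only mild point of care is applying Lemma \ref{isotypic} at the level of $\C[\lieh]$ rather than $\Lrm_c$, which is legitimate because the argument uses only $W$--invariance of $(-,-)_c$.
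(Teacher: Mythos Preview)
Your proof is correct and follows essentially the same approach as the paper: both arguments reduce the quantity to (a multiple of) the antisymmetrization $\e_-(x_1^{0}x_2^{b_2}\cdots x_n^{b_n})$ and then kill it using the exponent tuple $(0,b_2,\ldots,b_n)$. The only cosmetic difference is in the endgame: the paper observes that any nonzero antisymmetrization of this degree must be a scalar multiple of $\delta$ and rules that out by comparing monomial exponents with (\ref{deltaexpand}), whereas you go straight to ``two equal exponents $\Rightarrow$ antisymmetrization vanishes'' via the minimality bound $0+1+\cdots+(n-1)=\binom{n}{2}$, which is a touch cleaner.
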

\begin{proof}
For any $\sigma\in W$,
\[y_2^{b_2}y_3^{b_3}\cdots y_n^{b_n}\delta=\sigma(y_2^{b_2}y_3^{b_3}\cdots y_n^{b_n}\delta)=y_{\sigma(2)}^{b_2}y_{\sigma(3)}^{b_3}\cdots y_{\sigma(n)}^{b_n} ((-1)^{\sign(\sigma)}\delta)\]
because for degree reasons $y_2^{b_2}y_3^{b_3}\cdots y_n^{b_n}\delta$ is a constant. 
From this we deduce
\begin{align}
    \label{sigma}
\sum_{\sigma\in W} (-1)^{\sign(\sigma)}\sigma(y_2^{b_2}y_3^{b_3}\cdots y_n^{b_n})\delta=n! y_2^{b_2}y_3^{b_3}\cdots y_n^{b_n}\delta.
\end{align}

On the other hand, the polynomial $\sum_\sigma (-1)^{\sign(\sigma)}\sigma(x_2^{b_2}x_3^{b_3}\cdots x_n^{b_n})$ is skew-symmetric of degree the same as $\delta$ and hence has to equal to $\lambda \delta$ for some constant $\lambda$ because of \begin{align}
    \label{skew}
\C[\lieh]^{\sign}=\delta\C[\lieh].
\end{align} However, $\delta$ explands into
\begin{align}
    \label{deltaexpand}
\sum_{\sigma\in W} (-1)^{\sign(\sigma)}x_{\sigma(2)}x_{\sigma(3)}^2\cdots x_{\sigma(n)}^{n-1}\end{align}
since $\{b_2,\cdots, b_n\}\neq  \{1,2,\cdots, n-1\}$, by comparing the degrees of terms in $\delta$ and $\sum_\sigma (-1)^{\sign(\sigma)}\sigma(x_2^{b_2}x_3^{b_3}\cdots x_n^{b_n})$, we conclude that $\lambda=0$ and the lemma follows.
\end{proof}
Similar to the classical result $\C[\lieh]=\liea\oplus^{\perp_0}\Hcal_0$ (\cite[6.3]{chrissginzburg}), there is the following analogue:
\begin{prop}\label{perp}
When $c=\frac{m}{n}>1$ with $(m,n)=1$, we have that $\C[\lieh]=\liea\oplus \Hcal_c$ with $\liea{\perp_c}\Hcal_c$.
\end{prop}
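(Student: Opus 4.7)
My plan is to first establish the orthogonality $\liea\perp_c\Hcal_c$, and then use it together with a dimension count to obtain the direct-sum decomposition $\C[\lieh]=\liea\oplus\Hcal_c$.

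For the orthogonality, multiplicativity of the Fourier transform $\Phi_c$ in~(\ref{fourier}) and commutativity of the Dunkl operators together give the adjoint identity $(pf,h)_c=(f,\Phi_c(p)\,h)_c$ for $p\in\C[\lieh]^W_+$, $f\in\C[\lieh]$, $h\in\Hcal_c$. Writing $h=q(y_1-y_2,\dots,y_{n-1}-y_n)\,\delta$ and moving the commuting element $\Phi_c(p)=p(y_1,\dots,y_n)$ past $q$, one obtains $\Phi_c(p)\,h=q(y_1-y_2,\dots)\,p(y)\,\delta$. It then suffices to show $p(y)\,\delta=0$ for every nonconstant $W$-symmetric $p$: since $p(y)$ preserves $W$-isotypic components, $p(y)\delta$ lies in $\C[\lieh]^{\sign}$; but it has polynomial degree $\binom{n}{2}-\deg p<\binom{n}{2}=\deg\delta$, which by (\ref{skew}) forces $p(y)\delta=0$.

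For the decomposition, consider the surjection $\pi:\C[\lieh^*]\twoheadrightarrow\Hcal_c$, $q\mapsto q(y)\delta$. The orthogonality computation above shows $(\C[\lieh^*]^W_+)\subseteq\ker\pi$, so $\pi$ factors through the coinvariant algebra $\C[\lieh^*]/(\C[\lieh^*]^W_+)$ of dimension $n!$, giving $\dim_\C\Hcal_c\leq n!$. Since the classical decomposition $\C[\lieh]=\liea\oplus\Hcal_0$ from \cite[6.3]{chrissginzburg} also gives $\dim_\C(\C[\lieh]/\liea)=n!$, it suffices to show that the composite $\Hcal_c\hookrightarrow\C[\lieh]\twoheadrightarrow\C[\lieh]/\liea$ is surjective; a dimension count then forces it to be an isomorphism, establishing both $\Hcal_c+\liea=\C[\lieh]$ and $\Hcal_c\cap\liea=0$ simultaneously.

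To prove surjectivity, I would induct on $|\alpha|$ that each classical harmonic $\partial^\alpha\delta\in\Hcal_0$ lies in $\Hcal_c+\liea$. The base case rests on the scalar identity $y_i\,\delta=(1-2c)\partial_i\,\delta$, immediate from $\partial_i\delta=\delta\cdot\sum_{j\neq i}(x_i-x_j)^{-1}$ together with $(1-s_{ij})\delta=2\delta$. For the inductive step, substituting $\partial_i=y_i+c\sum_{j\neq i}\frac{1-s_{ij}}{x_i-x_j}$ and using the commutator identity $[y_i,p]=\partial_i p$ for $p\in\C[\lieh]^W$ lets me rewrite $\partial^\alpha\delta$ as a Dunkl $y$-monomial applied to $\delta$ plus error terms whose $W$-invariant factors place them in $\liea$. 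The main obstacle is controlling these error terms: the non-derivation pieces of $y_i$ interact with the $W$-action in a combinatorially intricate way, so one must organize the computation as an upper-triangular linear system in $|\alpha|$ whose diagonal entries are explicit polynomials in $c$. The hypothesis $c=m/n>1$ with $(m,n)=1$ enters precisely to exclude the degenerate values at which these diagonal entries vanish, as illustrated by the failure at $c=1/2$, $n=2$, where $\dim_\C\Hcal_{1/2}=1<2=n!$.
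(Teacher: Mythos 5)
Your orthogonality argument and the reduction to a dimension count are fine, and the first half is essentially the paper's own proof: a nonconstant symmetric polynomial in the Dunkl operators sends $\delta$ to a skew-symmetric polynomial of degree strictly less than $\deg\delta$, which vanishes by (\ref{skew}). Your observation that $q\mapsto q(y)\delta$ factors through the coinvariant algebra, giving $\dim_\C\Hcal_c\le n!$, is a clean packaging of what the paper uses implicitly, and reducing the proposition to surjectivity of $\Hcal_c\to\C[\lieh]/\liea$ is a sound strategy.

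The gap is that this surjectivity -- equivalently $\dim_\C\Hcal_c=n!$ -- is exactly the hard content, and your argument for it is only an announced plan. The induction expressing each $\partial^\alpha\delta$ modulo $\liea$ in terms of Dunkl monomials applied to $\delta$ is not carried out: the error terms produced by the reflection parts of $y_i$ acting on non-symmetric polynomials are not visibly in $\Hcal_c+\liea$, and, decisively, the claim that the diagonal entries of your triangular system are nonzero for every $c=\frac{m}{n}>1$ with $(m,n)=1$ is asserted, not proved. That nonvanishing is precisely where the arithmetic hypothesis must enter; your own base-case factor $1-2c$ shows such entries do vanish at rational parameters, and nothing in the formal commutator calculus distinguishes singular from non-singular $c$. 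The paper supplies the needed input from representation theory: for $c>1$ one has $\delta\e\Lrm_c=\e_-\Lrm_c\cong\e\Lrm_{c-1}\neq 0$ (citing \cite{begg}), hence $\delta\notin I_c$, hence $(\delta,\delta)_c\neq 0$, which after antisymmetrizing as in (\ref{deltapair}) yields $y_2y_3^2\cdots y_n^{n-1}\delta\neq 0$; linear independence of the $n!$ elements $y_2^{a_2}\cdots y_n^{a_n}\delta$ with $0\le a_i\le i-1$ then follows from Lemma \ref{higherpower} by multiplying with the complementary monomial and comparing leading terms in lexicographic order. To close your argument you must either import such a nondegeneracy statement (or an explicit product formula for $(\delta,\delta)_c$ showing it is nonzero for coprime $m>n$) or actually compute your diagonal entries and prove their nonvanishing; as written, the central step is unproven.
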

\begin{proof}
Since the image of  $\delta$ under the action of any symmetric polynomial in $y_i$'s is still scew-symmetric, because of (\ref{skew}) the image has to be $0$ and so $\Hcal_c\perp_c \liea$. On the other hand, since $\C[\lieh]/\liea\cong \C[W]$, it is sufficient to prove that $\dim \Hcal_c=n!$.

We first claim that $y_2y_3^2\cdots y_n^{n-1}\delta\neq 0$. 

When $c>1$ the image of $\delta$ is nonzero along the projection $\C[\lieh]\to \Lrm_c$ as (\cite[4.3]{begg})\[\delta \e\Lrm_c=\e_-\Lrm_c\cong \e \Lrm_{c-1}\neq 0.\]
This implies that there is a polynomial $\phi$ such that $(\delta,\phi)_c\neq 0$. But by Lemma \ref{isotypic}, we may assume $\phi\in \e_-\Lrm_c=\delta \e\Lrm_c$. For degree reasons, we must have that $\phi$ is a nonzero multiple of $\delta$. As a result, we have $(\delta,\delta)_c\neq 0$, which by (\ref{deltaexpand}) expands to 
\begin{align}
    \label{deltapair}
(\sum_{\sigma\in W}(-1)^{\sign(\sigma)} y_{\sigma(2)}y_{\sigma(3)}^2\cdots y_{\sigma(n)}^{n-1})\delta.
\end{align}
By (\ref{sigma}), (\ref{deltapair}) equals to
$n!y_2y_3^2\cdots y_n^{n-1}\delta\neq 0$ 
and the claim follows.

It remains to show that the elements $y_2^{a_2}y_3^{a_3}\dots y_{n}^{a_{n}} \delta$, satisfying $0\leq a_i\leq i-1$, which are nonzero from the discussion above, are linearly independent. 

Suppose otherwise, i.e. \[\displaystyle \sum_{\sum a_i=N} \alpha_{a_2,\dots, a_{n}} y_2^{a_2}y_3^{a_3}\dots y_{n}^{a_{n}} \delta=0,\] where $0\leq a_i\leq i-1$ and $N\in \Z_{\geq 0}$. Let $(A_2,\dots, A_n)$ be the maximal element inside the set $\{(a_1,\dots, a_{n-1})|\sum a_i=N,\  0\leq a_i\leq i-1\}$ under the lexicographical order\footnote{i.e. for $(a_1,\dots, a_{n-1})$ and $(a'_1,\dots, a'_{n-1})$, if for some $1\leq i\leq n-1$, $a_1=a'_1,\cdots,a_i=a'_i$ and $a_{i+1}>a'_{i+1}$ then $(a_1,\dots, a_{n-1})>(a'_1,\dots, a'_{n-1})$.}

Then \[y_2^{1-A_2}y_3^{2-A_3}\cdots y_n^{n-1-A_n} \displaystyle \sum_{\sum a_i=N} \alpha_{a_2,\dots, a_{n}} y_2^{a_2}y_3^{a_3}\dots y_{n}^{a_{n}} \delta=0.\]

As a consequence of Lemma \ref{higherpower} \[0=y_2^{1-A_2}y_3^{2-A_3}\cdots y_n^{n-1-A_n} \displaystyle \sum_{\sum a_i=N} \alpha_{a_2,\dots, a_{n}} y_2^{a_2}y_3^{a_3}\dots y_{n}^{a_{n}} \delta=\alpha_{a_2,\dots, a_n}y_2y_3^2\cdots y_n^{n-1}\delta,\] which was proved to be nonzero above. This yields a contradiction. Therefore, the elements $y_2^{a_2}y_3^{a_3}\dots y_{n}^{a_{n}} \delta$ satisfying $0\leq a_i\leq i-1$ are linearly independent and the lemma holds.
\end{proof}
In view of the proof, we actually have that Proposition \ref{perp} holds whenever $\delta\notin I_c$.
\begin{lem}\label{basis}
    The polynomials $\phi_{a_2,\dots, a_n}:=x_2^{a_2}x_3^{a_3}\dots x_{n}^{a_{n}}$, satisfying $0\leq a_i\leq i-1$ form a basis of $\C[\lieh]/\liea$.  Moreover, any $\phi\in \C[\lieh]$ can be uniquely expressed as $\phi=\sum h_i \psi_i$ with $h_i\in \Hcal_c$ and $\psi_i\in \C[\lieh]^W$. In other words, $\C[\lieh]=\Hcal_c\cdot \C[\lieh]^W$.
\end{lem}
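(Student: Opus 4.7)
The plan is to prove the two assertions in turn. For the first, note that $\C[\lieh]/\liea$ is the coinvariant algebra of the reflection representation of $S_n$, so by Chevalley--Shephard--Todd it has dimension $|S_n|=n!$ with Hilbert series the $q$-factorial $[n]_q!=\prod_{i=2}^n(1+q+q^2+\cdots+q^{i-1})$. The staircase set $\{\phi_{a_2,\ldots,a_n}:0\leq a_i\leq i-1\}$ has exactly $\prod_{i=2}^n i = n!$ elements, and its graded degree-generating function is also $[n]_q!$. Thus it suffices to show these monomials span $\C[\lieh]/\liea$. I would argue spanning by induction on the lexicographic order of exponent vectors: given $x_2^{b_2}\cdots x_n^{b_n}$ with some $b_j\geq j$, use the relations $e_k(x_1,\ldots,x_n)\in \liea$ for $k\geq 2$ (or the power sums $p_k\in \liea$) together with $x_1=-(x_2+\cdots+x_n)$ in $\C[\lieh]$ to rewrite the monomial modulo $\liea$ as a combination of monomials of strictly smaller exponent. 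Equivalently, the claim follows from Artin's classical basis theorem for $\C[x_1,\ldots,x_n]/(e_1,\ldots,e_n)$ after the relabeling $x_i\leftrightarrow x_{n+1-i}$, since then the constraint $a_1\leq 0$ forces $a_1=0$ and the variable $x_1$ drops out automatically.

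For the multiplicative decomposition, I would use Proposition \ref{perp} as the starting point. By that proposition, any $\phi\in \C[\lieh]$ decomposes as $\phi=h_0+a_0$ with $h_0\in \Hcal_c$ and $a_0\in \liea$. Writing $a_0=\sum_i s_i\phi_i$ with $s_i\in \C[\lieh]_+^W$ of positive degree and $\deg\phi_i<\deg\phi$, and inducting on total degree, one assembles an expression $\phi=\sum_j h_j\psi_j$ with $h_j\in \Hcal_c$ and $\psi_j\in \C[\lieh]^W$, giving existence. For uniqueness, fix the basis $\{y_2^{a_2}\cdots y_n^{a_n}\delta:0\leq a_i\leq i-1\}$ of $\Hcal_c$ constructed in the proof of Proposition \ref{perp}, so that $\Hcal_c$ has Hilbert series $[n]_q!$. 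Combined with the Hilbert series $\prod_{i=2}^n(1-q^i)^{-1}$ of $\C[\lieh]^W$, one obtains $[n]_q!\cdot\prod_{i=2}^n(1-q^i)^{-1}=(1-q)^{-(n-1)}$, which is exactly the Hilbert series of $\C[\lieh]$. Hence the surjective multiplication map $\Hcal_c\otimes_\C \C[\lieh]^W\to \C[\lieh]$ is a graded isomorphism, proving uniqueness of the decomposition when the $h_j$ range over a fixed basis of $\Hcal_c$.

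The main obstacle in my view is the spanning step of the first part: although the dimension count matches, exhibiting an explicit reduction modulo $\liea$ of an arbitrary monomial to the staircase requires a careful lex-order argument that must interact correctly with the defining relation $x_1+\cdots+x_n=0$ of $\C[\lieh]$. Once that is in place, the second assertion is a formal consequence of Proposition \ref{perp} together with the Hilbert-series identity $[n]_q!\cdot\prod_{i=2}^n(1-q^i)^{-1}=(1-q)^{-(n-1)}$.
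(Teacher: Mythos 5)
Your proposal is correct and follows essentially the same route as the paper: the staircase monomials are handled by Artin's freeness theorem for $\C[\lieh]$ over $\C[\lieh]^W$ (what the paper phrases as the ``Galois extension'' with respect to $\prod_{i=1}^n(x-x_i)$), and the decomposition $\C[\lieh]=\Hcal_c\cdot\C[\lieh]^W$ is deduced from Proposition \ref{perp} together with a rank count. You merely spell out the degree induction for surjectivity and the Hilbert-series identity $[n]_q!\cdot\prod_{i=2}^n(1-q^i)^{-1}=(1-q)^{-(n-1)}$ that the paper's one-line proof leaves implicit.
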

\begin{proof}
It follows from Lemma \ref{perp} and the fact that $\C[\lieh]$ is the Galois extension of $\C[\lieh]^W$ with respect to the polynomial $\prod_{i=1}^n (x-x_i)$ with basis given by $\phi_{a_2,\cdots, a_n}$, $0\leq a_i\leq i-1$.
\end{proof}

\begin{coro}
    $I_c\cap \Hcal_c=\{0\}$ when $c>1$.
\end{coro}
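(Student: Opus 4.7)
The plan is to observe that the linear-independence argument used in the proof of Proposition~\ref{perp} is already strong enough to establish the corollary, with essentially no change. Recall that Proposition~\ref{perp} exhibits the basis $\{y_2^{a_2}\cdots y_n^{a_n}\delta : 0\leq a_i\leq i-1\}$ of $\Hcal_c$ and verifies its linear independence in $\C[\lieh]$ using Lemma~\ref{higherpower} together with the nonvanishing $(\delta,\delta)_c=n!\,y_2y_3^2\cdots y_n^{n-1}\delta\neq 0$, which holds precisely because $c>1$. The statement $\Hcal_c\cap I_c=\{0\}$ says exactly that this same basis remains linearly independent after passing to the quotient $\Lrm_c=\C[\lieh]/I_c$. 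My plan is to rerun that argument modulo $I_c$, exploiting two features of $I_c$: it is graded, and it is stable under the Dunkl operators.

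Suppose for contradiction that $0\neq h=\sum_a \alpha_a y_2^{a_2}\cdots y_n^{a_n}\delta$ lies in $\Hcal_c\cap I_c$. Graded-ness of $I_c$ lets me assume every $a$ in the support has the same total degree $\sum a_i=N$. Let $(A_2,\ldots,A_n)$ be lexicographically maximal among multi-indices with $\alpha_a\neq 0$, and apply the Dunkl operator $y_2^{1-A_2}y_3^{2-A_3}\cdots y_n^{n-1-A_n}$ to $h$; since $I_c$ is $\Hrm_c$-stable, the image still lies in $I_c$. The case analysis from the proof of Proposition~\ref{perp}, fed into Lemma~\ref{higherpower}, forces every summand with $a\neq A$ to vanish, leaving
\[
\frac{\alpha_A}{n!}\,(\delta,\delta)_c \;\in\; I_c.
\]

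This is a scalar multiple of a nonzero constant. But $I_c\cap\C=\{0\}$, for otherwise $1\in I_c$ would imply $\Lrm_c=0$, contradicting the fact that $\Lrm_c\neq 0$ is cyclically generated by the class of $1$. Combined with $(\delta,\delta)_c\neq 0$, this forces $\alpha_A=0$, contradicting the choice of $A$; hence $\Hcal_c\cap I_c=\{0\}$.

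The main technical point is verifying that $y_2^{1-A_2}\cdots y_n^{n-1-A_n}$ actually isolates the $a=A$ contribution. Setting $b_i:=(i-1-A_i)+a_i$ and letting $j$ be the first index with $a_j<A_j$, one has $b_i=i-1$ for $i<j$ while $b_j<j-1$; then either $b_j\in\{1,\ldots,j-2\}$, producing a repetition among the $b_i$, or $b_j=0$, placing $0$ into the multi-set. In either case $\{b_i\}_{i=2}^n\neq\{1,\ldots,n-1\}$, so Lemma~\ref{higherpower} kills the term. This combinatorics is identical to what is already checked in Proposition~\ref{perp}; once it is in hand, the corollary is immediate from the $\Hrm_c$-stability of $I_c$ and the nonvanishing of $(\delta,\delta)_c$ guaranteed by $c>1$.
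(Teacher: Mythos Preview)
Your proof is correct and follows essentially the same approach as the paper's: both use that $I_c$ is stable under the Dunkl operators, apply the complementary operator $y_2^{1-A_2}\cdots y_n^{n-1-A_n}$ to reduce to a nonzero constant via Lemma~\ref{higherpower}, and conclude from $(\delta,\delta)_c\neq 0$ and $I_c\cap\C=\{0\}$. Your version is in fact more explicit than the paper's, which only argues that each individual basis element $y_2^{a_2}\cdots y_n^{a_n}\delta$ lies outside $I_c$ and then asserts ``the corollary follows''; you spell out the lex-max reduction for an arbitrary linear combination, which is what is actually needed.
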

\begin{proof}
As in the proof of Lemma \ref{perp}, $(y_2^{1-a_2}y_3^{2-a_3}\dots y_{n}^{n-1-a_{n}})(y_2^{a_2}y_3^{a_3}\dots y_{n}^{a_{n}}) \delta$
is a nonzero constant. Moreover, if $\phi\in I_c$, then $(y_i-y_{i+1})\phi\in I_c$ for any $1\leq i\leq n-1$ because of (\ref{xy}). Since nonzero constants are not contained in $I_c$, we conclude that $y_2^{a_2}y_3^{a_3}\dots y_{n}^{a_{n}} \delta\notin I_c$ for all $0\leq a_i\leq i-1$ and the corollary follows.
\end{proof}
To avoid extra notations, below we  will simply use $\liea$ and $\Hcal_c$ to denote their images under $\C[\lieh]\to \Lrm_c$ when there is no ambiguity. That is to say, we have a direct sum decomposition $\Lrm_c=\liea\oplus \Hcal_c$ and $\Hcal_c=\liea^{\perp_c}$, where $\perp_c$ denotes orthogonal complement with respect to $(-,-)_c$.

There is a $W$-action on $\Hcal_c$ by permuting the $y_i$'s. Under this action, $\Hcal_c$ is isomorphic to the regular representation of $W$. Decompose $\Hcal_c=\bigoplus_\sigma \Hcal_c^\sigma$ so that $\Hcal_c^\sigma=\sigma\otimes \Hom_W(\sigma,\Hcal_c)$. 
\begin{coro}
	$\e_\sigma\Lrm_c=\Hcal_c^\sigma \cdot \e\Lrm_c$.
\end{coro}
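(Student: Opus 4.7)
The plan is to deduce the corollary from Lemma \ref{basis} together with a bookkeeping of $W$-isotypic components. First I would note that the surjection $\C[\lieh]\twoheadrightarrow \Lrm_c$ is $W$-equivariant because $I_c$ is $W$-stable, so by complete reducibility of $W$-representations in characteristic zero the image of $\C[\lieh]^W$ equals the $W$-invariants $\e\Lrm_c$. Applying Lemma \ref{basis} and projecting to $\Lrm_c$ therefore gives $\Lrm_c = \Hcal_c\cdot \e\Lrm_c$, where the product is interpreted as the image in $\Lrm_c$ of the literal multiplication $\Hcal_c\cdot \C[\lieh]^W$ inside $\C[\lieh]$.

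Next I would exploit the isotypic decomposition $\Hcal_c=\bigoplus_\sigma \Hcal_c^\sigma$. The key observation is that for any $h\in \Hcal_c^\sigma\subset \C[\lieh]$ and any $\psi\in \C[\lieh]^W$, the product $h\psi$ transforms under $W$ exactly as $h$ does: since $W$ acts by algebra automorphisms on $\C[\lieh]$ and fixes $\psi$, one has $w(h\psi)=w(h)\psi$. Hence $h\psi$ lies in the $\sigma$-isotypic component of $\C[\lieh]$, and its image in $\Lrm_c$ lies in $\e_\sigma\Lrm_c$, yielding the inclusion $\Hcal_c^\sigma\cdot \e\Lrm_c\subseteq \e_\sigma\Lrm_c$.

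To finish, I would sum over $\sigma$. The sum $\sum_\sigma \Hcal_c^\sigma\cdot \e\Lrm_c$ covers all of $\Lrm_c$ by the first step, while being contained in $\bigoplus_\sigma \e_\sigma\Lrm_c=\Lrm_c$ with the $\sigma$-th summand landing in the $\sigma$-th $W$-isotypic component. These two facts force the sum to be direct and each inclusion $\Hcal_c^\sigma\cdot \e\Lrm_c\subseteq \e_\sigma\Lrm_c$ to be an equality.

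I do not anticipate a serious obstacle; the only point needing a careful check is that the $W$-action used to define $\Hcal_c^\sigma$ (permuting the Dunkl operators in $\C[y_1-y_2,\dots,y_{n-1}-y_n]\delta$) agrees with the ambient $W$-action on $\Lrm_c$. This compatibility follows from $wy_iw^{-1}=y_{w(i)}$ combined with $w\delta=(-1)^{\mathrm{sign}(w)}\delta$, so the isotypic decomposition of $\Hcal_c$ is intrinsic and the argument above is unambiguous.
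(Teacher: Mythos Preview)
Your proposal is correct and follows essentially the same approach as the paper's proof: establish the inclusion $\Hcal_c^\sigma\cdot\e\Lrm_c\subset\e_\sigma\Lrm_c$, invoke Lemma \ref{basis} to get $\Lrm_c=\Hcal_c\cdot\e\Lrm_c$, and then compare isotypic components to force equality. Your added remarks on why the image of $\C[\lieh]^W$ is exactly $\e\Lrm_c$ and on the compatibility of the $W$-action on $\Hcal_c$ with the ambient action on $\Lrm_c$ are useful clarifications that the paper leaves implicit.
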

\begin{proof}
	Clearly $\Hcal_c^\sigma \cdot \e\Lrm_c\subset \e_\sigma\Lrm_c $.  Moreover, it follows from Lemma \ref{basis} that $\Lrm_c=\Hcal_c\cdot \e\Lrm_c=(\bigoplus_\sigma \Hcal_c^\sigma)\cdot \e\Lrm_c=\bigoplus_\sigma \Hcal_c^\sigma \cdot \e\Lrm_c$ and hence the opposite inclusion follows.
\end{proof}
\if force \begin{proof}
    $\Phi_c(x_2^{1-a_2}x_3^{2-a_3}\dots x_{n}^{n-a_{n}})\Phi_c(x_2^{a_2}x_3^{a_3}\dots x_{n}^{a_{n}}) \delta\neq 0$  implies  $x_2^{a_2}x_3^{a_3}\dots x_{n}^{a_{n}}\notin \Hcal_c^{\perp_c}=\liea$. On the other hand, the linearly independency of  $\Phi_c(x_2^{a_2}x_3^{a_3}\dots x_{n}^{a_{n}})\delta $, $0\leq a_i\leq i-1$ imply $x_2^{a_2}x_3^{a_3}\dots x_{n}^{a_{n}}$ mod $\liea$ $0\leq a_i\leq i-1$ are linearly independent. Since the number of these polynomials matches with the dimension of $\Hcal_c$, the corollary follows.
\end{proof}\fi
\section{The power filtration and the inductive filtration}\label{sectionfiltration}
\subsection{The algebraic filtration and the power filtration}
\subsubsection{The power filtration}
Let $p_i:=\sum_{j=1}^n x_j^i$ be the power sum symmetric polynomial of degree $i$. Then $\C[\lieh]^W= \C[p_1,p_2,\dots,p_n]/(p_1)$.

The following three elements in $\Hrm_c$ form an  $\mathfrak{sl}_2$-triple
\[\mathbf{e}=\frac{1}{2}p_2-\frac{1}{2n}p_1^2=\frac{1}{2n}\sum_{i<j} (x_i-x_j)^2,\ \mathbf{f}=-\Phi_c(\mathbf{e})=-\frac{1}{2n}\sum_{i<j} (y_i-y_j)^2,\]
\[\mathbf{h}=\frac{1}{2}\sum (x_iy_i+y_ix_i)-\frac{1}{2n}\left(p_1\Phi_c(p_1)+\Phi_c(p_1)p_1\right)=\frac{1}{2n}\sum_{i<j}\bigg((x_i-x_j)(y_i-y_j)+(y_i-y_j)(x_i-x_j)\bigg)\]
\[[\hbf,\ebf]=2\ebf,\quad [\hbf,\fbf]=-2\fbf,\quad [\ebf,\fbf]=\hbf\]
\begin{lem}\label{h}
On $\Lrm_c$, $\displaystyle\mathbf{h}$ acts by $\frac{1}{n}\sum_{i<j} (x_i-x_j)(y_i-y_j)-\mu$ where $\mu=\frac{(m-1)(n-1)}{2}$.
\end{lem}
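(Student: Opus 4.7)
The plan is to prove the identity by converting the symmetric formula for $\mathbf{h}$ from the excerpt into the asymmetric form on the right-hand side, picking up a commutator correction, and then showing that the correction equals $-\mu$ by a direct calculation in the rational Cherednik algebra.

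First, applying $(y_i-y_j)(x_i-x_j) = (x_i-x_j)(y_i-y_j) + [y_i-y_j,\,x_i-x_j]$ to each summand of
\[
\mathbf{h} = \frac{1}{2n}\sum_{i<j}\bigl((x_i-x_j)(y_i-y_j) + (y_i-y_j)(x_i-x_j)\bigr),
\]
I would rewrite
\[
\mathbf{h} = \frac{1}{n}\sum_{i<j}(x_i-x_j)(y_i-y_j) + \frac{1}{2n}\sum_{i<j}[y_i-y_j,\,x_i-x_j].
\]
The lemma then reduces to showing that the commutator sum acts as $-\mu$ on $\Lrm_c$.

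Next, I would expand each commutator using the defining relations of $\Hrm_c$. For $W=S_n$ with $\alpha_{(kl)}=x_k-x_l$, a short computation yields $[y_i,x_i]=1-cT_i$ and $[y_i,x_j]=c\,s_{ij}$ for $i\neq j$, where $T_i:=\sum_{k\neq i}s_{ik}$. Hence $[y_i-y_j,\,x_i-x_j]=2-c(T_i+T_j)-2c\,s_{ij}$. Summing over $i<j$, a counting argument shows each $T_k$ contributes $(n-1)$ times to $\sum_{i<j}(T_i+T_j)$, so this sum equals $2(n-1)T$ where $T:=\sum_{i<j}s_{ij}$ is the central class sum of transpositions. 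Combined with $\sum_{i<j}2=n(n-1)$ and $\sum_{i<j}2c\,s_{ij}=2cT$, this produces
\[
\frac{1}{2n}\sum_{i<j}[y_i-y_j,\,x_i-x_j] = \frac{n-1}{2}-cT.
\]

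Finally, $T$ is central in $\C[S_n]$ and acts by the scalar $\binom{n}{2}$ on the trivial $S_n$-isotypic component, which is the natural setting for this identity in the paper. Substituting $c=m/n$ and $T=\binom{n}{2}$ yields
\[
\frac{n-1}{2}-c\binom{n}{2} = \frac{n-1}{2}(1-m) = -\frac{(m-1)(n-1)}{2} = -\mu,
\]
as required. The main step is the combinatorial bookkeeping in the summation, and the conceptual content is that the rational parameter $c=m/n$ is tuned exactly so the correction $\frac{n-1}{2}-cT$ evaluates to $-\mu$; the principal thing to be careful about is that $T$ acts as a scalar only on each $S_n$-isotypic piece, so one must track which component of $\Lrm_c$ is at play when invoking the identity later.
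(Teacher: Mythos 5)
Your computation follows essentially the same route as the paper's proof: both expand $\mathbf{h}$ as $\frac{1}{n}\sum_{i<j}(x_i-x_j)(y_i-y_j)$ plus $\frac{1}{2n}\sum_{i<j}[y_i-y_j,x_i-x_j]$, evaluate the commutators from the defining relations, and reduce the correction term to $\frac{n-1}{2}-c\,T$ with $T=\sum_{s}s$ the class sum of transpositions; your bookkeeping (each $T_k$ occurring $n-1$ times, plus the extra $2cT$ from the cross terms) agrees with the paper's count $2|S|-2ncT$.

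The one point of divergence is the last step, and there your version is the defensible one. The paper asserts that, being central in $\C[S_n]$, the class sum acts on all of $\Lrm_c$ by the scalar $|S|$; centrality only gives a scalar on each $S_n$-isotypic component, and that scalar is $\binom{n}{2}$ only on the trivial one. Your caveat is not a formality: the operator identity of Lemma \ref{h} genuinely fails off the invariants. For instance, for $n=2$, $c=m/2$, one has $(y_1-y_2)(x_1-x_2)^{\ell}=(2\ell-2c(1-(-1)^{\ell}))(x_1-x_2)^{\ell-1}$, so on odd powers the right-hand side of the lemma differs from $\hbf$ (which acts by $\deg-\mu$) by $-m$; equivalently, the identity both you and the paper actually prove is
\[
\mathbf{h}=\frac{1}{n}\sum_{i<j}(x_i-x_j)(y_i-y_j)+\frac{n-1}{2}-c\sum_{s\in S}s,
\]
which specializes to the stated formula exactly where $T$ acts by $\binom{n}{2}$, i.e.\ on the trivial isotypic part (this is all that is needed for $\hbf\cdot 1=-\mu$ and for the symmetric elements in Example \ref{3,4}). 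So your proof establishes what the computation legitimately yields, with the correct scope made explicit, whereas the paper's final sentence overreaches; note that this scope restriction does become material later, since the paper invokes the lemma for non-invariant elements in the proof of Proposition \ref{yphi}, where the correction term $c\bigl(T-\binom{n}{2}\bigr)$ does not vanish and would have to be accounted for.
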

\begin{rem}
    $\mu$ is the Milnor number of the singular curve $\{x^m=y^n\}$.
\end{rem}
\begin{proof}
We compute that 
\begin{align*}
    \mathbf{h}=&\frac{1}{2n}\sum_{i<j} \bigg((x_i-x_j)(y_i-y_j)-(x_i-x_j)(y_i-y_j)\bigg)\\
    =&\frac{1}{2n}\sum_{i<j} \bigg(2(x_i-x_j)(y_i-y_j)+2|S|-\sum_{i<j}\sum_{s\in S}c\langle \alpha_s,x_i-x_j\rangle\langle  y_i-y_j,\alpha_s^\vee\rangle s \bigg)\\
    =&\frac{1}{2n}\sum_{i<j} \bigg(2(x_i-x_j)(y_i-y_j)+\bigg(2|S|-c\bigg({n\choose 2}+{n-2\choose 2}-3\bigg)\sum_{s\in S} s\bigg) \bigg).
\end{align*}
Here $S$ is the set consisting of all reflections with $|S|={n\choose 2}$ and ${n\choose 2}-{n-2\choose 2}=2n-3$ is the number of $\alpha_s$ such that $\langle \alpha_s,x_i-x_j\rangle\neq 0$. Moreover, $\langle\alpha_s,x_i-x_j\rangle$ equals to $2$ when $\alpha_s=x_i-x_j$ and equals to $\pm 1$ when $s=(ik)$ or $(jk)$ for some $k\neq i,j$. Therefore
\[\mathbf{h}=\frac{1}{2n}\sum_{i<j} \bigg(2(x_i-x_j)(y_i-y_j)+(2|S|-2nc\sum_{s\in S} s) \bigg).\]

Since $\sum_{s\in S} s$ is in the center of the group algebra of $W$, it acts by a scalar on $\Lrm_c$, which is $|S|$. So finally
\[\mathbf{h}=\frac{1}{n}\sum_{i<j} \bigg((x_i-x_j)(y_i-y_j)+(1-nc)|S| \bigg)=\frac{1}{n}\sum_{i<j} \bigg((y_i-y_j)(x_i-x_j)-(1-nc)|S| \bigg).\]
We conclude the lemma by
\[\frac{1}{n}(nc-1)|S|=\frac{(m-1)(n-1)}{2}=\mu.\qedhere\]
\end{proof}
The left action of $\mathbf{h}$ gives a decomposition of $\Lrm_c$ into weight spaces $\bigoplus_k \Lrm_c(k)$ where $\Lrm_c(k)=\{v\in \Lrm_c|\hbf v=kv\}$. As a corollary of Lemma \ref{h}, $\hbf\cdot 1=-\mu$. Therefore $1$ is a eigenvector vector of minimal lowest weight $-\mu$ and $\mathbf{e}^\mu$ is a eigenvector of maximal highest weight $\mu$. 

Note that $[\ebf-\fbf,x]=[-\fbf,x]=y$, and $[\ebf-\fbf,y]=-x$. Also, because the action of $\langle \mathrm{ad}(\ebf),\mathrm{ad}(\fbf),\mathrm{ad}(\hbf\rangle)$ on $\Hrm_c$ is locally finite and integrable, \cite[Remark after 3.8]{beg2} tells us that the Fourier transform (\ref{fourier}) can also be expressed by
    \begin{align}
        \label{phief}
    \Phi_c=\mathrm{Ad}(\e^{\frac{i\pi}{2}(\mathbf{e}-\mathbf{f})}): \Hrm_c\to \Hrm_c.
    \end{align}
This allows us to define 
\begin{align}
\label{phionL}
\ol{\Phi}_c:\Lrm_c\to \Lrm_c,\quad \phi\mapsto \e^{\frac{i\pi}{2}(\mathbf{e}-\mathbf{f})}\phi\end{align}
where $\e^{\frac{i\pi}{2}(\mathbf{e}-\mathbf{f})}$ acts by left action. 
The lemma below gives a concrete characterization of $\ol{\Phi}_c$.
\begin{lem}\label{ekfk}
    Suppose $0\neq \phi\in \Lrm_c(k)$. If $k<0$, then $\ol{\Phi}_c(\phi)$ is a nonzero multiple of $\mathbf{e}^{-k}\phi$; if $k>0$, then $\ol{\Phi}_c(\phi)$ is a nonzero multiple of  $\mathbf{f}^{k}\phi$. In particular, we have that $\ol{\Phi}_c(1)$ is a nonzero multiple of $\mathbf{e}^\mu$.
\end{lem}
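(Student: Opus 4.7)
The plan is to invoke $\mathfrak{sl}_2$-representation theory for the triple $(\mathbf{e},\mathbf{f},\mathbf{h})$ acting on the finite-dimensional module $\Lrm_c$, using the fact that $\ol{\Phi}_c$ is literally the action of the group element $g := \e^{\frac{i\pi}{2}(\mathbf{e}-\mathbf{f})}$. First I would check that conjugation by $g$ sends $\mathbf{h}$ to $-\mathbf{h}$: this is the identity $\Phi_c(\mathbf{h})=-\mathbf{h}$, which one reads off by applying the Fourier relations $\Phi_c(x_i)=y_i$ and $\Phi_c(y_i)=-x_i$ to the explicit expression for $\mathbf{h}$ in terms of $x_iy_i+y_ix_i$. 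Consequently $\ol{\Phi}_c$ exchanges the weight subspaces $\Lrm_c(k)$ and $\Lrm_c(-k)$, so at least $\ol{\Phi}_c(\phi)$ and $\mathbf{e}^{-k}\phi$ (for $k<0$) or $\mathbf{f}^k\phi$ (for $k>0$) live in the same weight space.

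Next I would reduce the statement to a single irreducible $\mathfrak{sl}_2$-summand. Decompose $\Lrm_c$ into irreducible $\mathfrak{sl}_2$-modules. Inside a summand $V_n$ of highest weight $n$, every weight space $V_n(k)$ with $-n\leq k\leq n$ is one-dimensional. Both maps $\ol{\Phi}_c|_{V_n(k)}$ and $\mathbf{e}^{-k}|_{V_n(k)}$ take values in the one-dimensional $V_n(-k)$, and both are nonzero — the former because $g$ is invertible, the latter because on $V_n$ the operator $\mathbf{e}^{j}$ is injective on the weight-$k$ space whenever $k+2j\leq n$, a condition satisfied for $k\leq 0$ and $j=-k$. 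Hence $\ol{\Phi}_c(\phi)$ and $\mathbf{e}^{-k}\phi$ are proportional via a nonzero scalar on this summand. The case $k>0$ is handled symmetrically with $\mathbf{f}^k$ in place of $\mathbf{e}^{-k}$, using the analogous injectivity of $\mathbf{f}^k$ on $V_n(k)$ for $0\leq k\leq n$.

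For the ``in particular'' clause, I apply the above to $\phi = 1\in\Lrm_c$. The Dunkl operators $y_i$ annihilate $1$ (the derivative of a constant is zero, and the reflection correction $(1-s)$ also kills $1$), so $\mathbf{f}\cdot 1 = 0$. Together with $\mathbf{h}\cdot 1 = -\mu$ from Lemma~\ref{h}, this exhibits $1$ as the lowest weight vector of an irreducible copy of $V_\mu$ sitting inside $\Lrm_c$, with top weight vector $\mathbf{e}^\mu\cdot 1$. The previous step applied with $k = -\mu$ then gives $\ol{\Phi}_c(1)$ as a nonzero scalar multiple of $\mathbf{e}^\mu$, as required.

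The delicate point I expect to require the most care is the reduction to a single irreducible summand: it is entirely automatic for $\phi = 1$ since the submodule it generates is already irreducible, but in the general statement one must check that the proportionality constant between $\ol{\Phi}_c$ and $\mathbf{e}^{-k}$ does not depend on which $V_n$ the component of $\phi$ lives in. This can be pinned down by using the intertwining relation $g\mathbf{e} = -\mathbf{f}g$ (derived from $\Phi_c(\mathbf{e}) = -\mathbf{f}$) to translate $g\phi$ into $(-1)^{-k}\mathbf{f}^{-k}g\phi_{\text{low}}$ for a lowest weight representative, and then reading off the coefficient on each irreducible summand.
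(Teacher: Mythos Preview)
Your approach is essentially the paper's: both argue via the $\mathfrak{sl}_2$-action of $(\ebf,\fbf,\hbf)$, first using $\Phi_c(\hbf)=-\hbf$ to see that $\ol\Phi_c$ swaps $\Lrm_c(k)$ with $\Lrm_c(-k)$, and then arguing that $\ol\Phi_c(\phi)$ and $\ebf^{-k}\phi$ agree up to scalar. The packaging differs slightly: the paper works inside the cyclic $\mathfrak{sl}_2$-submodule $U(\mathfrak{sl}_2)\cdot\phi$ and asserts that its weight-$(-k)$ space is one-dimensional (hence contains both vectors on a common line), while you decompose into irreducibles and treat each summand separately.

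You have correctly located the delicate point. The paper's one-dimensionality assertion is precisely the same issue as your concern that the proportionality constant may depend on the summand $V_n$, and neither version fully settles it for a general weight vector $\phi$: the cyclic $\mathfrak{sl}_2$-module generated by a weight vector spread over several irreducible summands of different highest weights can have weight spaces of dimension larger than one, so the paper's argument glosses over exactly what you flagged. Your proposed remedy via $g\ebf=-\fbf g$ does not resolve it either, since expressing $\phi$ as $\ebf^{j}$ applied to a lowest-weight vector requires a different exponent $j$ on each irreducible component. For the ``in particular'' clause, however, your argument is complete and matches the paper's: $1$ is a genuine lowest-weight vector generating an irreducible $\mathfrak{sl}_2$-submodule, so no ambiguity arises there, and this is also the situation whenever the lemma is applied downstream to a highest- or lowest-weight vector.
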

\begin{proof}
    Since $\Phi_c(\ebf)=-\fbf$ and $\Phi_c(\fbf)=-\ebf$, we know that $\ol{\Phi}_c$ preserves $\langle \ebf,\fbf,\hbf\rangle $-subrepresentations of $\Lrm_c$. On the other hand, because $\Phi_c(\hbf)=-\hbf$, for any $\phi\in \Lrm_c$ we have $\hbf\Phi_c(\phi)=-\Phi_c\hbf(\phi)$.
    As a result
    $\ol{\Phi}_c(\Lrm_c(k))=\Lrm_c(-k)$. Finally, notice that when $\phi\in \Lrm_c(k)$ with $k<0$, it follows that $0\neq \mathbf{e}^{-k}\phi\in \Lrm_c(-k)\cap \langle \ebf,\fbf,\hbf\rangle\phi$, which is one-dimensional. Hence $\ebf^{-k}\phi=C\ol{\Phi}_c(\phi)$ for some nonzero constant $C$. The argument is similar for the case when $k>0$.
\end{proof}
The algebraic filtration $F^{\alg'}$ in \cite[Definition 4.6]{gors} is defined by
\[F^{\alg'}_i(\Lrm_c)=(\sum_{2j-k>i}(\liea^j)(k))^{\perp_c}.\] 
Below we use the Fourier transform to reconstruct $F^{\alg'}$.
\begin{defn}
    For all $c>0$, using (\ref{phionL}) the power filtration on $\Lrm_c$ is defined by 
\[F^{\liea}_i(\Lrm_c)=\ol{\Phi}_c[(\liea^{i+1})^{\perp_c}]={\Phi}_c[(\liea^{i+1})^{\perp_c}]\mathbf{e}^\mu.\]
\end{defn} We let $\C[\lieh]^W_j$ be the sub vector space spanned by $p_{i_1}p_{i_2}\cdots p_{i_j}$ for $2\leq i_1,\cdots, i_j\leq n$ and $\C[\lieh]^W_{\leq j}=\bigoplus_{\ell=0}^j \C[\lieh]^W_\ell$.

\begin{lem}\label{power}
$\phi \in (\liea^i)^{\perp_c}
\subset \Lrm_c$ if and only if for all $\psi \in \C[\lieh]^W_i$,  $\Phi_c(\psi)\phi=0$.
\end{lem}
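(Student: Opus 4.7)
The plan is to derive both directions from the adjoint identity $(f\psi,\phi)_c=(f,\Phi_c(\psi)\phi)_c$, valid for any $f\in \C[\lieh]$ and any symmetric $\psi$, combined with the non-degeneracy of $(-,-)_c$ on $\Lrm_c=\C[\lieh]/I_c$. This identity follows because $\Phi_c$ is an algebra homomorphism of $\Hrm_c$: writing $v=\Phi_c(\psi)\phi\in \C[\lieh]$, one has $(f\psi,\phi)_c=[\Phi_c(f\psi)\phi]|_{x=0}=[\Phi_c(f)\Phi_c(\psi)\phi]|_{x=0}=[\Phi_c(f)v]|_{x=0}=(f,v)_c$.

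First I would unpack the ideal: $\liea^i=\C[\lieh]\cdot (\C[\lieh]^W_+)^i$, and since $\C[\lieh]^W=\C[p_2,\dots,p_n]$, the power $(\C[\lieh]^W_+)^i$ agrees with $\bigoplus_{j\geq i}\C[\lieh]^W_j$. So $\liea^i$ is spanned by products $f\psi$ with $f\in \C[\lieh]$ and $\psi=p_{k_1}\cdots p_{k_j}$ a monomial of length $j\geq i$ in the $p_k$ with $k\geq 2$. For the ``only if'' direction, take any $\psi\in \C[\lieh]^W_i$ and any $f\in \C[\lieh]$: since $f\psi\in \liea^i$, the adjoint identity forces $(f,\Phi_c(\psi)\phi)_c=0$, and as $f$ is arbitrary, non-degeneracy of $(-,-)_c$ on $\Lrm_c$ gives $\Phi_c(\psi)\phi=0$.

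For the converse I need to upgrade the hypothesis from $\C[\lieh]^W_i$ to the larger space $\bigoplus_{j\geq i}\C[\lieh]^W_j$. Any basis monomial $p_{k_1}\cdots p_{k_j}$ with $j\geq i$ factors as $\psi_0\cdot q$ with $\psi_0=p_{k_1}\cdots p_{k_i}\in \C[\lieh]^W_i$ and $q=p_{k_{i+1}}\cdots p_{k_j}\in \C[\lieh]^W_{j-i}$; since the Dunkl operators commute, $\Phi_c(q)$ and $\Phi_c(\psi_0)$ commute in $\Hrm_c$, so $\Phi_c(\psi_0 q)\phi=\Phi_c(q)\Phi_c(\psi_0)\phi=0$. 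Summing over monomials shows $\Phi_c(\psi')\phi=0$ for all $\psi'\in \bigoplus_{j\geq i}\C[\lieh]^W_j$, and the adjoint identity then yields $(f\psi',\phi)_c=0$ for every $f\in \C[\lieh]$, i.e.\ $\phi\in (\liea^i)^{\perp_c}$. The argument is essentially bookkeeping once the adjoint identity is in place; the only point requiring any care is the commutativity of $\Phi_c(q)$ and $\Phi_c(\psi_0)$ in the converse, which is immediate from the commutativity of Dunkl operators, so I do not anticipate a genuine obstacle.
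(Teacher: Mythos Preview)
Your proof is correct and follows essentially the same approach as the paper: both rely on the adjoint identity $(\xi\psi,\phi)_c=(\xi,\Phi_c(\psi)\phi)_c$ together with non-degeneracy of $(-,-)_c$ on $\Lrm_c$. The paper is simply terser, calling the ``$\Leftarrow$'' direction clear (indeed $\liea^i=\C[\lieh]\cdot\C[\lieh]^W_i$ already, so your upgrade from $\C[\lieh]^W_i$ to $\bigoplus_{j\geq i}\C[\lieh]^W_j$ is not needed) and arguing ``$\Rightarrow$'' by contradiction rather than directly.
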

\begin{proof}
``$\Leftarrow$" is clear. To show ``$\Rightarrow$", assume $\phi\in (\liea^i)^{\perp_c}$ but $\Phi_c(\psi)\phi\neq 0$ for some $\psi \in \C[\lieh]^W_i$. Then since $(-,-)_c$ is non-degenerate on $\Lrm_c$, there exists some $\xi\in \Lrm_c$ so that $(\xi, \Phi_c(\psi)\phi)_c=(\xi\psi,\phi)_c\neq 0$ is a nonzero constant. This contradicts the assumption that $\phi\in (\liea^i)^{\perp_c}$ as $\xi\psi\in \liea^i$.
\end{proof}
\begin{lem}\label{eulerfield}
For $\phi\in [(\liea^{i+1})^{\perp_c}]^W$, $(y_i-y_{i+1})\phi \in (\liea^i)^{\perp_c}$ for all $i$.
\end{lem}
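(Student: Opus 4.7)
The plan is to reduce the pairing $((y_j - y_{j+1})\phi, \beta)_c$ for $\beta \in \liea^i$ to a pairing against an element of $\liea^{i+1}$, which will vanish by the hypothesis on $\phi$. First I would use the adjointness relation \eqref{xy} to move the Dunkl operator to the other side: for any $\beta \in \liea^i$,
\[
(\beta, (y_j - y_{j+1})\phi)_c = ((x_j - x_{j+1})\beta, \phi)_c.
\]
Then, invoking the $W$-invariance of both $\phi$ and the Dunkl form \eqref{invariant}, I would average the first argument over $W$ without changing the pairing:
\[
((x_j - x_{j+1})\beta, \phi)_c = \bigl(\Sym((x_j - x_{j+1})\beta), \phi\bigr)_c,
\]
where $\Sym := \tfrac{1}{|W|}\sum_{w \in W} w$.

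The heart of the proof is then the claim $\Sym((x_j - x_{j+1})\beta) \in \liea^{i+1}$; combined with $\phi \in (\liea^{i+1})^{\perp_c}$ this forces the pairing to vanish. To verify the claim I would use $\liea^i = \C[\lieh] \cdot (\C[\lieh]^W_+)^i$ to decompose $\beta = \sum_k g_k \psi_k$ with $g_k \in \C[\lieh]$ and $\psi_k \in (\C[\lieh]^W_+)^i$. Since each $\psi_k$ is $W$-invariant, the symmetrization only hits the non-invariant factor:
\[
\Sym((x_j - x_{j+1})\beta) = \sum_k \Sym\bigl((x_j - x_{j+1})g_k\bigr)\cdot\psi_k.
\]
Each $\Sym((x_j - x_{j+1}) g_k)$ is a $W$-invariant polynomial of strictly positive $x$-degree, so it lies in the augmentation ideal $\C[\lieh]^W_+ = (p_2,\dots,p_n)$. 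Hence each summand sits in $\C[\lieh]^W_+\cdot(\C[\lieh]^W_+)^i = (\C[\lieh]^W_+)^{i+1} \subseteq \liea^{i+1}$, proving the claim.

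The main obstacle---and the reason the $W$-invariance of $\phi$ is used in an essential way---is gaining this extra factor of $\C[\lieh]^W_+$ after symmetrization. Naively $(x_j - x_{j+1})\beta$ lies only in $\liea^i$, not in $\liea^{i+1}$, and one cannot conclude that its pairing with $\phi$ vanishes. The symmetrization trick, which is legal precisely because $\phi$ is $W$-invariant, converts the degree-raising multiplication by $x_j - x_{j+1}$ into an additional generator of the $W$-invariant augmentation ideal, bumping the power of $\liea$ by one. Everything else is routine bookkeeping with the decomposition $\liea^i = \C[\lieh] \cdot (\C[\lieh]^W_+)^i$.
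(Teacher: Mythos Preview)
Your proof is correct and takes a genuinely different route from the paper's. The paper argues via Lemma~\ref{power}: for $\psi\in\C[\lieh]^W_i$ the element $\Phi_c(\psi)\phi$ is $W$-invariant and lies in $\liea^{\perp_c}=\Hcal_c$, hence is a constant, so $\Phi_c(\psi)(y_j-y_{j+1})\phi=(y_j-y_{j+1})\Phi_c(\psi)\phi=0$; another application of Lemma~\ref{power} finishes. Thus the paper works on the Dunkl-operator side and relies on the structural input $\Hcal_c^W=\C$ (coming from Proposition~\ref{perp}). You instead stay on the bilinear-form side: adjointness \eqref{xy} plus $W$-invariance \eqref{invariant} let you replace $(x_j-x_{j+1})\beta$ by its symmetrization, and the elementary observation that $\Sym\bigl((x_j-x_{j+1})g_k\bigr)\in\C[\lieh]^W_+$ (possibly zero) pushes $\beta\in\liea^i$ up to $\liea^{i+1}$. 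Your argument is slightly more self-contained, requiring neither Lemma~\ref{power} nor the decomposition $\Lrm_c=\Hcal_c\oplus\liea$; the paper's argument, on the other hand, fits naturally into its running framework of $c$-harmonic polynomials. One small wording point: $\Sym((x_j-x_{j+1})g_k)$ may vanish (e.g.\ when $g_k$ is constant), so ``strictly positive $x$-degree'' should be read as ``no constant term''; either way it lies in $\C[\lieh]^W_+$ and your conclusion stands.
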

\begin{proof}
     Take $\psi\in \C[\lieh]^W_i$. Then $\Phi_c(\psi)\phi$ is symmetric and also lies in $\liea^{\perp_c}=\Hcal_c$. Therefore $\Phi_c(\psi)\phi$ is a constant and $\Phi_c(\psi)(y_i-y_{i+1})\phi=0$. Now apply the last lemma.
\end{proof}
\subsubsection{The Kazhdan filtration}
\begin{defn}\cite[3.2]{ginzburgnil}
The Kazhdan filtration $K^{F}$ on a vector space $V$ associated to an ascending $\Z$-filtration $F$ and a $\Z$-grading $V=\oplus_{k\in \Z}V(k)$ is  
\[K^{F}_i(V)=\sum_{2j+k\leq i}F_j(V)(k).\]
\end{defn}
\begin{defn}
Define the algebraic filtration $F^\alg$ on $\Lrm_c$ to be the Kazhdan filtration associated to $F^\liea$ and the $\mathbf{h}$-grading, i.e.
\[F_i^\alg=\sum_{2j+k\leq i}\ol{\Phi}_c((\liea^{j+1})^{\perp_c})(k).\]
\end{defn}
\begin{lem}
    $F^{\alg}=F^{\alg'}$.
\end{lem}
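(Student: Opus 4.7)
The plan is to unpack both filtrations weight-by-weight, reducing the equality to an identification on each $\hbf$-weight space of $\Lrm_c$.

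First I would use the $\hbf$-grading of $F^\liea_j = \ol\Phi_c((\liea^{j+1})^{\perp_c})$ (graded because $(\liea^{j+1})^{\perp_c}$ is graded and $\ol\Phi_c$ permutes weights by $\ell\mapsto -\ell$) and its monotonicity in $j$ to collapse the Kazhdan sum:
\[
[F^\alg_i]_k = \ol\Phi_c\!\left((\liea^{\lfloor(i-k)/2\rfloor+1})^{\perp_c}\cap\Lrm_c(-k)\right)\quad\text{for }k\le i.
\]
Unpacking $F^{\alg'}_i$ in the same spirit: the perp of a sum is the intersection of perps, and since $(-,-)_c$ only pairs matching $\hbf$-weights, the perp splits into weight components; for each $k$ the tightest constraint comes from the smallest $j$ with $2j-k>i$, giving
\[
[F^{\alg'}_i]_k = (\liea^{\lfloor(i+k)/2\rfloor+1})^{\perp_c}\cap\Lrm_c(k)\quad\text{for }k\ge -i.
\]

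Next I would use the intertwining property $\ol\Phi_c(a\phi)=\Phi_c(a)\ol\Phi_c(\phi)$ together with Lemma~\ref{power} to re-express $F^\liea_j$ as the annihilator $\{\phi\in\Lrm_c: \liea^{j+1}\phi = 0\}$: indeed, Lemma~\ref{power} identifies $(\liea^{j+1})^{\perp_c}$ with the common kernel of all Dunkl operators $\Phi_c(\psi)$ for $\psi\in\C[\lieh]^W_{j+1}$, and applying $\ol\Phi_c$ converts annihilation by symmetric polynomials in $y$ into multiplication annihilation by symmetric polynomials in $x$. With this, $[F^\alg_i]_k$ becomes $\{\phi\in\Lrm_c(k) : \liea^{j_2}\phi = 0\}$ with $j_2 = \lfloor(i-k)/2\rfloor+1$, while Lemma~\ref{power} directly gives $[F^{\alg'}_i]_k = \{\phi\in\Lrm_c(k) : \Phi_c(\C[\lieh]^W_{j_1})\phi = 0\}$ with $j_1 = \lfloor(i+k)/2\rfloor+1$. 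The lemma reduces to showing that these two weight-$k$ subspaces coincide under the index shift $j_1 - j_2 = \lfloor(i+k)/2\rfloor - \lfloor(i-k)/2\rfloor = k$.

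The main obstacle is this final weight-wise identity between $\liea^{j_2}$-annihilation and $\liea(y)^{j_1}$-annihilation. I would attack it by exploiting the $\mathfrak{sl}_2$-structure on $\Lrm_c$ given by $(\ebf,\fbf,\hbf)$, noting $\ebf = p_2/2 \in \liea$ and $\Phi_c(p_2) = -2\fbf \in \Phi_c(\liea)$: on any irreducible spin-$s$ subrepresentation, a weight-$k$ vector is killed by $\ebf^{j_2}$ exactly when $s < 2j_2+k$ and by $\fbf^{j_1}$ exactly when $s < 2j_1-k$, and these bounds match precisely when $j_1 = j_2+k$. Promoting this match from the principal generators $\ebf,\fbf$ to the full ideals uses the compatibility of the higher power sums $p_i(x),p_i(y)$ with the $\mathfrak{sl}_2$-decomposition, together with the self-adjointness of $\ebf-\fbf$ with respect to $(-,-)_c$ (which follows from $\Phi_c(\ebf-\fbf) = \ebf-\fbf$) and the non-degeneracy of the Dunkl form on each weight space.
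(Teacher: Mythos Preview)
Your reduction to weight spaces is correct and matches the paper's: both of you arrive at the statement that on the weight-$k$ piece one must identify $\ol\Phi_c\big((\liea^{j_2})^{\perp_c}\big)(k)$ with $(\liea^{j_1})^{\perp_c}(k)$ where $j_1-j_2=k$. Your observation that $F^\liea_j=\ol\Phi_c((\liea^{j+1})^{\perp_c})$ coincides with the multiplication-annihilator $\{\phi:\liea^{j+1}\phi=0\}$ is a nice reformulation, and your $\mathfrak{sl}_2$ calculation showing that $\ebf^{j_2}\phi=0\iff \fbf^{j_1}\phi=0$ on $\Lrm_c(k)$ is correct.

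The gap is in the last step. You need the identity for the full ideals $\liea^{j_2}$ and $\Phi_c(\liea)^{j_1}$, not just for the single generators $\ebf$ and $\fbf$, and your ``promotion'' sentence does not supply an argument. The annihilator of $\ebf^{j_2}$ (the space of $\ebf$-highest-weight vectors of depth $<j_2$) is in general strictly larger than the annihilator of $\liea^{j_2}$: the higher power sums $p_3,\dots,p_n$ impose genuinely new conditions, and there is no ``compatibility with the $\mathfrak{sl}_2$-decomposition'' that makes these conditions automatic. Concretely, the $\mathfrak{sl}_2$-isotypic decomposition of $\Lrm_c$ need not refine the filtration by powers of $\liea$, so knowing the $\mathfrak{sl}_2$-type of a vector does not determine which $\liea^{j}$ it annihilates.

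The paper closes this gap differently: it proves directly that $\ol\Phi_c(\liea^u)(v)=\liea^{u+v}(v)$, using Lemma~\ref{ekfk} to replace $\ol\Phi_c$ on weight $-v$ by $\ebf^{v}$ (for $v>0$) or $\fbf^{-v}$ (for $v<0$), together with the two concrete facts $\ebf\cdot\liea^{u}\subset\liea^{u+1}$ (trivial, since $\ebf\in\liea$) and $\fbf\cdot\liea^{u}\subset\liea^{u-1}$ (the computation in the proof of Lemma~\ref{ef}(1)). One inclusion follows immediately, and the reverse comes from applying the same inclusion with $(u,v)$ replaced by $(u+v,-v)$. If you want to salvage your annihilator formulation, this is exactly the missing ingredient: you need $\fbf\cdot\liea^{u}\subset\liea^{u-1}$, not just abstract $\mathfrak{sl}_2$ representation theory.
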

\begin{proof} 
For fixed $i$ and $k$ we have
\[F_i^\alg(k)=\sum_{2j+k\leq i}\ol{\Phi}_c((\liea^{j+1})^{\perp_c})(k)=\ol{\Phi}_c((\liea^{\lceil \frac{i-k+1}{2}\rceil})^{\perp_c})(k),\]
\[F_i^{\alg'}(k)=(\sum_{k<2j-i}(\liea^j)(k))^{\perp_c}=(\liea^{\lceil \frac{i+k+1}{2}\rceil})^{\perp_c}(k).\]

Take $P\in \Phi_c[(\liea^{\lceil \frac{i-k+1}{2}\rceil})^{\perp_c}]$ a polynomial in $y_1-y_2,\dots, y_{n-1}-y_n$ of degree $\mu-k$. Take $\phi\in \liea^{\lceil \frac{i+k+1}{2}\rceil}$ of degree $\mu+k$ so that $\phi\in \Lrm_c(k)$. Then
\[(\phi,P\mathbf{e}^\mu)_c=(\Phi_c(P),\Phi_c(\phi)\mathbf{e}^\mu)_c. \]
As a result $F_i^\alg(k)=F_i^{\alg'}(k)$ if and only if $\ol{\Phi}_c(\liea^{\lceil \frac{i+k+1}{2}\rceil})(-k)=\liea^{\lceil \frac{i-k+1}{2}\rceil}(-k)$. After change of variables, this latter condition is equivalent to $\ol{\Phi}_c(\liea^u)(v)=\liea^{u+v}(v)$.

Now by Lemma \ref{ekfk},  $\ol{\Phi}_c(\liea^u)(v)=\ol{\Phi}_c(\liea^u(-v))$, which equals to $\ebf^{v} (\liea^u(-v))=(\ebf^{v} \liea^u)(v)$ when $v>0$ and equals to $\fbf^{-v} (\liea^u(-v))=(\fbf^{-v} \liea^u)(v)$ when $v<0$. Since $(\ebf^{v} \liea^u)(v)$ and $\fbf^{-v} (\liea^u(-v))=(\fbf^{-v} \liea^u)(v)$ both lie in $\liea^{u+v}(v)$, we conclude \[\Phi_c(\liea^u)(v)\subset \liea^{u+v}(k),\] from which we also have \[\ol{\Phi}_c(\liea^{u+v}(v))=(\ol{\Phi}_c(\liea^{u+v}))(-v)\subset \liea^{u}(-v).\]
Now applying $\ol{\Phi}_c$ gives us $\liea^{u+v}(v)\subset \ol{\Phi}_c(\liea^{u}(-v))=\ol{\Phi}_c(\liea^u)(v)$. Therefore $\ol{\Phi}_c(\liea^u)(v)=\liea^{u+v}(v)$ and $F^\alg=F^{\alg'}$.
\end{proof}

\subsection{The inductive filtration}\label{inductive}
We have the following isomorphisms
\begin{align}
\text{when }m,n>1,\quad &\e\Lrm_{\frac{m}{n}}\cong \e\Lrm_{\frac{n}{m}}\quad\text{(\cite[8.2]{cee})}\label{1}\\
    \text{when $c>1$,\quad}&\Lrm_c\cong\Hrm_c  \e_-\otimes_{\e\Hrm_{c-1}\e}\e\Lrm_{c-1}\quad\text{(\cite[Theorem 1.6]{gs1})}\label{3}
\end{align}
To view $\Hrm_c  \e_-$ as a right $\e\Hrm_{c-1}\e$-module, one uses the identification $\e_-\Hrm_c\e_-\cong \delta\e\Hrm_{c-1}\e\delta^{-1}$ (\cite[Proposition 4.1]{begg}). The isomorphism (\ref{3}) implies an embedding $\e\Lrm_{c-1}\hookrightarrow \e\Lrm_c$: $\e m\mapsto \e_-\otimes \e m$ and an isomorphism \begin{align}
    \label{2}
\e_-\Lrm_c\cong \delta \e\Lrm_{c-1}\end{align} when $c>1$.
Using (\ref{1}) and (\ref{3}), we can define two inductive filtrations on $\Lrm_c$ when $c>1$ and on $\e \Lrm_c$ when $c>0$, denoted by $F^{\ind}$ and $F^{\ind'}$ respectively.

First we give a partial order on the positive rational numbers in the following way: for coprime pairs $(m,n)$
\begin{align}
    \label{order }
\frac{m}{n}\prec \frac{m+n}{n}; \text{ if $n<m$, then $\frac{m}{n}\prec \frac{m}{n}$}.
\end{align}

We can then use the Euclidean algorithm to go from $c=\frac{m}{n}$ to $\frac{1}{n'}$ for some integer $n'>1$ through a chain of rational numbers decreasing under the order $(\Q_{>0},\prec)$. For example, if $c=\frac{13}{5}$, then we have $\frac{13}{5}\succ \frac{3}{5}\succ \frac{5}{3}\succ \frac{2}{3}\succ \frac{3}{2}\succ \frac{1}{2}$.
\begin{defn}\cite[Theorem 4.1]{gors}
For the base case when $m=1$, $F^\ind$ and $F^{\ind'}$ are defined trivially on $\Lrm_c$:  \[0=F^{\star}_{-1}\e\Lrm_{\frac{1}{n}}\subset F^{\star}_0\e\Lrm_{\frac{1}{n}}=\e\Lrm_{\frac{1}{n}}=\Lrm_{\frac{1}{n}}.\] where $\star=\ind$ or $\ind'$. Next $F^{\ind}$ and $F^{\ind'}$ are defined inductively under the order $(\Q_{>0},\prec)$ using the isomorphisms (\ref{1}) and (\ref{3}).
\begin{itemize}
    \item 
When the filtration on the right hand side of (\ref{3}) is the tensor product filtration where $\Hrm_c$ is endowed with the order filtration $F^{\ord}$ such that $\deg y=1$ and $\deg x=\deg w=0$ we get $F^{\ind}$.  
\item When the filtration on the right hand side of (\ref{3}) is the tensor product filtration where $\Hrm_c$ is endowed with the Bernstein filtration $F^{\mathrm{Bern}}$ such that $\deg y=\deg x=1$ and $\deg w=0$ we obtain $F^{\ind'}$.
\end{itemize}
\end{defn}
\begin{lem}
    $F^{\ind'}$ is the Kazhdan filtration associated to $F^{\ind}$ and the $\mathbf{h}$-grading. 
\end{lem}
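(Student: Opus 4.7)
I would argue by induction on $c$ along the partial order $\prec$ recalled above. The base case $c = 1/n$ is immediate: $\e\Lrm_{1/n}$ is one-dimensional with $\mu = 0$, hence concentrated in $\hbf$-weight zero, so both $F^{\ind}$ and $F^{\ind'}$ are trivial and the Kazhdan recipe tautologically relates them.

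For the shift step via (\ref{3}), I would combine two observations. First, on $\Hrm_c$ itself the Bernstein filtration coincides with the Kazhdan filtration of the order filtration with respect to the $\hbf$-grading: a monomial in the PBW basis of total $x$-degree $a$ and total $y$-degree $b$ has order degree $b$, $\hbf$-weight $a-b$, and Bernstein degree $a+b = 2b + (a-b)$, so its Bernstein degree is exactly its Kazhdan-of-order degree. Second, the Kazhdan construction commutes with tensor product filtrations: for any two $\hbf$-graded filtered modules $(V, F)$ and $(W, G)$, a direct manipulation of indices gives
\[K^{F \otimes G}_i(V \otimes W) = \sum_{(2j+k)+(2m+\ell) \le i} F_j(k) \otimes G_m(\ell) = (K^F \otimes K^G)_i(V \otimes W),\]
and the same identity descends to tensor products over a graded filtered subring such as $\e\Hrm_{c-1}\e \subset \e_-\Hrm_c\e_-$ by passing to the induced quotient on both sides. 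Combining these with the inductive hypothesis on $\e\Lrm_{c-1}$,
\[F^{\ind'}_{\Lrm_c} = F^{\mathrm{Bern}}_{\Hrm_c\e_-} \otimes F^{\ind'}_{\e\Lrm_{c-1}} = K^{F^{\ord}_{\Hrm_c\e_-}} \otimes K^{F^{\ind}_{\e\Lrm_{c-1}}} = K^{F^{\ord}_{\Hrm_c\e_-} \otimes F^{\ind}_{\e\Lrm_{c-1}}} = K^{F^{\ind}_{\Lrm_c}}.\]

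The remaining case is the flip (\ref{1}), used to propagate the spherical filtrations from $\e\Lrm_{n/m}$ (with $n/m > 1$) back to $\e\Lrm_{m/n}$. Here I would verify that the CEE isomorphism $\e\Lrm_{m/n} \cong \e\Lrm_{n/m}$ intertwines the $\hbf$-grading on the two sides, so that the Kazhdan relation transports tautologically along the isomorphism. Granting this, the identity $F^{\ind'} = K^{F^{\ind}}$ transfers and the induction closes.

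I expect the main obstacle to be precisely this flipping compatibility: once the $\hbf$-equivariance of the CEE isomorphism is in place, the remainder of the argument is purely formal, resting only on the PBW-level identification of the Bernstein filtration with the Kazhdan of the order filtration on $\Hrm_c$ and on the combinatorial commutation of the Kazhdan construction with tensor products of filtered graded modules.
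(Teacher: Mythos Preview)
Your approach is correct and contains the paper's proof as its kernel: the paper simply asserts that it suffices to show $F^{\mathrm{Bern}} = K^{F^{\ord}}$ on $\Dcal(\ol{\lieh}_{\text{reg}})$ and then carries out exactly the PBW computation you describe, leaving the inductive scaffolding (tensor-product compatibility of the Kazhdan construction, and the flip step) implicit. Your concern about the flip is easily dispatched: the CEE isomorphism $\e\Lrm_{m/n}\cong\e\Lrm_{n/m}$ sends $p_i\mapsto p_i$ and hence preserves polynomial degree, while $\mu=\tfrac{(m-1)(n-1)}{2}$ is symmetric in $m,n$, so the $\hbf$-weight (degree minus $\mu$) is intertwined and the Kazhdan relation transports along the flip as you expected.
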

\begin{proof}
It is sufficient to show that the Bernstein filtration $F^{\text{Bern}}$ on $\Dcal(\ol{\lieh}_{\text{reg}})$ (containing $\Hrm_c$) defined by $\deg(x_i)=\deg({\partial_{x_i}})=1$ is the Kazhdan filtration associated to the the $\mathbf{h}$-grading and the order filtration $F^{\text{ord}}$ defined by $\deg(x_i)=0$ and $\deg({\partial_{x_i}})=1$. 

By definition, $F_i^{\text{ord}}(\Dcal(\ol{\lieh}_{\text{reg}}))=\sum_{|\alpha|\leq i}\Ocal_{\ol{\lieh}_{\text{reg}}}\partial_{x}^\alpha$ with $|\alpha|=\sum_i \alpha_i$ whose associated Kazhdan filtration is
\[K_i^{\text{ord}}(\Dcal(\ol{\lieh}_{\text{reg}}))=\sum_{2j+k\leq i}\sum_{|\alpha|\leq j}\Ocal_{\ol{\lieh}_{\text{reg}}}\partial_{x}^\alpha(k).\]
Now note that the $\mathbf{h}$-grading of $x_i$ is $1$ while that of $\partial_{x_i}$ is $-1$. Therefore 
\[K_i^{\text{ord}}(\Dcal(\ol{\lieh}_{\text{reg}}))=\sum_{2j+|\beta|-|\alpha|\leq i}\sum_{|\alpha|\leq j}\xi_{\alpha,\beta}x^\beta\partial_{x}^\alpha=\sum_{|\beta|+|\alpha|\leq i}\xi_{\alpha,\beta}x^\beta\partial_{x}^\alpha.\]
where $\xi_{\alpha,\beta}\in \C$. This is exactly $F^{\text{Bern}}$ and the lemma is proved.
\end{proof}
\begin{coro}\label{kazhdan}
    The equality $F^\alg=F^{\ind'}$ is equivalent to the equality $F^\liea=F^\ind$.
\end{coro}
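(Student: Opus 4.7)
The plan is to reduce the corollary to a formal inversion of the Kazhdan construction, using the compatibility of both $F^\liea$ and $F^\ind$ with the $\hbf$-grading on $\Lrm_c$. The forward direction is automatic: if $F^\liea = F^\ind$, then applying the Kazhdan construction with the $\hbf$-grading to both sides gives $F^\alg = F^{\ind'}$ by the definition of $F^\alg$ and by the preceding lemma.

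For the converse, the essential step is to verify that $F^\liea_j(\Lrm_c)$ and $F^\ind_j(\Lrm_c)$ are $\hbf$-graded subspaces, i.e.\ $F_j(\Lrm_c) = \bigoplus_k F_j(\Lrm_c)(k)$ where $F_j(\Lrm_c)(k) := F_j(\Lrm_c) \cap \Lrm_c(k)$. For $F^\liea$: the ideal $\liea^{j+1}$ is $\hbf$-graded, and the Dunkl form $(-,-)_c$ pairs $\Lrm_c(k)$ only with $\Lrm_c(k)$ since it is homogeneous of degree zero in polynomial degree (which coincides with $\hbf$-weight up to the shift by $-\mu$); hence $(\liea^{j+1})^{\perp_c}$ is $\hbf$-graded, and composing with $\ol{\Phi}_c$, which sends $\Lrm_c(k)$ to $\Lrm_c(-k)$ by Lemma \ref{ekfk}, preserves this gradedness. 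For $F^\ind$: one argues by induction along $\prec$, using that the order filtration $F^\ord$ on $\Hrm_c$ is $\hbf$-graded because $F^\ord_j(\Hrm_c)$ is the span of expressions of $y$-degree at most $j$ and $\hbf$ preserves $y$-degree (as $[\hbf, y_i] = -y_i$ and $[\hbf, x_i] = x_i$); the tensor product filtration on $\Hrm_c\e_- \otimes_{\e\Hrm_{c-1}\e} \e\Lrm_{c-1}$ then inherits $\hbf$-gradedness under the isomorphism (\ref{3}), and the flipping isomorphism (\ref{1}) is likewise $\hbf$-compatible.

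Once gradedness is in hand, the Kazhdan filtration satisfies the recovery formula
\[K^F_i(\Lrm_c) \cap \Lrm_c(k) \;=\; \sum_{2j+k \leq i} F_j(\Lrm_c)(k) \;=\; F_{\lfloor (i-k)/2 \rfloor}(\Lrm_c)(k),\]
which inverts to $F_j(\Lrm_c)(k) = K^F_{2j+k}(\Lrm_c) \cap \Lrm_c(k)$. Applied to $F^\liea$ and $F^\ind$, the assumption $F^\alg = F^{\ind'}$ gives $F^\liea_j(\Lrm_c)(k) = F^\ind_j(\Lrm_c)(k)$ for all $j, k$, and by $\hbf$-gradedness this promotes to $F^\liea_j(\Lrm_c) = F^\ind_j(\Lrm_c)$.

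The main obstacle is the verification of $\hbf$-gradedness of $F^\ind$ along the inductive chain: one must check that the identification $\e_-\Hrm_c\e_- \cong \delta\,\e\Hrm_{c-1}\e\,\delta^{-1}$ used to define the right $\e\Hrm_{c-1}\e$-action on $\Hrm_c \e_-$ respects the $\hbf$-grading (noting that $\delta$ is $\hbf$-homogeneous of $x$-weight $\binom{n}{2}$), and that the embedding $\e\Lrm_{c-1} \hookrightarrow \e\Lrm_c$ induced by $\e m \mapsto \e_- \otimes \e m$ does as well. Once these points are handled, $\hbf$-gradedness propagates through each Euclidean step and the corollary follows from the Kazhdan inversion argument above.
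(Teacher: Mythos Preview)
Your proposal is correct and supplies precisely the argument the paper leaves implicit: the corollary is stated without proof, as an immediate consequence of the preceding lemma identifying $F^{\ind'}$ as the Kazhdan filtration associated to $F^\ind$. Your observation that the Kazhdan construction is invertible on $\hbf$-graded filtrations, together with the verification that both $F^\liea$ and $F^\ind$ are $\hbf$-graded, is exactly what is needed and is the natural way to unpack the paper's claim.
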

\begin{lem}\label{ef}
\begin{enumerate}[(1)]
    \item $\mathbf{f}\cdot F^{\liea}_i\Lrm_c\subset F^{\liea}_{i+1}\Lrm_c$, for $c>0$
    \item $\mathbf{f}\cdot F^{\ind}_i\e\Lrm_c\subset F^{\ind}_{i+1}\e\Lrm_c$ for $c>0$ and $\mathbf{f}\cdot F^{\ind}_i\Lrm_c\subset F^{\ind}_{i+1}\Lrm_c$ for $c>1$.
    \item $\mathbf{e}\cdot F^{\liea}_i\Lrm_c\subset F^{\liea}_{i-1}\Lrm_c$, for $c>0$
    \item $\mathbf{e}\cdot F^{\ind}_i\e\Lrm_c\subset F^{\ind}_{i-1}\e\Lrm_c$ for $c>0$ and $\mathbf{e}\cdot F^{\ind}_i\Lrm_c\subset F^{\ind}_{i-1}\Lrm_c$ for $c>1$.
    \item  Assume $v$ is a highest weight vector with respect to the $\mathfrak{sl}_2$ triple $\{\ebf,\fbf,\hbf\}$ and $v\in F_j^\ind\Lrm_c\setminus F_{j-1}^\ind \Lrm_c$. Then for $ i\geq 0$ such that $\fbf^iv\neq 0$, we have $\fbf^iv\in F_{j+i}^\ind\Lrm_c\setminus F_{j+i-1}^\ind\Lrm_c$. The same holds for $F^\liea$ in place $F^\ind$.
\end{enumerate}

\end{lem}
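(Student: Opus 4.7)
The plan is to handle (1) and (3) via the Fourier description of $F^\liea$, (2) and (4) by induction on $c$ under the order $\prec$, and (5) as a formal $\mathfrak{sl}_2$-theoretic consequence. Using $F^\liea_i = \ol{\Phi}_c((\liea^{i+1})^{\perp_c})$ together with the Fourier relations $\Phi_c^{-1}(\ebf) = -\fbf$ and $\Phi_c^{-1}(\fbf) = -\ebf$, I reduce (3) to the inclusion $\fbf(\liea^{i+1})^{\perp_c} \subset (\liea^i)^{\perp_c}$, and (1) to $\ebf(\liea^{i+1})^{\perp_c} \subset (\liea^{i+2})^{\perp_c}$. For (3), the adjointness $(\fbf\phi,\psi)_c = -(\phi,\ebf\psi)_c$ (obtained by iterating the identity (\ref{xy}) on squared differences), combined with $\ebf = \tfrac{1}{2}p_2 \in \liea$, gives $(\fbf\phi,\psi)_c = 0$ for $\phi \in (\liea^{i+1})^{\perp_c}$ and $\psi \in \liea^i$, since $\ebf\psi \in \liea^{i+1}$.

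By non-degeneracy of $(-,-)_c$ on $\Lrm_c$, (1) is equivalent to the technical claim $\fbf\liea^j \subset \liea^{j-1}$ for all $j \geq 1$, which I will prove by induction on $j$. Writing $\xi \in \liea^j$ as $\xi = \sum f \cdot p_{n_1}\cdots p_{n_j}$ with $f \in \C[\lieh]$ and $p_{n_l} \in \C[\lieh]^W_+$, I use $\fbf\xi = [\fbf,\xi](1)$ (since $\fbf\cdot 1 = 0$ on the polynomial representation) and iterate Leibniz. For $p$ symmetric the commutator equals $[\fbf, p] = -\tfrac{1}{2n}\sum_{a<b}\{(y_a-y_b)(\partial_{x_a}-\partial_{x_b})(p) + (\partial_{x_a}-\partial_{x_b})(p)(y_a-y_b)\}$, and the key identity $(y_a-y_b)(p\phi) = p(y_a-y_b)\phi + (\partial_{x_a}-\partial_{x_b})(p)\cdot \phi$ for symmetric $p$ lets me migrate each symmetric factor outward, leaving the remainder in $\liea^{j-1}$.

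For (2) and (4), I induct on $c \in \Q_{>0}$ under $\prec$. The base case $c = 1/n$ is trivial since $F^\ind$ on $\Lrm_{1/n}$ has a single nontrivial step. For $c > 1$, use the shift iso $\Lrm_c \cong \Hrm_c \e_-\otimes_{\e\Hrm_{c-1}\e}\e\Lrm_{c-1}$; under the BEG identification $\Theta:\e\Hrm_{c-1}\e \to \e_-\Hrm_c\e_-$, $r\mapsto \delta r\delta^{-1}$, the classical shift result $\delta^{-1}y_i^{(c)}\delta = y_i^{(c-1)}$ yields $\Theta(\fbf^{(c-1)}) = \fbf^{(c)}$ and $\Theta(\ebf^{(c-1)}) = \ebf^{(c)}$. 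Hence $\fbf^{(c)}(a\otimes m) = a \otimes \fbf^{(c-1)}m + [\fbf^{(c)}, a]\otimes m$; the induction hypothesis places $\fbf^{(c-1)}m$ in $F^\ind_{|m|+1}(\e\Lrm_{c-1})$, while the standard order-of-commutator bound $|[\fbf^{(c)}, a]| \leq |a|+1$ (using $|\fbf^{(c)}| = 2$ in $F^\ord$ and that brackets lose one order in the $y$'s) places the second summand in the same filtration level. Both land in $F^\ind_{i+1}$ when $a\otimes m \in F^\ind_i$, giving (2); the analogous argument for (4) uses $|\ebf^{(c)}| = 0$ and the opposite shift direction. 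When the recursion requires $\e\Lrm_c$ with $m < n$, I first invoke the flipping iso $\e\Lrm_{m/n} \cong \e\Lrm_{n/m}$, which interchanges $\ebf$ and $\fbf$ and transports the bounds.

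Finally, for (5), the two-sided shifts combined with $[\ebf,\fbf] = \hbf$ force $\hbf F^\ind_j \subset F^\ind_j$, so $\mathfrak{sl}_2$ acts on $\gr^\ind\Lrm_c$ with $\ebf$, $\fbf$ of $\gr^\ind$-degrees $-1$, $+1$ and $\hbf$ of degree $0$. A highest weight vector $v \in F^\ind_j \setminus F^\ind_{j-1}$ has $\ebf v = 0$, so $\ebf[v] = [\ebf v] = 0$ in $\gr^\ind_{j-1}$, making $[v] \in \gr^\ind_j$ a highest weight of its $\mathfrak{sl}_2$-subrep with the same $\hbf$-weight as $v$. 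Standard $\mathfrak{sl}_2$-theory then gives $\fbf^i[v] \neq 0$ precisely when $\fbf^i v \neq 0$, so $[\fbf^i v] = \fbf^i[v]$ is nonzero in $\gr^\ind_{j+i}$, placing $\fbf^iv$ in $F^\ind_{j+i}\setminus F^\ind_{j+i-1}$. The same argument handles the $F^\liea$ case using (1), (3). The chief obstacle is the inductive claim $\fbf\liea^j \subset \liea^{j-1}$ underlying (1): the Dunkl operators are not honest derivations, and the reflection corrections $-c\sum\tfrac{1-s_{il}}{x_i-x_l}$ require careful bookkeeping to maintain the $\liea$-depth throughout the Leibniz expansion.
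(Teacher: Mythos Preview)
Your proof is correct and follows essentially the same route as the paper: Fourier transport for (1) and (3), induction on $c$ under $\prec$ for (2) and (4), and a formal $\mathfrak{sl}_2$ argument for (5). A few comparative remarks are in order.

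For (1), the paper uses Lemma~\ref{power} to reduce to the cleaner statement $\fbf\cdot\C[\lieh]^W_{i+2}\subset\C[\lieh]^W_{\geq i+1}$, i.e.\ only symmetric inputs, and then invokes the Leibniz formula for $\fbf$ on products of power sums directly. You instead target the stronger inclusion $\fbf\,\liea^{j}\subset\liea^{j-1}$ for arbitrary coefficients. This works, but requires the extra bookkeeping you allude to: one must also control $[\fbf,f](\psi)$ for non-symmetric $f$, splitting $[(y_a-y_b)^2,f]$ into its two pieces and using that $[y_a-y_b,f]\in\C[\lieh]\rtimes W$ acts on symmetric $\psi$ by a polynomial multiple, after which the identity $(y_a-y_b)(p\phi)=p(y_a-y_b)\phi+((\partial_a-\partial_b)p)\phi$ finishes the job. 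The paper's reduction via Lemma~\ref{power} avoids this non-symmetric coefficient entirely.

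For (2) and (4), your argument is exactly the paper's: split $\fbf(a\otimes m)=[\fbf,a]\otimes m+a\otimes\fbf m$, use $[\fbf,a]\in F^{\ord}_{|a|+1}$, and apply the inductive hypothesis on the second factor. One slip: the flip $\e\Lrm_{m/n}\cong\e\Lrm_{n/m}$ does \emph{not} interchange $\ebf$ and $\fbf$---since it sends $p_i\mapsto p_i$ it preserves $\ebf=\tfrac{1}{2}p_2$ and likewise $\fbf$. This is harmless for your argument (the bounds transport in the same direction, not the swapped one), but the parenthetical should be corrected.

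For (5), the paper simply says it ``follows from (1)--(4)''; your graded-$\mathfrak{sl}_2$ argument makes this explicit and is correct.
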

\begin{proof}
We will only prove (1) and (2). (3) and (4) can be shown by an analogous argument. (5) follows from (1)-(4).

By definition, (1) is to say $\fbf\cdot \ol{\Phi}_c((\liea^{i+1})^{\perp_c})\subset \ol{\Phi}_c((\liea^{i+2})^{\perp_c})$, or equivalently $\ebf \cdot (\liea^{i+1})^{\perp_c}\subset (\liea^{i+2})^{\perp_c}$. By Lemma \ref{power} and equation (\ref{xy}): $((x_i-x_j)\phi,\psi)_c=(\phi,(y_i-y_j)\psi)_c$, it suffices to show $\fbf\cdot \C[\lieh]^W_{i+2}\subset \C[\lieh]^W_{\geq i+1}$.

Recall that $\fbf=-\frac{1}{2n}\sum_{i<j} (y_i-y_j)^2$. We write  $\wt{\nabla}f=(\frac{\partial f}{\partial_{x_i}}-\frac{\partial f}{\partial_{x_j}})_{i<j}$
and use \cite[Corollary 5.3]{gorskyarc} to compute that for $2\leq j_1,\cdots j_{i+2}\leq n$
\begin{align*}
-2n\fbf\cdot (p_{j_1}\cdots p_{j_{i+2}})=&\left(-2n\fbf\cdot p_{j_1}\right)p_{j_2}\cdots p_{j_{i+2}}+\cdots +p_{j_1}p_{j_2}\cdots \left( -2n\fbf \cdot p_{j_{i+2}}\right)\\
&+(\wt{\nabla} p_{j_1}\cdot \wt{\nabla} p_{j_2})\cdots p_{j_{i+2}}+p_{j_1}\cdots (\wt{\nabla} p_{j_i}\cdot \wt{\nabla} p_{j_{i+2}})
\end{align*}
which does lie in $\C[\lieh]^W_{\geq i+1}$.

Next, we prove (2) by induction. 
Trivially, part (2) of the lemma holds for $\e\Lrm_{1/n}$ for all $n>1$. Now for $c=m/n$ where $m>1$ we assume the statement holds for all $(m',n')\prec (m,n)$. For $\e\Lrm_c$ we may assume $c>1$ otherwise we take $\e\Lrm_{1/c}$ given axiom (1) for $F^{\ind}$. Thus we only need to prove the statement for $\Lrm_c$.

Take $\sum \xi_k\otimes \eta_k\in \Hrm_c \delta \e_-\otimes_{\e_-\Hrm_c\e_-}\e\Lrm_{c-1}$ so that $\xi_k\in F_\alpha \Hrm_c\delta \e_-$ and $\eta_k\in F^{\ind}_\beta \e\Lrm_{c-1}$ and $\alpha+\beta\leq i$. Apply $\Phi_c(p_2)$ and we obtain
\[\sum \Phi_c(p_2)\xi_i\otimes \eta_i=\sum [\Phi_c(p_2),\xi_i]\otimes \eta_i + \xi_i\otimes \Phi_c(p_2)\eta_i.\]
Here $[\Phi_c(p_2),\xi_i]\in F_{\alpha+1}$ and $\Phi_c(p_2)\eta_i\in F^{\ind}_{\alpha+1}$ by the inductive hypothesis. Hence (2) of the lemma follows.
\end{proof}
\begin{coro}\label{sl2}
\begin{enumerate}[(1)]
    \item $\ebf(F_i^\alg\Lrm_c)\subset F_i^\alg\Lrm_c$ and $\ebf(F_i^{\ind'}\Lrm_c)\subset F_i^{\ind'}\Lrm_c$.
     \item $\fbf(F_i^\alg\Lrm_c)\subset F_i^\alg\Lrm_c$ and $\fbf(F_i^{\ind'}\Lrm_c)\subset F_i^{\ind'}\Lrm_c$.
    \item $\ol{\Phi}_c(F_i^\alg\Lrm_c)= F_i^\alg\Lrm_c$ and $\ol{\Phi}_c(F_i^{\ind'}\Lrm_c)=F_i^{\ind'}\Lrm_c$.
    \item Assume $v$ is a highest weight vector under the $\mathfrak{sl}_2$ triple $\{\ebf,\fbf,\hbf\}$ and $v\in F_j^{\ind'}\Lrm_c\setminus F_{j-1}^{\ind'} \Lrm_c$. Then for $i\geq 0$ such that $\fbf^iv\neq 0$, $\fbf^iv\in F_{j}^{\ind'}\Lrm_c\setminus F_{j-1}^{\ind'}\Lrm_c$. The same holds for $F^\alg$ in place $F^{\ind'}$.
\end{enumerate}
\end{coro}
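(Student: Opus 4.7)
The plan is to derive Corollary \ref{sl2} as a direct bookkeeping consequence of Lemma \ref{ef}, using the explicit description of $F^\alg$ (resp.\ $F^{\ind'}$) as the Kazhdan filtration built from $F^\liea$ (resp.\ $F^\ind$) and the $\hbf$-weight grading. The crucial numerical observation is that $\ebf$ raises $\hbf$-weight by $2$ while Lemma \ref{ef}(3) says it lowers the $F^\liea$-index by $1$; dually $\fbf$ lowers the weight by $2$ and Lemma \ref{ef}(1) raises the $F^\liea$-index by $1$. Either way the combined Kazhdan index $2j+k$ is left unchanged, which is exactly what is needed.

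For part (1), I would take $v\in F_i^\alg\Lrm_c$ and decompose it into $\hbf$-weight components $v_k\in F^\liea_{j_k}(k)$ satisfying $2j_k+k\leq i$. Lemma \ref{ef}(3) gives $\ebf v_k\in F^\liea_{j_k-1}(k+2)$, whose Kazhdan index equals $2(j_k-1)+(k+2)=2j_k+k\leq i$, so $\ebf v\in F_i^\alg\Lrm_c$. Part (2) is symmetric using Lemma \ref{ef}(1): $\fbf v_k\in F^\liea_{j_k+1}(k-2)$ again has Kazhdan index $2j_k+k$. The statements for $F^{\ind'}$ follow by the same computation with Lemma \ref{ef}(2) and (4) replacing (1) and (3).

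For part (3), I would use the formula $\ol{\Phi}_c=\e^{\frac{i\pi}{2}(\ebf-\fbf)}$ from (\ref{phionL}). Since $\Lrm_c$ is finite-dimensional, this exponential acts as a polynomial in $\ebf-\fbf$. Parts (1) and (2) show that $\ebf-\fbf$ preserves every $F_i^\alg\Lrm_c$, hence so does $\ol{\Phi}_c$. Since $\ol{\Phi}_c$ is an invertible endomorphism and each $F_i^\alg\Lrm_c$ is finite-dimensional, the inclusion must be an equality by dimension counting. The same argument works verbatim for $F^{\ind'}$.

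For part (4), I would translate the Kazhdan-filtration condition into a statement about $F^\ind$. A highest weight vector $v$ has a definite $\hbf$-weight $k_0$, and the condition $v\in F_j^{\ind'}\Lrm_c\setminus F_{j-1}^{\ind'}\Lrm_c$ is equivalent, via the Kazhdan definition, to the existence of an integer $a$ with $v\in F_a^\ind\Lrm_c\setminus F_{a-1}^\ind\Lrm_c$ and $j=2a+k_0$. Lemma \ref{ef}(5) then gives $\fbf^iv\in F_{a+i}^\ind\Lrm_c\setminus F_{a+i-1}^\ind\Lrm_c$, and since $\fbf^iv$ has $\hbf$-weight $k_0-2i$, its Kazhdan index equals $2(a+i)+(k_0-2i)=j$, whence $\fbf^iv\in F_j^{\ind'}\Lrm_c\setminus F_{j-1}^{\ind'}\Lrm_c$. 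The same calculation with $F^\liea$ in place of $F^\ind$ gives the statement for $F^\alg$. There is no substantive obstacle here: the entire corollary is pure bookkeeping, with the only mild subtlety being the reduction of the exponential $\ol{\Phi}_c$ to a polynomial operator in step (3), which is immediate from finite-dimensionality of $\Lrm_c$.
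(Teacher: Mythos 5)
Your proposal is correct, and for parts (1) and (2) it is exactly the paper's argument: $\ebf$, $\fbf$ shift the $\hbf$-weight by $\pm 2$ while Lemma \ref{ef} shifts the $F^{\liea}$ (resp.\ $F^{\ind}$) index by $\mp 1$, so the Kazhdan index $2j+k$ is unchanged. The only (harmless) divergences are in (3) and (4): the paper deduces (3) from Lemma \ref{ekfk}, which says that on each weight space $\ol{\Phi}_c$ acts by a nonzero multiple of a power of $\ebf$ or $\fbf$, whereas you instead expand $\ol{\Phi}_c=\e^{\frac{i\pi}{2}(\ebf-\fbf)}$ as a polynomial in $\ebf-\fbf$ on the finite-dimensional module and conclude by invertibility --- both are valid, and yours avoids invoking Lemma \ref{ekfk}; for (4) the paper states that it follows from (1)--(3) (in effect one applies $\ebf^i$ to $\fbf^i v$ to recover a nonzero multiple of $v$ and get the non-membership in $F_{j-1}$), while you translate the Kazhdan condition for a pure-weight vector into the statement $v\in F_a^{\ind}\setminus F_{a-1}^{\ind}$ with $j=2a+k_0$ and quote Lemma \ref{ef}(5) directly; this bookkeeping is correct and if anything slightly more explicit than the paper's one-line justification.
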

\begin{proof}
    Notice that $\mathbf{e}\cdot \Lrm_c(k)\subset \Lrm_c(k+2)$ while $\mathbf{f}\cdot \Lrm_c\subset \Lrm_c(k-2)$. Hence (1) and (2) of the corollary follow from Lemma \ref{ef} and the fact that $F^\alg$ is the Kazhdan filtrations associated to $F^\liea$ and $F^{\ind'}$ is the Kazhdan filtrations associated to $F^\ind$. (3) follows from (1) and (2) plus Lemma \ref{ekfk}. (4) follows from (1),(2) and (3).
\end{proof}
\subsection{Some first relations between $F^\liea$ and $F^\ind$}
It is discussed in \cite[Theorem 4.8]{gors} that $F^\alg$ is compatible with (\ref{1}) and (\ref{2}). It is not hard to see that $F^\liea$ is also compatible with (\ref{1}) and (\ref{2}): According to \cite[8.2]{cee} the isomorphism $\e\Lrm_{\frac{m}{n}}\cong \e\Lrm_{\frac{n}{m}}$ can be defined by (up to scalars) \[p_i(x_1,\dots, x_n)\mapsto p_i(x_1,\dots, x_m),\] 
which is clearly a filtered homomorphism under $F^\liea$. Also, the following lemma shows that $F^\liea$ is compatible with (\ref{2}):
\begin{lem}\label{delta1}
    $\delta \left((\liea^i)^{\perp_c}\right)^W\subset (\liea^i)^\perp$ and  $\Phi_c(\delta) \left((\liea^i)^\perp\right)^{\sign}\subset (\liea^i)^\perp$.
\end{lem}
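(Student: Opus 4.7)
The strategy is to prove the two inclusions separately, leveraging the adjoint identities furnished by (\ref{xy}) together with the $W$-equivariance of the Dunkl form. The second inclusion turns out to be essentially formal, while the first requires a genuine calculation.

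For the second inclusion $\Phi_c(\delta)\bigl((\liea^i)^{\perp_c}\bigr)^{\sign}\subset (\liea^i)^{\perp_c}$, the argument is one line. Taking $\phi\in ((\liea^i)^{\perp_c})^{\sign}$ and $\psi\in \liea^i$, combining (\ref{xy}) with the symmetry of the Dunkl form yields the adjoint identity
\[
(\Phi_c(\delta)\phi,\psi)_c=(\phi,\delta\psi)_c.
\]
Since $\liea^i$ is an ideal of $\C[\lieh]$ and $\delta\in \C[\lieh]$, we have $\delta\psi\in \liea^i$, so the pairing vanishes by $\phi\perp_c \liea^i$.

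For the first inclusion $\delta\bigl((\liea^i)^{\perp_c}\bigr)^W\subset (\liea^i)^{\perp_c}$, the relation (\ref{xy}) gives $(\delta\phi,\psi)_c=(\phi,\Phi_c(\delta)\psi)_c$ for $\phi\in ((\liea^i)^{\perp_c})^W$ and $\psi\in \liea^i$. By Lemma \ref{isotypic}, the $W$-invariance (\ref{invariant}) of the form, the symmetry of $\phi$, and the antisymmetry of $\Phi_c(\delta)$ under the $W$-action together imply that only the sign-isotypic component of $\psi$ contributes. Using the free $\C[\lieh]^W$-module structure of $\C[\lieh]$ whose sign-isotypic component is exactly $\delta\cdot \C[\lieh]^W$, the sign-isotypic part of $\liea^i$ equals $\delta\cdot(\C[\lieh]^W_+)^i$. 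Hence we may assume $\psi=\delta s$ with $s\in (\C[\lieh]^W_+)^i$, and the problem reduces to showing
\[
\Phi_c(\delta)(\delta s)\in \liea^i \quad\text{in } \Lrm_c,
\]
after which $(\phi,\Phi_c(\delta)\delta s)_c=0$ follows from $\phi\perp_c \liea^i$.

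The remaining claim is the main obstacle. I would establish it by induction on $i$ using the derivation rule $y_j(fs)=y_j(f)s+f\partial_j(s)$, valid whenever $s\in \C[\lieh]^W$. Expanding $\Phi_c(\delta)=\sum_{\sigma}\sign(\sigma)\,y_{\sigma(2)}y_{\sigma(3)}^2\cdots y_{\sigma(n)}^{n-1}$ as in the proof of Proposition \ref{perp} and iteratively moving Dunkl operators past the symmetric factor $s$, each $\partial_j$ lowers the $\liea$-depth of $s$ by at most one (since $\partial_j$ maps $(\C[\lieh]^W_+)^i$ into $\C[\lieh]\cdot(\C[\lieh]^W_+)^{i-1}$), but the preceding factor $\delta$ and the antisymmetrization over $\sigma$ conspire to keep the net output in $(\liea^i)^W$. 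The delicacy is that $\Phi_c(\delta)$ alone is a Dunkl--Vandermonde operator of total order $\binom{n}{2}$ which could naively drop depth by $\binom{n}{2}$; the saving feature is that the $W$-antisymmetric combination of Dunkl operators acts far more restrictively on a $W$-invariant function than on a generic polynomial, and this cancellation — together with the harmonic nature of $\delta$ — is precisely what preserves the depth $i$.
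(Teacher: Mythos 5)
Your reduction steps are fine: the one-line argument for the second inclusion (adjointness of $\Phi_c(\delta)$ and multiplication by $\delta$, plus $\liea^i$ being an ideal) is correct, and for the first inclusion the isotypic argument correctly shows it suffices to prove $(\phi,\Phi_c(\delta)(\delta s))_c=0$ for symmetric $\phi\perp_c\liea^i$ and $s\in(\liea^i)^W$. The problem is that your "remaining claim" $\Phi_c(\delta)(\delta s)\in\liea^i$ is where the entire content of the lemma sits, and the final paragraph does not prove it. Indeed, since the form is nondegenerate on $\Lrm_c$ and only invariants can pair with the invariant element $\Phi_c(\delta)(\delta s)$, that claim is equivalent to: every symmetric $\phi$ with $\phi\perp_c\liea^i$ pairs trivially with $\delta\cdot(\liea^i)^W$ under the transferred form $(\delta\,\cdot\,,\delta\,\cdot\,)_c$ --- i.e.\ it is a restatement of the lemma, not a reduction to something simpler. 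The appeal to "cancellation" and "the harmonic nature of $\delta$" is not an argument, and the proposed induction via the Leibniz rule $y_j(fs)=y_j(f)s+f\partial_j(s)$ only applies at the first step: after one Dunkl operator is applied the result is no longer of the form (polynomial)$\times$(invariant), so the rule cannot be iterated as you describe, and the naive depth count (which, as you concede, loses up to $\binom{n}{2}$) is never repaired.

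The missing input, and what the paper actually uses, is the Dunkl shift/factorization identity $(\delta f,\delta g)_c=(\delta,\delta)_c\,(f,g)_{c-1}$ (\cite[Corollary 4.5]{dunklsingular}), combined with $(\delta,\delta)_c\neq 0$ (from the proof of Proposition \ref{perp}) and the criterion of Lemma \ref{power}: the paper argues by contradiction, writing a nonvanishing pairing of $\delta\psi$ against a skew element $\delta\eta\phi$ of $\liea^i$ and converting it via this identity into a nonvanishing pairing of invariants at parameter $c-1$, which is then ruled out. In other words, the lemma is genuinely about the interplay between multiplication by $\delta$ and the parameter shift $c\to c-1$ (this is why it is stated as compatibility of $F^\liea$ with the isomorphism $\e_-\Lrm_c\cong\delta\e\Lrm_{c-1}$), and some version of the shift identity is unavoidable. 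Your proposal neither invokes nor reproves any statement of that strength, so as written it has a genuine gap at its central step; incidentally, your proof of the second inclusion never uses the $\sign$-isotypic hypothesis, which is a sign that it is not capturing the parameter-shift content the lemma is meant to carry.
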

\begin{proof}
We will only show that $\delta \left((\liea^i)^\perp\right)^W\subset (\liea^i)^\perp$. The proof of the other statement is similar. Assume there exists $\phi\in \C[\lieh]^W_i$ and $\psi\in \left((\liea^i)^\perp\right)^W$ such that $\Phi_c(\phi)(\delta\psi)\neq 0$. Then since $\Phi_c(\phi)(\delta\psi)\in \C[\lieh]^{\sign}=\delta\C[\lieh]^W$, which intersects $\Hrm_c=\liea^\perp$ only at $\{0\}$, there exists some symmetric polynomial $\eta$ (possibly constant) satisfying $\Phi_c(\eta)\Phi_c(\phi)(\delta\psi)=\delta$. From the proof of Lemma \ref{perp}, we see that $\Phi_c(\delta)\delta\neq 0$. As a result, 
$\Phi_c(\delta\eta\phi)(\delta\psi)$ is a nonzero constant. As a result, \begin{align}
    \label{dunkl}
0\neq (\Phi_c(\delta)(\delta\psi), \eta\phi)_c=(\delta \psi, \delta \eta \phi)_c=(\delta,\delta)_c(\psi,
\delta\eta)_{c-1}\end{align} where the second identity is proved in \cite[Corollary 4.5]{dunklsingular}. This contradicts the assumption that $\psi\in (\liea^i)^{\perp_c}$. Therefore $\Phi_c(\phi)(\delta\psi)\neq 0$ and the first statement follows from Lemma \ref{power}.
\end{proof}
Our ultimate goal is to show 
\begin{thm}\label{thm}
    When $c>1$, for all $i\geq 0$, $F_i^\ind \Lrm_c=F_i^\liea \Lrm_c$.
\end{thm}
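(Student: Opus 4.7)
By Corollary \ref{kazhdan} it suffices to prove the un-Kazhdanized equality $F^\liea = F^\ind$ on $\Lrm_c$, and I would proceed by induction on $c \in \Q_{>0}$ under the order $\prec$. The base case $c = 1/n$ is trivial since $\Lrm_{1/n}$ is one-dimensional. For the inductive step with $c = m/n$ and $m > n$, the predecessor $c - 1 = (m-n)/n$ satisfies $c - 1 \prec c$, so by the inductive hypothesis together with the compatibility of $F^\liea$ with the flip (\ref{1}) and with the sign-isotypic identification (\ref{2}) (Lemma \ref{delta1} and the discussion preceding it), the equality $F^\liea = F^\ind$ already holds on the image $\delta \cdot \e \Lrm_{c-1} \subset \e_- \Lrm_c$. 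The strategy is then to bootstrap this agreement to all of $\Lrm_c$ using the presentation (\ref{3}), which realises $\Lrm_c$ as the span of $\e \Lrm_{c-1}$ under the left action of $\Hrm_c$.

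\textbf{Easy direction.} To show $F^\ind_i \subseteq F^\liea_i$, I would take a representative tensor $\xi \otimes \eta$ with $\xi \in F^{\mathrm{ord}}_\alpha(\Hrm_c \delta \e_-)$ and $\eta \in F^\ind_\beta \e \Lrm_{c-1}$ satisfying $\alpha + \beta \leq i$. By Lemma \ref{power}, membership of $\xi \cdot \eta$ in $F^\liea_i$ amounts to the vanishing of $\Phi_c(\psi)(\xi \cdot \eta)$ for every $\psi \in \C[\lieh]^W_{i+1}$. Commuting $\Phi_c(\psi)$ past $\xi$ via (\ref{xy}) and noting that each Dunkl $y_i - y_j$ raises the order filtration on $\Hrm_c$ by exactly one reduces the question to a computation internal to $\e\Lrm_{c-1}$, which is handled by the induction hypothesis together with Lemma \ref{eulerfield}.

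\textbf{Hard direction and main obstacle.} For the reverse inclusion $F^\liea_i \subseteq F^\ind_i$, I would use a graded dimension count, leveraging that both filtrations are preserved by the $\mathfrak{sl}_2$-triple and the Fourier involution (Corollary \ref{sl2}) and the decomposition $\Lrm_c = \Hcal_c \cdot \e\Lrm_c$ of Lemma \ref{basis}. Writing elements in the form $\sum h_\ell \psi_\ell$ with $h_\ell$ harmonic and $\psi_\ell$ symmetric, the problem becomes a linear-algebra question on a direct sum of copies of the coinvariant algebra $\C[\lieh]/\liea$, one copy per basis element of $\Hcal_c$. The crux, and the main obstacle, is the identification of $\liea^{i+1} \cap I_c$, the intersection of a power of the augmentation ideal with the kernel of the Dunkl form: this is where the ``obstruction to induction'' alluded to in the introduction lives, since on $\C[\lieh]$ the power filtration is straightforward but one must track precisely which relations survive the quotient to $\Lrm_c$. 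My plan is to exhibit a distributive lattice structure on the indexing set of power-sum monomials so that its rank generating function pins down the required dimensions and matches them with the output of the inductive construction. In the boundary case $m = 2n - 1$, where the lattice is expected to degenerate, the bigraded cohomology of the Springer fiber at a minimal nilpotent element should supply the missing dimension data needed to close the argument.
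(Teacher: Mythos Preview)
Your easy direction is exactly Lemma~\ref{containment}, and the commutator argument you sketch is the right one. The hard direction, however, has real gaps.

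First, the paper does \emph{not} do a dimension count for $F^\liea_i\subseteq F^\ind_i$. It proves the inclusion constructively via a \emph{double} induction, on $c$ under $\prec$ \emph{and} on the filtration index $\ell$. The inner induction on $\ell$ is the point: given $\phi\in(\liea^{\ell+1})^{\perp_c}$, Lemma~\ref{h} rewrites $\phi$ as a sum $\sum (x_i-x_j)(y_i-y_j)\phi$ up to a scalar, so once you know each $(y_i-y_j)\phi\in(\liea^\ell)^{\perp_c}$ you get $\ol\Phi_c(\phi)=\sum(y_i-y_j)\ol\Phi_c((y_i-y_j)\phi)\in F^\ind_\ell$ from the hypothesis at level $\ell-1$. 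This is codified in Proposition~\ref{yphi}, which you do not mention and which is the pivotal reduction: the question ``is $F^\liea_\ell=F^\ind_\ell$?'' becomes ``if $\phi\in\Scal_\ell$ and $(y_k-y_{k+1})\phi\in\liea^\ell$ for all $k$, must $\phi\in I_c$?''. You also omit the orthogonal splitting $(\liea^{\ell+1})_\ell^{\perp_c}=\Scal_\ell\oplus\Scal_{\ell,\perp_c}$ (Lemmas~\ref{notinI}, \ref{Icperp}); the piece $\Scal_{\ell,\perp_c}$ is handled by the induction on $c$, and only the piece $\Scal_\ell$ coming from $I_{c-1}^W$ needs the coinvariant-algebra analysis.

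Second, the distributive lattice is not on an ``indexing set of power-sum monomials'' and is not used via a rank generating function. It is the lattice generated by the subspaces $V_i=\Rcal\cdot(x_i^{n-1},\ldots,x_i^{m-n})\subset\Rcal^{2n-m}$, and its role is to establish the single exactness statement $\im A=\Ker B$ (Corollary~\ref{equal}). Through Lemma~\ref{ABequivalent} this exactness is precisely the criterion of Proposition~\ref{yphi}(ii)$\Rightarrow$(iv). Finally, the Springer fiber at $m=2n-1$ is a side remark (\S\ref{minimalspringer}) giving an alternative proof of $\rank A=n!/2$ in that case; the main argument works uniformly for all $n<m<2n$ and does not rely on it, so your plan to invoke it as ``missing dimension data'' in a degenerate case is unnecessary and misreads its function.
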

The containment in one direction is immediate:
\begin{lem}\label{containment}
    When $c>1$, $F_i^\ind\Lrm_c\subset F^\liea_i\Lrm_c$
\end{lem}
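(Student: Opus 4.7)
The plan is induction on the order $\prec$ on coprime positive integer pairs $(m,n)$, using the compatibility of $F^\liea$ with the building-block isomorphisms (\ref{1}) and (\ref{2}) already established in the paper, plus a new compatibility of $F^\liea$ with the shift isomorphism (\ref{3}). The base case $c = 1/n$ is trivial since $\Lrm_{1/n}$ is one-dimensional. For the inductive step at $c = m/n > 1$ I invoke (\ref{3}) and read off the tensor-product filtration: an element $v \in F^\ind_i \Lrm_c$ can be written as $v = \sum_k \xi_k \cdot (\delta\, m_k)$ with $\xi_k \in F^\ord_{\alpha_k} \Hrm_c \e_-$, $m_k \in F^\ind_{\beta_k} \e\Lrm_{c-1}$, and $\alpha_k + \beta_k \leq i$. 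The inductive hypothesis applied to $\e\Lrm_{c-1}$ (routed through (\ref{1}) when $c-1 < 1$, which only shrinks the pair under $\prec$) gives $m_k \in F^\liea_{\beta_k} \e\Lrm_{c-1}$, and Lemma \ref{delta1} upgrades this to $\delta\, m_k \in F^\liea_{\beta_k} \Lrm_c$.

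It then remains to prove the key compatibility
\[F^\ord_\alpha \Hrm_c \cdot F^\liea_\beta \Lrm_c \subset F^\liea_{\alpha+\beta}\Lrm_c \qquad \text{for all } \alpha, \beta \geq 0.\]
By PBW it is enough to check this on three classes of generators. First, $W \subset \Hrm_c$ preserves $F^\liea$, because the Dunkl form is $W$-invariant by (\ref{invariant}) and $\ol{\Phi}_c$ commutes with $W$. Second, $x_i - x_j$ preserves $F^\liea_\beta$: using $\Phi_c^{-1}(x_i) = -y_i$ and that $\ol{\Phi}_c$ is algebra-conjugation, one computes $(x_i-x_j)\ol{\Phi}_c(\phi) = \ol{\Phi}_c(-(y_i-y_j)\phi)$, so the claim reduces to $(y_i - y_j)(\liea^{\beta+1})^{\perp_c} \subset (\liea^{\beta+1})^{\perp_c}$, which is immediate from the adjunction (\ref{xy}) and the trivial ideal property $(x_i - x_j)\liea^{\beta+1} \subset \liea^{\beta+1}$. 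Third, the Dunkl $y_i - y_j$ sends $F^\liea_\beta$ into $F^\liea_{\beta+1}$: the same translation gives $(y_i-y_j)\ol{\Phi}_c(\phi) = \ol{\Phi}_c((x_i-x_j)\phi)$, and the claim becomes $(y_i - y_j)\liea^{\beta+2} \subset \liea^{\beta+1}$, a Leibniz-rule calculation on a generator $f \cdot s_1 \cdots s_{\beta+2}$ with $s_j \in \C[\lieh]^W_+$: the reflection terms of $y_i - y_j$ vanish against the $W$-invariant factors, so $y_i - y_j$ acts on those by the classical derivative $\partial_{x_i} - \partial_{x_j}$, drops exactly one symmetric factor, and lands in $\liea^{\beta+1}$. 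Combining these three items via a PBW ordering $\xi_k = P(x)\,w\,Q(y)$ with $\deg_y Q \leq \alpha_k$ closes the induction.

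The substantive point is the third item: verifying that a single Dunkl $y_i - y_j$ drops exactly one power of $\liea$. This rests on the fact that $y_i - y_j$ collapses to an ordinary partial derivative against symmetric factors, which makes the Leibniz calculation go through. Everything else is careful bookkeeping between the $\ol{\Phi}_c$-transform and the power filtration on $(\liea^{k})^{\perp_c}$. I do not anticipate a serious obstacle here; this is the easy half of Theorem \ref{thm}, and the hard containment $F^\liea \subset F^\ind$ is where the combinatorial substance of the remainder of the paper lies.
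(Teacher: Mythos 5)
Your argument is correct, and it reaches the containment by a genuinely different organization than the paper, although both rest on the same underlying Leibniz mechanism and the same induction on $(\Q_{>0},\prec)$. The paper never isolates a module-filtration statement: it takes $\sum_k\xi_k\otimes\eta_k$ with $\xi_k\in F^\ord_\alpha$, $\eta_k\in F^\ind_\beta\e\Lrm_{c-1}$, and verifies membership in $F^\liea_i$ directly through the annihilation criterion of Lemma \ref{power}, expanding $p_{j_{i+1}}\cdots p_{j_1}(\xi_k\otimes\eta_k)$ by iterated commutators; the key input there is that $[p_j,\xi_k]$ drops the order filtration by one, while the leftover power sums kill $\eta_k$ by the inductive hypothesis. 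Your route instead proves the stronger, reusable statement $F^\ord_\alpha\Hrm_c\cdot F^\liea_\beta\Lrm_c\subset F^\liea_{\alpha+\beta}\Lrm_c$ by a PBW-generator analysis, whose substantive input $(y_i-y_j)\liea^{\beta+2}\subset\liea^{\beta+1}$ is exactly the Fourier-dual of the paper's commutator bound (and is the same Leibniz fact from \cite[Corollary 5.3]{gorskyarc} that the paper uses in Lemma \ref{ef}(1)). What you buy is a clean filtered-module property of $F^\liea$ over the order-filtered $\Hrm_c$ that applies to any element of $F^\liea_\beta\Lrm_c$, not just those coming from the tensor presentation; what it costs is the extra seeding step across (\ref{2}), where you invoke Lemma \ref{delta1} to get $\delta m_k\in F^\liea_{\beta_k}\Lrm_c$ --- a step the paper's own proof of this lemma avoids entirely.

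One point to tighten: Lemma \ref{delta1} is a statement about the untwisted spaces $(\liea^i)^{\perp}$, whereas $F^\liea$ carries the $\ol{\Phi}$-twist, so strictly you also need the isomorphism $\e_-\Lrm_c\cong\delta\e\Lrm_{c-1}$ to intertwine $\ol{\Phi}_{c-1}$ and $\ol{\Phi}_c$ (the matching of highest weight vectors that the paper carries out in the proof of Lemma \ref{notinI}(b)). This is not a gap --- the paper itself frames Lemma \ref{delta1} as the compatibility of $F^\liea$ with (\ref{2}) --- but you can bypass it altogether by using the equivalent description, immediate from Lemma \ref{power} and $\Phi_c^2=\pm\mathrm{id}$ on polynomials, that $v\in F^\liea_j\Lrm_c$ if and only if multiplication by $\liea^{j+1}$ annihilates $v$; since (\ref{2}) is equivariant for multiplication by $\C[\lieh]^W$, the transfer of the inductive hypothesis from $\e\Lrm_{c-1}$ to $\delta\,\e\Lrm_{c-1}\subset\Lrm_c$ is then automatic, and this is in effect how the paper's commutator computation uses it.
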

\begin{proof}
   We do induction on $c$ with respect to the same order as in the proof of Lemma \ref{ef}. Assume $F_i^\ind\Lrm_{c'}\subset F^\liea_i\Lrm_{c'}$ holds for all $c'\prec c$ under the order (\ref{order }). Take $\sum_k \xi_k\otimes \eta_k\in \Hrm_c \delta \e_-\otimes_{\e_-\Hrm_c\e_-}\e\Lrm_{c-1}$ so that $\xi_k\in F^{\text{ord}}_\alpha \Hrm_c\delta \e_-$ and $\eta_k\in F^\ind_\beta \e\Lrm_{c-1}$ and $\alpha+\beta\leq i$. We shall show $ (\xi_k\otimes \eta_k)\perp_c \ol{\Phi}_c(\liea^{i+1})$.

    By Lemma \ref{power}, it is sufficient to show that $p_{j_{i+1}}\cdots p_{j_1}(\xi_k\otimes \eta_k)=0$ for $2\leq j_1,\cdots j_{i+1}\leq n$. For this, we compute that
    \begin{align*}
        &p_{j_{i+1}}\cdots p_{j_1}(\xi_k\otimes \eta_k)\\
        =&p_{j_{i+1}}\cdots p_{j_2} [p_{j_{1}}, \xi_k]\otimes \eta_k+ p_{j_{i+1}}\cdots p_{j_2}\xi_k \otimes p_{j_{1}}\eta_k=\cdots \\
        =&[p_{j_{i+1}},[\cdots,[ p_{j_{\alpha+1}},[p_{j_\alpha},[p_{j_{\alpha-1}},[\cdots[p_{j_1}, \xi_k]\cdots ]\otimes \eta_k\\
        &+[p_{j_{i+1}},[\cdots, [p_{j_{\alpha+1}},[p_{j_\alpha},[p_{j_{\alpha-1}},[\cdots[p_{j_2}, \xi_k]\cdots ]\otimes  p_{j_1}\eta_k+\cdots\\
        &+\xi_k\otimes p_{j_{i+1}}\cdots p_{j_1}\eta_k.
        \end{align*}
Notice that $[p_{j_{i+1}}[\cdots [p_{j_{\alpha+1}},[p_{j_\alpha},[p_{j_{\alpha-1}},[\cdots[p_{j_{\ell+1}}, \xi_k]\cdots ]\in F_{\alpha-i+\ell-1}$ when $\alpha-i+\ell-1\geq 0$ and equals to $0$ otherwise.

On the other hand, by the induction hypothesis,  $\eta_k  \perp \Phi_c(\liea^{\beta+1})\sigma_{c-1}$. As a result, $p_{j_{\ell}}\cdots p_{j_1}\eta_k \in F_{\beta-\ell}^{\liea} \e\Lrm_{c-1}$ when $\ell\leq \beta$ and equals to $0$ otherwise. 

Since $\alpha+\beta\leq i$, we see that either $[p_{j_{i+1}}[\cdots [p_{j_{\alpha+1}},[p_{j_\alpha},[p_{j_{\alpha-1}},[\cdots[p_{j_{\ell+1}}, \xi_k]\cdots ]=0$ or  $p_{j_{\ell}}\cdots p_{j_1}\eta_k =0$ and the lemma follows.    
\end{proof}
This gives a different proof of \cite[Theorem 4.8]{gors}.  To show the opposite containment, our idea is to do induction on $i$. For the base case:
\begin{lem}\label{F0}
When $c>1$, $F_0^{\ind}\Lrm_c=F_0^{\liea}\Lrm_c$.
\end{lem}
\begin{proof}
    It suffices to show $F_0^{\ind}\Lrm_c\supset F_0^{\liea}\Lrm_c$. Under the isomorphism (\ref{3}),  
   \begin{align*}
F_0^{\liea}\Lrm_c\stackrel{\text{Lemma }\ref{perp}}{=}\ol{\Phi}_c(\liea^{\perp_c})=\ol{\Phi}_c(\Hcal_c)=\Phi_c(\C[y_1-y_2,\cdots, y_{n-1}-y_n]\delta)\mathbf{e}^\mu= \C[x_1,\cdots,x_n]\Phi_c(\delta)\ebf^\mu
\end{align*}
    is mapped to \begin{align*}
    &\im\big( \C[x_1,\cdots,x_n]\e_-\otimes \ebf^{\mu-\frac{n(n-1)}{2}} \Hrm_c  \e_-\otimes_{\e\Hrm_{c-1}\e}\e\Lrm_{c-1}\big)\\
    \subset\  &\im\big(F_0^\ord(\Hrm_c\e_-)\otimes F_0^{\ind}\e\Lrm_{c-1}\to \Hrm_c  \e_-\otimes_{\e\Hrm_{c-1}\e}\e\Lrm_{c-1}\big)
    =F^\ind_0\Lrm_c.\qedhere
    \end{align*}
\end{proof}
This recovers \cite[Corollary 4.11]{gors} from a different point of view.
\begin{ex}\label{3,4}(See Figure \ref{4/3})
Consider $\Lrm_{\frac{4}{3}}$ whose dimension is $4^{3-1}=16$. We compute $F^\liea, F^\ind, F^\alg$ on $\Lrm_{\frac{4}{3}}$. In this case, $\Hcal_c=\Hcal_c^{\triv}\oplus (\Hcal_c^{\st})^{\oplus 2}\oplus \Hcal_c^\sign$. Inside,  $(\Hcal_c^{\st})^{\oplus 2}=\langle \xi_1,\xi_2\rangle\oplus \langle \alpha_1,\alpha_2\rangle$ where $\xi_i=x_i-x_{i+1}$ and  $\alpha_i=(y_i-y_{i+1})\delta$. By Lemma \ref{F0},  we have $F_0^\ind\Lrm_c=F_0^\liea\Lrm_c=\langle \ol{\Phi}_c(1),\ol{\Phi}_c(\xi_i),\ol{\Phi}_c(\alpha_i),\ol{\Phi}_c(\delta)\rangle$.  Since $\ol{\Phi}_c(1),\ol{\Phi}_c(\xi_i),\ol{\Phi}_c(\alpha_i),\ol{\Phi}_c(\delta)$ are all highest weight vectors under the $\mathfrak{sl}_2$ triple $\{\ebf,\fbf,\hbf\}$,  Lemma \ref{ef} tells us that $\fbf^i(v)\in F_i^\ind\Lrm_c\cap F_i^\liea\Lrm_c$ for any $v\in F_0^\liea\Lrm_c$.

It remains to determine where $p_3$ belongs to. Since $p_3\in(\liea^2)^{\perp_c}$ and is not $c$-harmonic, $\ol{\Phi}_c(p_3)\in F^\liea_1\Lrm_c$. On the other hand, by Lemma \ref{h}, $p_3=\frac{1}{9}\sum_{i=1}^2 (x_i-x_{i+1})(y_i-y_{i+1})p_3$. Because $(y_i-y_{i+1})p_3\in \Hcal_c$, we conclude that $\ol{\Phi}_c(p_3)=\frac{1}{9}\sum_{i=1}^2 (y_i-y_{i+1}) \ol{\Phi}_c((y_i-y_{i+1})p_3)\in F_1^\ind\Lrm_c$.
\end{ex}
\newcommand{\width}{2}
\begin{figure}\label{4/3}
\centering
\begin{tikzpicture}
	\foreach \xy in {-3,...,3}{
		\node at (\width*\xy,-1) {\xy};
	}
	\node at (-3*\width,1) {1};
	\node at (-1*\width,1) {$p_2$};
	\node at (0*\width,1) {$p_3$};
	\node at (1*\width,1) {$p_2^2$};
	\node at (3*\width,1) {$p_2^3$};
	\node at (-2*\width,2) {$\xi_i$};
	\node at (0,2) {$\xi_ip_2$};
	\node at (2*\width,2) {$\xi_ip_2^2$};
	\node at (-\width,3) {$\alpha_i$};
	\node at (\width,3) {$\alpha_ip_2$};
	\node at (0,4) {$\delta$};
	\draw (-3.5*\width,3) -- (-2.5*\width,1.5) -- (-.6*\width,1.5) -- (0,.6) -- (.6*\width,1.5) --(2.5*\width,1.5) -- (3.5*\width,3);
	\draw (-2.5*\width,4) -- (-1.5*\width,2.5) -- (1.5*\width,2.5) -- (2.5*\width,4);
	\draw (-1.1*\width,4.9) -- (0,3.5) -- (1.1*\width,4.9);
	\draw[dashed, opacity=.5] (-3.3*\width,3.3) -- (-.8*\width,-.4) -- (3.5*\width,-.4);
	\draw[dashed, opacity=.5] (-2.4*\width,4.2) -- (.5*\width,-.3) -- (3.5*\width,-.3);
	\draw[dashed, opacity=.5] (-1.2*\width,4.8) -- (2.7*\width,-.2) -- (3.5*\width,-.2);
	\node at (.9*\width,5) {$\color{blue}{F^{\mathrm{alg}}_0}$};
	\node at (2.3*\width,4.1) {$\color{blue}{F^{\mathrm{alg}}_1}$};
	\node at (3.3*\width,3.1) {$\color{blue}{F^{\mathrm{alg}}_2}$};
	\node at (3.3*\width,0.05) {$\color{red}{F^{\mathfrak{a}}_0}$};
	\node at (.75*\width,-.05) {$\color{red}{F^{\mathfrak{a}}_1}$};
	\node at (-.55*\width,-.15) {$\color{red}{F^{\mathfrak{a}}_2}$};
\end{tikzpicture}
\caption{Filtrations from example \ref{3,4} where the numbers at the bottom indicate  $\hbf$-weights}
\label{4/3}
\end{figure}
\subsubsection{Orthogonal lifts and the orthogonal complement of $I_{c-1}^W$}
The product $\Hcal_c \cdot \C[\lieh]^W_{\leq i}$ denotes the vector space generated by all $h \psi$ with $h\in \Hcal_c$ and $\psi\in \C[\lieh]^W_{\leq i}$.
Since $\Hcal_c \cdot\C[\lieh]^W_{\leq i} \cong \C[\lieh]/\liea^{i+1}$, there is a unique  $\ul{h\psi}\in \Lrm_c$ so that $\ul{h\psi}\in (\liea^{i+1})^{\perp_c}$ and $\ul{h\psi}\equiv h\psi$ modulo $\liea^{i+1}$. We will use this notation frequently from now on. In the sequel, all polynomials will be homogeneous.

Write $(\liea^j)^{\perp_c}_{j-1}:=(\liea^j)^{\perp_c}\cap \liea^{j-1}$. Then when $j\neq j'$ \[ (\liea^{j+1})_{j}^{\perp_c}\perp_c(\liea^{j'+1})_{j'}^{\perp_c}.\]  

 Inside $(\liea^{j+1})_{j}^{\perp_c}$, consider the subspace 
 \begin{align}
     \label{S}
\mathcal{S}_j:=\langle\ul{h\psi}\in  (\liea^{j+1})_{j}^{\perp_c}|h\in \Hcal_c, \psi\in I^W_{c-1}\rangle.
 \end{align}
 We can go through the Gram-Schmidt process to obtain an orthogonal basis under the nondegenerate form $(-,-)_{c-1}$ on $\big((I_{c-1})^{\perp_c}\cap (\liea^{j+1})_{j}^{\perp_c}\big)^W$ (mod $I_c$), which we denote by $\{\wh{\psi}\}$. Also, we take an orthogonal basis $\{\wh{h}\}$ of $\Hcal_c$ such that $\wh{h}=\wh{P}\delta$ with $\wh{P}\in \C[y_1-y_2,\dots, y_{n-1}-y_n]$ depending on $\wh{h}$.

 Define $\mathcal{S}_{j,\perp_c}=\langle \ul{h\psi}\in  (\liea^{j+1})_{j}^{\perp_c}|h\in \Hcal_c, \psi\in (I^{\perp_c}_{c-1})^W\rangle$.
\begin{lem}\label{notinI}
For any integer $j\geq 0$, suppose $F_j^\ind\e\Lrm_{c-1}=F_j^\liea\e\Lrm_{c-1}$. Then $\ol{\Phi}_c(\mathcal{S}_{j,\perp_c})\subset  F^\ind_j\Lrm_c$.
\end{lem}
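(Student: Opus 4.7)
My plan is to transport $\ol{\Phi}_c(\ul{h\psi})$ through the shift isomorphism (\ref{3}) into the tensor product $\Hrm_c \e_- \otimes_{\e\Hrm_{c-1}\e} \e\Lrm_{c-1}$, and control the two tensor factors separately: the $\Hrm_c\e_-$-factor via the order filtration $F^{\ord}$ (matching Lemma \ref{F0}), and the $\e\Lrm_{c-1}$-factor via $F_j^\liea\e\Lrm_{c-1}$, which by the hypothesis equals $F_j^\ind\e\Lrm_{c-1}$.

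\emph{Step 1.} I write $h=P\delta$ with $P\in\C[y_1-y_2,\ldots,y_{n-1}-y_n]$, and expand $h\psi=P(\delta\psi)+[\psi,P]\delta$ in $\Lrm_c$. Under (\ref{2}), $\delta\psi\in\e_-\Lrm_c$ is identified with $\psi\in\e\Lrm_{c-1}$, so the main term $P(\delta\psi)$ corresponds in (\ref{3}) to $(P\e_-)\otimes\psi$. The commutator term $[\psi,P]\delta$ has strictly smaller $y$-order in its $\Hrm_c$-factor, and the orthogonal-lift correction $\ul{h\psi}-h\psi\in\liea^{j+1}$ is handled by an auxiliary induction on $\deg P+\deg \psi$ combined with the nested $F^\liea$-structure.

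\emph{Step 2.} I apply $\ol{\Phi}_c$. Using $\ol{\Phi}_c(av)=\Phi_c(a)\ol{\Phi}_c(v)$ for $a\in\Hrm_c$ and $v\in\Lrm_c$, together with the fact that $\ebf,\fbf\in\e\Hrm_c\e$ correspond to their analogues in $\e\Hrm_{c-1}\e$ under the shift identification $\e_-\Hrm_c\e_-\cong\delta\e\Hrm_{c-1}\e\delta^{-1}$ (which is $\mathfrak{sl}_2$-equivariant), the automorphism $\ol{\Phi}_c$ transports across the tensor to give
\[
\ol{\Phi}_c\big((P\e_-)\otimes\psi\big) = (\Phi_c(P)\e_-)\otimes\ol{\Phi}_{c-1}(\psi)
\]
up to a nonzero scalar. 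The factor $\Phi_c(P)\in\C[x_1,\ldots,x_n]$ has $y$-order zero, hence $\Phi_c(P)\e_-\in F_0^{\ord}(\Hrm_c\e_-)$.

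\emph{Step 3.} I show $\ol{\Phi}_{c-1}(\psi)\in F_j^\liea\e\Lrm_{c-1}$. Because $\ul{h\psi}\in\liea^j$ and $h\in\Hcal_c$ is not in $\liea$, we get $\psi\in\liea^j$ (up to a correction in $\liea^{j+1}$). The condition $\psi\in(I_{c-1}^{\perp_c})^W$ combined with the pairing identity $(\delta\phi,\delta\eta)_c=(\delta,\delta)_c(\phi,\eta)_{c-1}$ from Lemma \ref{delta1} translates the $c$-orthogonality of $\psi$ to $I_{c-1}$ into the condition $\psi\in(\liea^{j+1})^{\perp_{c-1}}$ inside $\e\Lrm_{c-1}$. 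Hence $\ol{\Phi}_{c-1}(\psi)\in F_j^\liea\e\Lrm_{c-1}=F_j^\ind\e\Lrm_{c-1}$ by the hypothesis, and combining with Step 2 yields $\ol{\Phi}_c(\ul{h\psi})\in F_0^{\ord}\otimes F_j^\ind\subset F_j^\ind\Lrm_c$.

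The main obstacle is Step 2: the compatibility of $\ol{\Phi}_c$ with the tensor decomposition requires carefully verifying that the $\mathfrak{sl}_2$-triples at levels $c$ and $c-1$ intertwine correctly through the shift functor, which is subtle because $\fbf$ involves the Dunkl operators at each level separately and the action does not a priori split as a tensor. A secondary obstacle is Step 1's auxiliary induction: one must carefully track all commutator and lift-correction terms to ensure they remain within $F_j^\ind\Lrm_c$, using the base case Lemma \ref{F0} and the orthogonality-preserving properties of $F^\liea$ from Corollary \ref{sl2}.
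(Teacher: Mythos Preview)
Your overall strategy—transport through the shift isomorphism (\ref{3}) and use the hypothesis on the $\e\Lrm_{c-1}$ factor—matches the paper. The genuine gap is your Step 1. Working with the defining generators $\ul{h\psi}$ forces you to control both the commutator $[\psi,P]\delta$ and the lift correction $\ul{h\psi}-h\psi\in\liea^{j+1}$, and your proposed ``auxiliary induction on $\deg P+\deg\psi$'' does not handle either: the commutator term $[\psi,P]\delta$ has no evident description inside any $\mathcal{S}_{j',\perp_c}$ to which an induction would apply, and the lift correction lives in $\liea^{j+1}$, precisely outside the orthogonal complement you are trying to stay in.

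The paper avoids both issues by choosing a \emph{different} basis of $\mathcal{S}_{j,\perp_c}$. Pick orthogonal bases $\{\wh{\psi}\}$ of $((I_{c-1})^{\perp_c}\cap(\liea^{j+1})_{j}^{\perp_c})^W$ and $\{\wh{h}=\wh{P}\delta\}$ of $\Hcal_c$, and form the elements $\wh{P}(\delta\wh{\psi})$, with $\wh{P}$ applied as Dunkl operators to $\delta\wh{\psi}$ (not as the polynomial product $(\wh{P}\delta)\cdot\wh{\psi}$). These already lie in $(\liea^{j+1})^{\perp_c}$ by Lemma \ref{delta1}, so no lift correction or commutator ever appears. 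A pairing computation shows $(\wh{P}(\delta\wh{\psi}),\wh{h'}\wh{\psi'})_c\neq 0$ iff $(\wh{h},\wh{\psi})=(\wh{h'},\wh{\psi'})$, so $\{\wh{P}(\delta\wh{\psi})\}$ is a basis of $\mathcal{S}_{j,\perp_c}$. Your Step 2 then goes through cleanly on this basis: $\ol{\Phi}_c(\wh{P}(\delta\wh{\psi}))=\Phi_c(\wh{P})\,\ol{\Phi}_c(\delta\wh{\psi})$, and since $\delta\wh{\psi}\in\e_-\Lrm_c\cong\delta\e\Lrm_{c-1}$ the $\mathfrak{sl}_2$-equivariance of this isomorphism gives $\ol{\Phi}_c(\delta\wh{\psi})\leftrightarrow\ol{\Phi}_{c-1}(\wh{\psi})$ directly, with $\Phi_c(\wh{P})\in F_0^{\ord}(\Hrm_c\e_-)$—no splitting of $\fbf$ over the full tensor product is needed. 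One small correction to your Step 3: the conclusion $\wh{\psi}\in(\liea^{j+1})^{\perp_{c-1}}$ comes from $\wh{\psi}\in(\liea^{j+1})^{\perp_c}$ via Lemma \ref{delta1} and the identity $(\delta\phi,\delta\eta)_c=(\delta,\delta)_c(\phi,\eta)_{c-1}$, not from the $I_{c-1}$-orthogonality; the latter is what guarantees nondegeneracy of the dual pairing used to show $\{\wh{P}(\delta\wh{\psi})\}$ spans.
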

\begin{proof}
For orthogonal basis elements $\wh{\psi}$ of $\big((I_{c-1})^{\perp_c}\cap (\liea^{j+1})_{j}^{\perp_c}\big)^W$ and $\wh{h}=\wh{P}\delta$ of $\Hcal_c$ chosen as above, we claim the following
\begin{enumerate}[(a)]
    \item $\wh{P}(\delta \wh{\psi})\in (\liea^{j+1})^{\perp_c}$.
    \item $\ol{\Phi}_c(\wh{P}(\delta \wh{\psi}))\in F^\ind_j \Lrm_c$.
    \item For any pair $(\wh{h'}, \wh{\psi'})$, $(\wh{P}(\delta \wh{\psi}), \wh{h'}\wh{\psi'})_c\neq 0$ if and only if $\wh{h'}= \wh{h}$ and $\wh{\psi'}=\wh{\psi}$.
\end{enumerate}

Here (a) holds because $\delta\wh{\psi}\in (\liea^{j+1})^{\perp_c}$ by Lemma \ref{delta1} and so does $\wh{P}(\delta \wh{\psi})$.

For (b), by Lemma \ref{ekfk}, $\ol{\Phi}_c(\delta \wh{\psi})={\Phi}_c(\delta \wh{\psi})\beta_c$ for some nonzero highest weight vector $\beta_c$ in $\Lrm_c$, which is mapped to $\Phi_{c-1}(\wh{\psi})\beta_{c-1}$ for some nonzero highest weight vector $\beta_{c-1}$ in $\Lrm_{c-1}$ under the isomorphism $\e_-\Lrm_c\cong \delta\e\Lrm_{c-1}$. The element $\Phi_{c-1}(\wh{\psi})\beta_{c-1}$ lies in $F_j^\liea \e\Lrm_{c-1}$ and hence belongs to $F^\ind_j \Lrm_{c-1}$ by the assumption of the lemma.  Therefore so does $\ol{\Phi}_c(\wh{P}(\delta \wh{\psi}))=\Phi_c(\wh{P})\ol{\Phi}_c(\delta \wh{\psi})$.

To show (c), notice that $\Phi_c(\delta \wh{\psi'})(\delta\wh{\psi})=(\delta,\delta)_c(\wh{\psi'},\wh{\psi})_{c-1}\neq 0$ if and only if $\wh{\psi}=\wh{\psi'}$. Therefore $\Phi_c(\wh{\psi'})(\delta\wh{\psi})\neq 0$ if and only if $\wh{\psi}=\wh{\psi'}$, in which case \[(\wh{P}(\delta \wh{\psi}), \wh{h'}\wh{\psi'})_c=(\wh{P}\Phi_c(\wh{\psi})(\delta \wh{\psi}), \wh{h'})_c=(\wh{\psi},\wh{\psi})_{c-1}(\wh{P}(\delta),\wh{h'})_c=(\wh{\psi},\wh{\psi})_{c-1}(\wh{h},\wh{h'})_c\neq 0\text{ iff $\wh{h}=\wh{h'}$}.\]

As a consequence, $\{\ol{\Phi}_c(\wh{P}(\delta\wh{\psi}))\}$ forms a new basis of 
$\ol{\Phi}_c(\mathcal{S}_{j,\perp_c})$ that is contained in $F^\ind_j \Lrm_c$. Hence the lemma follows.
\end{proof}
\begin{lem}\label{Icperp}
$\mathcal{S}_{j,\perp_c}=(\mathcal{S}_j)^{\perp_c}\cap (\liea^{j+1})_{j}^{\perp_c}$.
\end{lem}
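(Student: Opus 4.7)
My plan is to prove both inclusions by combining the orthogonal lift construction with the factored pairing identity underlying part (c) of the proof of Lemma \ref{notinI}. For an $(-,-)_c$-orthogonal basis $\{\wh h = \wh P \delta\}$ of $\Hcal_c$ and the orthogonal basis $\{\wh\psi\}$ of $\big((I_{c-1})^{\perp_c}\cap (\liea^{j+1})_{j}^{\perp_c}\big)^W$ constructed in that proof, the same computation yields $(\wh P(\delta\wh\psi),\ \wh h'\wh\psi')_c = (\wh\psi,\wh\psi')_{c-1}(\wh h,\wh h')_c$ for \emph{any} homogeneous $\wh\psi' \in \C[\lieh]^W_j$, not only those in the lifted basis used there. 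Since $\wh P(\delta\wh\psi) \in (\liea^{j+1})^{\perp_c}$ and $\ul{\wh h'\wh\psi'} - \wh h'\wh\psi' \in \liea^{j+1}$, the same identity persists with $\wh h'\wh\psi'$ replaced by $\ul{\wh h'\wh\psi'}$.

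For the inclusion $\mathcal{S}_{j,\perp_c} \subset (\mathcal{S}_j)^{\perp_c} \cap (\liea^{j+1})_{j}^{\perp_c}$: since both sides lie in $V_j := (\liea^{j+1})_{j}^{\perp_c}$, it suffices to show $\mathcal{S}_{j,\perp_c} \perp_c \mathcal{S}_j$. Any $v \in \mathcal{S}_{j,\perp_c}$ is a linear combination of basis elements $\wh P(\delta\wh\psi)$ with $\wh\psi \in (I_{c-1}^{\perp_c})^W$, and any $w \in \mathcal{S}_j$ is a linear combination of $\ul{\wh h'\wh\psi'}$ with $\wh\psi' \in I_{c-1}^W$. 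The factored identity then vanishes because $\wh\psi' \in I_{c-1}$ lies in the kernel of $(-,-)_{c-1}$, giving $(v,w)_c = 0$.

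For the reverse inclusion, I will first establish the splitting $V_j = \mathcal{S}_j + \mathcal{S}_{j,\perp_c}$. This follows by lifting (via $\ul{\cdot}$) the orthogonal direct sum decomposition $\e\Lrm_c = I_{c-1}^W/I_c^W \oplus (I_{c-1}^{\perp_c})^W/I_c^W$ intersected with the $p$-degree $j$ piece, and then multiplying by $\Hcal_c$ using the factorization $\Lrm_c = \Hcal_c\cdot\e\Lrm_c$ from Lemma \ref{basis}. Given $v \in (\mathcal{S}_j)^{\perp_c} \cap V_j$, decompose $v = v_0 + v_1$ with $v_0 \in \mathcal{S}_j$ and $v_1 \in \mathcal{S}_{j,\perp_c}$; the forward inclusion yields $v_0 \perp_c \mathcal{S}_j$, whence the non-degeneracy of $(-,-)_c$ on $\mathcal{S}_j$ forces $v_0 = 0$ and $v = v_1 \in \mathcal{S}_{j,\perp_c}$.

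The main obstacle is verifying the non-degeneracy of $(-,-)_c$ on $I_{c-1}^W/I_c^W$ and on $\mathcal{S}_j$, which underpins both the splitting and the final cancellation. I plan to derive these using the scalar identity $(\delta\psi,\delta\eta)_c = (\delta,\delta)_c(\psi,\eta)_{c-1}$ invoked in the proof of Lemma \ref{delta1}, which relates the form at parameter $c$ to that at $c-1$ and allows induction on the Euclidean-algorithm order (\ref{order }). A clean factored formula for the pairing of two $\ul{h\psi}$ elements with $\psi \in I_{c-1}^W$ does not appear to be directly available (the analogues $\wh P(\delta\wh\psi)$ vanish identically in $\Lrm_c$ since $\delta\psi = 0$ there for $\psi \in I_{c-1}^W$), so this inductive route circumvents the need to compute such pairings explicitly.
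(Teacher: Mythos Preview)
Your forward inclusion and the spanning $V_j = \mathcal{S}_j + \mathcal{S}_{j,\perp_c}$ are exactly the two ingredients the paper uses. The difference lies in how you deduce the reverse inclusion from them, and here you take an unnecessary detour that leaves a genuine gap.

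The paper's reasoning (left implicit in the sentence ``Therefore we only need to show $\mathcal{S}_{j,\perp_c}\subset (\mathcal{S}_j)^{\perp_c}\cap (\liea^{j+1})_{j}^{\perp_c}$'') is a one-line dimension count: since $(-,-)_c$ is nondegenerate on $V_j:=(\liea^{j+1})_{j}^{\perp_c}$ (because $\Lrm_c=\bigoplus_j V_j$ is an orthogonal decomposition for a nondegenerate form), one has $\dim(\mathcal{S}_j^{\perp_c}\cap V_j)=\dim V_j-\dim\mathcal{S}_j\leq \dim\mathcal{S}_{j,\perp_c}$ by spanning. Combined with the forward inclusion $\mathcal{S}_{j,\perp_c}\subset \mathcal{S}_j^{\perp_c}\cap V_j$, this forces equality. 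No statement about nondegeneracy on $\mathcal{S}_j$ (or on $I_{c-1}^W/I_c^W$) is needed.

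Your route instead requires $(-,-)_c$ to be nondegenerate on $\mathcal{S}_j$, and you propose to prove this by induction on $c$ using $(\delta\psi,\delta\eta)_c=(\delta,\delta)_c(\psi,\eta)_{c-1}$. This is where the gap is: that identity transports the form on $\e_-\Lrm_c$ to $(-,-)_{c-1}$ on $\e\Lrm_{c-1}$, whereas you need control of $(-,-)_c$ on the \emph{trivial}-isotypic subspace $I_{c-1}^W/I_c^W\subset\e\Lrm_c$; it is not clear how the sign-to-trivial passage (or any inductive step along the order~(\ref{order })) would give you this. Moreover, as you note yourself, the natural candidate $\wh P(\delta\wh\psi)$ vanishes in $\Lrm_c$ when $\wh\psi\in I_{c-1}^W$, so the factored pairing formula cannot be used to test nondegeneracy on $\mathcal{S}_j$ directly. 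In fact, nondegeneracy on $\mathcal{S}_j$ is a \emph{consequence} of the lemma (the equality forces $V_j=\mathcal{S}_j\oplus\mathcal{S}_{j,\perp_c}$, hence $\mathcal{S}_j\cap\mathcal{S}_j^{\perp_c}=0$), not an input to it. Replace your final paragraph by the dimension argument above and the proof is complete.
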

\begin{proof}
By Lemma \ref{basis}, $\mathcal{S}_{j,\perp_c}$ and $\mathcal{S}_j$ together span $(\liea^{j+1})_{j}^{\perp_c}$. Therefore we only need to show $\mathcal{S}_{j,\perp_c}\subset (\mathcal{S}_j)^{\perp_c}\cap (\liea^{j+1})_{j}^{\perp_c}$. To do so, by the proof of Lemma \ref{notinI}, it suffices to show that $(\wh{P}(\delta \wh{\psi}),\ul{h \phi})_c=0$ for all $\ul{h\phi}\in (\liea^{j+1})_{j}^{\perp_c}$ such that $h\in \Hcal_c$ and  $\phi\in I^W_{c-1}$.

Because of Lemma \ref{isotypic}, we have that $(\wh{P}(\delta \wh{\psi}),h \phi)_c=(\delta \wh{\psi}, \Phi_c(\wh{P})h\phi)_c$
is nonzero if and only if there exists some $\phi^\circ\in \C[\lieh]^W$ so that \[(\delta\wh{\psi},\delta \phi^\circ)_c=(\delta\wh{\psi},\Phi_c(\wh{P})h\phi)_c.\] By Lemma \ref{isotypic}, $\delta \phi^\circ\in I_{c-1}^{\sign}$. Moreover, note that $I^{\sign}_{c-1}=\delta I_{c-1}^W$ and hence $\phi^\circ\in I_{c-1}$. We conclude the lemma by 
\[(\delta \wh{\psi}, \Phi_c(\wh{P})h\phi)_c=(\delta,\delta)_c(\wh{\psi}, \phi^\circ)_{c-1}=0.\qedhere\]
\end{proof}

\subsubsection{Criteria for induction}\label{criteria}
Lemma \ref{notinI} tells us that we can show $F^\ind=F^\liea$ on $\mathcal{S}_{j,\perp_c}$ by induction on $c$. As for $\mathcal{S}_{j}$, with doing induction on indexes of the filtrations in question, we prove Proposition \ref{yphi}.
\begin{lem}\label{eLc-1}
\begin{enumerate}[(i)]
    \item The quotient map $\cup_j \Scal_{j,\perp_c}\to \Lrm_c/(I_{c-1}^W)$ is an isomorphism.
    \item $\im(F^\ord_0( \Hrm_c\e_-)\otimes_\C \e\Lrm_{c-1}\to \Hrm_c  \e_-\otimes_{\e\Hrm_{c-1}\e}\e\Lrm_{c-1}\cong \Lrm_c)= \Lrm_c/(I_{c-1}^W)$.
    \end{enumerate}
\end{lem}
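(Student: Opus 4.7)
The plan is to analyze both sides of (i) and (ii) using the orthogonal decompositions provided by Lemmas \ref{basis} and \ref{Icperp}, reducing each to identifications of subspaces of $\Lrm_c$.

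For (ii), I first unpack the image. Since $\e_-w=\sign(w)\e_-$, we have $F^\ord_0(\Hrm_c\e_-)=\C[\lieh]\e_-$; the isomorphism (\ref{3}) combined with (\ref{2}) identifies $\e_-\otimes\e m$ with $\delta\cdot\e m\in\e_-\Lrm_c$, so the image in $\Lrm_c$ equals $\C[\lieh]\cdot\e_-\Lrm_c$. By Lemma \ref{basis} together with the fact that $I_{c-1}^W$ annihilates $\e_-\Lrm_c\cong\delta\e\Lrm_{c-1}$ (since multiplication by $\delta$ realizes the quotient $\e\Lrm_c/I_{c-1}^W\e\Lrm_c\cong\e_-\Lrm_c$), this simplifies to $\Hcal_c\cdot\e_-\Lrm_c$.

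For (i), I show $\bigoplus_j\Scal_{j,\perp_c}$ is a direct-sum complement of $I_{c-1}^W\cdot\Lrm_c$ in $\Lrm_c$, so that the composition $\bigoplus_j\Scal_{j,\perp_c}\hookrightarrow\Lrm_c\twoheadrightarrow\Lrm_c/(I_{c-1}^W\cdot\Lrm_c)$ is an isomorphism. Lemma \ref{basis} together with orthogonal lifts yields $\Lrm_c=\bigoplus_j(\liea^{j+1})^{\perp_c}_j$, and Lemma \ref{Icperp} refines this to $\Lrm_c=\bigoplus_j\Scal_j\oplus\bigoplus_j\Scal_{j,\perp_c}$. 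It thus suffices to identify $\bigoplus_j\Scal_j$ with $I_{c-1}^W\cdot\Lrm_c$. For $\bigoplus_j\Scal_j\subset I_{c-1}^W\cdot\Lrm_c$: each spanning vector $\ul{h\psi}$ with $\psi\in I_{c-1}^W$ satisfies $\ul{h\psi}=h\psi-r$ where $h\psi\in I_{c-1}^W\cdot\Lrm_c$ and $r\in\liea^{j+1}$; descending induction on $j$ (using $\liea^N=0$ in the finite-dimensional $\Lrm_c$ for $N$ large) handles the residue. For the reverse inclusion, any $\phi\cdot w\in I_{c-1}^W\cdot\Lrm_c$ expands via Lemma \ref{basis} to expressions $h(\phi\psi)$ with $\phi\psi\in I_{c-1}^W$ (since $I_{c-1}^W$ is an ideal in $\C[\lieh]^W$), whose orthogonal lifts lie in $\Scal_{j'}$, and residues are again handled inductively.

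Part (ii) then follows by combining the image computation with (i): using the splitting $\e\Lrm_c=I_{c-1}^W\e\Lrm_c\oplus\e\Lrm_{c-1}$ (which follows from $\delta$ intertwining $\e\Lrm_c/I_{c-1}^W\e\Lrm_c\cong\e_-\Lrm_c\cong\delta\e\Lrm_{c-1}$), Lemma \ref{basis} gives $\Lrm_c/(I_{c-1}^W\cdot\Lrm_c)=\Hcal_c\cdot\e\Lrm_{c-1}$, and multiplication by $\delta$ $W$-equivariantly identifies $\e\Lrm_{c-1}$ with $\e_-\Lrm_c$, confirming that $\Hcal_c\cdot\e_-\Lrm_c$ surjects onto the quotient. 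Dimension matching (invoking (i)) then gives the equality. The main obstacle will be the descending-induction argument proving $\bigoplus_j\Scal_j=I_{c-1}^W\cdot\Lrm_c$: orthogonal lifts interact nontrivially with multiplication by elements of $I_{c-1}^W$, and tracking residues carefully across induction steps requires care. The finite-dimensionality of $\Lrm_c$ is essential for the induction to terminate.
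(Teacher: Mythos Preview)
Your approach to (ii) is essentially the same as the paper's (identifying $F^{\ord}_0(\Hrm_c\e_-)$ with $\C[\lieh]\e_-$ and reducing to $\Hcal_c\cdot\e\Lrm_{c-1}$), and it is fine.

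For (i), however, there is a genuine gap. You want to prove the \emph{equality of subspaces} $\bigoplus_j\Scal_j=I_{c-1}^W\cdot\Lrm_c$, but your descending induction does not establish either inclusion. For $\Scal_j\subset I_{c-1}^W\cdot\Lrm_c$: writing $\ul{h\psi}=h\psi-r$ with $r\in\liea^{j+1}$, you need $r\in I_{c-1}^W\cdot\Lrm_c$ to conclude. But $r$ decomposes in $\liea^{j+1}=\bigoplus_{j'>j}(\Scal_{j'}\oplus\Scal_{j',\perp_c})$, and nothing forces the $\Scal_{j',\perp_c}$-components of $r$ to vanish; the inductive hypothesis only controls the $\Scal_{j'}$-parts. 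The same issue obstructs the reverse inclusion. In fact the equality $\bigoplus_j\Scal_j=I_{c-1}^W\cdot\Lrm_c$ need not hold as stated: $\Scal_j$ is the orthogonal lift of $\gr_j(I_{c-1}^W\cdot\Lrm_c)$, and a subspace is generally not equal to the direct sum of the lifts of its graded pieces.

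What does work is to compare at the associated-graded level. Since $I_c\subset (I_{c-1}^W)$ and $\C[\lieh]$ is free over $\C[\lieh]^W$ with basis $\Hcal_c$, one checks that $\gr_j(I_{c-1}^W\cdot\Lrm_c)$ corresponds exactly to $\Scal_j$ under the orthogonal identification $\gr_j\Lrm_c\cong(\liea^{j+1})^{\perp_c}_j$. Together with $(\liea^{j+1})^{\perp_c}_j=\Scal_j\oplus\Scal_{j,\perp_c}$, this gives $\dim\Lrm_c=\dim(I_{c-1}^W\cdot\Lrm_c)+\dim\bigoplus_j\Scal_{j,\perp_c}$. Then an easy injectivity argument (if $v\in\bigoplus_j\Scal_{j,\perp_c}$ lies in $I_{c-1}^W\cdot\Lrm_c$, look at its leading term in $\gr_{j_0}$ to get a contradiction with $\Scal_{j_0}\cap\Scal_{j_0,\perp_c}=0$) finishes (i). The paper skips all of this by saying ``follows directly from the definition,'' presumably intending exactly this kind of graded comparison.
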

\begin{proof}
   (i) follows directly from the definition of $\Scal_{j,\perp_c}$. To see (ii), note that  $\big(\Lrm_c/(I_{c-1}^W)\big)^W=\e\Lrm_c/I_{c-1}^W=\e\Lrm_{c-1}$ and so by Lemma \ref{basis}, \[\Lrm_c/(I_{c-1}^W)=\Hcal_c\cdot \big(\Lrm_c/(I_{c-1}^W)\big)^W.\] 
   Moreover, $F^\ord_0( \Hrm_c\e_-)\cong \C[x_1,\cdots, x_n]$.  It remains to notice that \[\C[x_1,\cdots, x_n]\cdot \e\Lrm_{c-1}=(\C[x_1,\cdots, x_n]/\liea) \cdot \e\Lrm_{c-1}=\Hcal_c\cdot \Lrm_c^W/I_{c-1}^W.\qedhere\]
\end{proof}
\begin{prop}\label{yphi}
For $j\geq 1$, suppose $F^\ind_{j-1}\Lrm_c=F^\liea_{j-1}\Lrm_c$. Then the following statements are equivalent
\begin{enumerate}[(i)]
    \item For any nonzero $\psi\in \mathcal{S}_j$ and $k\in [1,n-1]$ such that $(y_k-y_{k+1})\psi\notin (\liea^j)^{\perp_c}$, $\ol{\Phi}_c(\psi)=0$.
    \item For any nonzero $\phi\in\mathcal{S}_j$ such that $(y_k-y_{k+1})\phi\in \liea^{j}$ for all $k\in[1,n-1]$, $\ol{\Phi}_c(\phi)=0$.
\item $\ol{\Phi}_c(\mathcal{S}_j\cap \left(\sum_{k=1}^n x_k (\liea^j)^{\perp_c}\right)^{\perp_c})= 0$.
\item $F_j^\ind\mathcal{S}_j= F_j^\liea\mathcal{S}_j$.
\end{enumerate}
\end{prop}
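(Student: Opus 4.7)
Plan. I will prove the four-way equivalence by cycling (iv)$\Leftrightarrow$(iii)$\Leftrightarrow$(ii) and then relating (i) to them. The central observation is that $\ol{\Phi}_c$ is an automorphism of $\Lrm_c$, so in statements (i)--(iii) the conclusion ``$\ol{\Phi}_c(\psi)=0$'' literally forces $\psi=0$; each of those assertions thus amounts to the triviality of a specific subspace of $\mathcal{S}_j$, while (iv) asserts the containment $\ol{\Phi}_c(\mathcal{S}_j)\subset F_j^\ind$.

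For (ii)$\Leftrightarrow$(iii): The Fourier adjointness $(x_k\phi,\psi)_c=(\phi,y_k\psi)_c$ (coming from $\Phi_c(x_k)=y_k$) identifies $\bigl(\sum_k x_k(\liea^j)^{\perp_c}\bigr)^{\perp_c}$ with $\{\psi:y_k\psi\in\liea^j\ \forall k\}$. Since $y_1+\cdots+y_n=0$ as an operator on $\Lrm_c$ (the Dunkl operators live on the trace-zero Cartan $\lieh$, where the normal direction $\sum x_i=0$ acts trivially), the linear spans of $\{y_k\}$ and $\{y_k-y_{k+1}\}$ as operators on $\Lrm_c$ coincide. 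Both (ii) and (iii) then reduce to the assertion $\{\psi\in\mathcal{S}_j:(y_k-y_{k+1})\psi\in\liea^j\ \forall k\}=0$.

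For (iii)$\Leftrightarrow$(iv): By definition $F_{j-1}^\liea=\ol{\Phi}_c[(\liea^j)^{\perp_c}]$, which equals $F_{j-1}^\ind$ by the induction hypothesis. The identity $\ol{\Phi}_c(x_k\psi)=y_k\ol{\Phi}_c(\psi)$ (from $\ol{\Phi}_c=\mathrm{Ad}(\e^{\frac{i\pi}{2}(\ebf-\fbf)})$ applied to $\Phi_c(x_k)=y_k$) transports $\sum_k x_k(\liea^j)^{\perp_c}$ to $\sum_k y_k F_{j-1}^\ind$, and this sits inside $F_j^\ind$ because $y_k\in F_1^\ord(\Hrm_c)$ in the tensor-product description of $F^\ind$. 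Lemma~\ref{notinI} places $\ol{\Phi}_c(\mathcal{S}_{j,\perp_c})$ inside $F_j^\ind$, while Lemma~\ref{Icperp} realizes $\mathcal{S}_{j,\perp_c}$ as the orthogonal complement of $\mathcal{S}_j$ inside $(\liea^{j+1})^{\perp_c}_j$. Combining with Lemma~\ref{containment}, (iv) fails on $\mathcal{S}_j$ precisely when $\mathcal{S}_j\cap\bigl(\sum_k x_k(\liea^j)^{\perp_c}\bigr)^{\perp_c}$ is nonzero, which is exactly (iii).

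For (i)$\Leftrightarrow$(ii): Suppose $\phi\in\mathcal{S}_j$ satisfies both ``$(y_k-y_{k+1})\phi\in(\liea^j)^{\perp_c}$ for all $k$'' (the failure of (i)'s premise) and ``$(y_k-y_{k+1})\phi\in\liea^j$ for all $k$'' (the premise of (ii)); then each $(y_k-y_{k+1})\phi$ lies in the radical $\liea^j\cap(\liea^j)^{\perp_c}$. An iterative argument using that $\bigcap_k\ker(y_k-y_{k+1})|_{\Lrm_c}=\C\cdot 1$ (equivalently $\bigcap_k\ker y_k$ via $\sum y_k=0$), a space disjoint from $\mathcal{S}_j$ for $j\geq 1$, forces $\phi=0$, giving (i)$\Rightarrow$(ii). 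The reverse implication passes through the already-established (ii)$\Leftrightarrow$(iii)$\Leftrightarrow$(iv) together with a symmetric dualization using that a failure of (i) would produce $\psi\in\mathcal{S}_j$ whose $\ol{\Phi}_c$-image escapes $F_j^\ind$.

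The main obstacle will be (iii)$\Rightarrow$(iv): promoting an orthogonality condition into a filtration containment requires combining the induction hypothesis at level $j-1$ with the separate induction on $c$ packaged inside Lemma~\ref{notinI}, while carefully tracking the order filtration through the shift isomorphism $\Lrm_c\cong\Hrm_c\e_-\otimes_{\e\Hrm_{c-1}\e}\e\Lrm_{c-1}$ so that $\sum_k y_k F_{j-1}^\ind$ together with $\ol{\Phi}_c(\mathcal{S}_{j,\perp_c})$ accounts for all of $\ol{\Phi}_c(\mathcal{S}_j)$.
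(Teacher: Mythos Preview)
Your (ii)$\Leftrightarrow$(iii) via the adjointness $((x_k-x_{k+1})\cdot,\,\cdot)_c=(\cdot,\,(y_k-y_{k+1})\cdot)_c$ is exactly what the paper does. The serious problem is your (iii)$\Rightarrow$(iv).

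You correctly observe that $\ol\Phi_c(U)\subset F_j^{\ind}$ where $U=\sum_k x_k(\liea^j)^{\perp_c}$, using $F_{j-1}^{\liea}=F_{j-1}^{\ind}$ and $y_k\in F_1^{\ord}$. But then you assert that ``(iv) fails on $\mathcal S_j$ precisely when $\mathcal S_j\cap U^{\perp_c}$ is nonzero.'' This is the gap: from $\mathcal S_j\cap U^{\perp_c}=0$ you cannot conclude $\mathcal S_j\subset U$ (which is what you need to get $\ol\Phi_c(\mathcal S_j)\subset F_j^{\ind}$). On a space with a nondegenerate form, $A\cap U^{\perp}=0$ only says that the projection $A\to V/U^{\perp}$ is injective; it does not force $A\subset U$. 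Your appeal to Lemma~\ref{notinI} is also misplaced: that lemma assumes $F_j^{\ind}\e\Lrm_{c-1}=F_j^{\liea}\e\Lrm_{c-1}$, which is the outer induction on $c$, not the hypothesis of Proposition~\ref{yphi}, and in any case it only controls $\mathcal S_{j,\perp_c}$, which is irrelevant to statement (iv).

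The paper closes this gap by passing through (i) and using the Euler field identity (Lemma~\ref{h}). For $\psi\in\mathcal S_j$ of weight $-\nu<\mu$ one has
\[
\psi=\frac{1}{n(\nu+\mu)}\sum_{i<j}(x_i-x_j)(y_i-y_j)\psi,
\]
so once (i) gives $(y_i-y_j)\psi\in(\liea^j)^{\perp_c}$, this formula \emph{exhibits} $\psi$ as an element of $U$, whence $\ol\Phi_c(\psi)\in\ol\Phi_c(U)\subset F_j^{\ind}$. This constructive step is the missing ingredient in your argument; without it, (iii) alone does not produce the containment in (iv). Note also that the paper's cycle is (iii)$\Rightarrow$(i)$\Rightarrow$(iv)$\Rightarrow$(iii), not (iii)$\Rightarrow$(iv) directly; the detour through (i) is precisely what makes the Euler formula applicable.

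Finally, your ``iterative argument'' for (i)$\Rightarrow$(ii) is too vague: you reduce to $(y_k-y_{k+1})\phi\in\liea^j\cap(\liea^j)^{\perp_c}$, but you do not explain why this radical is zero, nor how iteration helps.
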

\begin{proof}
$\mathbf{(i)\Rightarrow (ii)}$ is obvious.

    $\mathbf{(ii)\Leftrightarrow (iii)}$: The property (\ref{xy})
    $  ((x_i-x_j)\phi,\psi)_c=(\phi,(y_i-y_j)\psi)_c$
    implies that $\phi\in \left(\sum_{k=1}^n x_k (\liea^j)^{\perp_c}\right)^{\perp_c}$ is equivalent to $(y_k-y_{k+1})\phi\perp_c  (\liea^{j})^{\perp_c}$.

    $\mathbf{(iii)\Rightarrow (i)}$: First of all, note that $\ol{\Phi}_c(\sum_{k=1}^n x_k (\liea^j)^{\perp_c})$ is a subspace of $\ol{\Phi}_c((\liea^{j+1})^{\perp_c})$.  Let \[pr:\ol{\Phi}_c((\liea^{j+1})^{\perp_c})\to \ol{\Phi}_c(\sum_{k=1}^n x_k (\liea^j)^{\perp_c})\] denote the orthogonal projection. Then because of (\ref{xy}), 
$(i)$ is equivalent to the condition that $\psi\neq pr(\psi)$. Take $\phi=\psi-pr(\psi)$. Then $\phi$ lies in  $((\sum_{k=1}^n x_k (\liea^j)^{\perp_c})^{\perp_c})\cap (\liea^{j+1})^{\perp_c}$, which equals to $0$ by assumption $(ii)$.

    $\mathbf{(iv) \Rightarrow (iii)}$: Take $\phi\in \Scal_j$ such that $\ol{\Phi}_c(\phi)\in F_j^\liea\Lrm_c=F_j^\ind\Lrm_c$ with $j>0$. By the definition of $F^\ind$, $\ol{\Phi}_c(\phi)= \sum_{\alpha+\beta=j} \phi_\alpha \otimes \phi_\beta$ where $\phi_\alpha\in F^{\ord}_\alpha \Hrm_c \e_-$ and $\phi_\beta\in F^\ind_\beta \e\Lrm_{c-1}$. Since the image of $\phi$ in $\Lrm_c/(I_{c-1}^W)$ is $0$, given Lemma \ref{eLc-1}, we may assume $\alpha>0$. As a result,
        $\ol{\Phi}_c(\phi)$ can be written in the form of $\sum_k (y_k-y_{k+1})\ol{\Phi}_c(\phi_k)$ for some $\ol{\Phi}_c(\phi_k)\in F_{j-1}^{\ind}$ which equals to $F_{j-1}^{\liea}$ by our assumption. Therefore  $\ol{\Phi}_c(\phi)\in \ol{\Phi}_c(\sum_{k=1}^n x_k (\liea^j)^{\perp_c})$.

     $\mathbf{(i)\Rightarrow (iv)}$:     
    When $\ol{\Phi}_c(\ul{\psi})$ is a lowest weight vector, which lies $F^\liea_\mu$, we have already shown $\psi\in F^\ind_\mu$ by Lemma \ref{ef}. Hence below we assume $\ol{\Phi}_c(\ul{\psi})$ is of weight $\nu>-\mu$. 

Then from $\mathbf{h} \ul{\psi}=\nu\ul{\psi}$ and Lemma \ref{h} we deduce 

\[\sum_{i<j} (x_i-x_j)(y_i-y_j) \ul{\psi}=n(\nu+\mu)\ul{\psi}.\]
Therefore
\begin{align}
    \label{euler}
\ol{\Phi}_c(\ul{\psi})=\frac{1}{n(\nu+\mu)}\sum (y_i-y_j)\ol{\Phi}_c((y_i-y_j)\ul{\psi}).\end{align}
By assumption, $\ol{\Phi}_c((y_i-y_j)\ul{\psi})\in F_{j-1}^\liea\Lrm_c=F_{j-1}^{\ind}\Lrm_c$ and hence $\ol{\Phi}_c(\ul{\psi})\in F_j^\ind \Lrm_c$. Thus $(iv)\Rightarrow (i)$ is proved.
\end{proof}
    When $\psi\in \e\Lrm_c$, we have $(y_k-y_{k+1}) \ul{\psi}\in (\liea^{j})^{\perp_c}$ by Lemma \ref{eulerfield}. As a consequence of Proposition \ref{yphi}, to show $F^\ind=F^\liea$ we only need to consider elements in the form of $\sum  \ul{h\psi}$ where $\psi\in \C[\lieh]^W$ and $h\in \Hcal_c$ is not a constant. 
    
    Proposition \ref{yphi} also suggests that: to do induction, it is sufficient to show  $(y_k-y_{k+1})\psi\in (\liea^{j})^{\perp_c}$ for all $k$ if $\psi\in (\liea^{j+1})^{\perp_c}$. This is our goal in the next section.
\section{The kernel of the Dunkl form and the coinvariant algebra}\label{sectionkernel}
\subsection{The kernel of the Dunkl form}\label{kernel}
Throughout we assume $c=\frac{m}{n}>1$ for positive integer $m$ coprime to $n$. Define $u_i$ to be the elementary symmetric polynomials satisfying $\prod_{i=1}^n (1-x_iz)=\sum u_i z^i$ (put $u_1=0$). Let $v^{(c)}_i$ be the polynomials such that the formal Taylor expansion of the following equation holds:
\[(\sum u_i z^i)^c=\sum v^{(c)}_iz^i.\]
 For $i=1,\dots, n$, define  \begin{align*}
     f_i&=\text{Coef}_{z^m}\left((1-x_iz)^{-1}\prod_{k=1}^n (1-x_kz)^c\right)=\sum_{j=0}^m x_i^{j}v^{(c)}_{m-j}\\
     &=\text{Coef}_{z^m}\left(\prod_{l\neq i}(1-x_lz)\prod_{k=1}^n (1-x_kz)^{c-1}\right)=\sum_{j=0}^{n-1} \left((-1)^j\sum_{l_1<\cdots<l_j, l_1,\cdots, j_s\neq i}  x_{l_1}\cdots x_{l_j}\right)v^{(c-1)}_{m-j }.
     \end{align*}
\begin{lem}\label{lastlemma}
For $k<n$,  one has \[\displaystyle x_i^k+x_i^{k-2}u_2+\cdots +u_k=(-1)^k\sum_{j_1<\cdots<j_k,j_\ell\neq i} x_{j_1}\cdots x_{j_k}.\] As a consequence, 
    $\displaystyle x_i^k=(-1)^k\sum_{j_1<\cdots<j_k,j_\ell\neq i} x_{j_1}\cdots x_{j_k}$  $\mathrm{mod\ }\liea$.
\end{lem}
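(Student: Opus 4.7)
The plan is a short generating-function computation. Starting from the defining identity $\prod_{\ell=1}^n (1-x_\ell z) = \sum_{i\geq 0} u_i z^i$, I would factor out the $(1-x_i z)$ factor to obtain, as formal power series in $z$,
\[ \prod_{\ell \neq i}(1-x_\ell z) \;=\; \Bigl(\sum_{s\geq 0} u_s z^s\Bigr) (1-x_i z)^{-1} \;=\; \Bigl(\sum_{s\geq 0} u_s z^s\Bigr)\Bigl(\sum_{r \geq 0} x_i^r z^r\Bigr). \]
The whole lemma falls out by comparing the coefficient of $z^k$ on both sides.

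On the right-hand side of the displayed equation the coefficient of $z^k$ is $\sum_{s=0}^k u_s\, x_i^{k-s}$, which (using $u_1=0$) is exactly the left-hand side $x_i^k + x_i^{k-2}u_2 + \cdots + u_k$ appearing in the statement of the lemma. On the left-hand side, the product $\prod_{\ell\neq i}(1-x_\ell z)$ is a polynomial of degree $n-1$ in $z$ equal to $\sum_{j=0}^{n-1} (-1)^j e_j(x_1,\ldots,\widehat{x_i},\ldots,x_n)\, z^j$. The hypothesis $k<n$ is precisely what guarantees we are in the genuine polynomial range, so the coefficient of $z^k$ is
\[ (-1)^k e_k(x_1,\ldots,\widehat{x_i},\ldots,x_n) \;=\; (-1)^k \sum_{\substack{j_1<\cdots<j_k \\ j_\ell \neq i}} x_{j_1}\cdots x_{j_k}, \]
which is exactly the right-hand side of the claimed identity.

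For the mod-$\liea$ consequence, I would just observe that for $j\geq 2$ each $u_j = (-1)^j e_j(x_1,\ldots,x_n)$ is (up to sign) a positive-degree symmetric polynomial, hence lies in $\liea = (\C[\lieh]^W_+)$; together with $u_1=0$, this kills every term on the left-hand side except $x_i^k$ and gives the stated congruence. There is essentially no obstacle here: the only bookkeeping required is the sign convention (our $u_i$ differs from the usual $e_i$ by $(-1)^i$) and the fact that the restriction $k<n$ keeps us inside the polynomial expansion of $\prod_{\ell\neq i}(1-x_\ell z)$.
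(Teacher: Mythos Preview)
Your proof is correct and takes a genuinely different route from the paper. The paper proceeds by induction on $k$: at the inductive step it multiplies the formula for $k-1$ by $x_i$ and then rewrites $x_i\cdot\sum_{j_1<\cdots<j_{k-1},\,j_\ell\neq i}x_{j_1}\cdots x_{j_{k-1}}$ as $u_k-\sum_{j_1<\cdots<j_k,\,j_\ell\neq i}x_{j_1}\cdots x_{j_k}$ to close the induction. Your argument avoids the induction entirely by reading off the coefficient of $z^k$ in the identity $\prod_{\ell\neq i}(1-x_\ell z)=(\sum_s u_s z^s)(1-x_i z)^{-1}$ of formal power series. This is cleaner and makes transparent why the restriction $k<n$ is needed (so that $z^k$ lies in the polynomial range of the left-hand side). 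The paper's inductive approach is more hands-on and has the minor advantage of dovetailing with the very similar induction used immediately afterwards in Lemma~\ref{xixj}; your generating-function viewpoint, on the other hand, would let one read off that lemma too from $\prod_{\ell\neq i,j}(1-x_\ell z)=(\sum_s u_s z^s)(1-x_i z)^{-1}(1-x_j z)^{-1}$.
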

\begin{proof}
    We do induction on $k$. When $k=1$, it follows from $x_1+x_2+\cdots +x_n\in \liea$. When $k>1$, assume the lemma is proved for integers less than $k$. Then by the induction hypothesis
    \begin{align*}
    x_i^k=&x_i\left((-1)^{k-1} \sum_{j_1<\cdots<j_{k-1},j_\ell\neq i} x_{j_1}\cdots x_{j_{k-1}}-(x_i^{k-3}u_2+\cdots +u_{k-1})\right)\\
    =&(-1)^{k-1} \sum_{j_1<\cdots<j_{k-1},j_\ell\neq i} x_ix_{j_1}\cdots x_{j_{k-1}}-(x_i^{k-2}u_2+\cdots +x_iu_{k-1})\\
    =&(-1)^{k-1}\left(u_k -\sum_{j_1<\cdots<j_{k},j_\ell\neq i} x_{j_1}\cdots x_{j_{k}}\right)-(x_i^{k-2}u_2+\cdots +x_iu_{k-1})\\
    =&(-1)^k\sum_{j_1<\cdots<j_k,j_\ell\neq i} x_{j_1}\cdots x_{j_k}-(x_i^{k-2}u_2+\cdots +u_k).\qedhere
    \end{align*}
\end{proof}
Therefore 
\begin{align}
    \label{generalfi}
f_i= \sum_{j =0}^{n-1}  ( x_i^j+x_i^{j-2} u_2+\cdots + u_j )v^{(c-1)}_{m-j}. \end{align}
According to \cite[(4.2)]{gorskyarc}, the summand $v^{(c-1)}_m$ in $f_i$ can be expressed by
\[v^{(c-1)}_m=(2(c-1)-(m-2))v_{m-2}^{(c-1)}u_2+\cdots +((n-1)(c-1)-(m-n-1))v^{(c-1)}_{m-n+1}u_{n-1}.\]
Suppose $kn<m<(k+1)n$. Then $v^{(c-1)}_{m-l }\in \liea^k$ for $l =m-nk,\cdots, n-1$, and $v^{(c-1)}_{m-l }\in \liea^{k+1}$ when $l =0,\cdots, m-nk-1$. Hence
\begin{align}
    \label{fmodulo}
f_i\equiv \sum_{j =m-nk}^{n-1} x_i^j v^{(c-1)}_{m-j}\quad \text{mod }\liea^{k +1}.
\end{align}
\begin{thm}\cite[Chapter 5]{dunklintertwining}\cite[Proposition 3.1] {chmutovaetingof}\cite[Theorem 4.3]{gorskyarc} We have that
\begin{enumerate}[(i)]\label{arc}
    \item $I_c=\sum_i \C[\lieh]f_i$  and $\sum_i \C f_i $ form the standard permutation representation of $S_n$.
    \item $\sum x_i^\ell f_i=-\frac{m+\ell}{c}v^{(c)}_{m+\ell}-\sum_{j=1}^{\ell-1} v^{(c)}_{m+j}p_{\ell-j}$.
    \item  $I_c^W=(v^{(c)}_{m+1},\cdots, v^{(c)}_{m+n-1})$ and $v^{(c)}_{k}\in I_c^W$ for $k>m+n-1$.
\end{enumerate}
\end{thm}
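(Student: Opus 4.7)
The plan exploits the generating-function description $F_i(z) := G(z)/(1-x_iz)$, where $G(z) := \prod_k(1-x_kz)^c = \sum_j v^{(c)}_j z^j$, so that $f_i = [z^m]F_i(z)$. Part (ii) is then a formal manipulation: the resolvent identity $\sum_i x_i^\ell/(1-x_iz) = \sum_{r\geq 0} p_{r+\ell}z^r$ (valid for $\ell\geq 1$) gives $\sum_i x_i^\ell f_i = \sum_{r=0}^m p_{r+\ell}v^{(c)}_{m-r}$ upon extracting the coefficient of $z^m$. The logarithmic derivative of $G = (\sum u_iz^i)^c$ produces the recursion $kv^{(c)}_k = -c\sum_{s=0}^{k-1}v^{(c)}_s p_{k-s}$; applied at $k=m+\ell$ and reindexed, this converts the above sum into the claimed closed form $-\frac{m+\ell}{c}v^{(c)}_{m+\ell} - \sum_{j=1}^{\ell-1}v^{(c)}_{m+j}p_{\ell-j}$.

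For part (i), the equivariance $w\cdot f_i = f_{w(i)}$ is immediate from symmetry of $G$, and applying the log-derivative recursion at $k=m$ yields $\sum_i f_i = 0$, identifying $\sum\C f_i$ with the standard $(n{-}1)$-dimensional representation of $S_n$. To show $f_i\in I_c$, I would verify that at $c=m/n$ each $f_i$ is a singular vector of the polynomial representation. First, for $j\neq i$ the vanishing $y_jF_i(z) = 0$ holds for any $c$: the identity $\partial_{x_j}F_i = -czF_i/(1-x_jz)$ combined with the divided-difference $(F_i - F_j)/(x_j - x_i) = -zF_i/(1-x_jz)$ forces the derivative and reflection contributions to the Dunkl operator to cancel. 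Second, the formula $y_iF_i = \frac{z}{1-x_iz}[F_i - c\sum_kF_k]$ holds in general, and at $c = m/n$ a direct telescoping (using the log-derivative recursion to evaluate $\sum_k [z^b]F_k$ for each $b<m$) yields $[z^m]y_iF_i = 0$. Hence $y_jf_i = 0$ for all $j$, making $f_i$ singular, so $\sum\C[\lieh]f_i$ is a proper $\Hrm_c$-submodule and is contained in $I_c$. The reverse inclusion $I_c\subseteq(f_1,\ldots,f_n)$ follows from a Hilbert-series count: the leading term of $f_i$ in the $x_i$-grading is $x_i^m$, yielding $\dim\C[\lieh]/(f_1,\ldots,f_n)\leq m^{n-1} = \dim\Lrm_c$.

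Part (iii) then follows from (i) and (ii). Applying (ii) iteratively at $\ell=1,2,\ldots,n-1$ and using $p_1=0$ produces a triangular system expressing each $v^{(c)}_{m+\ell}$ as a nonzero scalar multiple of $\sum x_i^\ell f_i\in I_c^W$ minus a $\C[\lieh]^W$-combination of earlier generators, proving $(v^{(c)}_{m+1},\ldots,v^{(c)}_{m+n-1})\subseteq I_c^W$. Conversely, any $W$-invariant element of $I_c = \sum\C[\lieh]f_i$ arises from a $W$-equivariant coefficient tuple; since $\sum\C f_i$ is the standard representation, decomposing into isotypic components shows the invariant combinations are $\C[\lieh]^W$-spanned by $\{\sum x_i^\ell f_i : \ell\geq 1\}$, all of which lie in $(v^{(c)}_{m+1},\ldots,v^{(c)}_{m+n-1})$ by (ii). For $k>m+n-1$, specialize (ii) at $\ell=k-m$ to write $v^{(c)}_k$ (up to nonzero scalar) as $\sum x_i^{k-m}f_i - \sum_{j=1}^{k-m-1}v^{(c)}_{m+j}p_{k-m-j}$; induction on $k$ places every term in $I_c^W$.

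The main technical obstacle will be in part (i): carrying out the telescoping identity $[z^m]y_iF_i=0$ at the special parameter $c=m/n$ — this is where the arithmetic of $c$ actually enters — and rigorously establishing the Hilbert-series bound $\dim\C[\lieh]/(f_1,\ldots,f_n)\leq m^{n-1}$, which requires verifying that after quotienting $\C[x_1,\ldots,x_n]$ by $x_1+\cdots+x_n=0$ the $n$ polynomials $f_1,\ldots,f_n$ (which satisfy the single relation $\sum f_i = 0$) still cut out a zero-dimensional complete-intersection-type quotient of the expected dimension.
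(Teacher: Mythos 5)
You should first note that the paper does not prove this theorem at all: it is imported verbatim from \cite{dunklintertwining}, \cite{chmutovaetingof} and \cite{gorskyarc}, so there is no internal argument to compare against, and your proposal has to stand on its own. Much of it does: the generating-function proof of (ii) is correct (the log-derivative recursion $kv^{(c)}_k=-c\sum_{s=0}^{k-1}v^{(c)}_sp_{k-s}$ really does convert $\sum_i x_i^\ell f_i=\sum_{r=0}^{m}p_{r+\ell}v^{(c)}_{m-r}$ into the stated closed form); the Dunkl-operator computations in (i) check out, namely $y_jF_i=0$ for $j\neq i$ for \emph{every} $c$, and $y_iF_i=\frac{z}{1-x_iz}\bigl[F_i-c\sum_kF_k\bigr]$ has vanishing $z^m$-coefficient exactly when $cn=m$ (the telescoping you flag as an obstacle is in fact a clean cancellation of $\sum_{d}(m-d)x_i^{m-1-d}v^{(c)}_d$ against itself, and the same recursion at $k=m$ gives $\sum_if_i=(n-\tfrac{m}{c})v^{(c)}_m=0$); and the deduction of (iii) from (i) and (ii) is fine once you invoke the standard fact that the $W$-equivariant tuples in $\C[\lieh]^{\oplus n}$ valued in the reflection representation form a free $\C[\lieh]^W$-module on $(x_i^\ell)_i$, $1\le\ell\le n-1$.

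The genuine gap is the reverse inclusion $I_c\subseteq(f_1,\dots,f_n)$, which is the real content of (i) and which you only sketch. Your route needs $\dim\C[\lieh]/(f_1,\dots,f_n)\le m^{n-1}$, but the justification offered, that ``the leading term of $f_i$ in the $x_i$-grading is $x_i^m$'', does not deliver it: there is no single term order for which $x_i^m$ is the leading monomial of $f_i$ for all $i$ simultaneously (in lex with $x_1>\dots>x_n$, for instance, $f_2$ contains monomials $x_1^{m-j}x_2^j$ from the symmetric factors $v^{(c)}_{m-j}$ that dominate $x_2^m$), so no Gr\"obner/staircase bound follows. What is actually required is that $n-1$ of the $f_i$ form a regular sequence in $\C[\lieh]$, equivalently that their common zero locus in $\lieh$ is the origin; this is precisely the nontrivial assertion where the arithmetic of $c=m/n$ enters a second time, and neither the leading-term remark nor the singular-vector property supplies it (you acknowledge this, but acknowledging it leaves (i), hence (iii), unproved, and Theorem \ref{arc} is used throughout the paper). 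You also quietly rely on $\dim\Lrm_{m/n}=m^{n-1}$; that is a legitimate import from \cite{begg} but should be stated as such. To close the gap you would either have to prove the zero-locus/regular-sequence statement directly, or argue as in the cited sources, e.g.\ by viewing $\sum_i\C[\lieh]f_i$ as the image of the map of standard modules induced by the degree-$m$ copy of the reflection representation spanned by the $f_i$ and comparing graded characters of the cokernel with the known character of $\Lrm_c$ from \cite{begg}.
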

Given the expression (\ref{generalfi}) and (iii) of Theorem \ref{arc}, we see that $I_c$ is contained in the ideal generated by $I_{c-1}^W$.
\if force As $(\sum u_i z^i)^c=(\sum u_i z^i)^{c-1}((\sum u_i z^i)$, we see
\[    v_j^{(c)}=\sum_{k=\text{max}\{0,j-n\}}^jv_k^{(c-1)}u_{j-k}\]
\[    v_j^{(c-1)}=\sum_{k+j_1+\cdots +j_\gamma=j}(-1)^\gamma v_k^{(c)}u_{j_1}u_{j_2}\cdots u_{j_\gamma}\]
Therefore 
by Theorem \ref{arc}, we see that $I_c^W\subset I_{c-1}^W$.\fi 

\begin{rem}
The generators $v_{m-n+1}^{(c-1)}, \cdots, v_{m-1}^{(c-1)}$ of $I_{c-1}^W$ form a regular sequence. As a result,  $\dim (\Lrm_c/(I_{c-1}^W))=(m-n+1)\cdots(m-1)=\dim \Hcal_c\cdot \dim(\e\Lrm_{c-1})$, which implies that $\Lrm_c/(I_{c-1}^W)\cong \Hcal_c\otimes_\C \e\Lrm_{c-1}$.
\end{rem}

\subsection{Equality on $F_1$}\label{transform}
In this section and the next section, we will show $F_1^{\ind}=F_1^{\liea}$. In the process we will acquire all the ingredients needed for proving $F_j^{\ind}=F_j^{\liea}$ for $j\geq 1$.

Take a nonzero polynomial $\phi\in (\liea^2)^{\perp_c}\cap \liea$. Assume $\phi\equiv \sum_k h_kp_k$ mod $\liea^2$ for $h_k\in \Hcal_c$. By Lemma \ref{notinI} and \ref{Icperp}, we may assume, in addition, $p_k\in \ol{I_{c-1}^W}$ mod $\liea^2$. From Theorem \ref{arc}, we know that ${I_{c-1}^W}$ mod $\liea^2$ is generated by ${v^{(c)}_{m-n+1}},\cdots, {v^{(c)}_n}$. Therefore to have ${I_{c-1}^W}\neq 0$ mod $\liea^2$, the integer $m$ has to satisfy $n<m<2n$,
We will assume $n<m<2n$ for the rest of Section \ref{transform} and Section \ref{linearalgebra}.

Now to show $F_1^\ind=F_1^\alg$, by Lemma \ref{F0}, Lemma \ref{notinI} and Proposition \ref{yphi}, we only need to show that if (a lift of) $\phi:=\sum_{k=m-n+1}^n h_kp_k$ lying in  $\C[\lieh]\cap (I_{c-1}^W)$ is such that $(y_i-y_{i+1})\phi\in \liea$ for all $i$, then $\phi\in I_c$. Here $h_{m-n+1},\cdots, h_n\in \Hcal_c$.

We know that any element in $I_c$ has the form \begin{align}
    \label{fandxk}
\sum_{i=1}^n \phi_i f_i= \sum_{i=1}^n \phi_i \sum_{j=0}^m x_i^{m-j}v^{(c)}_{j}.\end{align}
 
Replacing $f_i$ for $i=1,\dots, n$ by 
\[f_i\equiv x_i^{n-1}v^{(c-1)}_{m-n+1}+\cdots +x_i^{m- n}v^{(c-1)}_{ n} \text{ mod } \liea^{2}\] 
and $v_j$ for $j=m-n+1,\dots, n$ by
\[v^{(c)}_j\equiv c u_j\quad \text{mod}\ \liea^2\]
 we have the further expansion \begin{align}
    \label{linearcombination}
\sum_{i=1}^n \phi_i f_i\equiv (c-1)\sum_{i=1}^n \phi_i \sum_{j=m-n+1}^{ n}x_i^{m-j}u_j\text{ mod } \liea^2. \end{align}

Let $\Rcal:=\C[\lieh]/\liea$ be the coinvariant algebra.  Using row vector multiplication, we have two linear maps
        \begin{align}
            \label{sequence}
    \Rcal^n\xrightarrow{A_{m-n,n-1}}\Rcal^{2n-m}\xrightarrow{B_{m-n,n-1}} \Rcal^n 
        \end{align} defined by the following matrices\begin{align}
      \label{AB}
  A_{m-n,n-1}:=\begin{pmatrix}
            x_1^{n-1}&x_1^{n-2}&\cdots&x_1^{m-n}
            \\
            x_2^{n-1}&x_2^{n-2}&\cdots&x_2^{m-n}\\
            &&\cdots\\
            x_n^{n-1}&x_n^{n-2}&\cdots&x_n^{m-n}
        \end{pmatrix},\quad B_{m-n,n-1}:=\begin{pmatrix}
            x_1^{m-n}&x_2^{m-n}&\cdots&x_n^{m-n}\\
            x_1^{m-n+1}&x_2^{m-n+1}&\cdots&x_n^{m-n+1}\\
            &&\cdots\\
            x_1^{n-1}&x_2^{n-1}&\cdots&x_n^{n-1}
        \end{pmatrix}.\end{align}
        
        We will simply write $A:=A_{m-n,n-1}$, $B:=B_{m-n,n-1}$ when there is no ambiguity.
        
  \begin{lem}\label{ABequivalent}
  \begin{enumerate}[(a)]
      \item $\ul{\sum_{m-n+1}^n h_kp_k}\in I_c$ holds if and only if $((m-n+1)h_{m-n+1},\cdots, nh_n)\in  \im(A)$.
      \item $(y_i-y_{i+1})\sum_{k=m-n+1}^n h_kp_k\in \liea$ holds for all $i$ if and only if $((m-n+1)h_{m-n+1},\cdots, nh_n)\in  \Ker(B)$. 
  \end{enumerate}
       \end{lem}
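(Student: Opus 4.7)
The plan is to compute in the graded piece $\liea/\liea^2$ and compare $u_k$-coefficients. By Chevalley--Shephard--Todd and the regularity of the sequence $u_2,\ldots,u_n$, one has $\liea/\liea^2 \cong \bigoplus_{k=2}^n \Rcal \cdot \ol{u_k}$ as $\Rcal$-modules. Newton's identity collapses modulo $\liea^2$ to $p_k \equiv -k u_k$, since every cross term $e_i p_{k-i}$ with $1 \leq i \leq k-1$ lies in $\liea^2$. Hence $\sum_k h_k p_k \equiv -\sum_k k h_k u_k \pmod{\liea^2}$.

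For part (a), I first observe that $v_j^{(c-1)} \equiv (c-1) u_j \pmod{\liea^2}$ for $2 \leq j \leq n$, as the binomial expansion of $(1 + \sum_{i \geq 2} u_i z^i)^{c-1}$ has every higher-order term in $\liea^2$. Combined with (\ref{fmodulo}), this yields $f_i \equiv (c-1) \sum_{k=m-n+1}^n x_i^{m-k} u_k \pmod{\liea^2}$, so the image of $I_c$ in $\liea/\liea^2$ has $u_k$-coordinate $(c-1) \sum_i \bar\phi_i x_i^{m-k}$ for $k \in [m-n+1, n]$ and zero for $k \in [2, m-n]$, as $\bar\phi_i$ ranges over $\Rcal$. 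Since $I_c \subset \liea$, the condition $\ul{\sum h_k p_k} \in I_c$ is equivalent to the class of $\sum h_k p_k$ in $\liea/\liea^2$ lying in the image of $I_c$. Matching $u_k$-coordinates and absorbing the nonzero scalar $-(c-1)$ gives the system $k h_k = \sum_i \phi_i x_i^{m-k}$ for $k \in [m-n+1, n]$, which is precisely $((m-n+1)h_{m-n+1},\ldots,n h_n) \in \im(A)$.

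For part (b), $W$-invariance of $p_k$ makes the reflection terms of $y_\alpha := y_i - y_{i+1}$ vanish on $p_k$, so the ordinary Leibniz rule holds: $y_\alpha(h_k p_k) = (y_\alpha h_k)\, p_k + k h_k (x_i^{k-1} - x_{i+1}^{k-1})$, with the first summand in $\liea$ since $p_k \in \liea$. The same Leibniz argument, applied to a generator $g \cdot \psi_1 \psi_2 \in \liea^2$ with $g \in \C[\lieh]$ and $\psi_1, \psi_2 \in \C[\lieh]^W_+$, yields $y_\alpha(\liea^2) \subset \liea$, so replacing the orthogonal lift $\phi = \ul{\sum h_k p_k}$ by the polynomial $\sum h_k p_k$ is harmless modulo $\liea$. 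Thus $y_\alpha \phi \in \liea$ for all $\alpha$ amounts to $F_1 \equiv \cdots \equiv F_n$ in $\Rcal$, where $F_j := \sum_k k h_k x_j^{k-1}$. But $\sum_{j=1}^n F_j = \sum_k k h_k p_{k-1}$ lies in $\liea$ (since $k-1 \geq m-n \geq 1$ forces $p_{k-1} \in \liea$), so $n F_1 = 0$ in $\Rcal$ and hence every $F_j = 0$ in $\Rcal$. This is exactly $((m-n+1)h_{m-n+1},\ldots,n h_n) \in \Ker(B)$.

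The only subtle step is verifying that $y_\alpha(\liea^2) \subset \liea$, which is what licenses passing from the specific orthogonal lift to the concrete polynomial $\sum h_k p_k$; without that control, the correction term $\phi - \sum h_k p_k$ hit by $y_\alpha$ would not obviously remain in $\liea$. Beyond this, both halves are routine coefficient bookkeeping in $\liea/\liea^2$.
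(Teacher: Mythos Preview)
Your proof is correct and follows essentially the same route as the paper: for (a), reduce modulo $\liea^2$, use $v_j^{(c-1)}\equiv (c-1)u_j$ and (\ref{fmodulo}) to expand $f_i$, and match $u_k$-coefficients; for (b), apply the Leibniz rule for Dunkl operators on a product with a symmetric factor and translate into a matrix condition. Your handling of (b) is a mild variant of the paper's: where the paper augments the $(2n-m)\times(n-1)$ difference matrix by the column $(p_{m-n},\ldots,p_{n-1})^T\equiv 0$ and performs column operations to recover $B$, you observe directly that $F_1\equiv\cdots\equiv F_n$ together with $\sum_j F_j=\sum_k kh_kp_{k-1}\in\liea$ forces every $F_j\equiv 0$ in $\Rcal$; this is the same linear-algebra fact unpacked. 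One small remark: part (b) as stated concerns the polynomial $\sum h_kp_k$ itself, not its orthogonal lift $\ul{\sum h_kp_k}$, so your verification that $y_\alpha(\liea^2)\subset\liea$ --- though correct and relevant when the lemma is later applied to elements of $(\liea^2)^{\perp_c}$ --- is not required to prove the lemma as written.
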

       \begin{proof}
        by Newton's identity  $p_k\equiv ku_k$ mod $\liea^2$, for $k=2,\cdots, n$. The equation 
\[\sum h_kp_k\equiv \sum kh_ku_k= \sum \phi_i f_i\equiv (c-1)\sum_{i=1}^n \phi_i \sum_kx_i^{m-k}ku_k=(c-1)\sum_k (\sum_{i=1}^n\phi_i x_i^{m-k})u_k \text{ mod $\liea^2.$}\]
then implies  $((m-n+1)h_{m-n+1},\cdots, nh_n)=(c-1)(\phi_1 \cdots \phi_n) A$. This proves (a).

Because $p_k$ is symmetric, by \cite[Corollary 5.3]{gorskyarc} we have that 
\[(y_{i}-y_{i+1})(h_kp_k)=((y_{i}-y_{i+1}) h_k)p_k+h_k((y_{i}-y_{i+1})p_k).\]
The element $(y_{i}-y_{i+1})\sum_k h_ky_k$ is in $\liea$ if and only if $\sum_k h_k((y_{i}-y_{i+1})p_k)$ is in $\liea$. The latter element is equal to $\sum_k kh_k (x_i^{k-1}-x_{i+1}^{k-1})$, because $y_i-y_{i-1}$ acts on symmetric polynomials by $\frac{\partial}{\partial_{x_i}}-\frac{\partial}{\partial_{x_{i+1}}}$.

 This can be translated to 
       \[((m-n+1)h_{m-n+1},\cdots, nh_n)\begin{pmatrix}
           x_1^{m-n}-x_2^{m-n}&x_2^{m-n}-x_3^{m-n}&\cdots&x_{n-1}^{m-n}-x_n^{m-n}\\
            x_1^{m-n+1}-x_2^{m-n+1}&x_2^{m-n+1}-x_3^{m-n+1}&\cdots&x_{n-1}^{m-n+1}-x_n^{m-n+1}\\
            &&\cdots\\
            x_1^{n-1}-x_2^{n-1}&x_2^{n-1}-x_3^{n-1}&\cdots&x_{n-1}^{n-1}-x_n^{n-1}
       \end{pmatrix}=0\text{ mod }\liea.\]
       
       We add in a new column to the matrix in the last equation to define \[B'=\begin{pmatrix}
           x_1^{m-n}-x_2^{m-n}&\cdots&x_{n-1}^{m-n}-x_n^{m-n}&p_{m-n}\\
            x_1^{m-n+1}-x_2^{m-n+1}&\cdots&x_{n-1}^{m-n+1}-x_n^{m-n+1}&p_{m-n+1}\\
            &&\cdots\\
            x_1^{n-1}-x_2^{n-1}&\cdots&x_{n-1}^{n-1}-x_n^{n-1}&p_{n-1}
       \end{pmatrix}.\]
       
       As the last column in $B'$ is $0$ mod $\liea$, $((m-n+1)h_{m-n+1},\cdots, nh_n)B=0$ mod $\liea$ is equivalent to $((m-n+1)h_{m-n+1},\cdots, nh_n)B'=0$ mod $\liea$. Finally notice that $B$ can obtained from $B'$ by elementary column  operations. Therefore, $\Ker(B)=\Ker(B')$.
       \end{proof}

\subsection{The coinvariant algebra}\label{linearalgebra}
In this subsection, we prove that the  sequence (\ref{sequence}) is exact at the middle term.

        Composing $A$ and $B$ gives 
        \[  AB=\begin{pmatrix}
            nx_1^{m-1}&\sum_{j=m-n}^{n-1}x_1^jx_2^{m-j-1}&\cdots &\sum_{j=m-n}^{n-1} x_1^jx_n^{m-j-1}\\
            \sum_{j=m-n}^{n-1} x_2^jx_1^{m-j-1}&nx_2^m&\cdots &\sum_{j=m-n}^{n-1} x_2^jx_n^{m-j-1}\\
            &&\cdots\\
            &&\cdots\\
            \sum_{j=m-n}^{n-1} x_1^jx_n^{m-j-1}&&\cdots &nx_n^m
        \end{pmatrix}.\]
We will show $AB=0$ mod $\liea$ below, which implies $\im(A)\subset \Ker(B)$ for $n<m<2n$. 
\begin{lem}\label{xixj}
  For $i\neq j$ and $k<n-1$, we have \[x_i^k+x_i^{k-1}x_j+\cdots +x_ix_j^{k-1}+x_j^k=(-1)^k \sum_{j_1<\cdots<j_k,j_\ell\neq i,j} x_{j_1}\cdots x_{j_k}  \quad \mathrm{mod\ }\liea.\]
\end{lem}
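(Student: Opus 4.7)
The statement is a two-variable generalization of Lemma \ref{lastlemma}, and the plan is to prove it via a generating function identity analogous to the one implicit in that lemma. The left-hand side is the complete homogeneous symmetric polynomial in $x_i, x_j$ of degree $k$, which has the generating function $\sum_{r \geq 0}(x_i^r + x_i^{r-1}x_j + \cdots + x_j^r) z^r = \frac{1}{(1-x_iz)(1-x_jz)}$.

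Starting from the defining identity $\prod_{\ell=1}^n (1-x_\ell z) = \sum_{s=0}^n u_s z^s$, divide both sides by $(1-x_iz)(1-x_jz)$ to obtain
\[
\prod_{\ell \neq i, j}(1 - x_\ell z)
= \left( \sum_{s=0}^n u_s z^s \right)\left( \sum_{r \geq 0} (x_i^r + x_i^{r-1}x_j + \cdots + x_j^r)\, z^r \right).
\]
The left-hand side is a polynomial of degree $n-2$, which expands directly as $\sum_{k=0}^{n-2}(-1)^k e_k^{(i,j)} z^k$, where $e_k^{(i,j)} := \sum_{j_1 < \cdots < j_k,\ j_\ell \neq i, j} x_{j_1}\cdots x_{j_k}$. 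Comparing coefficients of $z^k$ for $k \leq n-2$, i.e.\ precisely the range $k < n-1$ in the lemma, yields the exact identity
\[
(-1)^k e_k^{(i,j)} = \sum_{s=0}^k u_s \cdot (x_i^{k-s} + x_i^{k-s-1}x_j + \cdots + x_j^{k-s}).
\]

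Finally, reduce modulo $\liea$. Since $u_1 = 0$ in the Cartan of $\mathfrak{sl}_n$ and $u_2, \ldots, u_n \in \liea^W \subset \liea$, every term with $s \geq 1$ on the right-hand side vanishes, leaving only the $s = 0$ summand, which equals the left-hand side of the claimed congruence. This gives the lemma.

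The argument is essentially bookkeeping; the one delicate point is that the bound $k < n-1$ matches the degree $n-2$ of $\prod_{\ell \neq i,j}(1-x_\ell z)$, ensuring the coefficient comparison is valid in the stated range. Alternatively one could run an induction on $k$ modelled on the proof of Lemma \ref{lastlemma}, using the recursion $(x_i^k + \cdots + x_j^k) = x_i^k + x_j(x_i^{k-1} + \cdots + x_j^{k-1})$ and applying Lemma \ref{lastlemma} to $x_i$ to absorb the leading $x_i^k$. However the generating function route is cleaner and yields the stronger un-reduced identity for free, which may be useful elsewhere.
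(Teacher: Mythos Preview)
Your proof is correct and takes a genuinely different route from the paper. The paper proves the lemma by induction on $k$, using Lemma \ref{lastlemma} for the single-variable case as the key input: it writes $x_i^k$ via the inductive hypothesis, then manipulates the resulting sums over subsets avoiding $i$ and $j$ using two auxiliary identities (one from Lemma \ref{lastlemma} applied to $x_j^k$, and one relating $e_k^{(i,j)}$ to $e_{k-1}^{(i)}$ and $e_{k-1}^{(j)}$ modulo $\liea$). Your generating function argument bypasses all of this: dividing $\prod_\ell(1-x_\ell z)=\sum u_s z^s$ by $(1-x_iz)(1-x_jz)$ and reading off the coefficient of $z^k$ gives the exact identity $(-1)^k e_k^{(i,j)}=\sum_{s=0}^k u_s\, h_{k-s}(x_i,x_j)$ in one line, and reducing modulo $\liea$ kills every term with $s\geq 1$. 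This is cleaner, avoids the inductive bookkeeping entirely, and as you note yields the un-reduced identity as a bonus. The paper's inductive approach has the minor advantage of staying parallel to its own proof of Lemma \ref{lastlemma}, but your argument would in fact prove that lemma too (divide by a single factor $(1-x_iz)$ instead of two), so the uniformity is recoverable.
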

\begin{proof}
We do induction on $k$. When $k=1$, this follows from $x_1+x_2+\cdots +x_n\in \liea$. Assume the lemma is proved for integer less than $k$ where $k\geq 2$. 
By Lemma \ref{lastlemma}, we have
\begin{align}
    \label{1x}
x_j^k=x_j((-1)^{k-1}\sum_{j_1<\cdots<j_{k-1},j_\ell\neq j} x_{j_1}\cdots x_{j_{k-1}})\text{ mod }\liea.\end{align}
On the other hand, notice that
\begin{align}
    \label{kx}-\sum_{j_1<\cdots<j_{k},j_\ell\neq i,j} x_{j_1}\cdots x_{j_{k}}=x_i\sum_{j_1<\cdots<j_{k-1},j_\ell\neq i} x_{j_1}\cdots x_{j_{k-1}}+x_j\sum_{j_1<\cdots<j_{k-1},j_\ell\neq j} x_{j_1}\cdots x_{j_{k-1}}\text{ mod }\liea.\end{align}
Now by the induction hypothesis (modulo $\liea$ below)
\begin{align*}
    x_i^k=&x_i\left( (-1)^{k-1}\sum_{j_1<\cdots<j_{k-1},j_\ell\neq i,j} x_{j_1}\cdots x_{j_{k-1}} - (x_i^{k-2}x_j+\cdots +x_j^{k-1})\right) \\
    =&(-1)^{k-1}\sum_{j_1<\cdots<j_{k-1},j_\ell\neq i,j} x_ix_{j_1}\cdots x_{j_{k-1}}-(x_k^{k-1}x_j+\cdots +x_ix_j^{k-1})\\
    (\text{by }(\ref{kx}))=&(-1)^k\left(\sum_{j_1<\cdots<j_{k},j_\ell\neq i,j} x_{j_1}\cdots x_{j_{k}}+(\sum_{j_1<\cdots<j_{k-1},j_\ell\neq j} x_jx_{j_1}\cdots x_{j_{k-1}})\right)-(x_k^{k-1}x_j+\cdots +x_ix_j^{k-1})\\
    (\text{by }(\ref{1x}))=&(-1)^k\sum_{j_1<\cdots<j_{k},j_\ell\neq i,j} x_{j_1}\cdots x_{j_{k}}-(x_k^{k-1}x_j+\cdots +x_ix_j^{k-1}+x_j^k).\qedhere
\end{align*}
\end{proof}
\begin{coro}\label{AB=0}For $m\geq n$, 
    the element $\phi_{ij}:=x_i^{m-n}x_j^{n-1}+x_i^{m-n+1}x_j^{n-2}+\cdots +x_i^{n-1}x_j^{m-n}$ is in $\liea$. 

Thus, $AB=0$, $\im(A)\subset \Ker(B)$ and $\rank(A)+\rank(B)\leq (2n-m)n!$.
\end{coro}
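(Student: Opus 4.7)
The plan is to prove the first assertion $\phi_{ij}\in\liea$; the remaining conclusions then follow formally, since $AB\equiv 0\pmod\liea$ reduces to this (together with a check of the diagonal entries), and the rank inequality is an immediate consequence of $\im(A)\subset\Ker(B)$ inside the $(2n-m)\,n!$-dimensional space $\Rcal^{2n-m}$.

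First I would pull out the common factor and write $\phi_{ij}=(x_ix_j)^{m-n}h_{2n-m-1}(x_i,x_j)$, where $h_k(x_i,x_j):=x_i^k+x_i^{k-1}x_j+\cdots+x_j^k$. Since $n<m<2n$ forces $0\le 2n-m-1<n-1$, Lemma \ref{xixj} applies and gives
\[h_{2n-m-1}(x_i,x_j)\equiv(-1)^{2n-m-1}\,\epsilon_{2n-m-1}\pmod\liea,\]
where $\epsilon_c:=e_c(x_\ell:\ell\neq i,j)$. Noting that the exponent sum $(m-n)+(2n-m-1)=n-1$ is special, I would prove the following uniform claim by strong induction on $r$: \emph{for every $r\ge 0$, $(x_ix_j)^r\,\epsilon_{n-1-r}\in\liea$}. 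The base cases are $r=0$ (where $\epsilon_{n-1}=0$ trivially, because only $n-2$ variables are available) and $r=1$ (where $\epsilon_{n-2}=\prod_{\ell\neq i,j}x_\ell$, so $(x_ix_j)\cdot\epsilon_{n-2}=e_n\in\liea$). For the inductive step at $r\ge 2$, one applies the splitting identity
\[e_{n+1-r}(x_1,\dots,x_n)=\epsilon_{n+1-r}+(x_i+x_j)\,\epsilon_{n-r}+x_ix_j\,\epsilon_{n-1-r},\]
whose left side lies in $\liea$; multiplying the resulting congruence by $(x_ix_j)^{r-1}$ expresses $(x_ix_j)^r\epsilon_{n-1-r}$ modulo $\liea$ as a combination of $(x_ix_j)\cdot\bigl[(x_ix_j)^{r-2}\epsilon_{n-1-(r-2)}\bigr]$ and $(x_i+x_j)\cdot\bigl[(x_ix_j)^{r-1}\epsilon_{n-1-(r-1)}\bigr]$, both of which lie in $\liea$ by the inductive hypothesis applied at $r-2$ and $r-1$. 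Taking $r=m-n$ (which lies in $[1,n-1]$) then yields $\phi_{ij}\in\liea$.

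The remaining checks are short: Lemma \ref{lastlemma} at $k=n-1$ gives $x_i^{n-1}\equiv(-1)^{n-1}\prod_{\ell\neq i}x_\ell\pmod\liea$, so $x_i^n\equiv(-1)^{n-1}e_n\equiv 0$ and thus $x_i^{m-1}\in\liea$ for $m-1\ge n$; this handles the diagonal entries of $AB$. Combined with the identity $(AB)_{ij}=\phi_{ij}$ for $i\neq j$, one obtains $AB\equiv 0$ in $\Rcal^{2n-m}$, whence $\im(A)\subset\Ker(B)$, and the stated bound $\rank(A)+\rank(B)\le(2n-m)n!$ follows from $\rank(A)\le\dim\Ker(B)=(2n-m)n!-\rank(B)$.

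I expect the main obstacle to be the inductive step above: since the splitting identity drops the index of $e$ by two but the exponent of $x_ix_j$ only by one, the off-shape term $(x_ix_j)^{r-1}\epsilon_{n+1-r}$ does not immediately fit the inductive hypothesis until one factors out an extra $x_ix_j$, which is why two base cases ($r=0$ and $r=1$) are needed rather than one. Once this bookkeeping is in place, everything else is a formal computation.
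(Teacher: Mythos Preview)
Your argument is correct. The paper states this as an unproved corollary of Lemma~\ref{xixj}, and your route—factoring $\phi_{ij}=(x_ix_j)^{m-n}h_{2n-m-1}(x_i,x_j)$, applying Lemma~\ref{xixj}, and then proving $(x_ix_j)^r\epsilon_{n-1-r}\in\liea$ by strong induction via the two–variable splitting of $e_{n+1-r}$—is a clean way to supply the missing details; the handling of the diagonal entries and the rank inequality is also fine.

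One slightly shorter alternative, in the same spirit: since every term of $h_{m-1}(x_i,x_j)$ outside $\phi_{ij}$ carries a factor $x_i^{\ge n}$ or $x_j^{\ge n}$, one has $\phi_{ij}\equiv h_{m-1}(x_i,x_j)\pmod\liea$; then the recursion $h_k=x_ih_{k-1}+x_j^{k}$ together with $x_j^{k}\in\liea$ for $k\ge n$ reduces to showing $h_{n-1}(x_i,x_j)\in\liea$, which is the $k=n-1$ limit of Lemma~\ref{xixj} (where $\epsilon_{n-1}=0$). Your induction and this recursion are really two packagings of the same identity $e_s=\epsilon_s+(x_i+x_j)\epsilon_{s-1}+x_ix_j\epsilon_{s-2}$, so there is no substantive difference.
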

It remains to show rank$(A)+$rank$(B)=(2n-m)n!$. We first show the following lemma.
\begin{lem}\label{less}
    $\rank(A)=\rank(B)$ and so $\rank(A)\leq \frac{2n-m}{2}n!$.
\end{lem}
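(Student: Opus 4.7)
The plan is to exhibit $B$ as, up to an invertible row-reversal, the adjoint of $A$ with respect to the natural Frobenius pairing on $\Rcal$. Once this is established, the equality $\rank(A)=\rank(B)$ is automatic, and combining it with the inclusion $\im(A)\subset \Ker(B)$ from Corollary \ref{AB=0} yields the bound $\rank(A)\le \tfrac{2n-m}{2}n!$ by rank--nullity.

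More precisely, I would first recall the classical fact that the coinvariant algebra $\Rcal=\C[\lieh]/\liea$ is a graded Frobenius algebra of dimension $n!$: its top graded piece is one-dimensional (spanned by the image of the Vandermonde $\delta$), and the bilinear form
\[\langle f,g\rangle_\Rcal:=\epsilon(fg),\]
where $\epsilon:\Rcal\to \C$ projects onto this top piece, is non-degenerate, symmetric, and associative in the sense that $\langle fa,g\rangle_\Rcal=\langle f,ag\rangle_\Rcal$ for all $f,g,a\in\Rcal$. Extending coordinate-wise yields non-degenerate symmetric pairings on $\Rcal^n$ and $\Rcal^{2n-m}$, regarded as finite-dimensional $\C$-vector spaces.

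Next I would compute the adjoint $A^*:\Rcal^{2n-m}\to \Rcal^n$ of the right-multiplication map $v\mapsto vA$. Using the associativity of the Frobenius form, for $v\in \Rcal^n$, $w\in \Rcal^{2n-m}$,
\[\langle vA,w\rangle=\sum_{i,j}\langle v_i A_{ij},w_j\rangle_\Rcal=\sum_i\Bigl\langle v_i,\sum_j A_{ij}w_j\Bigr\rangle_\Rcal,\]
so $(A^*w)_i=\sum_j A_{ij}w_j=\sum_j x_i^{n-j}w_j$; equivalently, $A^*$ is right multiplication (on row vectors) by $A^T$. Comparing entries, $(A^T)_{ji}=x_i^{n-j}$ and $B_{ji}=x_i^{m-n+j-1}$ coincide under the substitution $j\leftrightarrow 2n-m-j+1$. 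Hence $A^T=PB$ for the $(2n-m)\times(2n-m)$ permutation matrix $P$ reversing the row order, so that $B$ and $A^*$ differ by composition with the $\C$-linear automorphism induced by $P^{-1}$. Therefore $\rank(B)=\rank(A^*)=\rank(A)$, the last equality being the standard identity $\Ker(A^*)=(\im A)^\perp$ for an adjoint of a linear map between finite-dimensional spaces carrying non-degenerate bilinear forms.

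Finally, by Corollary \ref{AB=0} we have $\im(A)\subset \Ker(B)$, so
\[\rank(A)\;\le\;\dim\Ker(B)\;=\;(2n-m)\,n!-\rank(B)\;=\;(2n-m)\,n!-\rank(A),\]
which gives $\rank(A)\le \tfrac{2n-m}{2}n!$. The only substantive step is the identification of $B$ (up to the harmless row-reversal $P$) with the adjoint of $A$ in the Frobenius sense; the rest is standard linear algebra over $\C$. I do not expect any real obstacle beyond being careful with the row-vs-column conventions and the reindexing $j\leftrightarrow 2n-m-j+1$ used to match $A^T$ with $B$.
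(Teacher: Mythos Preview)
Your proof is correct and is essentially a more careful version of the paper's own argument. The paper simply observes that the $\Rcal$-span of the columns of $A$ (inside $\Rcal^n$) coincides with the $\Rcal$-span of the rows of $B$, and asserts that each of these computes the corresponding rank; the bound then follows from Corollary \ref{AB=0}. The one point the paper leaves implicit is \emph{why} $\rank(A)=\dim_\C(\text{$\Rcal$-column span of }A)$: since $\Rcal$ is not a field, ``row rank $=$ column rank'' is not automatic for $\Rcal$-matrices viewed as $\C$-linear maps. Your Frobenius argument makes this precise: because multiplication by any $a\in\Rcal$ is self-adjoint for the Frobenius form, the adjoint of $A$ (row multiplication) is exactly $A^T$ (row multiplication), whose image is the $\Rcal$-column span of $A$; this is the missing justification. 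The identification $B=PA^T$ is then the same observation the paper makes in coordinates.
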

\begin{proof}
Computing the rank of $A$ by taking the span of its column vectors we obtain\[\text{rank}(A)=\dim_\C\{\phi_1(x_1^{n-1},\cdots, x_n^{n-1})+\cdots+\phi_{2n-m}(x_1^{m-n},\cdots, x_n^{m-n}),\phi_i \in \Rcal\}.\] Computing the rank of $B$ by taking the span of its row vectors we obtain
\[\text{rank}(B)=\dim_\C\{\psi_1(x_1^{m-n},\cdots, x_n^{m-n})+\cdots+\psi_{2n-m}(x_1^{n-1},\cdots, x_n^{n-1}),\psi_i \in \Rcal\}.\]
As a result, rank$(A)=$rank$(B)$.
\end{proof}
It remains to show $\rank(A)\geq \frac{2n-m}{2}n!$. Write $V_i:=\Rcal(x_i^{n-1},\cdots x_i^{m-n})$, the $\Rcal$-submodule in $\Rcal^{2n-m}$ generated by the i-th row vector of $A$.  Then 
$\rank(A)=\rank(B)=\dim_\C (V_1+\cdots +V_n)$.

From now on, let $\Rcal_i$ denote the coinvariant algebra of $i$ variables and $\im_n(x_i^{n-1}):=\im(x_i^{n-1}, \Rcal_n\to\Rcal_n)$. Note that the composition \[\im_n(x_i^{n-1})\xrightarrow{(x_i^{n-1})^T} \Rcal_n\to \Rcal_n/\im_n(x_i)\] is an isomorphism and $\Rcal_n/\im_n(x_i)\cong \Rcal_{n-1}$. Let $\iota_i: \im_n(x_i^{n-1})\to \Rcal_{n-1}$ denote this isomorphism.

Then $\iota_1(\im_n(x_1^{n-1})\cap \im_n(x_2^{n-1}))=\Ker_{n-1}(x_2)=\im_{n-1}(x_2^{n-2})\cong \Rcal_{n-2}$.

When $m=2n-1$, $V_i=\Ker (x_i)=\im(x_i^{n-1})$ and from the discussion above we deduce that $\dim(V_i)=(n-1)!$ and $\dim(V_i\cap V_j)=(n-2)!$ when $1\leq i,j\leq n$ and  $i\neq j$. In general:
\begin{lem}\label{dimcount}
\begin{itemize}
    \item 
    When $n<m<2n$, $\dim (V_i)=(2n-m)(n-1)!$
    \item When $1\leq i,j\leq n$ and  $i\neq j$, $\dim (V_i\cap V_j)=(2n-m)(n-2)!$ 
    \end{itemize}
\end{lem}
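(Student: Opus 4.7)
Both dimension claims reduce to computations inside the coinvariant algebra $\Rcal=\Rcal_n$. The essential input is the classical fact that $\Rcal_n$ is a free module of rank $(n-1)!$ over the subring $\C[x_i]/(x_i^n)\subset \Rcal_n$ for every $i$; this in turn yields
\[
\dim_\C\im(x_i^k)=(n-k)(n-1)!,\qquad 0\le k\le n-1.
\]
The freeness can be verified via the Artin monomial basis of $\Rcal_n$, or geometrically via the $\mathbb{P}^{n-1}$-bundle $GL_n/B_n\to \mathbb{P}^{n-1}$.

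For the first assertion, the $\Rcal$-linear surjection $\Rcal\twoheadrightarrow V_i$ given by $\phi\mapsto \phi\cdot(x_i^{n-1},\ldots,x_i^{m-n})$ has kernel $\bigcap_{k=m-n}^{n-1}\mathrm{Ann}(x_i^k)$, which collapses to $\mathrm{Ann}(x_i^{m-n})$ because $\phi\cdot x_i^{m-n}=0$ already forces $\phi\cdot x_i^k=0$ for all $k\ge m-n$. Hence $V_i\cong \im(x_i^{m-n})$, giving $\dim V_i=(2n-m)(n-1)!$.

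For the second assertion, the same last-coordinate projection identifies $V_i\cap V_j$ with the set of $w=\phi x_i^{m-n}=\psi x_j^{m-n}\in \im(x_i^{m-n})\cap \im(x_j^{m-n})$ for which the higher coordinates $\phi x_i^k=\psi x_j^k$ agree for $m-n<k\le n-1$. Writing $\phi x_i^k=w\cdot x_i^{k-(m-n)}$ and symmetrically on the $j$-side, a short induction on $k$ shows these agreements are all consequences of the single relation $w(x_i-x_j)=0$. Therefore
\[
V_i\cap V_j \;\cong\; \im(x_i^{m-n})\cap \im(x_j^{m-n})\cap \mathrm{Ann}_\Rcal(x_i-x_j).
\]

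The main obstacle is computing the dimension of this triple intersection. My plan is to upgrade the one-variable freeness to a two-variable analog, coming from the fibration $GL_n/B_n\to \mathrm{Fl}(1,1,n-2;n)$ with fiber $GL_{n-2}/B_{n-2}$. This realizes $\Rcal_n$ as a free module of rank $(n-2)!$ over the parabolic cohomology ring $\Rcal_{ij}:=H^*(\mathrm{Fl}(1,1,n-2;n))\cong \Rcal_n^{S_{n-2}}$ of dimension $n(n-1)$. Since $x_i$, $x_j$ and $x_i-x_j$ all lie in $\Rcal_{ij}$, each of the three subspaces is the base change from an analogous subspace of $\Rcal_{ij}$, and the triple intersection in $\Rcal_n$ factors as the analogous triple intersection in $\Rcal_{ij}$ multiplied by the fiber rank $(n-2)!$. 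The problem is thereby reduced to showing that the analogous triple intersection inside $\Rcal_{ij}$ has dimension $2n-m$. In $\Rcal_{ij}$ the transposition $(ij)$ swaps $x_i$ and $x_j$; since multiplication by $x_i-x_j$ is an isomorphism from the $(+1)$-eigenspace to the $(-1)$-eigenspace (any anti-invariant lifts to $(x_i-x_j)\cdot(\text{invariant})$ at the polynomial level), the condition $w(x_i-x_j)=0$ forces $w$ into the $(-1)$-eigenspace, and the remaining image conditions then cut the dimension down to exactly $2n-m$. The explicit combinatorial verification of this final cut is the technical core, paralleling the iota-map and Springer-fiber analysis that the paper uses for the extremal case $m=2n-1$.
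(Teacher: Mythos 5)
Your first bullet is fine: identifying $V_i\cong\im(x_i^{m-n})$ via the last coordinate and invoking freeness of $\Rcal_n$ over $\C[x_i]/(x_i^n)$ is essentially the same structural fact the paper uses (there packaged through the operator $J_i$ and the maps $\iota_i$). Your identification $V_i\cap V_j\cong\im(x_i^{m-n})\cap\im(x_j^{m-n})\cap\mathrm{Ann}_{\Rcal}(x_i-x_j)$ is also correct and coincides with the paper's reduction to $\Ker(x_i^{2n-m})\cap\Ker(x_i-x_j)$ (the $\im(x_j^{m-n})$ condition being redundant since $(x_i-x_j)\mid(x_i^{2n-m}-x_j^{2n-m})$), and the base change to $\Rcal_{ij}=\Rcal_n^{S_{n-2}}$ is legitimate because kernels and images of multiplication by elements of $\Rcal_{ij}$ decompose along any free $\Rcal_{ij}$-basis of $\Rcal_n$.

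The genuine gap is that the decisive dimension count is never performed: after the reduction, what must be shown is that $\Ker_{\Rcal_{ij}}(x_i-x_j)\cap\Ker_{\Rcal_{ij}}(x_i^{2n-m})$ has dimension exactly $2n-m$, and you defer precisely this step as ``the technical core.'' The eigenspace observation does not advance it. (As an aside, your justification only gives surjectivity of $(x_i-x_j)\cdot\colon\Rcal_{ij}^{+}\to\Rcal_{ij}^{-}$; injectivity needs the equality of eigenspace dimensions, e.g. from the vanishing of the trace of $(ij)$ on $\Rcal_{ij}$, or one can see the anti-invariance of the kernel directly from Poincar\'e duality since $(ij)$ acts trivially on $\Rcal/(x_i-x_j)$.) But anti-invariance is in any case automatic from $(x_i-x_j)w=0$ and only bounds the ambient space by $n(n-1)/2$; it says nothing about how the condition $x_i^{2n-m}w=0$ cuts the $(n-1)$-dimensional space $\Ker_{\Rcal_{ij}}(x_i-x_j)$ down to $2n-m$. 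This is exactly what the paper's Lemma 4.9 supplies: an explicit free $\Rcal_{n-2}$-basis $\psi_1,\dots,\psi_{n-1}$ of $\Ker(x_1-x_2)$ together with the fact that multiplication by $x_1^{2n-m}$ acts monomially on the Artin basis, so that precisely the $\psi_j$ with $j\geq m-n$ are annihilated and the remaining images stay independent, yielding $(2n-m)(n-2)!$. The Springer-fiber parallel you invoke covers only $m=2n-1$ (the paper itself states it does not know how to generalize it), so without an analogue of the explicit kernel basis computation your proof of the second bullet is incomplete.
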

\begin{proof}
    Define an operator $J_i: \Rcal_n^{2n-m}\to \Rcal_n^{2n-m}$ given by the following matrix using row multiplication
\[J_i:=\begin{pmatrix}
    1&0&\cdots&0&0\\
    -x_i&1&\cdots&0&0\\
    0&-x_i&\cdots&0&0\\
    &&\cdots&\\
    0&0&\cdots&-x_i&x_i^{2n-m}
    \end{pmatrix}\]
For example $J_i=x_i$ when $m=2n-1$. Then $V_i=\Ker(J_i)\cong \Ker(x_i^{2n-m})$ is of dimension $(2n-m)(n-1)!$. 
Let $\begin{pmatrix}
    J_i&J_j.
\end{pmatrix}$ denote concatenation of the matrices $J_i$ and $J_j$. When $i\neq j$, $\dim(V_i\cap V_j)=\dim \Ker\begin{pmatrix}
    J_i&J_j.
\end{pmatrix}$
Using elementary matrix transformations, $\begin{pmatrix}
    J_i&J_j
\end{pmatrix}$ can be transformed into
\[\begin{pmatrix}
    1&0&\cdots &0&0&0&\cdots&0\\
    0&1&\cdots &0&0&0&\cdots&0\\
    &&\cdots&&\cdots&&\cdots\\
    0&0&\cdots &x_i^{2n-m}&0&\cdots&x_i-x_j&x_j^{2n-m} 
\end{pmatrix}\]
Therefore, $\dim \Ker\begin{pmatrix}
    J_i&J_j
\end{pmatrix}=\dim \Ker(x_i^{2n-m}\ \ x_i-x_j\ \ \ x_j^{2n-m})$.

Since $(x_i-x_j)|(x_i^{2n-m}-x_j^{2n-m})$, we get \[\Ker(x_i^{2n-m}\ \ x_i-x_j\ \ x_j^{2n-m})=\Ker(x_i^{2n-m}\ \ x_i-x_j).\]

Without loss of generality, we may assume $i=1$ and $j=2$. 

By Lemma \ref{ker(x_1-x_2)} below, one has $\Ker(x_1-x_2)=\sum_{j=1}^{n-1} \psi_j\Rcal_{n-2}$ where \[\psi_j:=x_1^jx_2^{n-2}+2x_1^{j+1}x_2^{n-3}+\cdots +(n-j)x_1^{n-1}x_2^{j-1}\in \Ker(x_1-x_2).\]

In particular, $\psi_{m-n},\psi_{m-n+1},\cdots, \psi_{n-1}\in \Ker(x_1^{2n-m})$. Since multiplication by $x_1^{2n-m}$ preserves the basis \\$x_1^{a_1}x_2^{a_2}\cdots x_{n-1}^{a_{n-1}}$ ($0\leq a_i\leq n-i$) (possibly sending some elements to $0$), we conclude that \[\psi_{m-n}\Rcal_{n-2}+\cdots+\psi_{n-1}\Rcal_{n-2}= \Ker(x_1^{2n-m})\cap \Ker(x_1-x_2),\]
and so $\dim (V_1\cap V_2)=\dim \Ker(J_1\ \ J_2)=\dim (\Ker(x_1^{2n-m})\cap \Ker(x_1-x_2))= (2n-m)(n-2)!$.
\end{proof}
\begin{lem}\label{ker(x_1-x_2)}
Let $\Rcal_{n-2}\subset \Rcal_n$ be generated by $x_3^{a_3}x_4^{a_4}\cdots x_{n-1}^{a_{n-1}}$, $0\leq a_i\leq n-i$. For $1\leq j\leq n-1$, write \[\psi_j:=x_1^jx_2^{n-2}+2x_1^{j+1}x_2^{n-3}+\cdots +(n-j)x_1^{n-1}x_2^{j-1}\]
Then $\sum_{j=1}^{n-1} \psi_j\Rcal_{n-2}= \Ker(x_1-x_2,\Rcal_n\to \Rcal_n)$. 
\end{lem}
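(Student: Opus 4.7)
The plan is to combine a containment with a matching dimension count. First I would verify that each $\psi_j$ lies in $\Ker(x_1-x_2)$; then I would check that both $\sum_j \psi_j \Rcal_{n-2}$ and $\Ker(x_1-x_2)$ have dimension $(n-1)!$, thereby forcing equality.

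For the containment, a direct expansion collapses to
\[(x_1-x_2)\psi_j = (n-j)\,x_1^n x_2^{j-1} - \sum_{\ell=0}^{n-1-j} x_1^{j+\ell} x_2^{n-1-\ell}.\]
Since all elementary symmetric polynomials vanish in $\Rcal_n$, we have $\prod_{i=1}^n(1-x_i t) = 1$ in $\Rcal_n[t]$; this immediately gives $x_i^n = 0$, and taking reciprocals yields $\sum_{k\geq 0} h_k(x_1,x_2) t^k = \prod_{i=3}^n(1-x_i t)$, a polynomial of degree $n-2$. Hence $h_N(x_1,x_2) = 0$ in $\Rcal_n$ for every $N \geq n-1$. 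Padding the second summand above with the vanishing contributions from $x_1^{\geq n}$ and $x_2^{\geq n}$ turns it into $h_{n+j-1}(x_1, x_2)$, which vanishes. Combined with $x_1^n = 0$, this forces $(x_1-x_2)\psi_j = 0$ in $\Rcal_n$, and since the kernel is an $\Rcal_n$-submodule we deduce $\sum_j \psi_j \Rcal_{n-2} \subseteq \Ker(x_1-x_2)$.

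For the first dimension count, I would note that every monomial of $\psi_j$ respects the Artin bounds on $x_1$- and $x_2$-exponents ($\leq n-1$ and $\leq n-2$ respectively), so $\psi_j \cdot x_3^{a_3}\cdots x_{n-1}^{a_{n-1}}$ (for $0 \leq a_i \leq n-i$) already lies in the Artin basis of $\Rcal_n$. In the lexicographic order with $x_1 > x_2 > \cdots$, the leading term is $(n-j)x_1^{n-1}x_2^{j-1}x_3^{a_3}\cdots x_{n-1}^{a_{n-1}}$; distinct $(j, a_3, \ldots, a_{n-1})$ produce distinct leading Artin monomials, proving linear independence of the $(n-1)(n-2)! = (n-1)!$ elements.

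The second dimension count is the main obstacle. By rank-nullity applied to the endomorphism $x_1-x_2$ of $\Rcal_n$, one has $\dim \Ker(x_1-x_2) = \dim \Rcal_n/(x_1-x_2)\Rcal_n$. Setting $x_1 = x_2$, this quotient identifies with $\C[x_1, x_3, \ldots, x_n]/(\bar e_k)_{k=1}^n$ where $\bar e_k := e_k(x_1,x_1,x_3,\ldots,x_n) = e_k' + 2x_1 e_{k-1}' + x_1^2 e_{k-2}'$ and $e_k' := e_k(x_3,\ldots, x_n)$. Solving $\bar e_k = 0$ recursively for $k = 1, \ldots, n-2$ determines $e_k' = (-1)^k(k+1)x_1^k$ in the quotient; then $\bar e_{n-1} = 0$ reduces (using $e_{n-1}' = 0$) to $x_1^{n-1} = 0$, and $\bar e_n = 0$ becomes automatic. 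Invoking the Chevalley freeness of $\C[x_3, \ldots, x_n]$ of rank $(n-2)!$ over $\C[e_1',\ldots, e_{n-2}']$, the quotient identifies with $\C[x_3, \ldots, x_n]\otimes_{\C[e_1',\ldots,e_{n-2}']} \C[x_1]/(x_1^{n-1})$, of dimension $(n-2)!\cdot(n-1) = (n-1)!$. Matching this with the count from the previous paragraph closes the proof; the careful bookkeeping of signs in the recursion for $e_k'$ and the final identification with a tensor product are the only genuinely non-trivial computations.
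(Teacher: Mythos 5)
Your proof is correct, but it reaches the crucial dimension count by a genuinely different route than the paper. For the containment you rederive, from the identity $\prod_{i=1}^n(1-x_it)=1$ in $\Rcal[t]$, that $x_1^n=0$ and that the complete homogeneous polynomials $h_N(x_1,x_2)$ vanish in $\Rcal$ for $N\geq n-1$; the paper instead quotes its Corollary \ref{AB=0} (proved inductively in Lemma \ref{xixj}), which is exactly this vanishing, so the two arguments differ only in packaging. The real divergence is in computing $\dim\Ker(x_1-x_2)$: the paper writes multiplication by $x_1$ and $x_2$ as explicit block matrices on the Artin basis ($N\otimes I_{(n-1)!}$ and a companion-type matrix) and reduces $x_2-x_1$ by row and column operations until the kernel dimension is read off from $\rank(N^{n-1})=1$, whereas you use rank--nullity to replace the kernel by the cokernel $\Rcal/(x_1-x_2)\Rcal$, set $x_2=x_1$, solve $\bar{e}_k=0$ recursively to get $e_k'=(-1)^k(k+1)x_1^k$ for $k\leq n-2$ together with $x_1^{n-1}=0$ (the identity $(k+1)-2k+(k-1)=0$ is what makes the ideal comparison work in both directions, as you indicate), and conclude by Chevalley freeness that the quotient has dimension $(n-1)(n-2)!$. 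Your route is more ring-theoretic and arguably more conceptual, since it identifies the cokernel as a base change of the $(n-2)$-variable coinvariant algebra, while the paper's matrix computation stays within the linear-algebra formalism (the operators $J_i$) that it reuses in Lemma \ref{dimcount} and Lemma \ref{distributiveinequality}. You also make explicit, via the leading-term argument in the Artin basis, that the $(n-1)!$ elements $\psi_j\cdot x_3^{a_3}\cdots x_{n-1}^{a_{n-1}}$ are linearly independent, a point the paper leaves implicit when it concludes the lemma directly from $\dim\Ker(x_1-x_2)=(n-1)(n-2)!$.
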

\begin{proof}
We compute 
\begin{align*}
    &(x_1-x_2)(x_1^jx_2^{n-2}+2x_1^{j+1}x_2^{n-3}+\cdots +(n-j)x_1^{n-1}x_2^{j-1})\\
    =&-x_1^jx_2^{n-1}+x_1^{j+1}x_2^{n-2}-2x_1^{j+1}x_2^{n-2}+2x_1^{j+2}x_2^{n-3}+\cdots -(n-j)x_1^{n-1}x_2^j+(n-j)x_1^nx_2^{j-1}\\
    =&-x_1^jx_2^{n-1}-x_1^{j+1}x_2^{n-2}-\cdots -x_1^{n-1}x_2^j.
    \end{align*}
    The element in the last line belongs to $\liea$ by Corollary \ref{AB=0}.

Next, consider the basis $x_1^{a_1}\cdots x_{n-1}^{a_{n-1}}$ ordered by lexicographic order on the powers. With respect to this basis, multiplication by $x_1$ can be expressed by \[N\otimes I_{(n-1)!}\] where $I_k$ stands for the $k\times k$ identity matrix and 
\[N:=\begin{pmatrix}
    0&0&\cdots &0&0\\
    1&0&\cdots &0&0\\
    0&1&\cdots &0&0\\
    &&\cdots\\
    0&0&\cdots &1&0
\end{pmatrix}_{n\times n}\]
Multiplication by $x_2$ can be expressed by 
\[\begin{pmatrix}
    0&0&\cdots&0&-N^{n-1}\\
    I&0&\cdots &0&-N^{n-2}\\
    0&I&\cdots &0&-N^{n-3}\\
    &&\cdots &&\\
    0&0&\cdots&I&-N
\end{pmatrix}
\otimes I_{(n-2)!}\]
Observe that, using elementary row operations, the following matrix that represents multiplication by $x_2-x_1$ \[\begin{pmatrix}
    -N&0&\cdots&0&-N^{n-1}\\
    I&-N&\cdots &0&-N^{n-2}\\
    0&I&\cdots &0&-N^{n-3}\\
    &&\cdots &&\\
    0&0&\cdots&I&-2N
\end{pmatrix}
\otimes I_{(n-2)!}\] can be transformed to 
\[\begin{pmatrix}
    0&0&\cdots &0&-n N^{n-1}\\
    I&0&\cdots &0&-(n-1)N^{n-2}\\
    0&I&\cdots &0&-(n-2)N^{n-3}\\
    &&\cdots&&\\
    0&0&\cdots &I&-2N
\end{pmatrix}
\otimes I_{(n-2)!}\]
By column operations we transform it to 
\[\begin{pmatrix}
    0&0&\cdots &0&-n N^{n-1}\\
    I&0&\cdots &0&0\\
    0&I&\cdots &0&0\\
    &&\cdots&&\\
    0&0&\cdots &I&0
\end{pmatrix}
\otimes I_{(n-2)!}\]
Since rank$(N^{n-1})=1$, we see $\dim(\Ker(x_1-x_2))=(n-1)(n-2)!$. The lemma follows.
\end{proof}
\begin{lem}\label{distributiveinequality}
     $\dim (V_1\cap(V_2+\cdots +V_i))\leq (i-1)(2n-m)(n-2)!$.
\end{lem}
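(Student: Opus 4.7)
I will proceed by induction on $i$, with base case $i=2$ provided by Lemma~\ref{dimcount}. For the inductive step, the aim is to establish the incremental bound
\[
\dim\bigl(V_1\cap(V_2+\cdots+V_i)\bigr)\;\leq\;\dim\bigl(V_1\cap(V_2+\cdots+V_{i-1})\bigr)+(2n-m)(n-2)!,
\]
after which the inductive hypothesis immediately yields the claim.

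To obtain this increment, I would study the quotient
\[
Q_i\;:=\;\bigl(V_1\cap(V_2+\cdots+V_i)\bigr)\,\big/\,\bigl(V_1\cap(V_2+\cdots+V_{i-1})\bigr),
\]
which embeds naturally, via $v\mapsto v\bmod(V_2+\cdots+V_{i-1})$, into
\[
(V_2+\cdots+V_i)\big/(V_2+\cdots+V_{i-1})\;\cong\; V_i\big/\bigl(V_i\cap(V_2+\cdots+V_{i-1})\bigr).
\]
I would then argue that the image of $Q_i$ lands inside (the image of) a specific $(2n-m)(n-2)!$-dimensional ``$\Ker(x_1-x_i)$-type'' subspace of $V_i$, an analog of the subspace spanned by the polynomials $\psi_j$ in Lemma~\ref{ker(x_1-x_2)} for the pair $\{1,i\}$. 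The heuristic is that an element of $V_1\cap(V_2+\cdots+V_i)$, being simultaneously in $V_1$, is constrained modulo $V_2+\cdots+V_{i-1}$ to satisfy a $\Ker(x_1-x_i)$-condition on its $V_i$-component: modifying the representative by an element of $V_2+\cdots+V_{i-1}$ does not change this obstruction class.

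The main obstacle is the identification of this target subspace and the verification of the containment. Concretely, given $v\in V_1\cap(V_2+\cdots+V_i)$ written as $\phi_1(x_1^{n-1},\dots,x_1^{m-n})=\sum_{k=2}^i\phi_k(x_k^{n-1},\dots,x_k^{m-n})$, I would project component-by-component, use the fact (from the proof of Lemma~\ref{dimcount}) that the last-coordinate projection identifies $V_1$ with $\Ker(x_1^{2n-m},\Rcal_n)$, and apply elementary row/column manipulations in the spirit of Lemma~\ref{dimcount} to isolate the ``new'' contribution from $V_i$. This should reduce the required bound to a combinatorial identity in $\Rcal_{n-2}$ among the pair-specific polynomials analogous to the $\psi_j$ of Lemma~\ref{ker(x_1-x_2)}; this is precisely the step where the distributive-lattice structure advertised in the introduction is expected to enter the argument.
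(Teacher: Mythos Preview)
Your framework is sound: inducting on $i$ and bounding the increment $\dim Q_i$ by $(2n-m)(n-2)!$ would certainly suffice, and the embedding $Q_i\hookrightarrow V_i/(V_i\cap(V_2+\cdots+V_{i-1}))$ is correct. But the step you flag as the ``main obstacle'' is a genuine gap, and your heuristic does not close it. Given $v_1=v_2+\cdots+v_i$ with $v_1\in V_1$, all you know about the class of $v_i$ modulo $V_2+\cdots+V_{i-1}$ is that it coincides with the class of $v_1$; hence it lies in the \emph{intersection of the images} of $V_1$ and $V_i$ in $\Rcal^{2n-m}/(V_2+\cdots+V_{i-1})$. That intersection need not equal the \emph{image of the intersection} $V_1\cap V_i$, and bridging this gap is exactly the distributive-lattice statement you are ultimately trying to establish --- so it cannot be assumed here. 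Nothing in the $\psi_j$-description of $V_1\cap V_i$ from Lemma~\ref{ker(x_1-x_2)} tells you why the $V_i$-component of an element of $V_1\cap(V_2+\cdots+V_i)$ should, modulo the lower $V_j$'s, land in that particular subspace.

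The paper's argument exploits a structural fact you do not mention: each $V_j$ is the kernel of an explicit operator $J_j$, and the $J_j$ pairwise commute. Rather than inducting on $i$, the paper fixes $i$ and filters $V_1\cap(V_2+\cdots+V_i)$ by applying $J_2, J_3,\dots,J_{i-1}$ in succession, producing a chain of short exact sequences. Commutativity gives two things: $J_{[2,j-1]}$ preserves $V_1=\Ker(J_1)$, and it kills $V_2,\dots,V_{j-1}$; from this one checks directly that the kernel at each step maps into $V_1\cap V_j$. Summing yields $\dim(V_1\cap(V_2+\cdots+V_i))\le\sum_{j=2}^i\dim(V_1\cap V_j)$. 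No combinatorial identity in $\Rcal_{n-2}$ is needed; the whole thing is a linear-algebra filtration argument that works precisely because the $V_j$ are kernels of commuting operators. If you want to salvage the inductive-on-$i$ scheme, the map $J_{[2,i-1]}$ is exactly what you need: its image on $V_1\cap(V_2+\cdots+V_i)$ sits in $V_1\cap V_i$, and its kernel contains $V_1\cap(V_2+\cdots+V_{i-1})$, which gives $\dim Q_i\le\dim(V_1\cap V_i)$ once you know $\Ker(J_{[2,i-1]})\cap(V_2+\cdots+V_i)=V_2+\cdots+V_{i-1}$ --- but that last equality again requires the commuting-kernel structure and is not automatic.
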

\begin{proof}
Define $J_i$ as in the proof of Lemma \ref{dimcount} and for $\beta \geq 2$ put $J_{[2,\beta]}=J_2 J_3\cdots J_{\beta}$. Consider the following short exact sequences:
\[ 0\to V_1\cap V_2\to V_1\cap(V_2+\cdots+V_i)\xrightarrow{J_2}(V_1\cap (V_2+\cdots +V_i))J_2\to 0,\]
\[  0\to[(V_1\cap (V_2+\cdots +V_i))J_2]\cap V_3\to (V_1\cap (V_2+\cdots +V_i))J_2\xrightarrow{J_3}(V_1\cap (V_2+\cdots +V_i))J_{[2,3]}\to 0,\]
\[\cdots\cdots\]
\[  0\to[(V_1\cap (V_2+\cdots +V_i))J_{[2,i-2]}]\cap V_{i}\to (V_1\cap (V_2+\cdots +V_i))J_{[2,i-2]}\xrightarrow{J_{i-1}}(V_1\cap (V_2+\cdots +V_i))J_{[2,i-1]}\to 0.\]
 
We claim that when $j=1,2,\dots, i-1$
\begin{align}\label{containments}
[(V_1\cap (V_2+\cdots +V_i))J_{[2,i-1]}]\cap V_{j}\subset [V_1\cap \Ker(J_{[2,j]})]J_{[2,j-1]}\subset V_1\cap V_j.
\end{align}
First of all, $\Ker(J_{[2,j]})J_{[2,j-1]}\subset \Ker(J_j)=V_j$, from which the second containment follows. As for the first containment, take ${v}_1={v}_2+\cdots +{v}_i\in V_1\cap (V_2+\cdots +V_i)$ where ${v}_j\in V_j$ for all $j$. If ${v}_1J_{[2,j-1]}\in V_j$, then \[({v}_{j+1}+\cdots +{v}_i)J_{[2,j]}=0,\] i.e. ${v}_{j+1}+\cdots +{v}_i\in \Ker(J_{[2,j]})$. As a result, $v_1\in V_1\cap (V_2+\cdots +V_j+\Ker(J_{[2,j]}))=V_1\cap \Ker(J_{[2,j]})$ and ${v}_1J_{[2,j-1]}\in [V_1\cap \Ker(J_{[2,j]})]J_{[2,j-1]}$. Hence the claim is proved.

As a consequence of the short exact sequences and the claim,
\begin{align}
    &\dim (V_1\cap(V_2+\cdots +V_i))\nonumber\\
    =&\dim(V_1\cap V_2)+\dim ((V_1\cap(V_2+V_3))J_2)+\cdots + \dim ((V_1\cap \Ker(J_{[2,i]} )J_{[2,i-1]})\nonumber\\
  \label{bound}
  \leq  &\dim(V_1\cap V_2)+\dim(V_1\cap V_3)+\cdots +\dim(V_1\cap V_i).
    \end{align}
The last line equals to $(i-1)(2n-m)(n-2)!$ by Lemma \ref{dimcount}.
\end{proof}
\begin{prop}\label{distributive}
We have a vector space decomposition:
\begin{align}
    \label{directsum}
V_1\cap (V_2+\cdots +V_i)\cong (V_1\cap V_2)\oplus \cdots \oplus (V_1\cap V_i).\end{align}
\end{prop}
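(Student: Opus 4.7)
The inclusion $(V_1 \cap V_2) + \cdots + (V_1 \cap V_i) \subseteq V_1 \cap (V_2 + \cdots + V_i)$ is automatic. Combining this with the upper bound $\dim(V_1 \cap (V_2 + \cdots + V_i)) \leq (i-1)(2n-m)(n-2)!$ from Lemma \ref{distributiveinequality} and the individual dimensions $\dim(V_1 \cap V_j) = (2n-m)(n-2)!$ from Lemma \ref{dimcount}, which total $(i-1)(2n-m)(n-2)!$, the proposition reduces to showing that the sum on the right of (\ref{directsum}) is direct: once directness is established, the left side has dimension $(i-1)(2n-m)(n-2)!$, forcing equality with the right side.

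To establish directness, the plan is to induct on $i$, with the base case $i=2$ being vacuous. Assuming directness for index $i-1$, the induction hypothesis (combined with the dimension argument above) yields the distributive identity $V_1 \cap (V_2 + \cdots + V_{i-1}) = (V_1 \cap V_2) \oplus \cdots \oplus (V_1 \cap V_{i-1})$, so the inductive step reduces to verifying
\[
V_1 \cap V_i \cap (V_2 + \cdots + V_{i-1}) = 0.
\]

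For this I would use the explicit description of $V_1 \cap V_j$ obtained by applying Lemma \ref{ker(x_1-x_2)} with the pair $(1,j)$ in place of $(1,2)$, namely
\[
V_1 \cap V_j = \bigoplus_{k=m-n}^{n-1} \psi_k^{(j)} \cdot \Rcal^{(1,j)},
\]
where $\psi_k^{(j)} := x_1^k x_j^{n-2} + 2 x_1^{k+1} x_j^{n-3} + \cdots + (n-k) x_1^{n-1} x_j^{k-1}$ and $\Rcal^{(1,j)}$ is the subalgebra of $\Rcal_n$ generated by $\{x_s : s \notin \{1,j\}\}$ (this decomposition is itself direct by the dimension matchup of Lemma \ref{dimcount}). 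The key observation is that the leading term of $\psi_k^{(j)}$ in the $x_1$-degree lexicographic order is $(n-k)\, x_1^{n-1} x_j^{k-1}$, involving only the variables $x_1$ and $x_j$; across varying $j \in \{2,\ldots,i\}$ these leading monomials occupy disjoint ``$j$-columns'', so a hypothetical nonzero element of $V_1 \cap V_i$ expressible as an element of $V_2 + \cdots + V_{i-1}$ would force incompatible pivot behavior.

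The main obstacle is that $\Rcal_n$ is the coinvariant algebra rather than a polynomial ring, so $x_1^{n-1}$ is not itself part of the standard Artin basis $\{x_2^{a_2} \cdots x_n^{a_n} : 0 \leq a_s \leq s-1\}$ from Lemma \ref{basis}; it must be rewritten via $x_1 \equiv -(x_2 + \cdots + x_n)$ modulo $\liea$ and its higher-degree analogues (Lemma \ref{lastlemma}), which could a priori spoil the leading-term argument by producing cancellations across different $j$. A clean way to handle this is to work instead in an alternative basis of $\Rcal_n$ that keeps the pivots $x_1^{n-1} x_j^{k-1}$ manifest, or to invoke the Gorenstein duality of $\Rcal_n$ so that ``leading term'' becomes ``socle projection'', where the $j$-independence of pivots is transparent and the required vanishing follows directly.
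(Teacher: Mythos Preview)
Your reduction in the first paragraph is sound: the inclusion $(V_1\cap V_2)+\cdots+(V_1\cap V_i)\subset V_1\cap(V_2+\cdots+V_i)$ together with Lemma \ref{distributiveinequality} and Lemma \ref{dimcount} does reduce the proposition to proving that the sum on the right of (\ref{directsum}) is direct. The problem is that you do not actually prove directness. You set up an inductive scheme, reduce to $V_1\cap V_i\cap(V_2+\cdots+V_{i-1})=0$, propose a leading-term argument based on the elements $\psi_k^{(j)}$, and then yourself identify the obstacle: in the coinvariant algebra $\Rcal_n$ the pivots $x_1^{n-1}x_j^{k-1}$ are not basis monomials for the standard Artin basis, and rewriting them via Lemma \ref{lastlemma} destroys the disjoint-column structure you need. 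The final sentence invoking ``Gorenstein duality'' or an ``alternative basis'' is a hope, not an argument; no basis is exhibited in which the pivots for different $j$ remain separated, and no socle computation is carried out. As written, the proof stops exactly at the hard step.

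The paper's proof bypasses this entirely by a squeeze argument you have not used. The point is that the inequalities of Lemma \ref{distributiveinequality}, summed over $i=2,\ldots,n$ via inclusion--exclusion, give
\[
\rank(A)=\dim(V_1+\cdots+V_n)\ \geq\ n(2n-m)(n-1)!-\sum_{i=2}^n(i-1)(2n-m)(n-2)!=\tfrac{2n-m}{2}\,n!.
\]
But Lemma \ref{less} (equivalently $AB=0$ from Corollary \ref{AB=0}) gives the reverse inequality $\rank(A)\leq\tfrac{2n-m}{2}n!$. Equality is therefore forced everywhere, so each bound (\ref{bound}) and each inclusion (\ref{containments}) is an equality, and the short exact sequences of Lemma \ref{distributiveinequality} then exhibit $V_1\cap(V_2+\cdots+V_i)$ as an iterated extension with successive subquotients $V_1\cap V_2,\ldots,V_1\cap V_i$, yielding (\ref{directsum}). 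The global input $AB=0$ is what replaces the directness argument you were unable to complete.
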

\begin{proof}
We compute
\begin{align*}
&\rank(A)=\dim(V_1+\cdots +V_n)\\
=&\dim(V_1)+(\dim(V_2)-\dim(V_1\cap V_2))+\cdots +(\dim (V_n)-\dim(V_n\cap(V_1+\cdots +V_{n-1})))\\
=&n(2n-m)(n-1)!-(\dim(V_1\cap V_2)+\dim(V_3\cap(V_1+V_2))+\cdots +\dim(V_n\cap(V_1+\cdots +V_{n-1})))\\
\geq & n(2n-m)(n-1)!-\bigg((2n-m)(n-2)!+2\cdot (2n-m)(n-2)!+\cdots +(n-1)\cdot (2n-m)(n-2)!\bigg)\\
=&(2n-m)\frac{n(2n-2-(n-1))}{2}(n-2)!=\frac{2n-m}{2}n!\end{align*}
Here the second equality follows from Lemma \ref{dimcount} and the third equality follows from Lemma \ref{distributiveinequality}. 
By Corollary \ref{less}, we conclude that $\dim(V_1+\cdots +V_n)=\frac{2n-m}{2}n!$. Therefore, (\ref{bound}) is actually an equality for $i=1,\cdots, n$, the inclusions (\ref{containments}) are equalities. Thus the short exact sequences in the proof of Lemma \ref{distributiveinequality} yields (\ref{directsum}).
\end{proof}
\begin{coro}\label{equal}
We have $\dim_\C(V_1+\cdots +V_k)=\frac{2n-m}{2}k(2n-1-k)n!$ for $1\leq k\leq n$. 

  In particular $\rank(A)=\frac{2n-m}{2}n!$ and $\im(A)=\Ker(B)$.
\end{coro}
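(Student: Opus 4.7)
\textbf{Proof plan for Corollary \ref{equal}.} The key observation is that the proof of Proposition \ref{distributive} already establishes more than it states: the computation there shows $\rank(A) \geq \tfrac{2n-m}{2}n!$, which combined with Lemma \ref{less} and Corollary \ref{AB=0} forces the inequality in (\ref{bound}) to be an equality \emph{for every} $i = 1, \dots, n$, not merely for the terminal index. In particular,
\[
\dim\bigl(V_1 \cap (V_2 + \cdots + V_i)\bigr) = (i-1)(2n-m)(n-2)!
\]
for all $1 \leq i \leq n$.

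Next I would exploit the evident $S_n$-symmetry: the symmetric group acts on $\Rcal^{2n-m}$ by simultaneously permuting the variables $x_1, \dots, x_n$, and this action permutes the modules $V_1, \dots, V_n$ in the obvious way. Hence, for any $2 \leq k \leq n$, the transposition $(1\; k)$ carries $V_1 \cap (V_2 + \cdots + V_k)$ isomorphically onto $V_k \cap (V_1 + \cdots + V_{k-1})$, yielding
\[
\dim\bigl(V_k \cap (V_1 + \cdots + V_{k-1})\bigr) = (k-1)(2n-m)(n-2)!.
\]
I would then run induction on $k$ via the short exact sequence
\[
0 \to V_k \cap (V_1 + \cdots + V_{k-1}) \to V_k \to (V_1 + \cdots + V_k)/(V_1 + \cdots + V_{k-1}) \to 0,
\]
which together with Lemma \ref{dimcount} gives the telescoping recursion
\[
\dim(V_1 + \cdots + V_k) - \dim(V_1 + \cdots + V_{k-1}) = (2n-m)(n-2)!\,(n-k),
\]
whose sum recovers the closed form displayed in the corollary.

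For the two ``in particular'' assertions: specializing to $k = n$ yields $\rank(A) = \dim(V_1 + \cdots + V_n) = \tfrac{2n-m}{2}n!$. By Lemma \ref{less} we then also have $\rank(B) = \tfrac{2n-m}{2}n!$, so $\dim \Ker(B) = (2n-m)\,n! - \rank(B) = \tfrac{2n-m}{2}n! = \rank(A)$. Combined with the inclusion $\im(A) \subset \Ker(B)$ from Corollary \ref{AB=0}, the matching of dimensions forces $\im(A) = \Ker(B)$, completing the sequence's exactness at the middle term.

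The only conceptual step is the $S_n$-symmetry argument used to transfer the intermediate dimension count from $V_1 \cap (V_2 + \cdots + V_i)$ to $V_k \cap (V_1 + \cdots + V_{k-1})$; everything else is bookkeeping with exact sequences, so I do not expect any serious obstacle once Proposition \ref{distributive} has been internalized as giving equality in (\ref{bound}) at every stage.
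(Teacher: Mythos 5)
Your proposal is correct and follows essentially the same route as the paper: you extract the equalities $\dim(V_k\cap(V_1+\cdots+V_{k-1}))=(k-1)(2n-m)(n-2)!$ forced by the dimension count in Proposition \ref{distributive} (via Lemma \ref{less} and Corollary \ref{AB=0}), telescope with Lemma \ref{dimcount} to get the closed form, and then deduce $\im(A)=\Ker(B)$ from $\rank(A)=\rank(B)$, rank--nullity, and the inclusion $\im(A)\subset\Ker(B)$. Your explicit $S_n$-symmetry step merely makes precise the index permutation that the paper uses tacitly when applying Lemma \ref{distributiveinequality} to $V_k\cap(V_1+\cdots+V_{k-1})$, so it is a welcome clarification rather than a different argument.
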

\begin{proof}We compute
    \begin{align*}
&\dim(V_1+\cdots +V_k)=\dim(V_1)+(\dim(V_2)-\dim(V_1\cap V_2))+\cdots +(\dim (V_k)-\dim(V_k\cap(V_1+\cdots +V_{k-1})))\\
=&k(2n-m)(n-1)!-(\dim(V_1\cap V_2)+\dim(V_3\cap(V_1+V_2))+\cdots +\dim(V_k\cap(V_1+\cdots +V_{k-1})))\\
=& k(2n-m)(n-1)!-\bigg((2n-m)(n-2)!+2\cdot (2n-m)(n-2)!+\cdots +(k-1)\cdot (2n-m)(n-2)!\bigg)\\
=&(2n-m)\frac{k(2n-2-(k-1))}{2}(n-2)!=\frac{2n-m}{2}k(2n-1-k)n!\qedhere
\end{align*}
\end{proof}
\begin{rem}
   The dimension count of Corollary \ref{equal} implies that  $\im(A)=\Ker(B)$  is a Lagrangian subspace in $\Hrm^*(\Bcal)^{\oplus(2n-m)}$ (where $\Bcal$ is the flag variety) with respect to the Poincare form defined by 
\[(\alpha,\beta)=d,\quad \text{if } \alpha\cup \beta =d \cdot\delta^{\oplus(2n-m)}.\]
\end{rem}

\begin{coro}\label{corollaryofAB}
   If $\sum h_kp_k \in \Hcal_c\cdot (I^W_{c-1}\cap \C[\lieh]^W_1)$ and  $(y_i-y_{i+1})({\sum h_kp_k)}\in \liea$ holds for all $i$, then $\ul{\sum  h_kp_k }\in I_c$. 
\end{coro}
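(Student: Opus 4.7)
The plan is to obtain this corollary as a direct assembly of Lemma \ref{ABequivalent}, which translates both algebraic conditions (``$\ul{\sum h_k p_k}\in I_c$'' and ``$(y_i-y_{i+1})\sum h_k p_k\in \liea$'') into matrix equations over the coinvariant algebra $\Rcal$, with Corollary \ref{equal}, which supplies exactness of the sequence (\ref{sequence}) at the middle term, i.e.\ $\im(A)=\Ker(B)$. No further computation should be required: everything of substance has already been done in Section \ref{linearalgebra}, and the corollary is essentially a one-line consequence once both ingredients are in hand.

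Concretely, the first step is to put the hypothesis into the standard shape $\sum_{k=m-n+1}^{n} h_k p_k$ with $h_k\in \Hcal_c$. This is justified because, by Theorem \ref{arc}(iii), the ideal $I_{c-1}^W$ modulo $\liea^2$ is generated by $v^{(c-1)}_{m-n+1},\ldots,v^{(c-1)}_n$; expanding $(\sum_i u_i z^i)^{c-1}$ gives $v^{(c-1)}_k \equiv (c-1)u_k$ mod $\liea^2$ for $k\le n$, which by Newton's identity is a nonzero scalar multiple of $p_k$. Thus $I_{c-1}^W\cap \C[\lieh]^W_1$ is spanned mod $\liea^2$ precisely by $p_{m-n+1},\ldots,p_n$, and only those indices can appear in the sum $\sum h_k p_k$.

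The second step is to apply part (b) of Lemma \ref{ABequivalent}: the hypothesis $(y_i-y_{i+1})\sum h_k p_k\in \liea$ for all $i$ is equivalent to the row vector $((m-n+1)h_{m-n+1},\ldots,nh_n)$ lying in $\Ker(B)\subset \Rcal^{2n-m}$. By Corollary \ref{equal} we have $\Ker(B)=\im(A)$, so this row vector is in $\im(A)$. Part (a) of Lemma \ref{ABequivalent} then reinterprets membership in $\im(A)$ as precisely the statement $\ul{\sum h_k p_k}\in I_c$, yielding the corollary.

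I do not anticipate a genuine obstacle: the serious work — establishing $\im(A)\subset \Ker(B)$ via Lemma \ref{xixj}, and the reverse inclusion through Lemma \ref{dimcount}, Lemma \ref{distributiveinequality}, and Proposition \ref{distributive} — is already in place. The only minor point to verify is that the scalar prefactors $k$ appearing in $kh_k$ are nonzero, which holds because $k\in[m-n+1,n]\subset[1,n]$, so the correspondence $(h_k)\leftrightarrow (kh_k)$ is invertible and no information is lost on either side of the matrix translation.
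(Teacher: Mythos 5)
Your proposal is correct and matches the paper's own proof, which is exactly the one-line assembly of Lemma \ref{ABequivalent} with Corollary \ref{equal} ($\im(A)=\Ker(B)$); your preliminary reduction to $\sum_{k=m-n+1}^{n}h_kp_k$ via Theorem \ref{arc}(iii) is the same normalization the paper carries out at the start of Section \ref{transform}.
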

\begin{proof}
    By Lemma \ref{ABequivalent} and Corollary \ref{equal}.
\end{proof}
\begin{coro} We have  
    $F_1^\ind\Lrm_c=F_1^\liea\Lrm_c$ for all $c>1$.
\end{coro}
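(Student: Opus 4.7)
The plan is to establish the nontrivial inclusion $F_1^\liea \Lrm_c \subset F_1^\ind \Lrm_c$, since the reverse is Lemma \ref{containment}. First I would write $F_1^\liea \Lrm_c = \ol{\Phi}_c((\liea^2)^{\perp_c})$ and invoke Proposition \ref{perp} to decompose
\[
(\liea^2)^{\perp_c} = \Hcal_c \oplus \bigl((\liea^2)^{\perp_c} \cap \liea\bigr).
\]
The harmonic summand maps under $\ol{\Phi}_c$ into $F_0^\liea \Lrm_c$, which equals $F_0^\ind \Lrm_c$ by Lemma \ref{F0} and hence already sits inside $F_1^\ind \Lrm_c$. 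So the task reduces to controlling the second summand, which by Lemma \ref{Icperp} splits orthogonally as $\Scal_1 \oplus \Scal_{1,\perp_c}$.

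Next I would handle the two pieces separately. For $\Scal_{1,\perp_c}$, Lemma \ref{notinI} applied with $j = 0$ yields $\ol{\Phi}_c(\Scal_{1,\perp_c}) \subset F_1^\ind \Lrm_c$ provided we know that $F_0^\ind \e\Lrm_{c-1} = F_0^\liea \e\Lrm_{c-1}$. This follows from Lemma \ref{F0} applied to $\Lrm_{c-1}$ when $c - 1 > 1$, and from the same lemma applied after the flip isomorphism (\ref{1}) when $c - 1 < 1$ (the case $c - 1 = 1$ forces $n = 1$ and is trivial). For the $\Scal_1$ piece I would apply Proposition \ref{yphi} with $j = 1$, whose hypothesis $F_0^\ind = F_0^\liea$ on $\Lrm_c$ is again Lemma \ref{F0}. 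This reduces the problem to verifying condition (ii) of that proposition: every $\phi \in \Scal_1$ such that $(y_k - y_{k+1})\phi \in \liea$ for all $k$ must lie in $I_c$.

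The verification of this condition splits according to the size of $m$. If $m \geq 2n$, then all generators $v^{(c-1)}_{m-n+1},\dots, v^{(c-1)}_{m-1}$ of $I_{c-1}^W$ have index strictly exceeding $n$ and therefore lie in $\liea^2$, which forces $\Scal_1 = 0$ and makes the condition vacuous. If $n < m < 2n$, then I would represent any such $\phi$ modulo $\liea^2$ as $\sum_k h_k p_k$ with $h_k \in \Hcal_c$ and $p_k \in I_{c-1}^W \cap \C[\lieh]^W_1$, and conclude $\ul{\sum_k h_k p_k} \in I_c$ by a direct appeal to Corollary \ref{corollaryofAB}; this gives $\ol{\Phi}_c(\phi) = 0$, as required.

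The main obstacle has already been cleared upstream: it is the exactness statement $\im(A) = \Ker(B)$ of Corollary \ref{equal}, which rests on the dimension count of Lemma \ref{dimcount} together with the distributive intersection identity of Proposition \ref{distributive} for the submodules $V_i = \Rcal(x_i^{n-1},\dots, x_i^{m-n})$ of the coinvariant algebra. Granted these ingredients, together with the orthogonal-lift framework of Section \ref{criteria}, the assembly into a proof of the corollary is essentially bookkeeping; the only small subtlety is that one must split off the harmonic and the $\Scal_{1,\perp_c}$ components before appealing to Proposition \ref{yphi}, since that proposition is formulated intrinsically on $\Scal_1$.
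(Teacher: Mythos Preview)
Your overall architecture matches the paper's: reduce via Lemma~\ref{containment} to $F_1^\liea\subset F_1^\ind$, split $(\liea^2)^{\perp_c}=\Hcal_c\oplus\Scal_1\oplus\Scal_{1,\perp_c}$, handle $\Hcal_c$ by Lemma~\ref{F0}, handle $\Scal_1$ via Proposition~\ref{yphi} fed by Corollary~\ref{corollaryofAB}, and handle $\Scal_{1,\perp_c}$ via Lemma~\ref{notinI}. The $\Scal_1$ branch and the case split $m\ge 2n$ versus $n<m<2n$ are fine.

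There is, however, a genuine gap in the $\Scal_{1,\perp_c}$ step. You invoke Lemma~\ref{notinI} ``with $j=0$'' to conclude $\ol{\Phi}_c(\Scal_{1,\perp_c})\subset F_1^\ind\Lrm_c$, but that lemma with index $j$ gives a statement about $\Scal_{j,\perp_c}$, not $\Scal_{j+1,\perp_c}$; with $j=0$ it says nothing about $\Scal_{1,\perp_c}$. The correct application is $j=1$, whose hypothesis is
\[
F_1^{\ind}\,\e\Lrm_{c-1}\;=\;F_1^{\liea}\,\e\Lrm_{c-1},
\]
not the $F_0$ statement you verify. Lemma~\ref{F0} (even combined with the flip isomorphism) only yields the $F_0$ level on $\e\Lrm_{c-1}$ and does not supply this $F_1$ hypothesis. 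What is actually needed is an induction on $c$ under the partial order $\prec$: assuming the corollary holds for all $c'\prec c$, one obtains $F_1^\ind\e\Lrm_{c-1}=F_1^\liea\e\Lrm_{c-1}$ (directly if $c-1>1$, or after flipping to $\e\Lrm_{1/(c-1)}$ if $c-1<1$), and then Lemma~\ref{notinI} with $j=1$ applies. The paper's own one-line proof is terse on this point as well, but it is made precise in the proof of Theorem~\ref{thm}, where the double induction on $c$ and on the filtration index is spelled out.
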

\begin{proof}
    Lemma \ref{containment} says $F_1^\ind\Lrm_c\subset F_1^\liea\Lrm_c$. Lemma \ref{F0} guarantees that the assumptions of Lemma \ref{notinI} and Proposition \ref{yphi} hold. Lemma \ref{notinI} implies $\ol{\Phi}_c(\Scal_{1,\perp_c})\subset F_1^\ind\Lrm_c$. Corollary \ref{corollaryofAB} implies part (iii) of Proposition \ref{yphi} and therefore $\ol{\Phi}_c(\Scal_1)\subset F_1^\ind\Lrm_c$.
    \end{proof}
\begin{ex}\label{3,5}(See figure \ref{5/3})
	Consider $c=\frac{5}{3}$, where dim$(\Lrm)=5^{3-1}=25$. Based on Example \ref{3,4}, it remains to determine where $p_3\xi_i$ lies (recall that $\xi_i=x_i-x_{i+1}$). Note that $\Phi_c(p_2)(p_3\xi_i)\in \e_\st\Lrm_c$ is of weight $-2$ and hence can not be symmetric. Therefore $\Phi_c(p_2^2)(p_3\xi_i)$ must be $0$ and thus $p_3\xi_i\in(\liea^2)^{\perp_c}$. Since $p_3\in I_{\frac{2}{3}}^W$, it follows from Corollary \ref{equal} that $(y_k-y_{k+1})(p_3\xi_i)\in \Hcal_c$ for $k=1,2$. This is not obvious by direct computation.
\end{ex}
\begin{figure}
\centering
	\renewcommand{\width}{1.7}
	\newcommand{\height}{1.2}
	\begin{tikzpicture}
		\foreach \xy in {-4,...,4}{
			\node at (\width*\xy,-1*\height) {\xy};
		}
		\node at (-4*\width,1*\height) {1};
		\node at (-2*\width,1*\height) {$p_2$};
		\node at (-1*\width,1*\height) {$p_3$};
		\node at (0*\width,1*\height) {$p_2^2$};
		\node at (1*\width,1*\height) {$p_2p_3$};
		\node at (2*\width,1*\height) {$p_2^3$};
		\node at (4*\width,1*\height) {$p_2^4$};
		\node at (-3*\width,2*\height) {$\xi_i$};
		\node at (-1*\width,2*\height) {$\xi_ip_2$};
		\node at (0,2*\height) {$\xi_ip_3$};
		\node at (1*\width,2*\height) {$\xi_ip_2^2$};
		\node at (3*\width,2*\height) {$\xi_ip_2^3$};
		\node at (-2*\width,3*\height) {$\alpha_i$};
		\node at (0*\width,3*\height) {$\alpha_ip_2$};
		\node at (2*\width,3*\height) {$\alpha_ip_2^2$};
		\node at (-\width,4*\height) {$\delta$};
		\node at (\width,4*\height) {$\delta p_2$};
		\draw (-4.5*\width,2.6*\height) -- (-3.4*\width,1.3*\height) -- (-1.6*\width,1.3*\height) -- (-\width,.6*\height) -- (-.4*\width,1.3*\height) --(.4*\width,1.3*\height) -- (\width,.6*\height) -- (1.6*\width,1.3*\height) -- (3.4*\width,1.3*\height) -- (4.5*\width,2.6*\height);
		\draw (-3.5*\width,3.6*\height) -- (-2.4*\width,2.3*\height) -- (-.6*\width,2.3*\height) -- (0,1.6*\height) -- (.6*\width,2.3*\height) --(2.4*\width,2.3*\height) -- (3.5*\width,3.6*\height);
		\draw (-2.5*\width,4.6*\height) -- (-1.4*\width,3.3*\height) -- (-.6*\width,3.3*\height) -- (0,2.6*\height) -- (.6*\width,3.3*\height) --(1.4*\width,3.3*\height) -- (2.5*\width,4.6*\height);
		\draw[dashed, opacity=.5] (-.1*\width,4.5*\height) -- (4.1*\width,-.3*\height) -- (4.5*\width,-.3*\height);
		\draw[dashed, opacity=.5] (-2.6*\width,4.5*\height) -- (1.6*\width,-.4*\height) -- (4.5*\width,-.4*\height);
		\draw[dashed, opacity=.5] (-3.6*\width,3.5*\height) -- (-.2*\width,-.5*\height) -- (4.5*\width,-.5*\height);
		\draw[dashed, opacity=.5] (-4.6*\width,2.5*\height) -- (-2.1*\width,-.6*\height) -- (4.5*\width,-.6*\height);
		\node at (4.1*\width,2.6*\height) {$\color{blue}{F^{\mathrm{alg}}_3}$};
		\node at (3.1*\width,3.6*\height) {$\color{blue}{F^{\mathrm{alg}}_2}$};
		\node at (2.1*\width,4.6*\height) {$\color{blue}{F^{\mathrm{alg}}_1}$};
		\node at (4.3*\width,-.05*\height) {$\color{red}{F^{\mathfrak{a}}_0}$};
		\node at (1.9*\width,-.15*\height) {$\color{red}{F^{\mathfrak{a}}_1}$};
		\node at (0*\width,-.25*\height) {$\color{red}{F^{\mathfrak{a}}_2}$};
		\node at (-1.9*\width,-.35*\height) {$\color{red}{F^{\mathfrak{a}}_3}$};
	\end{tikzpicture}
\caption{Filtrations from example \ref{3,5} where the numbers on the bottom indicate $\hbf$-weights}
\label{5/3}
\end{figure}
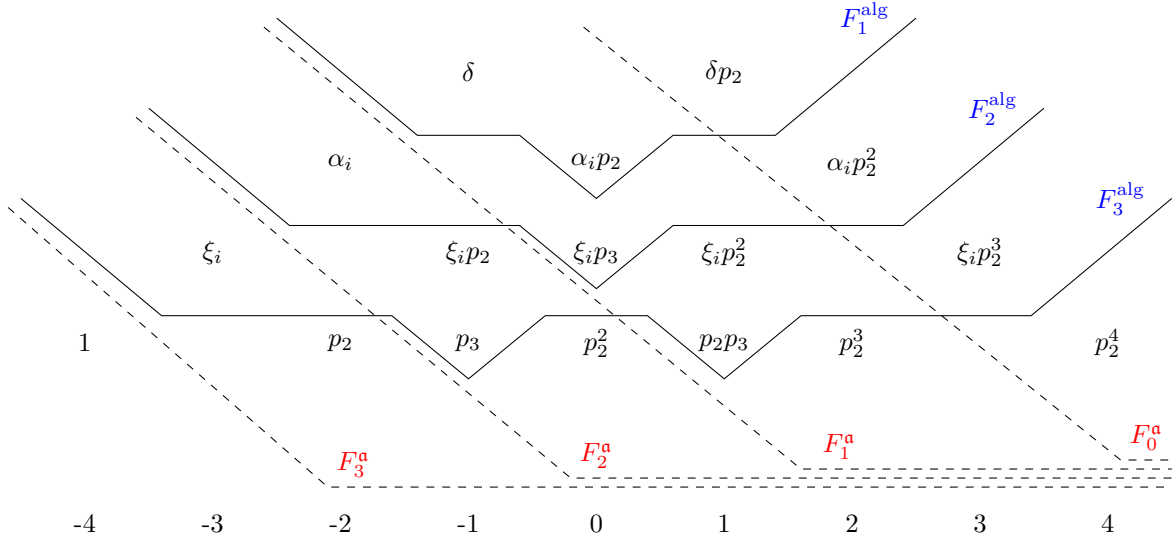
\vspace{-5pt}
    \subsubsection{Springer fiber at the minimal nilpotent orbit}\label{minimalspringer}
There is a mysterious relationship between the matrices $A$, $B$ and the minimal Springer fiber.

Let $G=\SL_n$ with Lie algebra $\lieg$ and $B\subset G$ the Borel consisting of upper triangular matrices. Let $\lieb$ be the Lie algebra of $B$ with nilpotent radical $\lien=[\lieb,\lieb]$. 

Let $\Nscr$ be the nilpotent cone in $\lieg$, there is a unique minimal nilpotent $G$-orbit consisting of nilpotent elements in $\lieg$ of rank $1$. Let $e_{min}$ be an element of this orbit and $\Bcal_{min}$ the fiber of the Springer map $G\times^B \lien\to \Nscr$ over $e_{min}$. 
By \cite{conciniprocesiconjugacy} one knows that 
\[ \Hrm^*(\Bcal_{min})=\C[\lieh]/[(\C[\lieh]^W_+)+(x_1^{n-1},\cdots, x_n^{n-1})].\]
 In the case of $m=2n-1$, the right hand side is exactly $\Rcal/\im(A)=\Rcal/\Ker(B)\cong \im(B)$. Now because $\Hrm^*(\Bcal_{min})\cong \ind_{S_2}^{S_n}\mathrm{triv}$, its dimension is exactly $\frac{n!}{2}$. This gives a different proof of Corollary \ref{equal} when $m=2n-1$. We do not know how to generalize it to arbitrary $n<m<2n$.
    \subsection{$F_\ell$ for $\ell\geq 1$}
Recall the subspace
  $\mathcal{S}_\ell:=\langle \ul{h\psi}\in (\liea^{\ell+1})_\ell^{\perp_c}|h\in \Hcal_c, \psi\in I^W_{c-1}\cap \liea^\ell \rangle $ which satisfies $\ol{\Phi}_c(\Scal_\ell)\subset F^\liea_\ell \Lrm_c$. Our goal in this section is to show that $\ol{\Phi}_c(\Scal_\ell)\subset F_\ell^\ind \Lrm_c$.  The following lemma is the easiest case of Lemma \ref{k<l}. It is stated separately to demonstrate the proof method in general.
\begin{lem}\label{k=l}
Suppose $kn<m<(k+1)n$.   If $\phi \in \mathcal{S}_{k}$ and  $(y_i-y_{i+1}) \phi\in \liea^k$ for all $i$, then $\phi\in I_c$.
\end{lem}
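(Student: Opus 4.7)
The approach mirrors the $F_1$ case treated in Corollary \ref{corollaryofAB}, replacing the matrices $A_{m-n,n-1}$ and $B_{m-n,n-1}$ by $A_{m-nk,n-1}$ and $B_{m-nk,n-1}$. The hypothesis $kn<m<(k+1)n$ ensures, via (\ref{fmodulo}) and Theorem \ref{arc}(iii), that the generators $v^{(c-1)}_r$ of $I^W_{c-1}$ with $r = m-n+1,\dots,nk$ lie in $\liea^k\setminus\liea^{k+1}$, while those with $r = nk+1,\dots,m-1$ lie in $\liea^{k+1}$ and may therefore be ignored modulo $\liea^{k+1}$. The surviving family has cardinality $(k+1)n-m$, matching the column count of $A_{m-nk,n-1}$.

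First I would parametrize $\phi\in\Scal_k$ by writing $\phi\equiv\sum_{r=m-n+1}^{nk} H_r\,v^{(c-1)}_r$ modulo $\liea^{k+1}$ with $H_r\in\Hcal_c$. This presentation is unique, since the $v^{(c-1)}_r$ sit in distinct graded pieces by total $x$-degree, each surviving in $\liea^k/\liea^{k+1}$. Using (\ref{fmodulo}), an element of $I_c$ modulo $\liea^{k+1}$ takes the form $\sum_i\phi_i f_i\equiv\sum_r\bigl(\sum_i\phi_i x_i^{m-r}\bigr)v^{(c-1)}_r$ modulo $\liea^{k+1}$. As in Lemma \ref{ABequivalent}(a), this translates $\ul{\phi}\in I_c$ into the condition that the row vector $(H_{m-n+1},\dots,H_{nk})\in\Rcal^{(k+1)n-m}$ lies in $\im(A_{m-nk,n-1})$. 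By the same Dunkl-operator computation as in Lemma \ref{ABequivalent}(b) (using that each $v^{(c-1)}_r$ is symmetric, so $(y_i-y_{i+1})v^{(c-1)}_r = (\partial_{x_i}-\partial_{x_{i+1}})v^{(c-1)}_r$), the hypothesis $(y_i-y_{i+1})\phi\in\liea^k$ translates into $(H_{m-n+1},\dots,H_{nk})\in\Ker(B_{m-nk,n-1})$.

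Next I would establish the equality $\im(A_{m-nk,n-1}) = \Ker(B_{m-nk,n-1})$. All of the arguments in Section \ref{linearalgebra} generalize essentially verbatim with $2n-m$ replaced by $(k+1)n-m$: Corollary \ref{AB=0} yields $AB=0$ (its proof only uses $m\geq n$); Lemma \ref{less} yields $\rank A=\rank B$; Lemma \ref{dimcount} gives $\dim V_i=((k+1)n-m)(n-1)!$ and $\dim(V_i\cap V_j)=((k+1)n-m)(n-2)!$ by the same matrix manipulations, with $x_i^{(k+1)n-m}$ in place of $x_i^{2n-m}$ in the operator $J_i$; and Lemma \ref{ker(x_1-x_2)} is unchanged. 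The distributivity argument of Proposition \ref{distributive} and Corollary \ref{equal} then yields $\rank A=\rank B=\tfrac{(k+1)n-m}{2}n!$, and hence $\im A=\Ker B$. Combining the three steps gives $(H_{m-n+1},\dots,H_{nk})\in\Ker B=\im A$, so $\ul{\phi}\in I_c$ as required.

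The main obstacle is the careful generalization of Lemma \ref{dimcount} and Lemma \ref{distributiveinequality}: one must re-examine the operator $J_i$ (now ending in $x_i^{(k+1)n-m}$) and verify that the intersections $V_i\cap V_j$ reduce via Lemma \ref{ker(x_1-x_2)} to the expected $\psi_j\Rcal_{n-2}$-decomposition, which requires checking that the surviving indices $\psi_{m-nk},\dots,\psi_{n-1}$ span $\Ker(x_1^{(k+1)n-m})\cap\Ker(x_1-x_2)$. This is routine but needs attention. A secondary point worth verifying is linear independence of $\{v^{(c-1)}_r\}_{r=m-n+1}^{nk}$ in $\liea^k/\liea^{k+1}$, which, as noted above, follows by total-degree grading; all other steps are direct generalizations of the $k=1$ treatment.
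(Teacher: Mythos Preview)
Your overall plan---parametrize $\phi\bmod\liea^{k+1}$ by the vector $(H_r)_{r=m-n+1}^{kn}$, translate ``$\phi\in I_c$'' into $(H_r)\in\im(A_{m-kn,n-1})$, translate the hypothesis into $(H_r)\in\Ker(B_{m-kn,n-1})$, then invoke $\Ker B=\im A$---matches the paper's, and the $\im(A)$ direction is fine. But the step
\[
\text{``}(y_i-y_{i+1})\phi\in\liea^k\text{ for all }i\ \Longleftrightarrow\ (H_r)\in\Ker(B_{m-kn,n-1})\text{''}
\]
does \emph{not} follow from ``the same Dunkl-operator computation as in Lemma~\ref{ABequivalent}(b).'' In that lemma the generators $p_j$ (equivalently $u_j$) are degree~$1$ in the $u$'s modulo $\liea^2$, so $(\partial_{x_i}-\partial_{x_{i+1}})p_j$ lands in $\Rcal$ and you get a single linear condition. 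Here $v^{(c-1)}_r\equiv\binom{c-1}{k}\sum_{j_1+\cdots+j_k=r}u_{j_1}\cdots u_{j_k}\bmod\liea^{k+1}$ has $u$-degree $k$, so $(\partial_{x_i}-\partial_{x_{i+1}})v^{(c-1)}_r$ lies in $\liea^{k-1}/\liea^k$, which is $\Hcal_c\otimes\C[u_2,\dots,u_n]_{k-1}$. The hypothesis $(y_i-y_{i+1})\phi\in\liea^k$ is therefore a \emph{family} of conditions, one for each monomial $u_{j_1}\cdots u_{j_{k-1}}$, not a single matrix condition on $(H_r)$. The paper's key move, which you omit, is to extract just the coefficient of the top monomial $u_n^{k-1}$: this one coefficient gives (up to an overall scalar and a multiplicity factor on the last entry) exactly the vector $(g_{m-n+1},\dots,g_{kn})$ tested against $B_{m-kn,n-1}$. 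That single extracted condition is already enough to put $(g_\alpha)$ into $\Ker(B)=\im(A)$; the paper then handles a residual $(k-1)g_{kn}v_{kn}$ term arising from the multiplicity discrepancy by a second application of the same trick.

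Separately, your proposed regeneralization of Section~\ref{linearalgebra} (redoing Lemmas~\ref{dimcount}, \ref{distributiveinequality}, Proposition~\ref{distributive}, etc.\ with $(k+1)n-m$ in place of $2n-m$) is unnecessary: the matrices $A_{a,n-1},B_{a,n-1}$ are already defined and analyzed for every $1\le a\le n-1$, and $m-kn$ is such an $a$. The paper simply cites Corollary~\ref{equal} directly.
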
 
\begin{proof}
We simplify the notation $v_i:=v_i^{(c-1)}$. From Theorem \ref{arc}, we know that ${I_{c-1}^W}$ mod $\liea^{k+1}$ is spanned by ${v_{m-n+1}},\cdots, {v_{kn}}$. Therefore 
\begin{align}
    \label{phi}
\phi\equiv \sum_{\alpha=m-n+1}^{kn} g_\alpha v_\alpha\quad \text{mod $\liea^{k+1}$}
\end{align}
where $g_\alpha\in \Hcal_c$.

For $\alpha=m-n+1,\cdots, kn$, by definition \begin{align*}
    \label{valpha}
v_{\alpha}\equiv{c-1\choose k}\sum_{j_1+\cdots +j_k=\alpha} u_{j_1}\cdots u_{j_k} \text{ mod } \liea^{k+1}.\end{align*}
We can order $u_{j_1}\cdots u_{j_k}$ so that $j_1\geq j_2\geq\cdots\geq j_k$ and use the lexicographic order. Under this order, $u_n^{k-1}u_{\alpha-(k-1)n}$ is the maximal term. Since $p_k\equiv ku_k$ mod $\liea$, we have that 
\[\phi= (\frac{g_{m-n+1}}{m-n+1}p_{m-kn+1}+\cdots +\frac{g_{kn}}{n}p_n)u_n^{k-1}+\text{lower terms in lexicographic order} \quad \text{modulo $\liea^{k+1}$ }.\]
Then modulo $\liea^{k+1}$, the part in $(y_i-y_{i+1})\phi$ that is divisible by $u_n^{k-1}$ is the product of $u_n^{k-1}$ and
\begin{align}\label{pn}
    \big(g_{m-n+1}(x_i^{m-kn}-x_{i+1}^{m-kn})+\cdots +g_{kn-1}(x_i^{n-2}-x_{i+1}^{n-2})+kg_{kn}(x_i^{n-1}-x_{i+1}^{n-1})\big).
\end{align}
The factor $k$ in the last summand comes from the multiplicity of $p_n$ in $p_n^k$ being $k$.

Hence $(y_i-y_{i+1})\phi\in \liea^k$ implies that for all $i$,  the polynomial (\ref{pn}) lies in $\liea$. This means that
\[(g_{m-n+1},\cdots, g_{kn-1},kg_{kn})\in \Ker(B_{m-kn,n-1}) \quad \text{modulo $\liea$}.\]

By Corollary \ref{equal}, there exist $h_j\in\Hcal_c$ such that one has an equality
\[(g_{m-n+1},\cdots, g_{kn-1},kg_{kn})=(h_1,\cdots,h_n)A_{m-kn,n-1} \quad \text{modulo $\liea$}.\]
Therefore, by (\ref{generalfi}) and (\ref{phi})  we have 
\begin{align*}
    \phi=&\sum_j \big(h_jx_j^{n-1}v_{m-n+1}+\cdots +h_jx_j^{m-n}v_{kn}\big)-(k-1)v_{kn} \quad \text{modulo $\liea^{k+1}$}.\\
    =&\sum_j h_j(x_j^{n-1}v_{m-n+1}+\cdots +x_j^{m-n}v_{kn})-(k-1)v_{kn} \quad \text{modulo $\liea^{k+1}$}.
    \end{align*}
    Combining with equation (\ref{fmodulo}): $f_i\equiv \sum_{j =m-nk}^{n-1} x_i^j v_{m-j}\quad \text{mod }\liea^{k +1}$, we see that \[\phi=\sum_j h_jf_j-(k-1)g_{kn}v_{kn} \quad \text{modulo $\liea^{k+1}$}.\]
It remains to show $\ul{g_{kn}v_{kn}}\in I_c$. Notice that $(k-1)(y_i-y_{i+1})({g_{kn}v_{kn}})=(y_i-y_{i+1})(\phi-\sum h_jf_j)\in \liea^k$ for all $i$ implies that $(0,\cdots, 0,g_{kn})\in \Ker (B_{m-kn,n-1})$. 
Applying Proposition \ref{equal}, we deduce that there exists $h'_i\in \Hcal_c$ such that $(0,\cdots,0,g_{kn})=(h'_1,\cdots, h'_n)A_{m-kn,n-1}$. Therefore ${g_{kn}v_{kn}}\equiv \sum h'_i f_i$ mod $\liea^k$ and $\ul{g_{kn}v_{kn}}\in I_c$.
\end{proof}
\begin{prop}\label{k<l}
    Suppose $kn<m<(k+1)n$ and $\ell\geq  k$. If $
    \phi\in \mathcal{S}_{\ell}$ and  $(y_j-y_{j+1}) \phi\in \liea^\ell$ for all $j$, then $\ul{\phi }\in I_c$.
\end{prop}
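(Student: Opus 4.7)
The plan is to prove Proposition~\ref{k<l} by induction on $\ell \geq k$, with the base case $\ell = k$ being exactly Lemma~\ref{k=l}. For the inductive step, I would mimic the strategy of that base case, introducing an extra symmetric polynomial factor of $\liea$-filtration level $\ell - k$ and running the same linear-algebra argument at every slice of the expansion of $\phi$ modulo $\liea^{\ell+1}$.

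Given $\phi \in \Scal_\ell$ with $(y_j - y_{j+1})\phi \in \liea^\ell$ for all $j$, I would first work modulo $\liea^{\ell+1}$. Because $I_{c-1}^W$ is generated by $v_{m-n+1}^{(c-1)}, \ldots, v_{m+n-1}^{(c-1)}$ (Theorem~\ref{arc}) and $v_\beta^{(c-1)} \in \liea^{k+1}$ for $\beta > kn$ by (\ref{fmodulo}), only the generators with $m-n+1 \le \alpha \le kn$ contribute modulo $\liea^{\ell+1}$, so that one can write
\[\phi \equiv \sum_{\alpha = m-n+1}^{kn} v_\alpha^{(c-1)} \cdot \chi_\alpha \quad \text{mod } \liea^{\ell+1},\]
where each $\chi_\alpha$ is a product of an element of $\Hcal_c$ with a symmetric polynomial of $\liea$-filtration level $\ell - k$. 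Expanding each $v_\alpha^{(c-1)}$ via $v_\alpha^{(c-1)} \equiv \binom{c-1}{k} \sum_{j_1 + \cdots + j_k = \alpha} u_{j_1} \cdots u_{j_k}$ mod $\liea^{k+1}$ and writing the symmetric factor in terms of $u_2, \ldots, u_n$ (using $p_k \equiv k u_k$ mod $\liea^2$), I would order monomials in the $u_i$'s lexicographically with $u_n \succ \cdots \succ u_2$ and extract the leading monomial of $\phi$. The derivative constraint $(y_j - y_{j+1})\phi \in \liea^\ell$ would then force the vector of leading coefficients, indexed by $\alpha = m-n+1, \ldots, kn$, to lie in $\Ker(B_{m-kn, n-1})$ modulo $\liea$, by exactly the same computation as in Lemma~\ref{k=l} (the extra symmetric factor is central and passes through untouched).

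Invoking Corollary~\ref{equal}, which gives $\Ker(B_{m-kn,n-1}) = \im(A_{m-kn,n-1})$, I would express these leading coefficients as $(h_1, \ldots, h_n)\, A_{m-kn, n-1}$ for some $h_j \in \Hcal_c$. Letting $Q \in \C[\lieh]^W$ be the symmetric polynomial factor attached to the leading monomial, the element $X := \sum_j h_j f_j \cdot Q$ lies in $I_c$ and, by design, matches the leading term of $\phi$ modulo $\liea^{\ell+1}$. Subtracting $X$ kills this leading monomial. Iterating over the finitely many possible leading monomials in descending lex order reduces $\phi$, modulo an element of $I_c$, to a residual $\phi'$ that lies in $\liea^{\ell+1}$ and, since $I_c$ is stable under the Dunkl operators, still satisfies $(y_j - y_{j+1})\phi' \in \liea^{\ell+1}$. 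Because $\Lrm_c$ is finite-dimensional, iterating the same reduction at successively higher levels $\ell + 1, \ell+2, \ldots$ terminates and yields $\ul{\phi} \in I_c$.

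The main obstacle I expect is the bookkeeping of this iterative subtraction: verifying that the choice of $Q$ at each stage kills the targeted leading monomial without spawning new monomials of equal or higher lex-order, and that the derivative condition genuinely propagates to the residual at every step. These checks all reduce to combining the explicit formulas of Theorem~\ref{arc} (for $f_i$ and $v_\alpha^{(c-1)}$) with the exactness $\im(A_{m-kn, n-1}) = \Ker(B_{m-kn, n-1})$ of Section~\ref{linearalgebra}; once the first iteration is handled carefully, the remaining ones proceed by the same template. No new idea beyond that of Lemma~\ref{k=l} should be required—only a patient extension of its lex-leading-term analysis through a symmetric polynomial factor of degree $\ell - k$.
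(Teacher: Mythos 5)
Your proposal has a genuine gap at the very first reduction. You claim that, modulo $\liea^{\ell+1}$, only the generators $v^{(c-1)}_\alpha$ with $m-n+1\le\alpha\le kn$ of $I_{c-1}^W$ contribute, because $v^{(c-1)}_\beta\in\liea^{k+1}$ for $\beta>kn$. This is only correct in the base case $\ell=k$, where the coefficients lie in $\Hcal_c$ and hence have $\liea$-filtration level $0$ (which is exactly why Lemma \ref{k=l} can be stated with a single leading monomial shape). For $\ell>k$ an element of $\Scal_\ell$ involves symmetric cofactors of positive filtration level, and a product $\chi\cdot v^{(c-1)}_\beta$ with $\beta>kn$ and $\chi$ of level $\ell-k-1$ has level exactly $\ell$, so it is visible modulo $\liea^{\ell+1}$ and cannot be discarded. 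Handling these terms is precisely the content of Cases 1 and 2 in the paper's proof: the leading monomials there are $u_{\alpha-nk}u_n^{k}$ and $u_2u_{n-1}u_n^{k-1}$ rather than $u_{\alpha-n(k-1)}u_n^{k-1}$, the derivative condition produces a vector in $\Ker(B_{1,n-1})$ (not $\Ker(B_{m-kn,n-1})$), and killing the leading term requires combining contributions of the mixed shapes $u_n v_i$ ($i\le nk$) and $v_i$ ($i>nk+1$) via elements $\sum_i g_i x_i^{n(k+1)-m+1}f_i$. This is a missing idea, not bookkeeping: your single-case template (essentially the paper's Case 3) does not cover the part of $\phi$ built from the degree-$(k{+}1)$ expansions of $v_\alpha$, $\alpha>kn$.

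There is also a problem with the tail of your argument. After subtracting an element $X\in I_c$ to reach a residual $\phi'\in\liea^{\ell+1}$, you assert that $(y_j-y_{j+1})\phi'\in\liea^{\ell+1}$ "since $I_c$ is stable under the Dunkl operators." What you actually know is $(y_j-y_{j+1})\phi'\in\liea^{\ell}+I_c$, which does not set up the hypothesis needed to rerun the argument at level $\ell+1$. The paper avoids this entirely: it never climbs through higher powers of $\liea$, but fixes $\ell$, works modulo $\liea^{\ell+1}$ throughout, and inducts on the lexicographic order of the leading $u$-monomial of $\phi$, checking at each subtraction that no monomial of equal or higher lex order is created; once all terms are exhausted one concludes $\ul{\phi}\in I_c$ directly. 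Your induction on $\ell$ with Lemma \ref{k=l} as base case is not how the paper proceeds and, as structured, both the missing Cases 1--2 and the non-propagating derivative hypothesis would have to be repaired before the proof closes.
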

\begin{proof}
 Write $\phi=\sum_{i=m-n+1}^{m-1} \phi_iv_i$ where $\phi_{m-n+1},\cdots, \phi_{nk}\in \liea^{\ell-k}$ and $\phi_{nk+1},\cdots, \phi_{m-1}\in \liea^{\ell-k+1}$. Assume $(i_1,\cdots,i_\ell)$ ($i_1\geq i_2\cdots \geq i_\ell$) is the largest sequence under the lexicographic order such that $u_{i_1}\cdots u_{i_\ell}$ shows up in $\phi$ with a nonzero coefficient in $\Hcal_c$. We will use $f_i$ to kill this nonzero term in $\phi$ without adding higher order terms and as a result, the lemma will follow from induction on $(i_1,\cdots,i_\ell)$. 

By definition, one has
\begin{align*}
    \label{valpha}
v_\alpha\equiv{c-1\choose s}\sum_{j_1+\cdots +j_{s}=\alpha} u_{j_1}\cdots u_{j_{s}} \text{ mod } \liea^{s+1}\end{align*}
where $s=k$ when $m-n+1\leq \alpha\leq nk$ and $s=k+1$ when $nk+1\leq i\leq m-1$. 
The largest term is either case $1$:  $u_{\alpha-nk}u_n^k$ when $nk+2\leq i\leq m-1$; case $2$: $u_2u_{n-1}u_n^{k-1}$ when $i=nk+1$ or case $3$: $u_{\alpha-n(k-1)}u_n^{k-1}$ when $m-n+1\leq i\leq nk$. 

\vspace{10pt}
\textbf{ Case $1$}: Suppose $u_{i_1}\cdots u_{i_\ell}$ shows up in $\phi_\alpha v_\alpha$ with nonzero coefficient with $nk+1<j\leq m-1$. We may assume $u_{i_1}\cdots u_{i_\ell}=u_{i_1}\cdots u_{i_{\ell-k-1}}u_n^ku_{\alpha-nk}$. Up to some linear rearranging, we may also assume the nonzero coefficient of this term completely comes from $\phi_\alpha v_\alpha$. In other words, the coefficients of $u_{i_1}\cdots u_{i_\ell}$ in $\phi_\gamma v_\gamma$ for $\gamma\neq \alpha$ add up to be $0$. Similarly, if the coefficient of $u_{i_1}\cdots u_{i_{\ell-k-1}}u_n^ku_\beta$ in $\phi$ is nonzero for $\beta< \alpha-nk$, we may assume the coefficient is completely contributed by  $u_{i_1}\cdots u_{i_{\ell-k-1}}v_i$ for $nk+2\leq i\leq m-1$ or $u_{i_1}\cdots u_{i_{\ell-k-1}}u_nv_i$ for $m-n+1\leq i\leq nk$ (depending on the size of $\beta$).

Inside $(y_j-y_{j+1})\phi$, $\big( (y_j-y_{j+1}) u_{\alpha-nk}\big) u_{i_1}\cdots u_{i_{\ell-k-1}}u_n^k$ can only be combined with like terms in the form of $\big( (y_j-y_{j+1}) u_\beta\big) u_{i_1}\cdots u_{i_{\ell-k-1}}u_n^k$ for $\beta< \alpha-nk$. 

Assume the coefficients of $u_{i_1}\cdots u_{i_{\ell-k-1}}u_nv_i$ for $m-n+1\leq i\leq nk$ and $u_{i_1}\cdots u_{i_{\ell-k-1}}v_i$ for $nk+2\leq i\leq m-1$ are $h_{m-n+1},\cdots h_{nk}, h_{nk+2},\cdots, h_{m-1}$(with some of them being possibly $0$.). To have $(y_j-y_{j+1})\phi\in\liea^\ell$, by Lemma \ref{ABequivalent}, there has to be \[(z_{nk+2}h_{nk+2},\dots, z_{m-1}h_{m-1},0,z_{m-n+1}h_{m-n+1},\cdots, z_{nk}h_{nk})\in \Ker(B_{1,n-1})\quad \text{mod $\liea$}.\] for some positive integers $z_{m-n+1},\cdots,z_{nk},z_{nk+2},\cdots, z_{m-1}$ due to possible nontrivial multiplicities (as in (\ref{pn})).

Since $\Ker(B_{1,n-1})=\im(A_{1,n-1})$, by Corollary \ref{equal} there exists $(g_1,\cdots, g_n)$ such that \[(z_{nk+2}h_{nk+2},\dots, z_{m-1}h_{m-1},0,z_{m-n+1}h_{m-n+1},\cdots, z_{nk}h_{nk})=(g_1,\cdots, g_n)A_{1,n-1}\quad \text{mod $\liea$}.\]
Recall that by Lemma \ref{lastlemma}, $u_n\equiv x_i^n$ mod $\liea$ while (\ref{generalfi}) says $f_i= \sum_{j =0}^{n-1}  ( x_i^j+x_i^{j-2} u_2+\cdots + u_j )v^{(c-1)}_{m-j}$. Therefore
 modulo $\liea^{k+2}$ we have
 \begin{align*}
    &z_{nk+1}h_{nk+2}v_{nk+2}+\cdots +z_{m-1}h_{m-1}v_{m-1}+z_{m-n+1}h_{m-n+1}u_nh_{m-n+1}+\cdots +z_{nk}h_{nk}u_nu_{nk}\\
    =&\sum_i g_i (x_i^{n-1}v_{nk+2}+\cdots +x_i^{(k+1)n-m+2}v_{m-1}+x_i^{n(k+1)-m}u_nv_{m-n+1}+\cdots +x_iu_nv_{nk})\\
    \equiv&\sum_i g_i x_i^{n(k+1)-m+1}f_i.
    \end{align*}
Now write $(*):=\phi-\frac{1}{z_\alpha}u_{i_1}\cdots u_{i_{\ell-k+1}}\sum g_i x_i^{n(k+1)-m+1}f_i$. Then $(*)$ still satisfies that $(y_j-y_{j+1})(*)\in \liea^\ell$ for all $j$ and the coefficient of $u_{i_1}\cdots u_{i_{\ell-k-1}}u_n^ku_{\alpha-nk}$ in $(*)$ is $0$ with no higher-order summand added compared to $\phi$. Our goal is achieved.

\vspace{10pt}
\textbf{Case $2$}: Suppose the nonzero coefficient of $u_{i_1}\cdots u_{i_\ell}$ in $\phi$ comes from $\phi_{nk+1} v_{nk+1}$ and so $u_{i_1}\cdots u_{i_\ell}=u_{i_1}\cdots u_{i_{\ell-k-1}}u_2u_{n-1}u_n^{k-1}$. Similar as case $1$, inside $(y_j-y_{j+1})\phi$, terms $\big( (y_j-y_{j+1}) u_{2}\big) u_{i_1}\cdots u_{i_{\ell-k-1}}u_n^k$ can only be combined with like terms in the form of $\big( (y_j-y_{j+1}) u_\beta\big) u_{i_1}\cdots u_{i_{\ell-k-1}}u_{n-1}u_n^{k-1}$ for $\beta<2$, which are $0$. Set the coefficient of $u_{\ell_1}u_{i_{\ell-k-1}}u_2u_{n-1}u_n^{k-1}$ to be $h_{nk+1}$. Then 
\[(0,\cdots, 0, h_{nk+1},0,\cdots,0)\in \Ker(B_{1,n-1})=\im(A_{1,n-1}).\]
Here $h_{nk+1}$ lies in the $(n(k+1)-m+1)$-th entry. 
Therefore there exists $g_i\in \Hcal_c$ such that 
\[h_{nk+1}v_{nk+1}\equiv \sum g_i(x_i^{n-1}v_{m-n+1}+\cdots +x_i^{m-nk-1}v_{nk+1}+\cdots x_i v_{m-1})=\sum g_i f_i \quad \text{mod $\liea^{k+2}$}.\]
Now the highest degree in $\phi- u_{i_1}\cdots u_{i_{\ell-k-1}}\sum g_i f_i$ is lower than the original $(i_1,\cdots, i_\ell)$.

\vspace{10pt}
\textbf{Case $3$}: Suppose the nonzero coefficient of $u_{i_1}\cdots u_{i_\ell}$ in $\phi$ comes from $\phi_\alpha v_\alpha$ for $m-n+1\leq \alpha\leq nk$. Then $u_{i_1}\cdots u_{i_\ell}=u_{i_1}\cdots u_{i_{\ell-k}} u_n^{k-1}u_{\alpha-n(k-1)}$. Given case $1$, we may assume $i_1,\cdots, i_\ell<n$. If the coefficient of $u_{i_1}\cdots u_{i_{\ell-k}}u_n^{k-1}u_\beta$ in $\phi$ is nonzero, we may assume the coefficient is completely contributed by  $u_{i_1}\cdots u_{i_{\ell-k}}v_i$ for $m-n+1\leq i\leq nk$.

Inside $(y_j-y_{j+1})\phi$, $\big( (y_j-y_{j+1}) u_{\alpha-nk}\big) u_{i_1}\cdots u_{i_{\ell-k}}u_n^{k-1}$ can only be combined with like terms in the form of $\big( (y_j-y_{j+1}) u_\beta\big) u_{i_1}\cdots u_{i_{\ell-k}}u_n^{k-1}$ for $\beta< \alpha-nk$. Assume the coefficient of $u_{i_1}\cdots u_{i_{\ell-k}}v_i$ is $h_i$, $i=m-n+1,\cdots, nk$ (with some of the $h_i$ possibly being $0$). Then \[(z_{m-n+1}h_{m-n+1},\cdots, z_{nk}h_{nk})\in \Ker(B_{m-nk,n-1})=\im(A_{m-nk,n-1}).\] for some positive rational numbers $z_{m-n+1},\cdots,z_{nk}$ due to possible nontrivial multiplicities. Therefore
  there exists $g_i\in \Hcal_c$ such that 
 \begin{align*}
    z_{m-n+1}h_{m-n+1}h_{m-n+1}+\cdots +z_{nk}h_{nk}u_{nk}
    =\sum g_i (x_i^{n-1}v_{m-n+1}+\cdots +x_i^{m-n(k-1)+1}v_{nk})\quad \text{mod $\liea^{k+1}$},
    \end{align*}
    and \[u_{i_1}\cdots u_{i_{\ell-k}}(z_{m-n+1}h_{m-n+1}u_{m-n+1}+\cdots +z_{nk}h_{nk}u_{nk})\equiv u_{i_1}\cdots u_{i_{\ell-k}}\sum_i g_if_i\quad \text{mod $\liea^{\ell+1}$}.\]
    Similar to the previous cases the highest degree in $\phi- \frac{1}{z_\alpha}u_{i_1}\cdots u_{i_{\ell-k}}\sum_i g_i f_i$ is lower than the original $(i_1,\cdots, i_\ell)$.
   \end{proof}
\subsubsection{Proof of the main theorem}
With all the puzzle pieces collected, we summarize the proof of Theorem \ref{thm}.
\begin{proof}
    By Corollary \ref{kazhdan} and Lemma \ref{containment}, we need to show $F_\ell^\liea\Lrm_c\supset F_\ell^\ind\Lrm_c$ for all $\ell\geq 0$ and $c=\frac{m}{n}>1$ with $(m,n)=1$. We do so by induction on both $c$ using the order in the proof of Lemma \ref{ef} and $F_\ell$. 
    
    The base case of $F_0$ is shown in Lemma \ref{F0} for any $c>1$. On the other hand, for all $c>0$\\ $F_0^\liea\e\Lrm_c=\C\cdot 1=F_0^\ind \e\Lrm_c$.
    
    For $c>1$ and $\ell>0$, assume we have shown that $F^\liea_i\e\Lrm_{c'}=F^\ind_i\e\Lrm_{c'}$  for all $i\geq 0$ and $c'$ such that $c'\prec c$, and assume also that   $F_{\ell'}^\liea\Lrm_{c}=F_{\ell'}^\ind\Lrm_{c}$  for $0\leq \ell'<\ell $. Note that because $F^\liea$ and $F^\ind$ are both compatible with the isomorphism $\e\Lrm_{\frac{m}{n}}=\e\Lrm_{\frac{n}{m}}$ when $n,m>1$, and that $\frac{n}{m}\prec \frac{m}{n}$ when $n>m$, our assumptions also imply that $F^\liea_i\e\Lrm_{c-1}=F^\ind_i\e\Lrm_{c-1}$ for all $i\geq 0$. 

    Now take a homogeneous $\phi\in (\liea^{\ell+1})^{\perp_c}$ such that $0\neq \ol{\Phi}_c(\phi)\in F_\ell^\liea\Lrm_c$.  If $\phi\in \mathcal{S}_{\ell}^{\perp_c}$, by Lemma \ref{notinI}, we conclude from the induction hypothesis on $c$ that $\ol{\Phi}_c(\phi)\in F_\ell^\ind\Lrm_c$. 
    
    Hence we may assume $\phi\in \mathcal{S}_{\ell}$, in which case $\ell\geq \lfloor c\rfloor $. If $\phi\in \C[\lieh]^W$, then by Lemma \ref{eulerfield}, $(y_i-y_{i+1})\phi\in (\liea^\ell)^{\perp_c}$; if $\phi\notin \C[\lieh]^W$,  Proposition \ref{k<l} implies that $(y_i-y_{i+1})\phi\in (\liea^\ell)^{\perp_c}$. Because $\ell>0$, $\ol{\Phi}_c(\phi)$ is not a highest weight vector. Now the equality (\ref{euler}) and the inductive hypothesis on $F_\ell$ implies that $\ol{\Phi}_c(\phi)\in F_\ell^\ind\Lrm_c$.
    This concludes the proof of Theorem \ref{thm}.
\end{proof}
\subsection{Description of $F^\liea$ for $c<1$}Let $c=\frac{m}{n}<1$ for positive integer $m$ coprime to $n$.  Inside $\Lrm_c$ one can define $\Hcal_c:=\liea^{\perp_c}$ and it still holds that $\Lrm_c=\Hcal_c\cdot \e\Lrm_c$. In this setting, $\Hcal_c$ is now only a proper submodule of the regular representation of $S_n$. 
\begin{prop}
For all $1\leq k\leq n-1$, $\ell>0$ and $\ol{\Phi}_c(\phi)\in F_\ell^\liea \Lrm_c$, we have $\ol{\Phi}_c((y_k-y_{k+1})\phi)\in F^\liea_{\ell-1}\Lrm_c$.
\end{prop}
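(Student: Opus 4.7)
The plan is to reformulate the statement via an adjoint property of the Dunkl form and reduce it to a structural claim about the powers of $\liea$ in $\Lrm_c$, and then attack the base case using the explicit generators of $I_c$.

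First, since $\ol{\Phi}_c$ is a bijection on $\Lrm_c$ and $F^\liea_\ell=\ol{\Phi}_c((\liea^{\ell+1})^{\perp_c})$, the hypothesis $\ol{\Phi}_c(\phi)\in F^\liea_\ell\Lrm_c$ is equivalent to $\phi\in(\liea^{\ell+1})^{\perp_c}$, and the conclusion $\ol{\Phi}_c((y_k-y_{k+1})\phi)\in F^\liea_{\ell-1}\Lrm_c$ is equivalent to $(y_k-y_{k+1})\phi\in(\liea^\ell)^{\perp_c}$. So the proposition is equivalent to the inclusion
\[(y_k-y_{k+1})(\liea^{\ell+1})^{\perp_c}\subset(\liea^\ell)^{\perp_c}.\]
Applying (\ref{xy}) together with the symmetry of $(-,-)_c$ yields the adjoint identity $((y_i-y_j)\phi,\psi)_c=(\phi,(x_i-x_j)\psi)_c$. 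Consequently, to prove the proposition it suffices to verify the structural claim
\[(x_k-x_{k+1})\liea^\ell\subset\liea^{\ell+1}\text{ in }\Lrm_c,\quad \ell\geq 1,\]
since then $(\phi,(x_k-x_{k+1})\xi)_c=0$ for every $\xi\in\liea^\ell$.

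Next I would induct on $\ell$. Assuming the base case $(x_k-x_{k+1})\liea\subset\liea^2$ in $\Lrm_c$, for $\ell\geq 2$ we get
\[(x_k-x_{k+1})\liea^\ell=\big((x_k-x_{k+1})\liea\big)\cdot\liea^{\ell-1}\subset\liea^2\cdot\liea^{\ell-1}=\liea^{\ell+1}\]
in $\Lrm_c$, using that $I_c\cdot\liea^{\ell-1}\subset I_c$ vanishes in the quotient. So it remains only to show that for every $j\geq 2$, $(x_k-x_{k+1})p_j\in\liea^2+I_c$ inside $\C[\lieh]$.

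The main obstacle is this base case. I would approach it using the explicit generators $f_1,\dots,f_n$ of $I_c$ from Theorem~\ref{arc} and the expansion~(\ref{generalfi}). Since the $f_i$ span a copy of the standard representation of $S_n$, the differences $f_k-f_{k+1}$ furnish elements of $I_c$ whose leading non-symmetric part should, after subtracting their symmetric expansions in $u_j$ and $v_j^{(c-1)}$, realize $(x_k-x_{k+1})p_j$ modulo $\liea^2$. The hypothesis $c<1$ (i.e., $m<n$) is essential here: it forces the low-degree generators of $I_c$ to appear below the $\liea^2$-threshold so that the needed cancellations can be arranged. I expect the bookkeeping to be analogous in spirit to---though simpler than---the linear algebra of Section~\ref{linearalgebra}, with the key new input being that for $c<1$ the full ideal $I_c$ (not only $I_c^W$) absorbs non-symmetric multiples of the generators of $\liea$.
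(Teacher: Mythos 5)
Your reformulation is correct as far as it goes: by (\ref{xy}), the symmetry of $(-,-)_c$ and its non-degeneracy on $\Lrm_c$, the statement is indeed equivalent to the containment $(x_k-x_{k+1})\liea^{\ell}\subset \liea^{\ell+1}+I_c$, and your passage from $\ell=1$ to general $\ell$ is immediate because $I_c$ is an ideal. But this means the entire mathematical content of your plan is the base case $(x_k-x_{k+1})\liea\subset\liea^{2}+I_c$, for which you offer only the expectation that combinations of the $f_i$ will produce $(x_k-x_{k+1})p_j$ modulo $\liea^2$. That step is not merely missing; it fails. The $f_i$ generate $I_c$ in degree $m$ (Theorem \ref{arc}), while $\liea^2$ begins in degree $4$, so for any $c=\frac{m}{n}<1$ with $m\geq 4$ (say $c=\frac{4}{5}$) the nonzero degree-$3$ element $(x_1-x_2)p_2$ cannot lie in $\liea^2+I_c$; and even in the remaining case $(n,m)=(4,3)$, where $I_c$ does have a degree-$3$ part, a short check shows $(x_1-x_2)p_2$ is not in the span of the $f_i=x_i^3+cu_2x_i+cu_3$ (the span of the $f_i$ is one copy of the standard representation, and its $(12)$-antisymmetric vector $f_1-f_2$ is not proportional to $e_2(x_1-x_2)$ modulo $p_1$). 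In particular your remark that $c<1$ "forces the low-degree generators of $I_c$ below the $\liea^2$-threshold" is backwards: the generators sit in degree $m$, above the degrees where these products live. Since your reduction is in fact an equivalence, this obstruction cannot be repaired by better bookkeeping with $f_k-f_{k+1}$; the per-$k$ multiplicative statement is simply too strong and is not the route by which the proposition can be reached.

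The paper's proof is genuinely different and never passes through a statement about multiplication by a single $x_k-x_{k+1}$. It reduces the proposition, via the projection/equivalence mechanism of Proposition \ref{yphi} (applied now to all of $\Lrm_c$), to the simultaneous kernel statement: if $\phi\in(\liea^{\ell+1})^{\perp_c}\cap\liea^{\ell}$ and $(y_k-y_{k+1})\phi\in\liea^{\ell}$ for \emph{every} $k$, then $\phi\in I_c$. For $\ell=1$ this hypothesis is translated by Lemma \ref{ABequivalent} into $(2h_2,\dots,nh_n)\in\Ker(B)$ over the coinvariant algebra, the equality $\Ker(B)=\im(A)$ of Corollary \ref{equal} is invoked, and $\sum_i h_ip_i$ is then rewritten in terms of the $f_j$ together with the generators $v^{(c)}_{m+1},\dots,v^{(c)}_{m+n-1}$ of $I_c^W$; for $\ell>1$ one inducts on the lexicographically largest monomial $u_{i_1}\cdots u_{i_\ell}$, as in Proposition \ref{k<l}. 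The key structural point you are missing is that the condition "for all $k$ simultaneously" is exactly what feeds the $\Ker(B)=\im(A)$ linear algebra and is far weaker than your fixed-$k$ containment; the conclusion of the proposition is then recovered from that kernel statement by the orthogonal projection argument, not by exhibiting $(x_k-x_{k+1})\xi$ inside $\liea^{\ell+1}+I_c$. Given that your reformulation is equivalent to the unrestricted statement, the degree obstruction above also shows that any proof must go through the paper's reduction rather than through a multiplicative containment of this kind.
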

\begin{proof}
We first note that when $c<1$ the equivalence of statements in Proposition \ref{yphi} holds for the whole $\Lrm_c$ rather than merely $\Scal_\ell$. Therefore we only need to show that if $\phi\in (\liea^{\ell+1})^{\perp_c}\cap \liea^\ell$ satisfies $(y_k-y_{k+1})\phi\in\liea^\ell$ for all $k=1,\dots,n-1$ then $\phi\in I_c$.

We first prove the case when $\ell=1$. Recall the ideal $I_c\subset \C[\lieh]$ is generated by 
\[  f_i=\sum_{j=0}^m x_i^jv^{(c)}_{m-j}, \quad i=1,\cdots, n\]
where
\[v^{(c)}_j\equiv cu_j \quad \text{mod } \liea,\quad j=2,\cdots, n\]
Take $\phi:=\ul{\sum_{i=2}^n h_ip_i}\in (\liea^2)^{\perp_c}$ where $h_i\in \Hcal_c$.  Since the proposition holds when $\phi$ is symmetric by Lemma \ref{eulerfield}, we may assume that $h_2,\dots, h_n$ are non-constant. By Lemma \ref{ABequivalent}, $(y_k-y_{k+1})\phi\in \liea$ for all $k=1,\cdots, n-1$ if and only if $(2h_2,\cdots, nh_n)\in \Ker(B_{2,n-1})$. By Corollary \ref{equal}, there exists $g_1,\cdots, g_n$ such that $(2h_2,\cdots, nh_n)=(g_1,\cdots, g_n)A_{2,n-1}$. Therefore,
\begin{align*}
    \sum_{i=2}^n h_ip_i\equiv \sum_{i=2}^n ih_iu_i\equiv \sum_{j=1}^n\sum_{i=2}^n g_jx_j^{n+1-i}u_i\quad \text{mod }\liea.
    \end{align*}
Note that $c\sum_{i=2}^m x_j^{n+1-i}u_i\equiv x_j^{n+1-m}(f_j-x_j^m)$ mod $\liea$. Thus we obtain that 
\begin{align*}
\sum_{j=1}^n\sum_{i=2}^n g_jx_j^{n+1-i}u_i
\equiv& \frac{1}{c}\sum_{j=1}^n g_j\bigg(x_j^{n+1-m}(f_j-x_j^m)+x_j^{n-m}v_{m+1}^{(c)}+\cdots +x_jv_n^{(c)}\bigg)\quad \text{mod }\liea\\
\equiv &\frac{1}{c}\sum_{j=1}^n g_j\bigg(x_j^{n+1-m}f_j+x_j^{n-m}v_{m+1}^{(c)}+\cdots +x_jv_n^{(c)}\bigg) 
\quad \text{mod }\liea.
\end{align*}
The second equality follows as $x_j^{n+1}\in\liea$.
Since the ideal $I_c^W\subset \C[\lieh]^W$ is generated by $v_{m+1}^{(c)},\dots, v_{m+n-1}^{(c)}$, we conclude that $\ul{\sum_{i=2}^n h_ip_i}\in I_c$. 

When $\ell>1$, suppose $(y_k-y_{k+1})\phi\in \liea^{\ell+1}$ for all $k$. Assume $(i_1,\cdots,i_\ell)$ ($i_1\geq i_2\cdots \geq i_\ell$) is the largest sequence under the lexicographic order such that $u_{i_1}\cdots u_{i_\ell}$ shows up in $\phi$ with a nonzero coefficient in $\Hcal_c$. Similar to the proof of Proposition \ref{k<l}, we will use $f_i$ to kill this nonzero term in $\phi$ without adding higher-order terms, and as a result, the proposition will follow from induction on $(i_1,\cdots,i_\ell)$. 

For $\gamma=2,\dots, i_\ell$, let $h_\gamma\in\Hcal_c$ be the coefficient of $u_{i_1}\cdots u_{i_{\ell-1}}u_\gamma$ in $\phi$. Inside $(y_k-y_{k+1})\phi$, $h_{i_\ell}\big( (y_k-y_{k+1}) u_{i_\ell}\big) u_{i_1}\cdots u_{i_{\ell-1}}$ can only be combined with like terms in the form of $h_\gamma\big( (y_k-y_{k+1}) u_\gamma\big) u_{i_1}\cdots u_{i_{\ell-1}}$ for $\gamma=2,\dots, i_\ell-1$. Therefore $(y_k-y_{k+1})\phi\in \liea^{\ell+1}$ for all $k$ implies that 
\[(z_2h_2,\cdots, z_nh_n)\in \Ker(B_{1,n-1})\quad \text{mod }\liea\]
where $z_i$'s are integers with $z_{i_\ell}\neq 0$ and $z_\gamma=0$ for $\gamma>i_\ell$. By the discussion above in the case of $\ell=1$, we can express $\ul{\sum_{\gamma=2}^n z_\gamma h_\gamma u_\gamma}$ by $f_1,\dots, f_n$. Write $(*):=\phi-\frac{1}{z_{i_\ell}}(\sum_{\gamma=2}^n z_\gamma h_\gamma u_\gamma)u_{i_1}\cdots u_{i_{\ell-1}}$. Then $(*)$ still satisfies $(y_k-y_{k+1})(*)\in\liea^\ell$ for all $k$ and the coefficients of $u_{i_1}\cdots u_{i_\ell}$ in $(*)$ is $0$ with no higher-order term added compared to $\phi$. This concludes the proof of the proposition.
\end{proof}
\appendix
\section{Distributive lattices}
    One can use the language of distributive lattice to further describe the structure captured by (\ref{directsum}) in Proposition \ref{distributive}. 
    
    Following \cite{bgs1} \cite{bezrulattice}, we define the following notions:
    \begin{itemize}
        \item A set of subspaces of vector space $U$ is called a lattice if it is closed under taking sum and intersections.
        \item A lattice is called distributive if $U_1\cap(U_2+U_3)=(U_1\cap U_2)+(U_1\cap U_3)$ for any elements $U_1,U_2,U_3$ in the lattice.
        \item A $n$-tuple $(U_1,\cdots, U_n)$ of subspaces in $U$ is called distributive if the lattice generated by $U_1,\cdots, U_n$ is distributive.
        \item A $n$-tuple is called predistributive if all the $(n-1)$-tuples $(U_1,\dots, \hat{U_i},\cdots, U_n)$, $i=1,\cdots, n$, are distributive.
        \item A $n$-tuple is called acyclic if all the $3$-triples $(U_1\cap \cdots \cap U_i, U_{i+1}, U_{i+2}+\cdots +U_n)$, $i=1,\cdots, n-2$, are distributive.
      \end{itemize}
      \begin{lem}\cite[Lemma 4.5.2]{bgs1}\label{predist}
    A $n$-tuple $(U_1,\cdots, U_n)$ of subspaces in $U$ is distributive if and only if it is both predistributive and acyclic.
\end{lem}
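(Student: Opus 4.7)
The forward implication is immediate: if the lattice generated by $(U_1,\ldots,U_n)$ is distributive, then every sublattice is distributive, so removing any single $U_i$ still yields a distributive subtuple (predistributivity), and each particular three-element subset of the generated lattice --- in particular the specific triples appearing in the acyclicity hypothesis --- satisfies the distributive identity.

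For the reverse implication, my plan is to argue by induction on $n$. The cases $n\leq 2$ are vacuous; for $n=3$ the acyclicity condition with $i=1$ gives $U_1\cap(U_2+U_3)=(U_1\cap U_2)+(U_1\cap U_3)$, and combined with the modular law (automatic in a lattice of subspaces of a vector space) this implies distributivity of the 3-generated lattice.

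For $n\geq 4$, the goal is to verify $A\cap(B+C)=(A\cap B)+(A\cap C)$ for every triple $A,B,C$ in the generated lattice. My strategy has two stages. First, using modularity and predistributivity, every element of the generated lattice is brought into a disjunctive normal form $\sum_\alpha\bigcap_{i\in S_\alpha}U_i$ with $S_\alpha\subseteq\{1,\ldots,n\}$. Second, acyclicity yields the identity
\[
(U_1\cap\cdots\cap U_i)\cap(U_{i+1}+\cdots+U_n)=(U_1\cap\cdots\cap U_{i+1})+(U_1\cap\cdots\cap U_i)\cap(U_{i+2}+\cdots+U_n)
\]
for each $i=1,\ldots,n-2$. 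The crucial observation is that the second summand on the right lies in the sublattice generated by $\{U_j:j\neq i+1\}$, which is predistributive and so distributive by the inductive hypothesis. Iterating this identity eliminates all occurrences of the full index set $\{1,\ldots,n\}$ from any given normal-form expression except for the single canonical term $\bigcap_{j=1}^n U_j$ (which plainly respects distributivity), and reduces the verification of $A\cap(B+C)=(A\cap B)+(A\cap C)$ to an identity inside a distributive $(n-1)$-sublattice, where the inductive hypothesis closes the argument.

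The main obstacle will be the combinatorial bookkeeping of the normal-form reduction: one must show that the $n-2$ acyclicity identities together with the distributive identities in all the $(n-1)$-sublattices generate the full equivalence relation identifying distinct formal expressions that represent the same subspace. This is essentially a question about the free distributive lattice on $n$ generators --- the vector-space hypotheses enter only through the modular law --- and I would attack it by induction on the total number of generators appearing (with multiplicity) in a formal expression, invoking acyclicity at each step to decrement this count until the inductive hypothesis on $n$ applies.
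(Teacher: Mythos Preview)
The paper does not supply its own proof of this lemma: it is quoted verbatim from \cite[Lemma 4.5.2]{bgs1} and used as a black box in the proof of Proposition~\ref{distributivelattice}. So there is nothing in the paper to compare your argument against.

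On the merits of your sketch: the forward direction is fine, and the base case $n=3$ is correct. The gap is in your first stage for $n\geq 4$. You propose to bring an arbitrary element of the generated lattice into disjunctive normal form $\sum_\alpha\bigcap_{i\in S_\alpha}U_i$ using only modularity and predistributivity, and then to use acyclicity to handle the terms involving all $n$ indices. But the normal-form reduction is essentially the distributivity you are trying to prove: in a merely modular lattice one cannot in general rewrite $(A+B)\cap(C+D)$ as a sum of intersections of the generators, and predistributivity only gives you such identities once you are already inside an $(n-1)$-generated sublattice. You recognise this yourself when you call the bookkeeping ``the main obstacle'' and defer it to an unspecified secondary induction; as written, that step is circular.

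The argument in \cite{bgs1} does not go through normal forms at all. It uses the equivalent characterisation that a tuple of subspaces is distributive if and only if there is a basis of the ambient space each member of which lies in, or avoids, every $U_i$ (so that each $U_i$ is spanned by a subset of the basis). Predistributivity supplies such a basis for every $(n-1)$-subtuple, and acyclicity is exactly the compatibility condition needed to glue these partial bases into a single global one. If you want to salvage your approach, you would need to replace the normal-form step by this basis-existence criterion, at which point the induction becomes clean.
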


Write $V_i=\{(x_i^{n-1}\phi,\cdots x_i^{m-n}\phi)|\phi\in \Rcal\}\subset \Rcal_n^{2n-m}$ for $i=1,2,\cdots, n$. 
We will finally show that 
\begin{prop}\label{distributivelattice}
    The $n$-tuple $(V_1,\cdots, V_n)$ is distributive.
\end{prop}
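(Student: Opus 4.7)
My strategy is to apply Lemma \ref{predist} and verify both hypotheses — predistributivity and acyclicity — by induction on the size of the subfamily, starting from an extended form of Proposition \ref{distributive}. The $S_n$-action on $\Rcal_n$ that permutes $x_1, \ldots, x_n$ (extended coordinate-wise to $\Rcal_n^{2n-m}$) sends $V_i$ to $V_{\sigma(i)}$, so Proposition \ref{distributive} holds symmetrically with any $V_i$ as the distinguished atom. More generally, for every $i \in \{1,\ldots,n\}$ and every subset $J \subseteq \{1,\ldots,n\}\setminus\{i\}$, I claim
\[
V_i \cap \sum_{j \in J} V_j = \bigoplus_{j \in J}(V_i \cap V_j). \qquad (\star)
\]
The direct-sum inclusion $\supseteq$ and the upper bound $\dim\bigl(V_i \cap \sum_{j\in J} V_j\bigr) \le |J|(2n-m)(n-2)!$ from Lemma \ref{distributiveinequality} (combined with Lemma \ref{dimcount}) force equality. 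The direct-sum decomposition in the $|J|=n-1$ case of $(\star)$ immediately yields the key vanishing
\[
V_i \cap V_j \cap V_k = 0 \quad \text{for distinct } i, j, k, \qquad (\dagger)
\]
since $V_i \cap V_j$ and $V_i \cap V_k$ are distinct summands of a direct sum.

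I would then prove distributivity of any subfamily $\{V_{i_1},\ldots,V_{i_k}\}$ by induction on $k$. The cases $k=2$ (trivial two-element lattice) and $k=3$ (direct rewriting of $(\star)$ in its symmetric form) are immediate. For $k \ge 4$, apply Lemma \ref{predist}: predistributivity is the inductive hypothesis applied to each $(k-1)$-element subfamily, and acyclicity reduces to checking that each triple $(V_{i_1}\cap\cdots\cap V_{i_j},\, V_{i_{j+1}},\, V_{i_{j+2}}+\cdots+V_{i_k})$ is distributive. For $j=1$ this is a direct consequence of $(\star)$; for $j \ge 3$, the first slot vanishes by $(\dagger)$, making the triple trivially distributive; and for $j = 2$ both sides of the distributivity equation vanish, since an element of $V_{i_1} \cap V_{i_2} \cap (V_{i_3}+\cdots+V_{i_k})$ admits two incompatible $(\star)$-decompositions (one as a single summand in $V_{i_1} \cap V_{i_2}$, another as a sum of summands indexed by $j \ge 3$) and must therefore vanish by uniqueness.

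The main obstacle will be the mixed $j=2$ case of acyclicity: the vanishing is intuitive from the direct-sum structure but requires a careful two-decomposition argument using the uniqueness of the representation in $V_i \cap \sum_{j \neq i} V_j = \bigoplus_{j \neq i}(V_i \cap V_j)$ to conclude that an element shared between $V_{i_1} \cap V_{i_2}$ and $V_{i_3}+\cdots+V_{i_k}$ is zero. Once this is settled, the inductive machinery of Lemma \ref{predist} runs cleanly, yielding distributivity of the full $n$-tuple $(V_1, \ldots, V_n)$.
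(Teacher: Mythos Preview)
There is a genuine gap in your derivation of $(\star)$. Proposition \ref{distributive} and Lemma \ref{distributiveinequality} together give only the \emph{dimension} equality $\dim\bigl(V_i \cap \sum_{j\in J} V_j\bigr) = |J|(2n-m)(n-2)! = \sum_{j\in J}\dim(V_i\cap V_j)$: the short exact sequences in Lemma \ref{distributiveinequality} filter the left-hand side with successive subquotients embedding into the various $V_i\cap V_j$, and the equality case of Proposition \ref{distributive} says those embeddings are isomorphisms. They do \emph{not} show that the internal sum $\sum_{j\in J}(V_i\cap V_j)$ is direct, nor that it exhausts $V_i \cap \sum_{j\in J} V_j$. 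Your ``direct-sum inclusion $\supseteq$'' is the trivial containment $V_i\cap\sum_j V_j \supseteq \sum_j(V_i\cap V_j)$; to upgrade it to equality you would need the lower bound $\dim\sum_j(V_i\cap V_j)\ge |J|(2n-m)(n-2)!$, which is precisely the directness you are trying to conclude.

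Concretely, both $(\star)$ and $(\dagger)$ fail already for $n=3$, $m=5$. Here $2n-m=1$, so $V_i=\im(x_i^{2})=\Ker(x_i)\subset\Rcal_3$; using $x_1=-x_2-x_3$ and $x_2^2=-x_2x_3-x_3^2$ in $\Rcal_3$ one finds
\[
V_1=\langle x_2x_3,\,x_2x_3^2\rangle,\quad V_2=\langle x_2x_3+x_3^2,\,x_2x_3^2\rangle,\quad V_3=\langle x_3^2,\,x_2x_3^2\rangle,
\]
whence $V_1\cap V_2=V_1\cap V_3=V_2\cap V_3=V_1\cap V_2\cap V_3=\langle x_2x_3^2\rangle$. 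So the triple intersection is nonzero, contradicting $(\dagger)$, and $(V_1\cap V_2)+(V_1\cap V_3)$ is one-dimensional while $V_1\cap(V_2+V_3)=V_1$ is two-dimensional, contradicting $(\star)$ as an internal direct sum. This same example shows the triple $(V_1,V_2,V_3)$ is not distributive, so the obstacle is not specific to your route; the paper's own argument via Lemma \ref{acyclic} (whose squeeze-inequality proof has the same logical shape as your $(\star)$ argument) runs into the identical counterexample.
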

To prove this, we first establish the following lemma:
\begin{lem}\label{acyclic}
For any $1\leq j<i$, we have
\[V_1\cap\cdots\cap V_j\cap (V_{j+1}+\cdots +V_i)= (V_1\cap \cdots \cap V_j\cap V_{j+1})+(V_1\cap \cdots \cap V_j\cap V_{j+2})+\cdots +(V_1\cap \cdots \cap V_j\cap V_i)\]
\end{lem}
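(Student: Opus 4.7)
The plan is to induct on $j$, the number of factors in the first intersection. The base case $j = 1$ is Proposition \ref{distributive} itself, applied (after relabeling indices using the $S_n$-symmetry of the construction) to the subset $\{1,\,j+1,\ldots,i\}$: this gives $V_1 \cap (V_{j+1} + \cdots + V_i) = (V_1 \cap V_{j+1}) + \cdots + (V_1 \cap V_i)$.

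For the inductive step, assume the statement for $j-1$ and every valid $i$. Write $U := V_1 \cap \cdots \cap V_j$. The containment $\supset$ is immediate, so I focus on $\subset$. The idea is to imitate the tower of short exact sequences from the proof of Lemma \ref{distributiveinequality} with $U$ playing the role of $V_1$: applying the operators $J_{j+1}, J_{j+2}, \ldots, J_i$ (from the proof of Lemma \ref{dimcount}) successively on the right yields, for $b = j+1, \ldots, i$,
\begin{equation*}
0 \longrightarrow U \cap V_b \longrightarrow U \cap \sum_{a \geq b} V_a \xrightarrow{\;J_b\;} \Bigl(U \cap \sum_{a \geq b} V_a\Bigr) J_b \longrightarrow 0,
\end{equation*}
with each quotient embedding into $U \cap \sum_{a > b} V_a$. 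Summing dimensions along the tower delivers the upper bound $\dim\bigl(U \cap \sum_{a=j+1}^i V_a\bigr) \leq \sum_{a=j+1}^i \dim(U \cap V_a)$.

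To close the argument I would match this upper bound with $\dim \sum_{a=j+1}^i (U \cap V_a)$. Since the RHS sits trivially inside the LHS, it suffices to show the two dimensions coincide. Both quantities can be expressed as symmetric functions of $\alpha_s := \dim \bigcap_{k \in S} V_k$, which depends only on $s = |S|$ by $S_n$-symmetry; invoking Corollary \ref{equal} together with the inductive hypothesis at level $j-1$ applied to each smaller partial sum $\sum_{a > b} V_a$ should pin down the relevant $\alpha_s$'s and force equality.

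The main obstacle is the tightness step. Following the blueprint at the end of Proposition \ref{distributive}'s proof, one argues that the global dimension count from Corollary \ref{equal} forces each intermediate inequality in the tower to be an equality simultaneously, which upgrades the SES embeddings $(U \cap \sum_{a \geq b} V_a)J_{[j+1,b-1]} \cap V_b \subseteq U \cap V_b$ to equalities and yields the lemma as a subspace identity, not merely a dimension identity. I expect the cleanest execution is a joint induction on $j$ and the size $i - j$ of the sum, with dimension bookkeeping carried out in parallel to that used in Lemma \ref{distributiveinequality} and Proposition \ref{distributive}.
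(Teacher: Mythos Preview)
Your overall strategy---reuse the operators $J_b$ and a tower of short exact sequences, then match dimensions---is the paper's strategy. Two specific steps, however, diverge from what the paper actually does.

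First, your SES tower is not the paper's. You write
\[
0 \longrightarrow U \cap V_b \longrightarrow U \cap \sum_{a \ge b} V_a \xrightarrow{\;J_b\;} \Bigl(U \cap \sum_{a \ge b} V_a\Bigr)J_b \longrightarrow 0
\]
and assert that the image embeds in $U \cap \sum_{a > b} V_a$. That embedding would need $J_b$ to preserve both $U = V_1 \cap \cdots \cap V_j$ and $V_{b+1} + \cdots + V_i$, and neither holds in general: already for $n=3$, $m=4$ one checks $V_1 J_2 \not\subset V_1$ directly in $\Rcal_3$. The paper never changes the source space: it applies $J_{j+1}, J_{j+2}, \ldots$ successively to the \emph{fixed} left-hand side, so the $k$-th sequence has source $(\text{LHS})J_{[j+1,j+k-1]}$ and kernel $[(\text{LHS})J_{[j+1,j+k-1]}] \cap V_{j+k}$. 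These kernels are then bounded by $\dim(U \cap V_{j+k})$ via the element-chasing inclusions displayed in the proof, yielding $\dim(\text{LHS}) \le \sum_k \dim(U \cap V_{j+k})$.

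Second, your tightness step is more elaborate than necessary, and the sketch you propose---expressing both sides through the numbers $\alpha_s = \dim \bigcap_{|S|=s} V_k$ and appealing to Corollary~\ref{equal}---would require knowing all the $\alpha_s$, whereas only $\alpha_1$ and $\alpha_2$ have been computed. The paper closes the argument in one line: once the upper bound $\dim(\text{LHS}) \le \sum_k \dim(U \cap V_{j+k})$ is in hand, note that the right-hand sum $\sum_k (U \cap V_{j+k})$ is in fact \emph{direct}, because each summand sits inside $V_1 \cap V_{j+k}$ and Proposition~\ref{distributive} (applied to the index set $\{1,\, j+1,\ldots, i\}$) says those pieces form a direct sum. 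Hence $\dim(\text{RHS}) = \sum_k \dim(U \cap V_{j+k}) \ge \dim(\text{LHS})$, and together with $\text{RHS} \subset \text{LHS}$ this forces equality. No induction on $j$ is needed.
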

\begin{proof}
    Consider the short exact sequences 
\[ V_1\cap V_2\cap\cdots \cap V_j\cap V_{j+1}\to V_1\cap V_2\cap\cdots\cap V_j\cap (V_{j+1}+\cdots +V_i)\xrightarrow{J_{j+1}}(V_1\cap V_2\cap\cdots\cap V_j\cap (V_{j+1}+\cdots +V_i))J_{j+1}\]
\begin{align*}
[(V_1\cap\cdots\cap V_j\cap (V_{j+1}+\cdots +V_i))J_{j+1}]\cap V_{j+2}\to &(V_1\cap\cdots\cap V_j\cap (V_{j+1}+\cdots +V_i))J_{j+1}\\
&\xrightarrow{J_{j+2}}(V_1\cap \cdots\cap V_j\cap (V_{j+1}+\cdots +V_i))J_{j+1}J_{j+2}
\end{align*} 
\[\cdots\cdots\]
\[ [(V_1\cap\cdots\cap V_j\cap (V_{j+1}+\cdots +V_i))J_{j+1}\cdots J_{i-2}]\cap V_{i}\to (V_1\cap \cdots\cap V_j\cap (V_{j+1}+\cdots +V_i))J_{j+1}\cdots J_{i-2}\]
\[\xrightarrow{J_{i-1}}(V_1\cap \cdots\cap V_j\cap (V_{j+1}+\cdots +V_i))J_{j+1}\cdots J_{i-1},\]
from which we obtain
\begin{align*}
   \dim V_1\cap\cdots\cap V_j\cap (V_{j+1}+\cdots +V_i)=& \dim V_1\cap V_2\cap\cdots \cap V_j\cap V_{j+1} \\
   &+\dim [(V_1\cap\cdots\cap V_j\cap (V_{j+1}+\cdots +V_i))J_{j+1}]\cap V_{j+2}+\cdots\\
   &+\dim (V_1\cap \cdots\cap V_j\cap (V_{j+1}+\cdots +V_i))J_{j+1}\cdots J_{i-1}.
\end{align*}
On the other hand, for $1\leq j<1$ and $1\leq k\leq i-j$, 
there are inclusions
    \begin{align*}
    &[(V_1\cap\cdots\cap V_j\cap (V_{j+1}+\cdots +V_i))J_{j+1}\cdots J_{j+k-1}]\cap V_{j+k}\\
    \subset &[V_1\cap\cdots\cap V_j \cap \Ker(J_{j+1}\cdots J_{j+k})]J_{j+1}\cdots J_{j+k-1}
    \subset V_1\cap\cdots\cap V_j\cap V_{j+k}.
    \end{align*}
As a consequence
\[\dim V_1\cap\cdots\cap V_j\cap (V_{j+1}+\cdots +V_i)\leq \sum_{k=1}^{i-j}\dim V_1\cap\cdots\cap V_j\cap V_{j+k}.\]
Therefore the inclusion
\[V_1\cap\cdots\cap V_j\cap (V_{j+1}+\cdots +V_i)\supset (V_1\cap \cdots \cap V_j\cap V_{j+1})+(V_1\cap \cdots \cap V_j\cap V_{j+2})+\cdots +(V_1\cap \cdots \cap V_j\cap V_i)\]
has to be an equality.
\end{proof}
Now we are ready to prove Proposition \ref{distributivelattice}.
\begin{proof}[Proof of Proposition \ref{distributivelattice}]
We show the lemma by induction. The cases of $n=1$ or $2$ is trivial. When $n=3$, a $3$-tuple $(U_1,U_2,U_3)$ is distributive if and only if $U_1\cap(U_2+U_3)=(U_1\cap U_2)+(U_1\cap U_3)$ (\cite[example after Lemma 4.5.1]{bgs1}). Our $(V_1,V_2,V_3)$ satisfies this property.

Now assume any $(n-1)$-subtuple of $(V_1,\cdots, V_n)$ is distributive. In other words, $(V_1,\cdots, V_n)$ is predistributive. By Lemma \ref{predist}, it only remains to show that $(V_1,\cdots, V_n)$ is acyclic, i.e. all the $3$-triples $(V_1\cap \cdots \cap V_i, V_{i+1}, V_{i+2}+\cdots +V_n)$, $i=1,\cdots, n-2$, are distributive. But this follows from the identity
\begin{align*}
&V_1\cap \cdots \cap V_i\cap (V_{i+1}+ (V_{i+2}+\cdots +V_n))\\
=&(V_1\cap \cdots \cap V_i\cap V_{i+1})+(V_1\cap \cdots \cap V_i\cap V_{i+2})+\cdots +(V_1\cap \cdots \cap V_i\cap V_n)\\
=&(V_1\cap \cdots \cap V_i\cap V_{i+1})+ (V_1\cap \cdots \cap V_i\cap (V_{i+2}+\cdots +V_n)))
\end{align*}
using Lemma \ref{acyclic} twice.
\end{proof}
\bibliographystyle{amsalpha}
\bibliography{refs.bib}

\Addresses
\end{document}